\documentclass[11pt,english,reqno]{amsart}
\usepackage{bm}
\usepackage{color}
\usepackage{xcolor}
\usepackage{babel}
\usepackage{verbatim}
\usepackage{amsthm}
\usepackage{amstext}
\usepackage{amssymb}
\usepackage{amsbsy}
\usepackage{ifpdf}
\usepackage[nobysame,abbrev,alphabetic]{amsrefs}
\usepackage{enumerate}
\usepackage{amsmath}
\usepackage{amscd}
\usepackage{amsfonts}
\usepackage{graphicx}
\usepackage[all]{xy}
\usepackage{verbatim}
\usepackage{gensymb}
\usepackage{mathrsfs}
\usepackage[colorlinks]{hyperref}
\allowdisplaybreaks
\hypersetup{
linkcolor=blue,          
citecolor=green,        
}
\newtheorem{theorem}{Theorem}[section]

\newtheorem{lemma}[theorem]{Lemma}
\newtheorem{corollary}[theorem]{Corollary}
\theoremstyle{definition}\newtheorem{definition}[theorem]{Definition}

\theoremstyle{theorem}\newtheorem{proposition}[theorem]{Proposition}

\theoremstyle{definition}
\newtheorem{question}[theorem]{Question}
\theoremstyle{definition}
\theoremstyle{definition}\newtheorem{remark}[theorem]{Remark}
\theoremstyle{definition}\newtheorem*{acknowledgments}{Acknowledgments}
\newcommand{\al}{\alpha}

\newcommand{\Ga}{\Gamma}


\newcommand{\cM}{\mathcal{M}}

\newcommand{\cP}{\mathcal{P}}

\newcommand{\bP}{\mathbb{P}}
\newcommand{\bR}{\mathbb{R}}
\newcommand{\bZ}{\mathbb{Z}}

\newcommand{\SL}{\operatorname{SL}}
\newcommand{\SO}{\operatorname{SO}}

\newcommand{\GL}{\operatorname{GL}}


\newcommand\pa[1]{\left(#1\right)}

\newcommand\on[1]{\operatorname{#1}}

\newcommand\mb[1]{\mathbf{#1}}

\newcommand\br[1]{\left[#1\right]}
\newcommand\smallmat[1]{\pa{\begin{smallmatrix}#1\end{smallmatrix}}}

\newcommand{\Vol}{\operatorname{Vol}}
\newcommand{\diag}{\operatorname{diag}}
\newcommand{\sign}{\operatorname{sign}}
\newcommand{\Lie}{\operatorname{Lie}}
\newcommand{\Ad}{\operatorname{Ad}}
\newcommand{\Area}{\operatorname{Area}}


\newcommand{\onto}{\xymatrix{\ar@{>>}[r]&}}







\begin{document}
\title[Translates of Divergent Diagonal Orbits]{Limiting Distributions of Translates of Divergent Diagonal Orbits}
\author{Uri Shapira and Cheng Zheng}
\begin{abstract}
We define a natural topology on the collection of (equivalence classes up to scaling of) locally finite measures on a homogeneous 
space and prove that in this topology, pushforwards of certain infinite volume orbits equidistribute in the ambient space. As 
an application of our results we prove an asymptotic formula for the number of integral points in a ball on some varieties as the radius goes to infinity.
\end{abstract}
\address{Department of Mathematics\\
Technion \\
Haifa \\
Israel }
\email{ushapira@technion.ac.il}
\email{cheng.zheng@campus.technion.ac.il}
\thanks{}
\maketitle


\section{Introduction}\label{intro}
This paper deals with the study of the possible limits of periodic orbits in homogeneous spaces. Before explaining 
what we mean by this we start by motivating this study.
In many instances arithmetic properties of an object are captured by periodicity of a corresponding orbit in some dynamical system.
A simple instance of this phenomenon is that $\al\in \bR$ is rational if and only if its decimal expansion is eventually periodic. In dynamical terms this is expressed by the fact that the orbit of $\al$ modulo 1 on the torus $\bR/\bZ$
under multiplication by 10 (modulo 1) is eventually periodic. Furthermore, from knowing distributional information regarding the 
periodic orbit one can draw meaningful arithmetical conclusions. In the above example this means that if the orbit is very close to 
being evenly distributed on the circle then the frequency of appearance of say the digit 3 in the period of the decimal expansion is 
roughly $\frac{1}{10}$. This naive scheme has far reaching analogous manifestations capturing deep arithmetic concepts in dynamical terms. More elaborate instances are for example the following:
\begin{itemize}
\item Similarly to the above example regarding decimal expansion, periodic geodesics on the modular surface
correspond to continued fraction expansions of quadratic numbers and distributional properties of the former implies
statistical information regarding the latter (see~\cite{AS} where this was used).
\item Representing an integral quadratic form by another is related to periodic orbits of orthogonal groups (see~\cite{EV08}).
\item Class groups of number fields correspond to adelic torus orbits (see~\cite{ELMV}).
\item Values of rational quadratic forms are governed by the volume of
 periodic orbits of orthogonal groups (see~\cite[Theorem 1.1]{EMV09})
\item Asymptotic formulas for counting integer and rational points on varieties are encoded by distributional properties 
of periodic orbits (see~\cite{DRS93, EM93, EMS96, GMO08} for example). 
\end{itemize}
In all the above examples the orbits that are considered are of~\textit{finite volume}. Recently in~\cite{KK} and~\cite{OS14} this barrier was 
crossed and particular instances of the above principle were used for infinite volume orbits in order to obtain asymptotic estimates for counting integral points on some varieties and weighted second moments of $\GL(2)$ automorphic $L$-functions. 

At this point let us make more precise our terminology. Let $X$ be a locally compact second countable Hausdorff space and let $H$ be a unimodular topological group acting on $X$ continuously. We say that an orbit $Hx$ is \textit{periodic} if it supports an $H$-invariant \emph{locally finite} Borel measure. In such a case the orbit is necessarily closed and this measure is unique up to scaling and is obtained by restricting the Haar measure of $H$ to a fundamental domain of $\on{Stab}_H(x)$ in $H$ which is identified with the orbit via $h\mapsto hx$. We say that such an orbit is \textit{of finite volume} if the total mass of the orbit is finite. It is then customary to normalize
the total mass of the orbit to 1.  We remark that in some texts the term periodic orbit is reserved for finite volume ones but we wish to extend the terminology as above. If $Hx$ is a periodic orbit we denote by $\mu_{Hx}$ a choice of such a measure, which in the finite volume case is assumed to be normalized to a probability measure. 

Given a sequence of periodic orbits $Hx_i$, it makes sense to ask if  they converge in some sense to a limiting object. When the orbits are of finite volume the common definition is that of weak* convergence; each orbit is represented by the probability measure $\mu_{Hx_i}$ and one equips the space of probability measures $\cP(X)$ with the weak* topology coming from identifying 
$\cP(X)$ as a subset of the unit sphere in the dual of the Banach space of continuous functions on $X$ vanishing at infinity $C_0(X)$. The starting point of this paper is to challenge this and propose a slight modification which will allow to bring into the picture periodic orbits of infinite volume. For that we will shortly concern ourselves with topologizing the space of equivalence classes $[\mu]$ of locally finite measures  $\mu$ on $X$. 

This approach has several advantages over the classical weak* convergence approach. As said above it allows to discuss limiting distributions of infinite volume orbits but also it allows to detect in some cases information which is invisible for the weak* topology: In the classical discussion, it is common that a sequence of periodic probability measures $\mu_{Hx_i}$ converges to 
the zero measure (phenomenon known as full escape of mass). Nevertheless it sometimes happens that the orbits themselves do
converge to a limiting object but this information was lost because the measures along the sequence were not scaled properly. This phenomenon happens for example in~\cite{CasselsEscape} which inspired us to define the notion of convergence to be defined below. 

Although the results we will prove are rather specialized we wish to present the framework in which our discussion takes place 
in some generality. Let $G$ be a Lie group\footnote{One could (and should) develop this discussion in the $S$-arithmetic and 
adelic settings as well.} and let $\Ga<G$ be a lattice.
\begin{question}\label{q}
Let $X=G/\Ga$ and let $H_ix_i$ be a sequence of periodic orbits. Under which conditions the following holds:
\begin{enumerate}
\item The sequence $\br{\mu_{H_ix_i}}$ has a converging subsequence?
\item The accumulation points of $\br{\mu_{H_ix_i}}$ are themselves (homothety classes of) periodic measures?
\end{enumerate}
\end{question}
  
\section{Basic definitions and results}\label{defres}
\subsection{Topologies}
Now we make our discussion in the introduction more rigorous. Let $X$ be a locally compact second countable Hausdorff space and $\mathcal M(X)$ the space of locally finite measures on $X$. We say that two locally finite measures $\mu$ and $\nu$ in $\mathcal M(X)$ are equivalent if there exists a constant $\lambda>0$ such that $\mu=\lambda\nu$. This forms an equivalence
relation and we denote the equivalence class of $\mu$ by $[\mu]$. We denote by $\bP\cM(X)$ the set of all equivalence classes of nonzero locally finite measures on $X$.

We topologize $\mathcal M(X)$ and $\bP\cM(X)$ as follows. Let $C_c(X)$ be the space of compactly supported continuous functions on $X$. For any $\rho\in C_c(X)$, define $$i_\rho:\mathcal M(X)\to C_0(X)^*$$ by sending $d\mu\in\mathcal M(X)$ to $\rho d\mu\in C_0(X)^*$.
Here $C_0(X)$ is the space of continuous functions on $X$ vanishing at infinity equipped with the supremum norm, and $C_0(X)^*$ denotes its dual space. The weak* topology on $C_0(X)^*$ then induces a topology $\tau_\rho$ on $\mathcal M(X)$ via the map $i_\rho$. We will denote by $\tau_X$ the topology on $\mathcal M(X)$ generated by $(\mathcal M(X),\tau_\rho)$ $(\rho\in C_c(X))$. Equivalently,  $\tau_X$ is the smallest topology on $\mathcal M(X)$ such that for any $f\in C_c(X)$ the map $$\mu\mapsto\int fd\mu$$ is a continuous map from $\mathcal M(X)$ to $\mathbb R$.

\begin{definition}\label{def12}
Let $\pi_P$ be the natural projection map from $\mathcal M(X)\setminus\{0\}$ to $\bP\cM(X)$. We define $\tau_P$ to be the quotient topology on $\bP\cM(X)$ induced by $\tau_X$ via $\pi_P$. In other words, $U$ is an open subset in $\bP\cM(X)$ if and only if $\pi_P^{-1}(U)$ is open in $\mathcal M(X)\setminus\{0\}$. In this way, we obtain a topological space $(\bP\cM(X),\tau_P)$.
\end{definition}

\subsection{Main results}
Let $G=\SL(n,\mathbb R)$, $\Gamma=\SL(n,\mathbb Z)$ and $X=G/\Gamma$. Denote by $m_X$ the unique $G$-invariant probability measure on $X$ and by $\Ad$ the adjoint representation of $G$. We write $$A=\{\diag(e^{t_1},e^{t_2},\dots,e^{t_{n-1}},e^{t_n}): t_1+t_2+\dots+t_n=0\}$$ for the connected component of the full diagonal group in $G$, and $$N=\{(u_{ij})_{1\leq i,j\leq n}: u_{ii}=1\;(1\leq i\leq n),\;u_{ij}=0\;(i>j)\}$$ for the upper triangular unipotent group. Let $K=\SO(n,\mathbb R)$. In this paper, we address Question \ref{q} in the space $X=\SL(n,\mathbb R)/\SL(n,\mathbb Z)$ with certain periodic orbits $H_ix_i$, and prove the convergence of $[\mu_{H_ix_i}]$ with respect to the topology $(\tau_P, \bP\cM(X))$. As a simple exercise, and to motivate
such a statement, the reader can show that if $\br{\mu_{H_ix_i}}\to\br{m_X}$ for example, then the orbits $H_ix_i$ become 
dense in $X$. In many cases our results imply that indeed the limit homothety class is the class of the uniform measure $m_X$.

Before stating our theorems, we need some notations. For a Lie subgroup $H<G$, let $H^0$ denote the connected component of identity of $H$, and $\Lie(H)$ its Lie algebra.  Denote by $C_G(H)$ (resp. $  C_G(\Lie(H)))$ the centralizer of $H$ (resp. $\Lie(H))$ in $G$. We write $\mathfrak g=\Lie(G)=\mathfrak{sl}(n,\mathbb R)$, and $$\exp:\mathfrak{sl}(n,\mathbb R)\to\SL(n,\mathbb R)$$ the exponential map from $\mathfrak g$ to $G$. We also write $\|\cdot\|_{\mathfrak g}$ for the norm on $\mathfrak g$ induced by the Euclidean norm on the space of $n\times n$ matrices. For any $g\in G$ and any measure $\mu$ on $X$, define the measure $g_*\mu$ by 
$$g_*\mu(E)=\mu(g^{-1}E)\textup{ for any Borel subset }E\subset X.$$ An $A$-orbit $Ax$ in $X$ is called divergent if the map $a\mapsto ax$ from $A$ to $X$ is proper.

\begin{definition}\label{def13}
Let $\{g_k\}_{k\in\mathbb N}$ be a sequence in $G$. For any subgroup $S\subset A$, we define $$\mathcal A(S,\{g_k\}_{k\in\mathbb N})=\{Y\in\Lie(S): \{\Ad(g_k)Y\}_{k\in\mathbb N}\text{ is bounded in }\mathfrak g\}.$$ This is a subalgebra in $\Lie(S)$. 
\end{definition}
\begin{remark}\label{rem:1054}
By definition \ref{def13}, $\{\Ad(g_k)Y\}_{k\in\mathbb N}$ is unbounded for any $Y\in\Lie(S)\setminus\mathcal A(S,\{g_k\}_{k\in\mathbb N})$. Then one can find a subsequence $\{g_{i_k}\}_{k\in\mathbb N}$ such that for any $Y\in\Lie(S)\setminus\mathcal A(S,\{g_{i_k}\}_{k\in\mathbb N})$, the sequence $\{\Ad(g_{i_k})Y\}_{k\in\mathbb N}$ diverges to infinity. 

Indeed, suppose that for an element $Y\in\Lie(S)\setminus\mathcal A(S,\{g_k\}_{k\in\mathbb N})$, $\{\Ad(g_k)Y\}_{k\in\mathbb N}$ does not diverge. Then there is a subsequence $\{g'_k\}_{k\in\mathbb N}$ such that $\{\Ad(g'_k)Y\}_{k\in\mathbb N}$ is bounded. This implies that $\mathcal A(S,\{g_k'\}_{k\in\mathbb N})$ contains the linear span of $Y$ and $\mathcal A(S,\{g_k\}_{k\in\mathbb N})$. Because of this, one can keep on enlarging the set $\mathcal A(S,\{g_k\}_{k\in\mathbb N})$ by passing to subsequences of $\{g_k\}_{k\in\mathbb N}$. But due to the finite dimension of $\Lie(S)$, this process would stop at some point. Then one can get a subsequence $\{g_{i_k}\}_{k\in\mathbb N}$ such that for any vector $Y\in\Lie(S)\setminus\mathcal A(S,\{g_{i_k}\}_{k\in\mathbb N})$, the sequence $\Ad(g_{i_k})Y\to\infty$.
\end{remark}

The following theorem answers Question \ref{q} for translates of a divergent diagonal orbit in $G/\Gamma$. Moreover, it gives a description of all accumulation points.

\begin{theorem}\label{th11}
Let $Ax$ be a divergent orbit in $X$. Then for any $\{g_k\}_{k\in\mathbb N}$ in $G$, the sequence $[(g_k)_*\mu_{Ax}]$ has a subsequence converging to an equivalence class of a periodic measure on $X$. 

Furthermore, by passing to a subsequence, we assume that for any $Y\in\Lie(A)\setminus\mathcal A(A,\{g_k\}_{k\in\mathbb N})$ the sequence $\{\Ad(g_k)Y\}_{k\in\mathbb N}$ diverges (see Remark~\ref{rem:1054}). Then we have the following description of the limit points of the sequence $[(g_k)_*\mu_{Ax}]$. The subgroup $\exp(\mathcal A(A,\{g_k\}_{k\in\mathbb N}))$ is the connected component of the center of the reductive group $C_G(\mathcal A(A,\{g_k\}_{k\in\mathbb N}))$, and any limit point of the sequence $[(g_k)_*\mu_{Ax}]$ is a translate of the equivalence class $[\mu_{C_G(\mathcal A(A,\{g_k\}_{k\in\mathbb N}))^0x}]$. In particular, if $\mathcal A(A,\{g_k\}_{k\in\mathbb N})=\{0\}$, then $[(g_k)_*\mu_{Ax}]$ converges to the equivalence class of the Haar measure $m_X$ on $X$.\end{theorem}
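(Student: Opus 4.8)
The plan is to first use a structural argument to reduce to the case $\mathcal A(A,\{g_k\}_{k\in\mathbb N})=\{0\}$, and then to treat that case by producing enough unipotent invariance of the limit to invoke measure rigidity. Write $\mathcal A=\mathcal A(A,\{g_k\}_{k\in\mathbb N})$ and $L=C_G(\mathcal A)$, fix a complement $\mathcal B$ in $\Lie(A)=\mathcal A\oplus\mathcal B$, and pass to the subsequence of Remark~\ref{rem:1054}, so $\Ad(g_k)Y$ is bounded for $Y\in\mathcal A$ and $\Ad(g_k)Y\to\infty$ for every $Y\in\mathcal B\setminus\{0\}$. \textbf{Step 1 (structure and reduction).} A generic $Y^\ast\in\mathcal A$ has centralizer $C_G(Y^\ast)=L$ (its ``constancy block'' partition of $\{1,\dots,n\}$ is the same as that of the whole subspace $\mathcal A$), so since $Y^\ast$ is semisimple its adjoint orbit is closed with proper orbit map, and the boundedness of $\Ad(g_k)Y^\ast$ forces $g_k$ to be bounded modulo $L$; after a further extraction we may write $g_k=c_kh_k$ with $\{c_k\}$ bounded in $G$ and $h_k\in L^0$. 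Because conjugation by $h_k\in L$ fixes the centre of $\Lie(L)$, we get that $\Ad(g_k)$ is bounded on $\mathfrak z(\Lie(L))$; with the obvious inclusion $\mathcal A\subseteq\mathfrak z(\Lie(L))$ this yields $\mathcal A=\mathfrak z(\Lie(L))$, hence $\exp(\mathcal A)=Z(L)^0$, and $L$ is reductive as a common centralizer of semisimple elements, which establishes the structural assertions of the theorem. Absorbing the bounded factor $c_k$ into the final translation, we may assume $g_k=h_k\in L^0$, so $\supp\big((g_k)_\ast\mu_{Ax}\big)\subseteq L^0x$; writing $L^0$ as an almost-direct product of $\exp(\mathcal A)$ with simple factors $\SL_{n_1},\dots,\SL_{n_r}$, the divergence condition on $\mathcal B\setminus\{0\}$ says exactly that the induced sequence in each $\SL_{n_i}$ expands its full diagonal subalgebra. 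Granting that $L^0x$ is periodic — which follows from the divergence of $Ax$ and the classification of divergent diagonal orbits, forcing $\mathrm{Stab}_{L^0}(x)$ to be an arithmetic lattice compatible with the block decomposition — and using that translates along the central torus $\exp(\mathcal A)$ are unchanged, an induction on $n$ reduces the theorem to the case $\mathcal A=\{0\}$ applied inside each factor $\SL_{n_i}/\SL_{n_i}(\mathbb Z)$.

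\textbf{Step 2 (the case $\mathcal A=\{0\}$).} Here $\Ad(g_k)Y\to\infty$ for all $Y\in\Lie(A)\setminus\{0\}$ and the goal is $[(g_k)_\ast\mu_{Ax}]\to[m_X]$. First comes non-escape: one exhibits normalizing constants $c_k>0$ such that $c_k^{-1}(g_k)_\ast\mu_{Ax}$ is uniformly bounded on compact sets and does not converge to the zero measure in $\tau_X$, after which the vague compactness of uniformly locally bounded Radon measures gives subsequential limits in $(\bP\cM(X),\tau_P)$. The natural $c_k$ renormalizes the mass that the ``turn-around'' part of the orbit carries back into a fixed compact set — essentially $c_k\asymp$ the reciprocal of the product of the singular values of $\Ad(g_k)|_{\Lie(A)}$, which tends to $0$ — while the remainder escapes to infinity and contributes nothing against $C_c(X)$; the required upper and lower bounds rely on the quantitative non-divergence estimates for diagonalizable flows together with the fact that divergent orbits of maximal tori meet a universal compact set. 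I expect this renormalization to be the main obstacle, since it is exactly where the infinite volume of $Ax$ and the topology $\tau_P$ interact. Given a nonzero subsequential limit $\nu$: for each $Y\in\Lie(A)\setminus\{0\}$ the matrices $\Ad(g_k)Y$ are semisimple with fixed spectrum but norm tending to $\infty$, so along a subsequence $\Ad(g_k)Y/\|\Ad(g_k)Y\|_{\mathfrak g}$ converges to a nonzero nilpotent $Z_Y\in\mathfrak g$, and reparametrizing the one-parameter subgroups $\exp(\mathbb R\,\Ad(g_k)Y)$ — under which each $(g_k)_\ast\mu_{Ax}$ is invariant — shows that $\nu$ is invariant under $\exp(\mathbb R Z_Y)$. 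The hypothesis that every direction of $\Lie(A)$ is expanded forces the collection of such nilpotents (over all $Y$ and all subsequential choices) to generate a subgroup large enough that, by Ratner's measure classification theorem, the only homogeneous measure invariant under it is $m_X$; hence $\nu\propto m_X$. As every subsequence has a further subsequence converging to $[m_X]$, the whole sequence converges to $[m_X]$.

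\textbf{Step 3 (conclusion).} Feeding Step 2 into the reduction of Step 1, every subsequential limit of $[(g_k)_\ast\mu_{Ax}]$ in the general case is, up to the translation coming from $\lim c_k$, the class $[\mu_{L^0x}]=[\mu_{C_G(\mathcal A)^0x}]$, a periodic measure, and the existence of a convergent subsequence follows from the same non-escape estimates; together with Step 2 this gives all the assertions of the theorem. In summary, the parts demanding real work are the non-escape and correct renormalization in $(\bP\cM(X),\tau_P)$ and the passage from ``$\nu$ is invariant under the nilpotent directions $Z_Y$'' to the full strength of Ratner's theorem — i.e.\ quantifying how much unipotent invariance the expanding directions actually produce — whereas the structural lemma of Step 1 and the input from the classification of divergent diagonal orbits are comparatively routine.
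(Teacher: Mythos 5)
Your overall shape (reduce to $\mathcal A(A,\{g_k\})=\{0\}$, then nondivergence plus unipotent invariance plus measure rigidity) matches the paper, and your Step 1 structural reduction -- using properness of the adjoint orbit of a generic semisimple $Y^\ast\in\mathcal A$ to write $g_k=c_kh_k$ with $\{c_k\}$ bounded and $h_k\in C_G(\mathcal A)^0$, identifying $\mathcal A=\mathfrak z(\Lie(C_G(\mathcal A)))$, discarding the central component, and inducting into the $\SL(n_i)$ factors -- is sound and a reasonable alternative to the paper's route (Iwasawa decomposition plus Gauss elimination to reduce to unipotent $g_k$ with the dichotomy property, then Corollary \ref{c42}). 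But the two places you yourself flag as ``real work'' are genuine gaps, and your sketches of them would not go through. First, the renormalization in Step 2 is wrong in both direction and size: for a fixed large compact set $\mathcal K$ one has $(g_k)_*\mu_{Ax}(\mathcal K)\approx m_{\Lie(A)}\{\mb{t}:g_k\exp(\mb{t})x\in\mathcal K\}$, which is comparable to $\Vol(\Omega_{g_k,\delta})$ and tends to infinity (the polytope contains balls of radius $r_k\to\infty$, Lemma \ref{p52}); so the correct scaling multiplies by $\lambda_k=\Vol(\Omega_{g_k,\delta})^{-1}\to 0$, a quantity of logarithmic size in $\|g_k\|$, whereas your $c_k^{-1}$ (the product of the singular values of $\Ad(g_k)|_{\Lie(A)}$) tends to infinity, making $c_k^{-1}(g_k)_*\mu_{Ax}$ blow up on compacta. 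Moreover, the real content is not choosing the constant but proving that a definite proportion of the polytope lands in a fixed compact set: this occupies Sections \ref{con}--\ref{nondiv} of the paper (the graph-theoretic analysis under the dichotomy hypothesis, Lemma \ref{p51} and Lemma \ref{p52}, the Eskin--Mozes--Shah growth class $E(d,\lambda)$, and the Kleinbock--Margulis nondivergence along one-dimensional boxes, culminating in Proposition \ref{p64}); your proposal cites ``quantitative non-divergence for diagonalizable flows'' without any mechanism, and states Step 2 for arbitrary $g_k$, for which even the paper must first reduce to the special unipotent case before this machinery applies.

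Second, the rigidity step is not correct as stated: knowing that $\nu$ is invariant under the nilpotent limits $Z_Y$, or even under the full $(n-1)$-dimensional abelian unipotent group $W=\lim_k\Ad(g_k)A$, does not via Ratner's theorem alone force $\nu\propto m_X$. There are many homogeneous probability measures invariant under such a $W$ (already for $n=2$, $W\subset N$ and the compact orbit $Nx_e$ carries an invariant measure), and a priori the limit could charge them; the issue is not ``generating enough invariance'' but excluding concentration on the singular sets. This is exactly what the paper does after invoking Theorem \ref{th82}: the Dani--Margulis/Mozes--Shah linearization argument of Section \ref{line} (Propositions \ref{p82}--\ref{p86}), which again uses the polytope structure and the $E(d,\lambda)$ estimates to show the translated measures give asymptotically no mass to neighborhoods of $\pi(N(H,W))$ for every proper $H\in\mathcal H$. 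This non-concentration argument is absent from your proposal, so as written the proof of the case $\mathcal A(A,\{g_k\})=\{0\}$ -- and hence of the theorem -- is incomplete.
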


In fact, we deduce Theorem \ref{th11} as a corollary of the following theorem.

\begin{theorem}\label{nth11}
Let $Ax$ be a divergent orbit in $X$. Suppose that $\{g_k\}_{k\in\mathbb N}$ is a sequence in $N$ with $$g_k=(u_{ij}(k))_{1\leq i,j\leq n}\in\SL(n,\mathbb R)$$ such that for each pair $(i,j)$ $(1\leq i<j\leq n)$,$$\textup{either }u_{ij}(k)=0\textup{ for any }k,\;\textup{ or }u_{ij}(k)\to\infty\textup{ as }k\to\infty.$$ Then the sequence $[(g_k)_*\mu_{Ax}]$ converges to the equivalence class $[\mu_{C_G(\mathcal A(A,\{g_k\}_{k\in\mathbb N}))^0x}]$.
\end{theorem}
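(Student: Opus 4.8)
The plan is to reduce to the (by now standard) equidistribution machinery for unipotent translates, while carefully keeping track of the infinite-volume normalization through the topology $\tau_P$. I would begin by fixing coordinates: since $Ax$ is divergent we may write $x = g\Gamma$ with $g \in G$, and the divergence means (by the Birkhoff-cone / Tomanov--Weiss description of divergent diagonal orbits) that the ``shape'' of the orbit is controlled combinatorially. The key structural input is that the full diagonal orbit $A$ acts on $X$, and when we translate by $g_k \in N$ with entries that are either identically zero or going to infinity, the conjugates $g_k A g_k^{-1}$ degenerate: each one-parameter subgroup $\exp(tY)$ for $Y \in \Lie(A)$ either stays bounded under $\Ad(g_k)$ (precisely when $Y \in \mathcal A := \mathcal A(A,\{g_k\})$) or escapes to infinity. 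So I would first establish the algebraic claim that $\mathcal A$ is exactly $\Lie(Z)$ where $Z$ is the connected center of the reductive centralizer $L := C_G(\mathcal A)^0$ — this is essentially the statement already asserted in Theorem \ref{th11}, and follows from the root-space decomposition: a diagonal $Y$ centralizes $\Ad(g_k)$-boundedly iff $u_{ij}(k) = 0$ on every root space $\mathfrak g_\alpha$ with $\alpha(Y) \ne 0$, and the set of such roots cuts out a Levi.

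Next I would set up the measure-theoretic core. Write $\mu := \mu_{Ax}$ and consider $(g_k)_* \mu$. The natural thing is to foliate the $A$-orbit: decompose $\Lie(A) = \mathcal A \oplus \mathfrak b$ for some complement $\mathfrak b$, so that $A = A_1 A_2$ with $A_1 = \exp(\mathcal A)$, $A_2 = \exp(\mathfrak b)$ (up to a finite-index / covering issue that is harmless for homothety classes). Then $\mu_{Ax}$ disintegrates along the $A_1$-fibers, and $(g_k)_*\mu_{Ax}$ becomes an integral over $A_2$ of translated periodic $A_1$-orbit measures: schematically,
\begin{equation*}
(g_k)_* \mu_{Ax} = \int_{A_2} (g_k a_2)_* \mu_{A_1 x}\, d a_2 .
\end{equation*}
Since $g_k$ centralizes $A_1$ modulo bounded error — indeed $\Ad(g_k)|_{\mathcal A}$ is bounded, so after passing to a further subsequence $g_k a_1 g_k^{-1} \to a_1 \cdot (\text{bounded})$ — the $A_1$-orbit $g_k a_2 A_1 x$ is, up to bounded translation, a fixed periodic orbit inside $L x$; its $L$-orbit measure is $\mu_{Lx}$ which is periodic because $Lx = C_G(\mathcal A)^0 x$ is closed (this last point uses that $A_1 x$ is already closed and divergent inside the reductive $L$, and a periodic orbit of the center sits inside a closed orbit of the whole reductive group — one invokes Borel--Harish-Chandra / the arithmeticity of $L \cap \Gamma$). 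The remaining directions $A_2$ get pushed by $\Ad(g_k)$ out to infinity; this is where unipotent dynamics enters.

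For the actual convergence, I would test against a fixed $\rho \in C_c(X)$ and an $f \in C_c(X)$, i.e. compute $\int f \rho\, d(g_k)_*\mu / \int \rho\, d(g_k)_*\mu$ and show it converges to $\int f \rho\, d\mu_{Lx} / \int \rho\, d\mu_{Lx}$. Expanding via the disintegration above, both numerator and denominator are integrals over $A_2$ of quantities of the form $\int_{A_1\text{-orbit}} (f\rho \text{ or } \rho) (g_k a_2 a_1 x)\, d a_1$. Because $\Ad(g_k)$ sends the $A_2$-direction to infinity, the relevant unipotent subgroups (the root groups $U_\alpha$ with $\alpha$ not vanishing on $\mathcal A$) get ``activated'': the orbit $g_k a_2 A_1 x$, as $a_2$ ranges, sweeps out a family of unipotent translates, and one applies Ratner's theorem / the linearization technique (Dani--Margulis, Mozes--Shah, or the quantitative nondivergence of Eskin--Mozes--Shah) to conclude that the normalized contribution equidistributes toward $\mu_{Lx}$, and crucially that there is \emph{no escape of mass} at the relevant scale — the normalization by $\int \rho\, d(g_k)_*\mu$ is exactly what absorbs the infinite-volume blow-up. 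The passage from ``finite-volume equidistribution in $L x / (\text{something})$'' to the statement in $X$ is where I expect the main difficulty: one must check that the unipotent orbits produced by translating the divergent $A$-orbit are genuinely expanding (not trapped in an intermediate subgroup), which is guaranteed precisely by the hypothesis that each nonzero $u_{ij}(k) \to \infty$ rather than staying bounded, together with the divergence of $Ax$ ensuring the $A$-orbit is ``spread out'' in every diagonal direction. A secondary technical point, requiring care, is the uniformity in $a_2$ over the noncompact group $A_2$: one needs a dominated-convergence-type argument, controlling the tails of the $a_2$-integral uniformly in $k$, which again reduces to nondivergence estimates for the relevant unipotent flows applied to the divergent-orbit geometry.
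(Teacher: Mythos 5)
Your high-level skeleton matches the paper's strategy (reduce to $x=x_e$ via Tomanov--Weiss, identify $\mathcal A(A,\{g_k\})$ with the Lie algebra of the centre of $C_G(\mathcal A)^0$, obtain a limit invariant under the limiting unipotent group $\lim_k\Ad(g_k)A$, and finish with Ratner plus linearization), but the two points you dismiss as ``secondary technical points'' are in fact the substance of the proof, and your sketch contains no argument for them. First, the no-escape-of-mass statement and the choice of normalization are not consequences of the nondivergence results you cite: Dani--Margulis, Mozes--Shah and Eskin--Mozes--Shah control polynomial/unipotent trajectories over fixed balls, whereas here one must average a \emph{diagonal} (exponential) orbit over a $k$-dependent region of $\Lie(A)$. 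The paper has to construct the normalizing scalars explicitly as volumes of the convex polytopes $\Omega_{g_k,\delta}$ (Definition \ref{def31}), prove via the graph-theoretic analysis (Proposition \ref{p41}, Lemma \ref{l41}) that when $\mathcal A=\{0\}$ these polytopes contain balls of radius tending to infinity, deduce $\Area(\partial\Omega_{g_k,\delta})/\Vol(\Omega_{g_k,\delta})\to0$ (Proposition \ref{p53}), and only then run a Kleinbock--Margulis-type nondivergence argument for the class $E(d,\lambda)$ of exponential sums along one-dimensional slices of the polytope (Theorem \ref{th61}, Propositions \ref{p63}--\ref{p65}). The same polytope geometry is needed again in the linearization step (the shrunken polytopes $R'_{g_k,\delta}$ and Propositions \ref{p83}--\ref{p86}); invoking the standard linearization literature as a black box leaves the actual estimate -- that the proportion of the polytope whose image lies near a tube around a proper $T_H(W)$ is small -- unproved. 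Your ``dominated convergence over $A_2$'' remark is exactly this missing uniform estimate.

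Second, your treatment of the case $\mathcal A(A,\{g_k\})\neq\{0\}$ does not work as stated: the limit class $[\mu_{C_G(\mathcal A)^0x}]$ is an \emph{infinite} locally finite measure (the central factor $S$ contributes infinite volume), so you cannot classify the limit ``toward $\mu_{Lx}$'' by applying Ratner's measure classification to it, and your disintegration $\int_{A_2}(g_ka_2)_*\mu_{A_1x}\,da_2$ does not by itself reduce the problem to a finite-measure statement. The paper instead exploits that, under the dichotomy hypothesis on the entries, $g_k$ commutes \emph{exactly} with $\exp(\mathcal A)$ (Corollary \ref{c42}), so that $C_G(S)^0/(\Gamma\cap C_G(S)^0)$ splits as $S\times\prod\SL(n_i,\mathbb R)/\SL(n_i,\mathbb Z)$, the measure and the translating sequence split compatibly, $(g_k)_*\mu_{Ax_e}=\mu_S\times\prod(g_{i,k})_*\mu_{A_ix_i}$, and induction on $n$ reduces each factor to the case $\mathcal A=\{0\}$ where the Haar-measure conclusion applies. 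Some such exact product (or quotient) reduction of the central direction is needed in your argument and is absent; ``centralizes modulo bounded error'' is not enough to make the superposition converge to the asserted infinite-volume periodic class.
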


We will also deduce the following theorem from Theorem \ref{th11} and Theorem \ref{nth11}, which answers Question \ref{q} for translates of an orbit of a connected reductive group $H$ containing $A$. We will see by Lemma \ref{al101} that for such a reductive group $H$, and for $x\in X$ with $Ax$ divergent, $Hx$ is a closed orbit.

\begin{theorem}\label{th12}
Let $Ax$ be a divergent orbit in $X$ and let $H$ be a connected reductive group containing $A$. Then for any $\{g_k\}_{k\in\mathbb N}$ in $G$, the sequence $[(g_k)_*\mu_{Hx}]$ has a subsequence converging to an equivalence class of a periodic measure on $X$. 

Furthermore, let $S$ be the connected component of the center of $H$, and assume that for any $Y\in\Lie(S)\setminus\mathcal A(S,\{g_k\}_{k\in\mathbb N})$ the sequence $\{\Ad(g_k)Y\}_{k\in\mathbb N}$ diverges. Then we have the following description of the limit points of $[(g_k)_*\mu_{Hx}]$. The subgroup $\exp(\mathcal A(S,\{g_k\}_{k\in\mathbb N}))$ is the connected component of the center of the reductive group $C_G(\mathcal A(S,\{g_k\}_{k\in\mathbb N}))$, and any limit point of the sequence $[(g_k)_*\mu_{Hx}]$ is a translate of the equivalence class $[\mu_{C_G(\mathcal A(S,\{g_k\}_{k\in\mathbb N}))^0x}]$. In particular, if $\mathcal A(S,\{g_k\}_{k\in\mathbb N})=\{0\}$, then $[(g_k)_*\mu_{Hx}]$ converges to the equivalence class of the Haar measure $m_X$ on $X$.
\end{theorem}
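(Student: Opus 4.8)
The plan is to reduce Theorem \ref{th12} to the already-established Theorems \ref{th11} and \ref{nth11} by exploiting the structure theory of a connected reductive group $H$ with $A \subset H$. First I would record the structural facts that make the reduction possible. Since $H$ is reductive and contains the maximal $\bR$-split torus $A$ of $G$, the centralizer $C_G(A)=A$ (as $A$ is its own centralizer in $G=\SL(n,\bR)$), so $S \subseteq C_H(A) \subseteq C_G(A) = A$; thus the connected component $S$ of the center of $H$ is a subtorus of $A$. Moreover $H = S \cdot H'$ where $H' = [H,H]$ is semisimple, and one has $C_G(S)^0 \supseteq H$; more precisely, because $S$ is contained in $A$, $C_G(S)^0$ is itself a connected reductive group, and $H$ sits between $S$ and $C_G(S)^0$ with $\Lie(C_G(S)^0)$ decomposing under the $\Ad(S)$-action into the zero-weight space only. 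The key point to extract is that the measure $\mu_{Hx}$ and the measure $\mu_{C_G(S)^0 x}$ are ``homothetic up to a finite-volume semisimple factor''—or rather, that the limiting behavior of $(g_k)_* \mu_{Hx}$ is governed by the same data $\mathcal A(S,\{g_k\})$ as that of $(g_k)_*\mu_{Ax}$, since the non-central directions in $H$ (coming from $H'$) contribute a compact piece at infinity that gets absorbed.

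Next I would carry out the reduction in three steps. Step one: since $Ax$ is divergent and $A \subset H$, Lemma \ref{al101} gives that $Hx$ is closed, hence periodic; I need to verify $\on{Stab}_H(x)$ is a lattice in $H$, which follows because the $A$-part diverges (carrying all the infinite volume) while, by a Borel density / arithmeticity argument on the semisimple factor $H'$, $H' \cap g\Gamma g^{-1}$ is a lattice in $H'$ for the relevant $g$. Step two: decompose $\mu_{Hx}$ using the product-like structure $H \approx S \times (H'/\text{finite})$ locally, so that as a measure on $X$, up to homothety, $\mu_{Hx}$ restricted to a neighborhood is comparable to integrating $\mu_{Ax}$-type slices against the finite Haar measure on (a quotient of) $H'$. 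Step three: apply Theorem \ref{th11} to the divergent orbit $Ax$ — or more precisely, reduce to the case where $\{g_k\} \subset N$ after the $KAK$-type maneuvering used to derive Theorem \ref{th11} from Theorem \ref{nth11} — to conclude that $(g_k)_* \mu_{Ax}$ limits onto $[\mu_{C_G(\mathcal A(A,\{g_k\}))^0 x}]$ or its translate, then observe that the extra integration over the compact-at-infinity $H'$-directions, combined with $\mathcal A(S,\{g_k\}) \subseteq \mathcal A(A,\{g_k\})$ and the hypothesis that off-$\mathcal A(S,\{g_k\})$ directions in $\Lie(S)$ diverge, pins the limit down to $[\mu_{C_G(\mathcal A(S,\{g_k\}))^0 x}]$. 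For the ``subsequence'' assertion one passes to a subsequence as in Remark \ref{rem:1054} applied to $S$ in place of $A$, plus a diagonal argument, and for the ``in particular'' clause one uses that $\mathcal A(S,\{g_k\}) = \{0\}$ forces $C_G(\{0\})^0 = G$, giving $m_X$.

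The structural claim that $\exp(\mathcal A(S,\{g_k\}))$ is the connected component of the center of the reductive group $C_G(\mathcal A(S,\{g_k\}))$ is proved exactly as in Theorem \ref{th11}: $\mathcal A(S,\{g_k\})$ is a subalgebra of $\Lie(S) \subseteq \Lie(A)$ consisting of simultaneously $\bR$-diagonalizable elements, so $\mathcal B := \mathcal A(S,\{g_k\})$ spans a subtorus $T = \exp(\mathcal B)$ of $A$, and $C_G(T)$ is the Levi factor of a parabolic (it is reductive with connected center exactly $T^0$) — this is standard for centralizers of tori in $\SL(n,\bR)$ and requires only checking that the weight-zero space for $\Ad(T)$ has center precisely $\Lie(T)$, which holds since $T \subseteq A$ and $A/T$ acts with nonzero weights on the complement.

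The main obstacle I anticipate is Step two: making precise the sense in which $\mu_{Hx}$ ``factors'' as a central ($S$-)part carrying the divergence tensored against a finite-volume semisimple part, and controlling the interaction between the $\Ad(g_k)$-action on the semisimple directions $\Lie(H')$ (which need not be bounded — the $g_k$ can distort $H'$ badly) and the convergence in $\tau_P$. The resolution should be that the non-central directions are exactly the ``stable/unstable'' directions relative to $S$, and a Margulis-thick-part / non-divergence argument (Dani–Margulis type) shows that although $\Ad(g_k)$ may blow up on $\Lie(H')$, the corresponding mass stays in the part of $X$ that, after rescaling by the factor dictated by the $S$-divergence, converges to the full orbit $C_G(\mathcal A(S,\{g_k\}))^0 x$ rather than escaping or collapsing — i.e., the semisimple factor only ever contributes a uniformly-distributed-on-its-own-orbit piece, never changing the homothety class of the limit. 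Verifying that this piece is ``transparent'' for the projective/homothety topology $\tau_P$, and does not for instance cause full escape of mass that Theorems \ref{th11}–\ref{nth11} would not already predict, is where the real work lies.
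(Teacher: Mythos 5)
Your outer skeleton matches the paper (reduce to $g_k$ in the unipotent radical $U$ of the parabolic with Levi $H$ via $G=KUH$ and Gauss elimination; when $\mathcal A(S,\{g_k\})\neq\{0\}$ pass to the commutant $S'\subseteq S$ and induct on block size), but the heart of the theorem --- transferring the results for $\mu_{Ax}$ to $\mu_{Hx}$ --- is exactly the step you leave as an acknowledged sketch, and the route you propose has a genuine gap. Disintegrating $\mu_{Hx}$ into $\mu_{Ax}$-type slices and applying Theorem \ref{th11} slice by slice does not work as stated: convergence in $\tau_P$ is a statement about homothety classes, and the normalizing constants (the volumes $\Vol(\Omega_{g_kh,\delta})$) depend on the translating element $h$, which ranges over a \emph{noncompact} family (essentially $H_{ss}$ modulo its lattice). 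To integrate the slicewise limits you would need uniformity in $h$ of both the convergence and the scalings, and your appeal to a Dani--Margulis-type non-divergence argument ("the semisimple piece is transparent") does not supply either; homothety classes cannot simply be integrated. The paper circumvents this with a different device: by Theorem \ref{nth11} applied \emph{inside} $H$ one chooses unipotent $h_n\in H$ with $[(h_n)_*\mu_{Ax_e}]\to[\mu_{Hx_e}]$, $\exp(\mathcal A(A,\{h_n\}))=S$ and $C_G(\mathcal A(A,\{h_n\}))^0=H$; Proposition \ref{p91} shows that when $\mathcal A(S,\{g_k\})=\{0\}$ every nonzero $Y\in\Lie(A)$ satisfies $\Ad(g_mh_n)Y\to\infty$ along all subsequences (the key point being that $H$ normalizes $U$, so the $\Lie(U)$- and $\Lie(H)$-components of $\Ad(g_mh_n)Y$ cannot cancel); and Proposition \ref{p92} upgrades Theorem \ref{th11} for the doubly-indexed family $g_mh_n$ to a ratio estimate uniform in $(m,n)$, which permits exchanging the limits: fix $m$, let $n\to\infty$ to replace $(g_mh_n)_*\mu_{Ax_e}$ by $(g_m)_*\mu_{Hx_e}$, then let $m\to\infty$. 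Nothing in your proposal plays the role of Proposition \ref{p91} or of this uniform double-limit exchange.

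Two secondary inaccuracies: $\on{Stab}_H(x)$ is not a lattice in $H$ (the orbit $Hx$ has infinite volume; only the semisimple factor carries a lattice), though periodicity in the paper's sense still holds; and your justification that $\exp(\mathcal A(S,\{g_k\}))$ is the connected center of $C_G(\mathcal A(S,\{g_k\}))$ is incorrect for an arbitrary subtorus of $A$ --- for instance the span of $\diag(1,-1,0)$ in $\mathfrak{sl}_3$ has centralizer $A$ itself, whose connected center is all of $A$. The claim holds here only because, after reducing to unipotent $g_k$ with the dichotomy property and passing to the subsequence of the hypothesis, Corollary \ref{c42} identifies $\mathcal A(S,\{g_k\})$ with the Lie algebra of the full commutant $\{a\in S: ag_k=g_ka\ \forall k\}$, which is saturated for the block partition it induces; this saturation, not the mere inclusion $S\subseteq A$, is what makes the centralizer's connected center equal to $\exp(\mathcal A(S,\{g_k\}))$.
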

\begin{remark}
The proof of Theorem \ref{th11} also gives a criterion on the convergence of $[(g_k)_*\mu_{Ax}]$. Similar criterion on the convergence of $[(g_k)_*\mu_{Hx}]$ for a connected reductive group $H$ containing $A$ could be obtained from the proof of Theorem \ref{th12}.
\end{remark}

We give several examples to illustrate Theorem \ref{th11}, Theorem \ref{nth11} and Theorem \ref{th12}. 
\begin{enumerate}
\item[(1)] Let $G=\SL(3,\mathbb R)$ and $\Gamma=\SL(3,\mathbb Z)$. Pick the initial point $x=\mathbb Z^n\in X$ and the sequence $g_k=\smallmat{1&k&k^2/2\\0&1&k\\0&0&1}$. In this case one can show that the subalgebra $\mathcal A(A,\{g_k\}_{k\in\mathbb N})=\{0\}$, and $\Ad(g_k)Y$ diverges for any nonzero $Y\in\Lie(A)$. We also have $C_G(\mathcal A(A,\{g_k\}_{k\in\mathbb N}))=\SL(3,\mathbb R)$. Theorem \ref{th11} then says that $[(g_k)_*\mu_{Ax}]$ converges to $[\mu_{\SL(3,\mathbb R)x}]=[m_X]$.

\item[(2)] Fix $G,\Gamma,x$ and $g_k$ as in example (1). Let $H$ be the connected component of the reductive subgroup $$\left(\begin{array}{ccc}* & * & 0 \\ * & * & 0 \\0 & 0 & *\end{array}\right)\cap\SL(3,\mathbb R).$$ Then the center $S$ of $H$ is equal to $\left\{\diag(a,a,a^{-2}):a\neq0\right\}$, and one could check that the subalgebra $\mathcal A(S,\{g_k\}_{k\in\mathbb N})=\{0\}$, and $\Ad(g_k)Y$ diverges for any nonzero $Y\in\Lie(S)$. Also $C_G(\mathcal A(S,\{g_k\}_{k\in\mathbb N}))=\SL(3,\mathbb R)$. Then Theorem \ref{th12} implies that the sequence $[(g_k)_*\mu_{Hx}]$ converges to $[\mu_{\SL(3,\mathbb R)x}]=[m_X]$.
\item[(3)] Let $G=\SL(4,\mathbb R)$ and $\Gamma=\SL(4,\mathbb Z)$. Pick the initial point $x=\mathbb Z^n\in X$ and the sequence $g_k=\smallmat{1&k&0&0\\0&1&0&0\\0&0&1&k\\0&0&0&1}$. In this case one can show that $\mathcal A(A,\{g_k\}_{k\in\mathbb N})=\left\{\diag(t,t,-t,-t):t\in\mathbb R\right\}$ and $$C_G(\mathcal A(A,\{g_k\}_{k\in\mathbb N}))=\left(\begin{array}{cccc}* & * & 0 & 0 \\ * & * & 0 & 0 \\0 & 0 & * & * \\0 & 0 & * & *\end{array}\right)\cap\SL(4,\mathbb R).$$ Theorem \ref{nth11} then says that the sequence $[(g_k)_*\mu_{Ax}]$ converges to $[\mu_{C_G(\mathcal A(A,\{g_k\}_{k\in\mathbb N}))^0x}].$
\item[(4)] Fix $G,\Gamma$ and $x$ as in example (3), and pick the sequence $g_k=\smallmat{1&k&k^2/2&0\\0&1&k&0\\0&0&1&0\\0&0&0&1}$. Let $H$ be the connected component of the reductive subgroup $$\left(\begin{array}{cccc}* & * & 0 & 0\\ * & * & 0 & 0\\0 & 0 & * & 0\\0&0&0&*\end{array}\right)\cap\SL(4,\mathbb R).$$ Then the center $S$ of $H$ is equal to $\left\{\diag(a,a,b,c):a^2bc=1\right\}$, and one could check that $\mathcal A(S,\{g_k\}_{k\in\mathbb N})=\left\{\diag(s,s,s,-3s):s\in\mathbb R\right\}$ and $$C_G(\mathcal A(S,\{g_k\}_{k\in\mathbb N}))=\left(\begin{array}{cccc}* & * & * & 0\\ * & * & * & 0\\ * & * & * & 0\\0&0&0&*\end{array}\right)\cap\SL(4,\mathbb R).$$ In this case, Theorem \ref{th12} tells that any limit point of the sequence $[(g_k)_*\mu_{Hx}]$ is a translate $[\mu_{C_G(\mathcal A(S,\{g_k\}_{k\in\mathbb N}))^0x}]$, and the proof of Theorem \ref{th12} would imply that $[(g_k)_*\mu_{Hx}]$ actually converges to $[\mu_{C_G(\mathcal A(S,\{g_k\}_{k\in\mathbb N}))^0x}]$.
\end{enumerate}

By comparing examples (1) and (3) (resp. (2) and (4)), one can see that the subalgebra $\mathcal A(A,\{g_k\}_{k\in\mathbb N})$ (resp. $\mathcal A(S,\{g_k\}_{k\in\mathbb N})$) plays an important role in indicating what kinds of limit points the sequence $[(g_k)_*\mu_{Ax}]$ (resp. $[(g_k)_*\mu_{Hx}]$) could converge to. In example (1), we have $\mathcal A(A,\{g_k\}_{k\in\mathbb N})=\{0\}$. By pushing $Ax$ with $g_k$, the orbit $g_kAx$ starts snaking in the space $\SL(3,\mathbb R)/\SL(3,\mathbb Z)$, and eventually fills up the entire space. In example (3), $\mathcal A(A,\{g_k\}_{k\in\mathbb N})$ is a 1-dimensional subalgebra in $\Lie(A)$ which commutes with $g_k$, and it corresponds to the part of the orbit $Ax$ which stays still and is not affected when we push $\mu_{Ax}$ by $g_k$. This would result in the limit orbit having this part as the `central direction', and the `orthogonal complement' part in $Ax$ would be pushed by $g_k$ and fill up the sub-homogeneous space $\smallmat{\SL(2,\mathbb R) & 0 \\0 & \SL(2,\mathbb R)}x$ in $\SL(4,\mathbb R)/\SL(4,\mathbb Z)$.

By the characterization of convergence given in Proposition \ref{p25}, Theorem \ref{th11} and Theorem \ref{th12} can be restated in the form of the following
\begin{theorem}\label{th13}
Let $Ax$ be a divergent orbit and $\{g_k\}_{k\in\mathbb N}$ be a sequence in $G$ such that $[(g_k)_*\mu_{Ax}]$ converges to an equivalence class of a locally finite periodic measure $[\nu]$ as in Theorem \ref{th11}. Then there exists a sequence $\lambda_k>0$ such that $$\lambda_k(g_k)_*\mu_{Ax}\to\nu$$ with respect to the topology $\tau_X$. In particular, for any $F_1,F_2\in C_c(X)$ we have $$\frac{\int F_2d(g_k)_*\mu_{Ax}}{\int F_1d(g_k)_*\mu_{Ax}}\to\frac{\int F_2d\nu}{\int F_1d\nu}$$ whenever $\int F_1d\nu\neq0$. The same results hold if $A$ is replaced by any connected reductive group $H$ containing $A$.
\end{theorem}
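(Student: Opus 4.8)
The plan is to read off Theorem \ref{th13} from the characterization of $\tau_P$-convergence recorded in Proposition \ref{p25}, together with the defining property of the topology $\tau_X$. By this point all of the substantive work lies in Theorems \ref{th11} and \ref{th12} and in Proposition \ref{p25}, so what remains is essentially an unwinding of definitions.

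First I would apply Proposition \ref{p25} to the sequence $[(g_k)_*\mu_{Ax}]$, which by hypothesis converges in $\tau_P$ to the periodic class $[\nu]$ furnished by Theorem \ref{th11} (possibly along the subsequence appearing there). The characterization of convergence then yields a representative of that homothety class and a sequence of positive reals $\lambda_k$ with $\lambda_k (g_k)_*\mu_{Ax}\to\nu$ in $(\cM(X),\tau_X)$; relabelling this representative as $\nu$ --- which is legitimate, since the assertion only requires that \emph{some} representative of $[\nu]$ work --- gives the first displayed convergence.

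Next I would deduce the statement about ratios. Fix $F_1,F_2\in C_c(X)$ with $\int F_1\,d\nu\neq 0$. Since $\tau_X$ is, by construction, the coarsest topology for which $\mu\mapsto\int f\,d\mu$ is continuous for every $f\in C_c(X)$, the convergence $\lambda_k(g_k)_*\mu_{Ax}\to\nu$ gives $\lambda_k\int F_i\,d(g_k)_*\mu_{Ax}\to\int F_i\,d\nu$ for $i=1,2$ (every integral here is a finite real number, since $(g_k)_*\mu_{Ax}$ is locally finite and the $F_i$ have compact support). As $\lambda_k>0$ and $\lambda_k\int F_1\,d(g_k)_*\mu_{Ax}\to\int F_1\,d\nu\neq 0$, the denominator $\int F_1\,d(g_k)_*\mu_{Ax}$ is nonzero for all large $k$, so the ratio is eventually well defined; dividing the two convergent sequences --- the factors $\lambda_k$ cancel --- gives the claimed limit.

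Finally, for a connected reductive $H$ containing $A$ the argument is word for word the same, using Theorem \ref{th12} in place of Theorem \ref{th11} to supply the $\tau_P$-limit and then Proposition \ref{p25} and the same cancellation. The one place where anything beyond bookkeeping happens --- and it is the content of Proposition \ref{p25} rather than of this deduction --- is the assertion that convergence in the quotient topology $\tau_P$ can be realized by a concrete choice of normalizing constants $\lambda_k$ making the rescaled measures converge in $\tau_X$; this is precisely the obstruction one expects with quotient topologies, and overcoming it is where one uses the second countability of $X$ (equivalently, the metrizability of $\cM(X)$ in the vague topology), so I would simply cite Proposition \ref{p25} for it.
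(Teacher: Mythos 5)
Your proposal is correct and follows essentially the same route as the paper: the authors obtain Theorem \ref{th13} precisely by combining Theorems \ref{th11} and \ref{th12} with the characterization of $\tau_P$-convergence in Proposition \ref{p25} (whose second part is exactly your ratio statement), noting after that proposition that Theorem \ref{th13} is equivalent to Theorems \ref{th11} and \ref{th12}. Your explicit cancellation of the $\lambda_k$'s is just a re-derivation of part (2) of Proposition \ref{p25}, so nothing is missing.
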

\begin{remark}
From the proofs of Theorem \ref{th11} and Theorem \ref{nth11}, we will see that in the case $\mathcal A(A,\{g_k\}_{k\in\mathbb N})=\{0\}$, the numbers $\lambda_k$'s in Theorem \ref{th13} are related to the volumes of convex polytopes of a special type in $\Lie(A)$ (see Definition \ref{def31} and Corollary \ref{c91}). We remark here that in view of Theorem \ref{th13}, the $\lambda_k$'s in this case can also be calculated by a function $F_1\in C_c(X)$ with its support being a large compact subset. This makes Theorem \ref{th13} practical in other problems.
\end{remark}
\subsection{Applications}
As an application of our results, we give one example of a counting problem. More details about this counting problem could be found in \cite{DRS93}, \cite{EM93}, \cite{EMS96} and \cite{Sha00}. 

Let $M(n,\mathbb R)$ be the space of $n\times n$ matrices with the norm $$\|M\|^2=\operatorname{Tr}(M^tM)=\sum_{1\leq i,j\leq n} x_{ij}^2$$ for $M=(x_{ij})_{1\leq i,j\leq n}\in M(n,\mathbb R)$. Denote by $B_T$ the ball of radius $T$ centered at 0 in $M(n,\mathbb R)$. Fix a monic polynomial $p_0(\lambda)$ in $\mathbb Z[\lambda]$ which splits completely over $\mathbb Q$. By Gauss lemma, the roots $\alpha_i$ of $p(\lambda)$ are integers. We assume that the $\alpha_i$'s are distinct and nonzero. Let $$M_\alpha=\diag(\alpha_1,\alpha_2,\dots,\alpha_n)\in M(n,\mathbb Z).$$ For $M\in M(n,\mathbb R)$, denote by $p_M(\lambda)$ the characteristic polynomial of $M$. 
We define $$V(\mathbb R):=\{M\in M(n,\mathbb R): p_M(\lambda)=p_0(\lambda)\}$$ the variety of matrices $M$ with characteristic polynomial $p_M(\lambda)$ equal to $p_0(\lambda)$, and $$V(\mathbb Z):=\{M\in M(n,\mathbb Z): p_M(\lambda)=p_0(\lambda)\}$$ the integer points in the variety $V(\mathbb R)$.

The metric $\|\cdot\|_{\mathfrak g}$ on $\mathfrak g=\mathfrak{sl}_n(\mathbb R)$ induces Haar measures on $A$ and $N$. The $K$-invariant probability measure on $K$ and the Haar measures on $A$, $N$ then give a Haar measure on $G$ via Iwasawa Decomposition $G=KNA$. We will denote by $c_X$ the volume of $X=G/\Gamma$ with respect to the Haar measure on $G$.

 There is a natural volume form on the variety $V(\mathbb R)$ inherited from $G=\SL(n,\mathbb R)$. Specifically, the orbit map $$G\to V(\mathbb R)$$ defined by $g\mapsto\Ad(g)M_\alpha$ gives an isomorphism between the quotient space $G/C_G(A)$ and the variety $V(\mathbb R)$, and the volume form is defined to be the $G$-invariant measure on $G/C_G(A)$. The existence of such a measure is well-known, and the proof of it could be found, for example, in \cite{Rag}. With this volume form, one can compute (see Proposition \ref{ap101}) that for any $T$, the volume of $V(\mathbb R)\cap B_T$ equals $cT^{n(n-1)/2}$ for some constant $c>0$. The following theorem concerns the asymptotic formula for the number of integer points in $V(\mathbb Z)\cap B_T$. We will see that the set $V(\mathbb Z)\cap B_T$ behaves differently from $V(\mathbb R)\cap B_T$, with an extra log term.

By a well-known theorem of Borel and Harish-Chandra \cite{BHC62}, the subset $V(\mathbb Z)$ is a finite  disjoint union of $\Ad(\Gamma)$-orbits. One can write this disjoint union as $$V(\mathbb Z)=\bigcup_{i=1}^{h_0}\Ad(\Gamma)M_i$$ for some $h_0\in\mathbb N$ and $M_i\in V(\mathbb Z)$ ($1\leq i\leq h_0$). Note that for each $M_i$, the stabilizer $\Gamma_{M_i}$ of $M_i$ is finite. Also the number of the orbits $h_0$ is equal to the number of equivalence classes of nonsingular ideals in the subring in $M(n,\mathbb R)$ generated by $M_\alpha$, for which readers may refer to \cite{BHC62} and \cite{LM33}. In the following theorem, to ease the notation, we write $\mb{t}$ for a vector $(t_1,t_2,\dots,t_n)\in\mathbb R^n$.

\begin{theorem}\label{th14}
We have $$|V(\mathbb Z)\cap B_T|\sim\left(\sum_{i=1}^{h_0}\frac1{|\Gamma_{M_i}|}\right) \frac{c_0\Vol(B_1)}{c_X\prod_{j>i}|\alpha_j-\alpha_i|}T^{n(n-1)/2}(\ln T)^{n-1}$$ where $\Vol(B_1)$ is the volume of the unit ball in $\mathbb R^{n(n-1)/2}$ and $c_0$ is the volume of the $(n-1)$-convex polytope 
$$\left\{\mb{t}\in\mathbb R^n:\sum_{i=1}^n t_i=0, \sum_{j=1}^l t_{i_j}\geq\sum_{j=1}^l(j-i_j),\forall 1\leq l\leq n,\forall 1\leq i_1<\cdots<i_l\leq n\right\}$$ with respect to the natural measure induced by the Lebesgue measure on $\mathbb R^n$.
\end{theorem}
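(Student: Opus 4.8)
The plan is to deduce Theorem~\ref{th14} from Theorem~\ref{th13} (equivalently Theorem~\ref{nth11}) via the standard ``unfolding'' reduction of a counting problem to equidistribution, as in~\cite{DRS93,EM93,EMS96}, but being careful about the infinite-volume phenomenon. First I would fix one of the $\Ga$-orbits $\Ad(\Ga)M_i$ inside $V(\bZ)$ and relate counting points in it to a lattice-point count on the homogeneous space $G/C_G(A)$: writing $x_i\in X$ for the point whose stabilizer corresponds to $\Ga_{M_i}$ and $H=C_G(A)^0=A$ (up to finite index), the set $\Ad(\Ga)M_i\cap B_T$ is in bijection (up to the finite factor $|\Ga_{M_i}|$) with the set of $gC_G(A)\in G/C_G(A)$ with $\Ad(g)M_\al\in B_T$ whose image in $X$ lies in a fixed $\Ga$-coset, and this is counted by integrating the indicator of $\{g: \|\Ad(g)M_\al\|\le T\}$ against the periodic measure on the $A$-orbit $Ax_i$ in $X$, after pushing by a suitable $K$-average. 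Concretely, using $G=KNA$ and the fact that $\|\Ad(kna)M_\al\|$ depends on $na$ only, the count becomes $\int_X \chi_{B_T}\, d((g)_*\mu_{Ax_i})$-type expression where the translating elements $g=g(T)$ run through $N$ and, after rescaling the ball to radius $1$, $g(T)\to\infty$ in each coordinate exactly as in the hypothesis of Theorem~\ref{nth11}.

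Next I would identify the correct normalizing sequence $\lam_k$ and the limit measure. The $A$-orbit is divergent, so $Ax_i$ is closed of infinite volume; choosing the $g_k$ to push $Ax_i$ toward filling up $X$ forces $\cA(A,\{g_k\})=\{0\}$ and, by Theorem~\ref{nth11}, $[(g_k)_*\mu_{Ax_i}]\to[m_X]$. Theorem~\ref{th13} then supplies a scaling sequence $\lam_k>0$ with $\lam_k (g_k)_*\mu_{Ax_i}\to c\, m_X$ in $\tau_X$; by the remark following Theorem~\ref{th13}, in the case $\cA(A,\{g_k\})=\{0\}$ the $\lam_k$ are governed by volumes of the convex polytopes of Definition~\ref{def31}, which is precisely the source of the constant $c_0$ and the polytope appearing in the statement, and of the $(\ln T)^{n-1}$ growth (the $A$-orbit grows like $(\ln T)^{n-1}$ inside a ball of radius $T$ because $A$ is $(n-1)$-dimensional and the relevant matrix entries of $\Ad(a)M_\al$ are exponential in the $t_j$'s, so the region $\{\|\Ad(a)M_\al\|\le T\}$ in $\Lie(A)$ is a dilate of a fixed polytope by $\ln T$). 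Matching the smooth ball-volume computation $\Vol(V(\bR)\cap B_T)=cT^{n(n-1)/2}$ from Proposition~\ref{ap101} against the homogeneous integral, and reinstating the $K$-integration, produces the factor $c_0\Vol(B_1)/(c_X\prod_{j>i}|\al_j-\al_i|)$, with $\prod_{j>i}|\al_j-\al_i|$ entering from the Jacobian of the orbit map $g\mapsto\Ad(g)M_\al$ (a Vandermonde-type determinant in the eigenvalues).

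I would then sum over $i=1,\dots,h_0$, with each orbit contributing its own $1/|\Ga_{M_i}|$, to get the claimed asymptotic. A few technical points need care along the way. First, Theorem~\ref{th13} gives convergence of ratios $\int F_2\,d(g_k)_*\mu/\int F_1\,d(g_k)_*\mu$ for $F_i\in C_c(X)$, whereas the counting function is an indicator of a non-compact, $T$-dependent region that is \emph{not} compactly supported on $X$; I would handle this by the usual thickening/smoothing argument, approximating the indicator from above and below by $C_c$ functions and using a non-divergence/uniform-integrability input to rule out escape of mass contributing to the count, together with the remark after Theorem~\ref{th13} that $\lam_k$ can itself be read off from a single $F_1\in C_c(X)$ with large support. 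Second, one must pass from the ``per-coset'' count in $X=G/\Ga$ back to the genuine lattice count in each $\Ad(\Ga)M_i$; this is the Eskin--McMullen/DRS duality and is routine once the orbit $Hx_i$ is closed (Lemma~\ref{al101}). The main obstacle, and the place I would spend the most effort, is the non-compactness issue just mentioned: because the orbit has infinite volume and the measures genuinely escape to infinity before rescaling, one must show that the part of the count coming from the cusp does not overwhelm the main term, i.e. prove an effective estimate on $\mu_{Ax_i}(\{a: \|\Ad(g_k a)M_\al\|\le 1,\ ax_i \text{ high in the cusp}\})$ relative to $\lam_k^{-1}$; this is exactly the quantitative content underlying the polytope-volume computation and is where the geometry of numbers for the divergent $A$-orbit (reduction theory, Mahler's criterion) has to be used carefully rather than cited.
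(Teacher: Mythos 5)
Your overall route is the paper's: unfold the counting function $F_T(g)=\sum_{\gamma\in\Gamma/\Gamma_{M_\alpha}}\chi_T(\Ad(g\gamma)M_\alpha)$ against test functions $\psi\in C_c(X)$, use the Iwasawa decomposition $G=KNA$ to reduce to translates by $kh$ with $h\in N$, invoke the equidistribution with polytope normalization (Corollary \ref{c91}), identify $c_0(\ln T)^{n-1}$ as the asymptotic volume of $\Omega_{kh,\delta}$ (Proposition \ref{p107}) and $\Vol(B_1)T^{n(n-1)/2}/\prod_{j>i}|\alpha_j-\alpha_i|$ as $\mu_N(N(T))$ via the Jacobian (Lemmas \ref{p102} and \ref{p106}), and finally sum over the finitely many $\Gamma$-orbits given by Proposition \ref{p101}. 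So the skeleton is the same as the paper's and you attribute each constant to the correct source.

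Two points in your plan are off as stated, however. First, the duality is written backwards: you propose to count by integrating the indicator of $\{g:\|\Ad(g)M_\alpha\|\le T\}$ against the periodic measure on the $A$-orbit, but since $A$ centralizes $M_\alpha$ and the norm is $\Ad(K)$-invariant, $\|\Ad(kha)M_\alpha\|=\|\Ad(kh)M_\alpha\|$ is independent of $a$; that pairing is therefore $0$ or $+\infty$ and carries no information. The correct move (the one made in Proposition \ref{p108}) is to pair $F_T$ with a compactly supported $\psi$ on $X$ and unfold, whereupon the inner integral $\int_A\psi(kha)\,d\mu_A$ is automatically finite because the orbit is divergent, and the ball constraint lives entirely on the $N$-variable. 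Second, and for the same reason, your declared ``main obstacle'' --- an effective bound on $\mu_{Ax_i}\bigl(\{a:\|\Ad(g_ka)M_\alpha\|\le 1,\ ax_i\ \text{high in the cusp}\}\bigr)$ --- is vacuous as posed: the norm condition does not depend on $a$, so that set has measure $0$ or $+\infty$. The genuine escape-of-mass control you are worried about is already built into Corollary \ref{c91} through the non-divergence results (Propositions \ref{p64} and \ref{p65}) that justify normalizing by $\Vol(\Omega_{g_k,\delta})$; what actually remains after quoting it is (i) the splitting $N(T)=N(\epsilon,T)\cup\bigl(N(T)\setminus N(\epsilon,T)\bigr)$ together with Lemma \ref{p106}, so that the volume asymptotics of Proposition \ref{p107} hold with the uniform constant $c_0$ on the main part and the complement is $O(\epsilon)$, (ii) a uniformity (or subsequence-and-contradiction) argument letting Corollary \ref{c91} be applied to the whole family of translates $kh$, $h\in N(\epsilon,T)$, inside the integral, and (iii) the standard well-roundedness step of \cite{DRS93,EMS96} converting the weak-$*$ statement of Proposition \ref{p108} into the pointwise asymptotics for $F_T$, which you correctly flag as routine.
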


In the sequel, we will mainly focus on Theorem \ref{nth11} as all the other theorems will be corollaries of it. In the course of the proof of Theorem \ref{nth11}, the case $\mathcal A(A,\{g_k\}_{k\in\mathbb N})=\{0\}$ plays an important role, and other cases could be proved by induction. Therefore, most of our arguments in this paper would work for the case $\mathcal A(A,\{g_k\}_{k\in\mathbb N})=\{0\}$. We remark that our proof is inspired by \cite{OS14}, where Hee Oh and Nimish Shah deal with the case $G=\SL(2,\mathbb R)$ by applying exponential mixing and obtain an error estimate. This effective result is improved recently in \cite{KK} by Dubi Kelmer and Alex Kontorovich.

When we showed an earlier draft of the manuscript to Nimish Shah, he pointed out to us that similar results to those appearing in this paper were established by him at the beginning of this century, but were never published.

 The paper is organized as follows:
\begin{itemize}
\item We start our work in section \ref{top} by studying the topology $\tau_P$ on $\bP\cM(X)$ for a locally compact second countable Hausdorff space $X$. In particular, a characterization of convergence in $\bP\cM(X)$ is given, and Theorem \ref{th13} is obtained as a natural corollary, if Theorem \ref{th11} and Theorem \ref{th12} are presumed.
\item In section \ref{con}, a special type of convex polytopes in $\Lie(A)$ is introduced. Such convex polytopes are related to non-divergence of the orbits $g_kAx$. In order to analyze these convex polytopes in the setting of Theorem \ref{nth11}, we define graphs associated to them and prove some auxiliary results concerning the graphs in section \ref{graph}. With the assumption $\mathcal A(A,\{g_k\}_{k\in\mathbb N})=\{0\}$, these auxiliary results imply some properties of the convex polytopes, which are proved in section \ref{recon}.

\item Keeping the assumption $\mathcal A(A,\{g_k\}_{k\in\mathbb N})=\{0\}$ in section \ref{nondiv}, we prove a statement on the non-divergence of the sequence of $[(g_k)_*\mu_{Ax}]$ and show that $[(g_k)_*\mu_{Ax}]$ converges to $[\nu]$ for some probability measure $\nu$ invariant under a unipotent subgroup. Then we translate section \ref{nondiv} in terms of adjoint representation in section \ref{ad-nondiv}. The linearization technique and the measure classification theorem for unipotent actions on homogeneous spaces are discussed in section \ref{line}, which enable us to study the measure rigidity in our setting.
\item We complete the proof of Theorem \ref{nth11} in section \ref{proof}. Then we prove Theorem \ref{th11} and \ref{th12}. The  proof of Theorem \ref{th14} is given in section \ref{app}.
\end{itemize}

\begin{acknowledgments}
We would like to express our gratitude to Nimish Shah for insightful comments and support on this work. We are grateful to Barak Weiss for valuable communications and Roy Meshulam for teaching us Lemma \ref{p32}. We thank Ofir David, Asaf Katz, Rene R\"uhr, Oliver Sargent, Lei Yang, Pengyu Yang and Runlin Zhang for helpful discussions and support. We also thank the anonymous referees for carefully reading our manuscript and their valuable comments. Finally, the authors acknowledge the support of ISF grants number 871/17, 662/15 and 357/13, and the second author is in part supported at the Technion by a Fine Fellowship.
\end{acknowledgments}

\section{Topology on $\bP\cM(X)$}\label{top}
In this section, we study the topology $\tau_P$ on $\bP\cM(X)$ for any locally compact second countable Hausdorff space $X$. We will give a description of the convergence of a sequence $[\mu_k]$ in $\mathbb P\mathcal M(X)$ (Proposition \ref{p25}). This will help us study the convergence of the sequence $[(g_k)_*\mu_{Ax}]$ in Theorems \ref{th11} and \ref{nth11} (resp. $[(g_k)_*\mu_{Hx}]$ in Theorem \ref{th12}).

Before proving Proposition \ref{p25}, we need some preparations.

\begin{proposition}\label{p23}
The topology $(\tau_P, \bP\cM(X))$ is Hausdorff. In particular, any convergent sequence in $\bP\cM(X)$ has a unique limit.
\end{proposition}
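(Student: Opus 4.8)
The plan is to prove that $(\tau_P,\bP\cM(X))$ is Hausdorff by a direct point-separation argument, the key structural observation being that $\pi_P$ is an \emph{open} map. First I would record this: if $V\subset\mathcal M(X)\setminus\{0\}$ is $\tau_X$-open, then $\pi_P^{-1}(\pi_P(V))=\bigcup_{\lambda>0}\lambda V$ is a union of $\tau_X$-open sets (scaling $\rho\mapsto\lambda\rho$ is a $\tau_X$-homeomorphism of $\mathcal M(X)\setminus\{0\}$, since $\int f\,d(\lambda\rho)=\lambda\int f\,d\rho$), hence $\pi_P(V)$ is $\tau_P$-open. A standard consequence I will use: if $W\subset\mathcal M(X)\setminus\{0\}$ is $\tau_X$-open and $\pi_P$-saturated, then $\pi_P|_W\colon W\to\pi_P(W)$ is again an open quotient map onto the $\tau_P$-open set $\pi_P(W)$, so every $\tau_X$-continuous, scale-invariant function on $W$ descends to a $\tau_P$-continuous function on $\pi_P(W)$.

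Next, fix $[\mu]\neq[\nu]$. I would first produce a single test function that "sees" both measures: choosing $x\in\supp\mu$ and $y\in\supp\nu$ and adding two nonnegative bump functions in $C_c(X)$ supported in small relatively compact neighborhoods of $x$ and of $y$ yields $g\in C_c(X)$, $g\geq0$, with $\int g\,d\mu>0$ and $\int g\,d\nu>0$ (both integrals are automatically finite, as $g$ has compact support and $\mu,\nu$ are locally finite). Set $W=\{\rho\in\mathcal M(X)\setminus\{0\}:\int g\,d\rho>0\}$; this is $\tau_X$-open and $\pi_P$-saturated, and $[\mu],[\nu]\in\pi_P(W)$. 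Since $[\mu]\neq[\nu]$, the measures $\mu$ and $\tfrac{\int g\,d\mu}{\int g\,d\nu}\,\nu$ are distinct, so by the uniqueness part of the Riesz representation theorem (locally finite Borel measures on the lcsc space $X$ are Radon, hence determined by integration against $C_c(X)$) there is $f\in C_c(X)$ with $\tfrac{\int f\,d\mu}{\int g\,d\mu}\neq\tfrac{\int f\,d\nu}{\int g\,d\nu}$. The map $\Phi\colon\rho\mapsto \int f\,d\rho\,/\int g\,d\rho$ on $W$ is $\tau_X$-continuous and scale-invariant, so by the first paragraph it descends to a $\tau_P$-continuous map $\bar\Phi\colon\pi_P(W)\to\bR$ with $\bar\Phi([\mu])\neq\bar\Phi([\nu])$. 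Pulling back two disjoint open intervals around these distinct real numbers gives disjoint $\tau_P$-open neighborhoods of $[\mu]$ and $[\nu]$ inside the open set $\pi_P(W)$, proving the Hausdorff property. The final assertion is then immediate: sequential limits are unique in any Hausdorff space.

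The routine parts are constructing the pair $(f,g)$ and invoking Riesz uniqueness. The one genuinely topological point that needs care — and which I expect to be the main obstacle to write cleanly — is the passage in the first paragraph: that openness of $\pi_P$ persists after restricting to a saturated open subset, and that this lets a continuous scale-invariant function on $W$ descend continuously to $\pi_P(W)$ with the subspace topology inherited from $\tau_P$ (so that the pullback neighborhoods obtained from $\bar\Phi$ are genuinely $\tau_P$-open in all of $\bP\cM(X)$). Once that is pinned down, the rest of the argument is short.
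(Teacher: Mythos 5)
Your argument is correct, and it reaches the Hausdorff property by a somewhat different mechanism than the paper. The paper also starts from the (unproved there, proved by you) fact that $\pi_P$ is open, and also separates $[\mu]$ from $[\nu]$ using a pair of test functions $f,g\in C_c(X)$; but it then works entirely with explicit neighborhoods $V(\mu;f,g,\epsilon)$, $V(\nu;f,g,\epsilon)$ in $\cM(X)$ and shows by a small numerical estimate (choosing $\epsilon<\min\{0.1,|\kappa-1|/5\}$ and comparing the possible values of the scaling ratio $\alpha/\beta$) that their images under $\pi_P$ are disjoint. You instead package the same underlying invariant -- the ratio $\rho\mapsto\int f\,d\rho/\int g\,d\rho$ -- as a scale-invariant continuous function on the saturated open set $W=\{\int g\,d\rho>0\}$, check it descends to a $\tau_P$-continuous function on $\pi_P(W)$ (your quotient-topology computation of $\pi_P^{-1}(\bar\Phi^{-1}(U))=\Phi^{-1}(U)$ is exactly what is needed, and in fact already shows these preimages are open in all of $\bP\cM(X)$, so openness of $\pi_P$ is not even essential at that step), and separate the two classes by preimages of disjoint intervals. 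What your route buys is a cleaner, more structural statement -- continuity of ratio functionals on $\bP\cM(X)$ -- which is essentially the content the paper later extracts in Proposition \ref{p25}(2); what it costs is an appeal to Riesz/regularity of locally finite Borel measures on an lcsc space to produce the separating $f$, a fact the paper uses only in the weaker implicit form that inequivalent measures are distinguished by some nonnegative $C_c$ function. Both proofs are complete; yours trades the paper's elementary $\epsilon$-bookkeeping for the descent lemma, which you correctly identify as the only point needing care.
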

\begin{proof}
Let $[\mu]$ and $[\nu]$ be two distinct elements in $\bP\cM(X)$. We choose $f\in C_c(X)$ and representatives $\mu$ and $\nu$ such that $$\int fd\mu=\int fd\nu=1.$$ Since $[\mu]\neq[\nu]$, there exists a nonnegative function $g\in C_c(X)$ such that $$\int gd\mu\neq 1,\quad\int gd\nu=1.$$ We define neighborhoods of $\mu$ and $\nu$ in $\mathcal M(X)$ by $$V(\mu; f,g,\epsilon)=\left\{\lambda:\left|\int gd\lambda-\int gd\mu\right|<\epsilon, \left|\int fd\lambda-\int fd\mu\right|<\epsilon\right\}$$
$$V(\nu; f,g,\epsilon)=\left\{\lambda:\left|\int gd\lambda-\int gd\nu\right|<\epsilon, \left|\int fd\lambda-\int fd\nu\right|<\epsilon\right\}.$$ Since $\pi_P:\mathcal M(X)\setminus\{0\}\to\mathbb P\mathcal M(X)$ is an open map, $\pi_P(V(\mu; f,g,\epsilon))$ and $\pi_P(V(\nu; f,g,\epsilon))$ are open neighborhoods of $[\mu]$ and $[\nu]$ in $\bP\cM(X)$ for any $\epsilon>0$. Let $\kappa=\int gd\mu$. We prove that for any $\epsilon<\min\{0.1,|\kappa-1|/5\}$ $$\pi_P(V(\mu; f,g,\epsilon))\cap\pi_P(V(\nu; f,g,\epsilon))=\emptyset.$$ Suppose, on the contrary, that $[\lambda]\in\pi_P(V(\mu; f,g,\epsilon))\cap\pi_P(V(\nu; f,g,\epsilon))$. Then there exist constants $\alpha, \beta>0$ such that $$\left|\alpha\int gd\lambda-\int gd\mu\right|<\epsilon, \left|\alpha\int fd\lambda-\int fd\mu\right|<\epsilon$$ $$\left|\beta\int gd\lambda-\int gd\nu\right|<\epsilon, \left|\beta\int fd\lambda-\int fd\nu\right|<\epsilon.$$ This implies that $$\frac{\int gd\mu-\epsilon}{\int gd\nu+\epsilon}<\frac\alpha\beta<\frac{\int gd\mu+\epsilon}{\int gd\nu-\epsilon},\quad\frac{\int fd\mu-\epsilon}{\int fd\nu+\epsilon}<\frac\alpha\beta<\frac{\int fd\mu+\epsilon}{\int fd\nu-\epsilon}$$ and $$\frac{\kappa-\epsilon}{1+\epsilon}<\frac\alpha\beta<\frac{\kappa+\epsilon}{1-\epsilon},\quad\frac{1-\epsilon}{1+\epsilon}<\frac\alpha\beta<\frac{1+\epsilon}{1-\epsilon}.$$ This is a contradiction for $\epsilon<\min\{0.1,|\kappa-1|/5\}$.
\end{proof}

\begin{proposition}\label{p24}
A sequence $[\mu_k]$ in $\bP\cM(X)$ converges to $[\nu]$ if and only if for each $k\in\mathbb N$ there exists a representative $\mu_k'$ in $[\mu_k]$ and for $[\nu]$ a representative $\nu'\in[\nu]$ such that $\mu_k'$ converges to $\nu'$ in $\mathcal M(X)$.\end{proposition}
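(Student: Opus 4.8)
\emph{Proof plan.} The forward implication is the easy one and I would dispatch it first: $\pi_P$ is continuous from $(\mathcal M(X),\tau_X)$ to $(\bP\cM(X),\tau_P)$ by the very definition of the quotient topology, so if $\mu_k'\to\nu'$ in $\mathcal M(X)$ with $\mu_k'\in[\mu_k]$ and $\nu'\in[\nu]$ (all nonzero, being representatives of classes in $\bP\cM(X)$), then $[\mu_k]=\pi_P(\mu_k')\to\pi_P(\nu')=[\nu]$. For the converse, the point to get past is that the hypothesis $[\mu_k]\to[\nu]$ only tells us that, for each basic neighbourhood of $[\nu]$, membership of $[\mu_k]$ in it is witnessed by \emph{some} scalar $\lambda_k>0$, and a priori that scalar depends on the neighbourhood; what I need is a single sequence of scalars working simultaneously against every test function.

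I would break this dependence by pinning the normalization to one fixed function. Since $\nu$ is a nonzero Radon measure, an elementary regularity argument (a compact set of positive $\nu$-measure together with Urysohn's lemma) produces $f\in C_c(X)$ with $f\ge 0$ and $\int f\,d\nu>0$, and I rescale the chosen representative $\nu$ so that $\int f\,d\nu=1$. I will use two facts about the quotient, both already implicit in the proof of Proposition~\ref{p23}: that $\pi_P$ is an open map (each dilation $\mu\mapsto\lambda\mu$, $\lambda>0$, is a self-homeomorphism of $\mathcal M(X)\setminus\{0\}$, so $\pi_P^{-1}(\pi_P(W))=\bigcup_{\lambda>0}\lambda W$ is open whenever $W$ is), and that the sets $V(\nu;h_1,\dots,h_m,\epsilon):=\{\lambda\in\mathcal M(X):|\int h_i\,d\lambda-\int h_i\,d\nu|<\epsilon,\ 1\le i\le m\}$ form a neighbourhood basis at $\nu$ in $\tau_X$; together these make the sets $\pi_P(V(\nu;h_1,\dots,h_m,\epsilon))$ a neighbourhood basis at $[\nu]$ in $\tau_P$. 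Feeding $\pi_P(V(\nu;f,1/2))$ into the convergence yields, for all large $k$, a $\lambda_k>0$ with $|\lambda_k\int f\,d\mu_k-1|<1/2$, hence $\int f\,d\mu_k>0$; for those $k$ I set $\mu_k':=(\int f\,d\mu_k)^{-1}\mu_k\in[\mu_k]$, so that $\int f\,d\mu_k'=1$ (the finitely many remaining indices do not affect convergence and may be given any representative).

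It then remains to verify $\mu_k'\to\nu$ against an arbitrary $g\in C_c(X)$. Fixing $g$ and $\epsilon>0$, I feed $\pi_P(V(\nu;f,g,\epsilon'))$ into the convergence for a small $\epsilon'\in(0,1/2)$ to be chosen: for large $k$ there is $\lambda_k>0$ with $\lambda_k\mu_k\in V(\nu;f,g,\epsilon')$, i.e., writing $s_k:=\lambda_k\int f\,d\mu_k$ so that $\lambda_k\mu_k=s_k\mu_k'$, one has $|s_k-1|<\epsilon'$ and $|s_k\int g\,d\mu_k'-\int g\,d\nu|<\epsilon'$. The second bound together with $s_k>1/2$ forces $|\int g\,d\mu_k'|\le 2(1+|\int g\,d\nu|)=:C_g$, and then $|\int g\,d\mu_k'-\int g\,d\nu|\le|1-s_k|\,|\int g\,d\mu_k'|+\epsilon'\le\epsilon'(C_g+1)$; choosing $\epsilon'<\min\{1/2,\epsilon/(C_g+1)\}$ closes the argument and gives $\mu_k'\to\nu$ in $\tau_X$. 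The one step carrying genuine content is this last one: the witnessing measure $\lambda_k\mu_k$ is, thanks to the fixed normalization $\int f\,d\mu_k'=1$, exactly $s_k\mu_k'$ with $s_k$ forced near $1$, so $\mu_k'$ inherits the closeness of $\lambda_k\mu_k$ to $\nu$ in the $g$-direction; everything else is bookkeeping about the quotient topology, and I anticipate no obstacle beyond arranging these elementary estimates cleanly. In particular, no metrizability or separability of $\mathcal M(X)$ enters.
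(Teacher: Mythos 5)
Your proposal is correct and follows essentially the same route as the paper's proof: fix a single test function $f$ to pin the normalization $\int f\,d\mu_k'=\int f\,d\nu=1$, and exploit neighbourhoods of the form $\pi_P(V(\nu;f,g,\epsilon))$ (open because $\pi_P$ is an open map) to force $\int g\,d\mu_k'$ close to $\int g\,d\nu$ for every $g\in C_c(X)$. The only differences are presentational — you argue directly with explicit estimates where the paper argues by contradiction via the computation in Proposition~\ref{p23}, and you make explicit the small points the paper leaves implicit (existence of $f\ge 0$ with $\int f\,d\nu>0$ and eventual positivity of $\int f\,d\mu_k$).
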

\begin{proof}
Let $[\mu_k]$ be a sequence in $\mathbb P\mathcal M(X)$ converging to $[\nu]$. We choose $f\in C_c(X)$ and representatives $\mu_k'$ and $\nu'$ of $[\mu_k]$ and $[\nu]$ such that $$\int fd\mu_k'=\int fd\nu'=1.$$ Suppose that $\mu_k'\not\to\nu'$ in $\mathcal M(X)$. Then there exists a nonnegative function $g\in C_c(X)$ such that after passing to a subsequence $$\int gd\nu'=1,\quad\left|\int gd\mu_k'-1\right|\geq\delta$$ for some $\delta>0$. Then by the same argument as in Proposition \ref{p23}, we can find a neighborhood $\pi_P(V(\nu;f,g,\epsilon))$ of $[\nu]$ in $\bP\cM(X)$ for some $\epsilon<\min\{0.1,\delta/5\}$ such that $$[\mu_k]\notin \pi_P(V(\nu;f,g,\epsilon))$$ which contradicts the condition $[\mu_k]\to[\nu]$. The other direction follows from Definition \ref{def12}.
\end{proof}

Now we prove the following important proposition, which provides a characterization of the convergence of a sequence $[\mu_k]$ in $\mathbb P\mathcal M(X)$. This will help us study the convergence of equivalence classes of locally finite measures on $\SL(n,\mathbb R)/\SL(n,\mathbb Z)$ in the rest of the paper.

\begin{proposition}\label{p25}
\begin{enumerate}
\item Let $\{\mu_k\}_{k\in\mathbb N}$ be a sequence in $\mathcal M(X)$. Then $[\mu_k]$ converges to $[\nu]$ in $\bP\cM(X)$ if and only if there exists a sequence $\{\lambda_k\}$ of positive numbers such that $\lambda_k\mu_k$ converges to $\nu$ in $\mathcal M(X)$. If there exists another sequence $\{\lambda_k'\}$ with $\lambda_k'\mu_k\to\nu'\neq0$ in $\mathcal M(X)$, then $$[\nu']=[\nu]$$ and $\lim_k\lambda_k'/\lambda_k$ exists.
\item The sequence $[\mu_k]$ converges to $[\nu]$ if and only if for any $f,g\in C_c(X)$ with $\int gd\nu\neq0$, we have $\int gd\mu_k\neq0$ for sufficiently large $k$ and $$\frac{\int fd\mu_k}{\int gd\mu_k}\to\frac{\int fd\nu}{\int gd\nu}.$$
\end{enumerate}
\end{proposition}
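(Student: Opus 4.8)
The plan is to reduce both assertions to vague convergence in $\mathcal M(X)$, i.e.\ to $\tau_X$-convergence, which by the defining description of $\tau_X$ means precisely that $\int f\,d\mu_k\to\int f\,d\mu$ for every $f\in C_c(X)$; the structural inputs are Proposition~\ref{p23} (Hausdorffness), Proposition~\ref{p24} (convergence in $\bP\cM(X)$ is witnessed by genuine representatives), and the continuity of $\pi_P$ built into Definition~\ref{def12}.

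For part (1), the ``if'' direction is immediate: if $\lambda_k>0$ and $\lambda_k\mu_k\to\nu$ in $\mathcal M(X)$, then $[\mu_k]=[\lambda_k\mu_k]=\pi_P(\lambda_k\mu_k)$ tends to $\pi_P(\nu)=[\nu]$ by continuity of $\pi_P$. For ``only if'', I would apply Proposition~\ref{p24} to obtain representatives $\mu_k'=c_k\mu_k$ (with $c_k>0$) and $\nu'=c\nu$ (with $c>0$) such that $\mu_k'\to\nu'$ in $\mathcal M(X)$, and then take $\lambda_k:=c_k/c$. For the uniqueness clause, if also $\lambda_k'\mu_k\to\nu'\ne0$, then $[\nu]=\lim[\mu_k]=[\nu']$ by Proposition~\ref{p23}, so $\nu'=c\nu$ for some $c>0$; since $\nu$ is a nonzero locally finite (hence Radon) measure there is $f\in C_c(X)$ with $\int f\,d\nu\ne0$, and then for large $k$
\[
\frac{\lambda_k'}{\lambda_k}=\frac{\int f\,d(\lambda_k'\mu_k)}{\int f\,d(\lambda_k\mu_k)}\longrightarrow\frac{\int f\,d\nu'}{\int f\,d\nu}=c .
\]

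For part (2), the forward direction follows from part (1) by writing $\int f\,d\mu_k/\int g\,d\mu_k=\int f\,d(\lambda_k\mu_k)/\int g\,d(\lambda_k\mu_k)$ — legitimate once $\int g\,d\mu_k\ne0$, which holds eventually because $\int g\,d(\lambda_k\mu_k)\to\int g\,d\nu\ne0$ — and passing to the limit. The crux is the converse, where the device is a \emph{normalizing test function}: as $\nu\ne0$ is Radon, fix $g_0\in C_c(X)$ with $g_0\ge0$ and $\int g_0\,d\nu=1$. By the hypothesis $\int g_0\,d\mu_k\ne0$ for large $k$, and since $g_0\ge0$ it is then $>0$; set $\lambda_k:=\bigl(\int g_0\,d\mu_k\bigr)^{-1}$ for such $k$ and $\lambda_k:=1$ otherwise. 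Then for \emph{every} $f\in C_c(X)$ the hypothesis applied with $g=g_0$ gives
\[
\int f\,d(\lambda_k\mu_k)=\frac{\int f\,d\mu_k}{\int g_0\,d\mu_k}\longrightarrow\frac{\int f\,d\nu}{\int g_0\,d\nu}=\int f\,d\nu ,
\]
so $\lambda_k\mu_k\to\nu$ in $\mathcal M(X)$, and part (1) yields $[\mu_k]\to[\nu]$.

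There is no deep content here; the only points requiring care are (a) the existence of $g_0$ (and of the $f$ in the uniqueness clause), which uses that a nonzero locally finite Borel measure on a locally compact second countable Hausdorff space is inner regular and hence has positive mass on some compact set, after which Urysohn's lemma produces a nonnegative compactly supported function with positive integral; (b) that $\lambda_k$ is well defined, which is exactly the purpose of the non-vanishing clause in the hypothesis of (2); and (c) the identification of sequential $\tau_X$-convergence with convergence of $\int f\,d\mu_k$ for all $f\in C_c(X)$, immediate from $\tau_X$ being the initial topology for these evaluation maps. I expect (a) to be the only place one might slip, but it is routine.
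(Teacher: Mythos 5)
Your proof is correct and follows essentially the same route as the paper: both parts reduce to Propositions \ref{p23} and \ref{p24}, and the converse of (2) is obtained, exactly as in the paper, by rescaling $\mu_k$ with a fixed test function whose $\nu$-integral is nonzero. Your only deviation is the minor refinement of choosing the normalizer $g_0\ge 0$ so that $\lambda_k>0$ is automatic (a point the paper's choice $\lambda_k=\int g\,d\nu/\int g\,d\mu_k$ leaves implicit); otherwise the arguments coincide.
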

\begin{proof}
The first statement follows from Proposition \ref{p23} and Proposition \ref{p24}. For $\lim_k\lambda_k'/\lambda_k$, we choose $f\in C_c(X)$ with $\int fd\nu\neq0$, and we have $$\frac{\lambda_k'}{\lambda_k}=\frac{\lambda_k'\int fd\mu_k}{\lambda_k\int fd\mu_k}\to\frac{\int fd\nu'}{\int fd\nu}.$$

For the second statement, if $[\mu_k]\to[\nu]$, then there exists a sequence $\lambda_k>0$ such that $\lambda_k\mu_k\to\nu\neq0$. For any $f,g\in C_c(X)$ with $\int gd\nu\neq0$ we have $$\lambda_k\int gd\mu_k\neq0$$ for sufficiently large $k$ and $$\frac{\int fd\mu_k}{\int gd\mu_k}=\frac{\int fd(\lambda_k\mu_k)}{\int gd(\lambda_k\mu_k)}\to\frac{\int fd\nu}{\int gd\nu}.$$ Conversely, let $g\in C_c(X)$ with $\int gd\nu\neq0$ and $$\lambda_k=\frac{\int gd\nu}{\int gd\mu_k}.$$ Then we have $\lambda_k\mu_k\to\nu$ and $[\mu_k]\to[\nu]$.
\end{proof}
\begin{remark}
This proves that Theorem \ref{th13} is equivalent to Theorem \ref{th11} and Theorem \ref{th12}.
\end{remark}

From the discussions in this section, we know that to prove Theorem \ref{nth11}, one needs to find a sequence of $\lambda_k>0$ such that $\lambda_k(g_k)_*\mu_{Ax}$ converges to a locally finite measure $\nu$, and then prove that $\nu$ is a periodic measure. From section \ref{con} to section \ref{recon}, we will construct the sequence $\lambda_k$ in an explicit way. In the rest of the paper, $X$ will denote the homogeneous space $G/\Gamma$.

\section{Convex polytopes}\label{con}
In this section, we will construct a special type of convex polytopes in $\Lie(A)$. These convex polytopes will play an important role in the rest of the paper.

By Theorem 1.4 in \cite{TW03}, $Ax$ is divergent in $X=G/\Gamma$ if and only if $x\in A\cdot \SL(n,\mathbb Q)\Gamma$. Note that for any $q\in \SL(n,\mathbb Q)$ the lattice $q\Gamma q^{-1}$ is commensurable with $\Gamma$, and all results in this paper would hold if $\Gamma$ is replaced by $q\Gamma q^{-1}$. Therefore, without loss of generality, we may assume that the initial point $x=x_e=e\SL(n,\mathbb Z)$, where $e$ is the identity matrix in $G$. We will denote by $m_{\Lie(A)}$ the natural measure on $\Lie(A)\subset\mathfrak{sl}(n,\mathbb R)$ induced by the Lebesgue measure on the space of $n\times n$ matrices.

To ease the notations, we will write $\mb{t}$ for a vector $(t_1,t_2,\dots,t_n)$ in a $n$-dimensional space, and denote by $[n]$ the index set $\{1,2,\dots,n\}$. We write $\mathcal I_n$ for the collection of all multi-index subsets of $[n]$, and $\mathcal I_n^l$ for the collection of the index subsets of cardinality $l$ in $\mathcal I_n$. Let $\{e_1,e_2,\dots,e_n\}$ be the standard basis of $\mathbb R^n$. For any index subset $I=\{i_1<i_2\cdots<i_l\}\in\mathcal I_n$, we denote by $$e_{I}:=e_{i_1}\wedge\dots\wedge e_{i_l}$$ the wedge product of the vectors $e_{i_1},\dots,e_{i_l}$. We write $\omega_{I}(\mb{t})$ ($\mb{t}=(t_1,t_2,\dots,t_n)\in\mathbb R^n$) for the linear functional $\sum_{i\in I}t_i$ on $\mathbb R^n$.

Let $g\in\SL(n,\mathbb R)$ and $\delta>0$. We define a region $\Omega_{g,\delta}$ in $\Lie(A)$ as follows. Let $\mb{t}=(t_1,t_2,\dots,t_n)\in\Lie(A)$. For each $e_i\in\mathbb R^n$, the vector $$g\exp(\mb{t})e_i=e^{t_i}ge_i\notin B_\delta$$ if and only if $$t_i\geq\ln\delta-\ln\|ge_i\|.$$ Here $B_\delta$ denotes the ball of radius $\delta>0$ around 0 in $\mathbb R^n$ with the standard Euclidean norm $\|\cdot\|$. We also consider the wedge product $e_{I}$ for any nonempty subset $I\in\mathcal I_n^l$ $(1\leq l\leq n)$, and $$g\exp(\mb{t})e_{I}=e^{\omega_{I}(\mb{t})}g e_{I}\notin B_\delta$$ if and only if $$\omega_{I}(\mb{t})\geq\ln\delta-\ln\|ge_{I}\|.$$ Here by abusing notations, $\|\cdot\|$ is the norm on $\wedge^l\mathbb R^n$ induced by the Euclidean norm on $\mathbb R^n$, and $B_\delta$ is the ball of radius $\delta>0$ around 0 in $\wedge^l\mathbb R^n$. This leads to the following
\begin{definition}\label{def31}
For any $g\in G$ and $\delta>0$, we define $$\Omega_{g,\delta}=\left\{\mb{t}\in\Lie(A): \omega_{I}(\mb{t})\geq\ln\delta-\ln\|g e_{I}\|\text{ for any nonempty }I\in\mathcal I_n\right\}.$$
\end{definition}
 \begin{remark}
By the construction above, for any $\mb{t}\in\Lie(A)\setminus\Omega_{g,\delta}$, the lattice $g\exp(\mb{t})\mathbb Z^n$ has a short nonzero vector with the length depending on $\delta>0$. Hence by Mahler's compactness criterion, the point $g\exp(\mb{t})\Gamma\in gA\Gamma$ is close to infinity. Due to this reason, we will mainly study the part $\{g\exp(\mb{t})\Gamma:\mb{t}\in\Omega_{g,\delta}\}$ of the orbit $gA\Gamma$.
\end{remark}

\begin{lemma}\label{p31}
The region $\Omega_{g,\delta}$ is a bounded convex polytope in $\Lie(A)$ for any $g\in G$ and $\delta>0$.
\end{lemma}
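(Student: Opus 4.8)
The plan is to note first that, directly from Definition~\ref{def31}, the set $\Omega_{g,\delta}$ is the intersection of the affine hyperplane $\Lie(A)=\{\mb t:\sum_{i=1}^n t_i=0\}$ with the finitely many closed half-spaces $\{\mb t:\omega_{I}(\mb t)\ge\ln\delta-\ln\|g e_{I}\|\}$, one for each of the $2^n-1$ nonempty $I\in\mathcal I_n$. Hence $\Omega_{g,\delta}$ is automatically a closed convex polyhedron, and the only thing that requires an argument is boundedness, since a bounded polyhedron is a polytope.

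To establish boundedness I would exhibit, for each coordinate $t_i$, an explicit lower and upper bound valid on $\Omega_{g,\delta}$, which traps $\Omega_{g,\delta}$ inside a box $\prod_{i=1}^n[c_i,C_i]$. For the lower bound, apply the defining inequality with the singleton $I=\{i\}$: it reads $t_i=\omega_{\{i\}}(\mb t)\ge\ln\delta-\ln\|g e_i\|=:c_i$. For the upper bound, apply it with the co-singleton $I=[n]\setminus\{i\}$ and use that on $\Lie(A)$ one has $\omega_{[n]\setminus\{i\}}(\mb t)=-t_i$; this yields $-t_i\ge\ln\delta-\ln\|g e_{[n]\setminus\{i\}}\|$, i.e.\ $t_i\le\ln\|g e_{[n]\setminus\{i\}}\|-\ln\delta=:C_i$. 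Together these give $\Omega_{g,\delta}\subset\prod_{i=1}^n[c_i,C_i]$, which is bounded, completing the proof.

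I do not expect a genuine obstacle here: the one idea is to test the defining inequalities against the singleton and co-singleton index sets, and everything else is formal. The only point worth a remark is the degenerate range $\delta>1$: since $g\in\SL(n,\mathbb R)$ acts on $\wedge^n\mathbb R^n$ by $\det g=1$, one has $\|g e_{[n]}\|=1$, so the inequality for $I=[n]$ becomes $0=\omega_{[n]}(\mb t)\ge\ln\delta$, forcing $\Omega_{g,\delta}=\emptyset$ when $\delta>1$; as the empty set is a (bounded) polytope this case is harmless, and in the applications one anyway takes $\delta$ small.
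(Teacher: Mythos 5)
Your proof is correct and follows essentially the same route as the paper: the defining linear inequalities make $\Omega_{g,\delta}$ a convex polyhedron, and boundedness comes from the singleton constraints. The only cosmetic difference is that the paper derives the upper bounds on the $t_i$ implicitly from the lower bounds $t_i\ge\ln\delta-\ln\|ge_i\|$ together with the constraint $\sum_i t_i=0$, whereas you use the co-singleton inequalities $\omega_{[n]\setminus\{i\}}(\mb t)\ge\ln\delta-\ln\|ge_{[n]\setminus\{i\}}\|$; both are immediate and equivalent in substance.
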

\begin{proof}
Since the region $\Omega_{g,\delta}$ is defined by various linear functionals on $\Lie(A)$, $\Omega_{g,\delta}$ is a convex polytope. Now by definition, $\Omega_{g,\delta}$ is contained in the following region $$\left\{\mb{t}\in\mathbb R^n: \sum_{i=1}^n t_i=0, t_i\geq\ln\delta-\ln\|ge_i\|,\forall i\in[n]\right\}$$ which is bounded. The boundedness of $\Omega_{g,\delta}$ then follows. 
\end{proof}

In section \ref{recon}, we will closely study the convex polytope $\Omega_{g,\delta}$. We list here some properties of convex polytopes which will be used later. For a bounded convex subset $\Omega$ in a Euclidean space $E$, we denote by $\Vol(\Omega)$ the volume of $\Omega$ with respect to the Lebesgue measure on $E$, and by $\Area(\partial\Omega)$ the surface area of the boundary $\partial\Omega$ of $\Omega$ induced by the Lebesgue measure. 

The following lemma is well known. We learnt it from Roy Meshulam.
\begin{lemma}\label{p32}
Let $\Omega$ be a bounded convex subset in $\mathbb R^d$. Suppose that $\Omega$ contains a ball of radius $r>0$. Then we have $$\frac{\Area(\partial\Omega)}{\Vol(\Omega)}\leq\frac dr.$$
\end{lemma}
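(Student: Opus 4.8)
The plan is to exploit the standard cone decomposition of a convex body with apex at the center of a large inscribed ball. After translating, we may assume the ball $B_r$ of radius $r$ about the origin is contained in $\Omega$. The one substantive input is an elementary consequence of convexity: for every $x\in\partial\Omega$, choose a supporting hyperplane $H_x$ of $\Omega$ at $x$ with outward unit normal $n_x$, so that $\Omega\subseteq\{y:\langle y-x,n_x\rangle\le 0\}$. Since $B_r\subseteq\Omega$, the whole ball lies in this half-space, which forces $\langle x,n_x\rangle\ge r$; that is, the distance from the origin to $H_x$ is at least $r$. This is the only place convexity and the inscribed ball enter.

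Granting this, I would apply the divergence theorem to the position vector field $V(y)=y$ on $\Omega$ (legitimate, as the boundary of a convex body is Lipschitz and the outward unit normal $n_y$ is defined for $\Area$-a.e.\ $y\in\partial\Omega$). Since $\operatorname{div}V\equiv d$, this gives
$$d\,\Vol(\Omega)=\int_\Omega\operatorname{div}V\,dy=\int_{\partial\Omega}\langle y,n_y\rangle\,dS(y)\ \ge\ r\,\Area(\partial\Omega),$$
where the inequality uses $\langle y,n_y\rangle\ge r$ from the previous paragraph. Dividing by $d\,\Vol(\Omega)$ yields $\Area(\partial\Omega)/\Vol(\Omega)\le d/r$, as claimed.

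For the applications in this paper $\Omega=\Omega_{g,\delta}$ is a polytope, and one can bypass the divergence theorem entirely: writing $F_1,\dots,F_m$ for the facets of $\Omega$ and $C_i=\operatorname{conv}(\{0\}\cup F_i)$ for the cone over $F_i$ with apex at the origin, we have that $\Omega$ is the essentially disjoint union $\bigcup_i C_i$, with $\Vol(C_i)=\tfrac1d h_i\Area(F_i)$ where $h_i$ is the distance from $0$ to the affine hull of $F_i$; summing over $i$ and using $h_i\ge r$ gives the same bound. I do not expect a genuine obstacle: the mild point is the regularity of $\partial\Omega$ needed to invoke the divergence theorem in the general convex case, and this disappears completely in the polytopal route.
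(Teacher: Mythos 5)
Your proof is correct, but it follows a genuinely different route from the paper. The paper's argument is a one-line Minkowski/Steiner computation: it takes $\Area(\partial\Omega)=\lim_{\epsilon\to0}\bigl(\Vol(\Omega+\epsilon B_1(0))-\Vol(\Omega)\bigr)/\epsilon$ as the definition of surface area, observes that $B_r(0)\subseteq\Omega$ gives $\Omega+(\epsilon/r)B_r(0)\subseteq\Omega+(\epsilon/r)\Omega=(1+\epsilon/r)\Omega$, and concludes $\Area(\partial\Omega)\leq\lim_{\epsilon\to0}\frac{(1+\epsilon/r)^d-1}{\epsilon}\Vol(\Omega)=\frac dr\Vol(\Omega)$. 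You instead use the divergence theorem for the position field together with the support-hyperplane estimate $\langle y,n_y\rangle\geq r$, which yields the sharper identity $d\,\Vol(\Omega)=\int_{\partial\Omega}\langle y,n_y\rangle\,dS\geq r\,\Area(\partial\Omega)$; your polytopal pyramid decomposition $\Vol(\Omega)=\frac1d\sum_i h_i\Area(F_i)$ with $h_i\geq r$ is an elementary special case of the same idea, and it indeed suffices for every application in the paper, since the lemma is only invoked for the polytopes $\Omega_{g_k,\delta}$ and $R_{g_k,\delta}$ (and inside Lemma \ref{p33}); it is also pleasantly parallel to the cylinder-covering argument the paper uses for the reverse bound in Claim 2 of Lemma \ref{p33}. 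The only mild point to flag in your general-convex-body version is definitional: the paper's $\Area(\partial\Omega)$ is the Minkowski content, while the divergence theorem produces the $(d-1)$-dimensional surface (Hausdorff) measure of the Lipschitz boundary; for bounded convex bodies these coincide, so there is no gap, but the paper's route avoids having to invoke that identification, whereas your route buys an exact identity rather than just an inequality and, in the polytopal form, is completely elementary.
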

\begin{proof}
Let $B_{r}(0)$ denote the ball of radius $r$ centered at $0$ in $\mathbb R^d$ and we may assume, without loss of generality, that $B_r(0)\subset\Omega$. We have
\begin{eqnarray*}
\Area(\partial\Omega)&=&\lim_{\epsilon\to0}\frac{\Vol(\Omega+\epsilon B_1(0))-\Vol(\Omega)}\epsilon\\
&=&\lim_{\epsilon\to0}\frac{\Vol(\Omega+(\epsilon/r) B_r(0))-\Vol(\Omega)}\epsilon\\
&\leq&\lim_{\epsilon\to0}\frac{\Vol(\Omega+(\epsilon/r)\Omega)-\Vol(\Omega)}\epsilon\\
&=&\lim_{\epsilon\to0}\frac{(1+(\epsilon/r))^d-1}\epsilon\Vol(\Omega)=\frac dr\Vol(\Omega).
\end{eqnarray*}
This completes the proof of the lemma.
\end{proof}

\begin{lemma}\label{p33}
Let $R\subset\Omega$ be two bounded $d$-dimensional convex polytopes in $\mathbb R^d$. Suppose that $\Omega$ contains a ball of radius $r>0$ and $$\frac{\Vol(R)}{\Vol(\Omega)}\geq c$$ for some constant $c>0$. Then $R$ contains a ball of radius $rc/d$.
\end{lemma}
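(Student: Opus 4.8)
The plan is to derive the statement from a general lower bound for the inradius of a convex body in terms of the ratio of its volume to its surface area. Write $\rho$ for the inradius of $R$; since $R$ is a bounded convex polytope this value is attained, so it suffices to show $\rho\ge rc/d$. The argument will have two parts: first, use Lemma~\ref{p32} to bound $\Area(\partial R)$ in terms of $\Vol(R)$; second, bound $\rho$ from below by $\Vol(R)/\Area(\partial R)$.

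For the first part, since $\Omega$ contains a ball of radius $r$, Lemma~\ref{p32} gives $\Area(\partial\Omega)\le\frac dr\Vol(\Omega)$. Because $R\subseteq\Omega$ are convex bodies, surface area is monotone under inclusion, so $\Area(\partial R)\le\Area(\partial\Omega)$ --- for instance, the nearest-point projection onto $R$ is $1$-Lipschitz and carries $\partial\Omega$ onto $\partial R$, and a $1$-Lipschitz map does not increase $(d-1)$-dimensional Hausdorff measure. Combining this with the hypothesis $\Vol(R)\ge c\,\Vol(\Omega)$ gives $\Area(\partial R)\le\frac{d}{rc}\Vol(R)$, that is,
\begin{equation*}
\frac{\Vol(R)}{\Area(\partial R)}\ \ge\ \frac{rc}{d}.
\end{equation*}

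For the second part I claim that $\operatorname{inradius}(K)\ge\Vol(K)/\Area(\partial K)$ for every convex body $K\subset\mathbb R^d$; applied to $K=R$ together with the previous inequality this gives $\rho\ge rc/d$ and completes the proof. To prove the claim, let $\rho_K=\operatorname{inradius}(K)$ and let $f(x)=\operatorname{dist}(x,\partial K)$ for $x\in K$, which equals the distance from $x$ to the nearest supporting hyperplane of $K$ and is therefore a minimum of affine functions, hence concave and $1$-Lipschitz, with $\max_K f=\rho_K$ and $\|\nabla f\|=1$ almost everywhere in the interior of $K$ (a concave function is differentiable off a null set, and where $f$ is differentiable its gradient is a unit vector; for the polytope $R$ this is transparent, since $f$ is then piecewise affine). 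For $0\le s<\rho_K$ the superlevel set $K_{-s}:=\{x\in K:f(x)\ge s\}$ is a convex body contained in $K$ whose boundary equals $\{f=s\}$ (a concave function that attains its minimum at an interior point of its domain is constant there, so the level sets have empty interior), and again by monotonicity of surface area $\Area(\partial K_{-s})\le\Area(\partial K)$. The coarea formula then yields
\begin{equation*}
\Vol(K)=\int_K\|\nabla f\|\,dx=\int_0^{\rho_K}\mathcal H^{d-1}\!\bigl(\{f=s\}\bigr)\,ds=\int_0^{\rho_K}\Area(\partial K_{-s})\,ds\ \le\ \rho_K\,\Area(\partial K),
\end{equation*}
which is the claim.

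The routine parts are the application of Lemma~\ref{p32}, the monotonicity of surface area under inclusion (classical), and the final combination. The step I expect to require genuine care is the claim in the third paragraph, and within it the points to be checked are that $\{f\ge s\}$ is exactly the inner parallel body $K_{-s}$, that $\partial K_{-s}=\{f=s\}$, and that $\|\nabla f\|=1$ almost everywhere; all of these are standard consequences of the concavity and $1$-Lipschitz property of the distance-to-boundary function, and they are completely elementary in the polytopal setting of the lemma.
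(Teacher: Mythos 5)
Your proof is correct, and its outer skeleton is the same as the paper's: both arguments come down to the inequality $\operatorname{inradius}(R)\ge \Vol(R)/\Area(\partial R)$, which is then combined with the monotonicity $\Area(\partial R)\le\Area(\partial\Omega)$ and Lemma~\ref{p32}. Where you genuinely diverge is in how that key inequality is established. The paper proves it only for polytopes, by an elementary covering argument: it first checks (Claim~1) that the orthogonal projection of a point of $R$ onto its nearest facet hyperplane lands inside that facet, and then covers $R$ by cylinders of height $\rho$ erected over the facets, giving $\Vol(R)\le\rho\,\Area(\partial R)$ directly. You instead prove the inequality for an arbitrary convex body via the distance-to-boundary function $f$: concavity, $\|\nabla f\|=1$ a.e., the identification of the superlevel sets with inner parallel bodies whose boundaries are the level sets, the coarea formula, and a second application of surface-area monotonicity. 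Your route buys generality (no polytopal structure is needed, and it dispenses with the facet-projection step) and you also supply a justification of $\Area(\partial R)\le\Area(\partial\Omega)$ via the $1$-Lipschitz nearest-point projection, which the paper simply cites as a known fact; the price is heavier (though entirely standard) machinery --- a.e.\ differentiability of concave functions and the coarea formula --- where the paper's argument is completely elementary and self-contained in the polytopal setting that the lemma is actually applied in. All the delicate points you flag (that $\partial K_{-s}=\{f=s\}$ for $0<s<\rho_K$, via the fact that a concave function attaining its minimum at an interior point is constant, and that $\|\nabla f\|=1$ a.e.) do check out, so there is no gap.
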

\begin{proof}
Let $\rho$ be the largest number such that $R$ contains a ball of radius $\rho$. It suffices to show that $\rho\geq rc/d$. Let $\{f_i\}$ be the collection of the facets of $R$, and denote by $P_i$ the hyperplane determined by $f_i$. First, we prove two claims.

Claim 1: Let $p$ be a point in $R$, and let $f_{i_0}$ be a facet of $R$ such that the hyperplane $P_{i_0}$ is closest to $p$ among all the hyperplanes $P_i$. Then the orthogonal projection of $p$ in $P_{i_0}$ is in the facet $f_{i_0}$.
\begin{proof}[Proof of Claim 1]
Let $p_{i_0}$ be the orthogonal projection of $p$ in $P_{i_0}$, and denote by $\overline{p_{i_0}p}$ the line segment connecting $p$ and $p_{i_0}$. Suppose that $p_{i_0}$ is outside the facet $f_{i_0}$. Then $\overline{p_{i_0}p}$ intersects another facet of $R$, say, $f_{j_0}$. This implies that the distance between $p$ and the hyperplane $P_{j_0}$ is smaller than the length of $\overline{p_{i_0}p}$, which contradicts the choice of $P_{i_0}$.
\end{proof}

Claim 2: $\operatorname{Vol}(R)\leq\rho\Area(\partial R).$
\begin{proof}[Proof of Claim 2]
 For each facet $f_i$ of $R$, let $B_i$ be the unique cylinder with the following properties:
\begin{enumerate}
\item the base of $B_i$ is $f_i$, and the height of $B_i$ is equal to $\rho$.
\item $B_i$ and $R$ lie in the same half-space determined by $P_i$.
\end{enumerate}
The maximality of $\rho$ then implies $$R\subset\bigcup_i B_i;$$ otherwise, by Claim 1, one would find a point $x\in R\setminus\bigcup_iB_i $ such that for each $f_i$, the distance between $x$ and $f_i$ is strictly larger than $\rho$. Now we have $$\operatorname{Vol}(R)\leq\sum_i\operatorname{Vol}(B_i)=\rho\Area(\partial R)$$ and Claim 2 follows.
\end{proof}

Now we can finish the proof of the lemma. By Claim 2 and Lemma \ref{p32}, we have $$\rho\geq\frac{\operatorname{Vol}(R)}{\Area(\partial R)}\geq\frac{c\operatorname{Vol}(\Omega)}{\Area(\partial \Omega)}\geq\frac{cr}{d}.$$ Here we use the fact that $\Area(\partial R)\leq\Area(\partial\Omega)$ for any two convex polytopes $R\subset\Omega$.
\end{proof}

For a bounded convex polytope $\Omega$ in $\mathbb R^d$ and $\epsilon>0$, its $\epsilon$-neighborhood is defined by $$\{\mb{t}\in\mathbb R^d: \inf_{\mb{s}\in\Omega}\|\mb{t}-\mb{s}\|\leq\epsilon\}.$$ Here $\|\cdot\|$ is the Euclidean norm on $\mathbb R^d$.

\begin{lemma}\label{p34}
Let $\Omega$ be a bounded convex subset in $\mathbb R^d$ which contains a ball of radius $r>0$. Let $\Omega_\epsilon$ be the $\epsilon$-neighborhood of $\Omega$ for $\epsilon>0$. Then we have $$\frac{\Vol(\Omega_\epsilon)}{\Vol(\Omega)}\leq\left(1+\frac\epsilon r\right)^d.$$
\end{lemma}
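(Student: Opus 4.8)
The plan is to deduce this from two elementary facts about convex sets and Lebesgue measure: translation invariance and homogeneity ($\Vol(\lambda\Omega)=\lambda^d\Vol(\Omega)$) of the volume, and the identity $\Omega+\lambda\Omega=(1+\lambda)\Omega$ for convex $\Omega$ and $\lambda\ge0$. First I would normalize: since both the passage to the $\epsilon$-neighborhood and the functional $\Vol$ commute with translations of $\mathbb R^d$, there is no loss of generality in assuming that the ball of radius $r$ contained in $\Omega$ is centered at the origin, i.e. $B_r(0)\subset\Omega$; in particular $0\in\Omega$ and $\Vol(\Omega)>0$, so the ratio on the left-hand side is well defined.

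Next, observe that the $\epsilon$-neighborhood is precisely the Minkowski sum $\Omega_\epsilon=\Omega+B_\epsilon(0)=\Omega+\epsilon B_1(0)$. From $B_r(0)\subset\Omega$ we get $\epsilon B_1(0)=\frac{\epsilon}{r}B_r(0)\subset\frac{\epsilon}{r}\Omega$, and hence $\Omega_\epsilon\subset\Omega+\frac{\epsilon}{r}\Omega$. Using convexity of $\Omega$ one has $\Omega+\frac{\epsilon}{r}\Omega\subset\left(1+\frac{\epsilon}{r}\right)\Omega$ (any $a+\frac{\epsilon}{r}b$ with $a,b\in\Omega$ equals $\left(1+\frac{\epsilon}{r}\right)$ times a convex combination of $a$ and $b$, which lies in $\Omega$). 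Therefore $\Omega_\epsilon\subset\left(1+\frac{\epsilon}{r}\right)\Omega$, and applying $\Vol$ together with its scaling behaviour yields $\Vol(\Omega_\epsilon)\le\left(1+\frac{\epsilon}{r}\right)^d\Vol(\Omega)$, which is exactly the asserted inequality. This is the same chain of inclusions $B_r(0)\subset\Omega$, $\frac{\epsilon}{r}B_r(0)\subset\frac{\epsilon}{r}\Omega$, $\Omega+\frac{\epsilon}{r}\Omega=\left(1+\frac{\epsilon}{r}\right)\Omega$ already exploited in the proof of Lemma~\ref{p32}.

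I do not anticipate any genuine obstacle: the argument is purely formal once one notes that the $\epsilon$-neighborhood of a set is its Minkowski sum with the closed $\epsilon$-ball. The only points that deserve an explicit word are the reduction to an origin-centered inscribed ball (by translation invariance) and the convexity identity $\Omega+\lambda\Omega=(1+\lambda)\Omega$; both are standard, so the proof will be short.
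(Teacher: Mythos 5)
Your proof is correct and follows essentially the same route as the paper: identify $\Omega_\epsilon$ with the Minkowski sum $\Omega+(\epsilon/r)B_r(0)$, enlarge to $\Omega+(\epsilon/r)\Omega=(1+\epsilon/r)\Omega$ using convexity, and conclude by the scaling of Lebesgue measure. The only difference is that you spell out the translation reduction and the convexity identity, which the paper leaves implicit.
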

\begin{proof}
The proof is similar to Lemma \ref{p32}. Assume that $\Omega$ contains the ball $B_r(0)$ of radius $r$ around $0$. We have
\begin{eqnarray*}
\frac{\Vol(\Omega_\epsilon)}{\Vol(\Omega)}&=&\frac{\Vol(\Omega+(\epsilon/r) B_r(0))}{\Vol(\Omega)}\leq\frac{\Vol(\Omega+(\epsilon/r)\Omega)}{\Vol(\Omega)}=\left(1+\frac\epsilon r\right)^d.
\end{eqnarray*}
This completes the proof of the lemma.
\end{proof}

\section{Auxiliary results in graph theory}\label{graph}
In this section, we will study a special class of graphs and prove some properties of these graphs (Proposition \ref{p41} and Lemma \ref{l41}), which will be crucial in our study of convex polytopes in section \ref{recon}. From now on until section \ref{proof}, we would assume that $\{g_k\}_{k\in\mathbb N}$ satisfies the condition in Theorem \ref{nth11}, i.e. $\{g_k\}_{k\in\mathbb N}$ is a sequence in $N$ with $$g_k=(u_{ij}(k))_{1\leq i,j\leq n}$$ such that for each $(i,j)$ $(1\leq i<j\leq n)$, either $u_{ij}(k)=0\textup{ for any }k$, or $u_{ij}(k)\neq0$ and diverges to infinity as $k\to\infty$.

In order to prove Proposition \ref{p41}, we will need some lemmas involving complex calculations which will guarantee the validity of the proof of Proposition \ref{p41}. Here we introduce the following notation. For any $g\in\SL(n,\mathbb R)$ and any $1\leq l\leq n$, denote by $(g)_{l\times l}$ the $l\times l$ submatrix in the upper left corner of $g$. Note that if $g,h\in\SL(n,\mathbb R)$ are upper triangular, then $(gh)_{l\times l}=(g)_{l\times l}(h)_{l\times l}$.

\begin{lemma}\label{al51}
For any $a\in A$ and any $1\leq l\leq n$, we have either $(g_k)_{l\times l}=(a^{-1}g_ka)_{l\times l}$ for all $k$ or $(g_k)_{l\times l}\neq(a^{-1}g_ka)_{l\times l}$ for all $k$.
\end{lemma}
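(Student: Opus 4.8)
The plan is to analyze the entries of the submatrix $(a^{-1}g_k a)_{l\times l}$ explicitly in terms of the entries of $g_k$ and the diagonal part of $a$. Write $a = \diag(e^{s_1}, \dots, e^{s_n})$ with $\sum s_i = 0$. Then conjugation acts entrywise by $(a^{-1}g_k a)_{ij} = e^{s_j - s_i} u_{ij}(k)$, so in particular the upper-left $l\times l$ block has entries $e^{s_j - s_i} u_{ij}(k)$ for $1 \le i,j \le l$. Thus the condition $(g_k)_{l\times l} = (a^{-1}g_k a)_{l\times l}$ holds if and only if for every pair $(i,j)$ with $1 \le i < j \le l$ one has $e^{s_j - s_i} u_{ij}(k) = u_{ij}(k)$, i.e. either $u_{ij}(k) = 0$ or $s_i = s_j$.

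The key point is then simply to invoke the dichotomy hypothesis on $\{g_k\}$ from Theorem~\ref{nth11}: for each fixed pair $(i,j)$ with $i<j$, \emph{either} $u_{ij}(k) = 0$ for all $k$, \emph{or} $u_{ij}(k) \ne 0$ for all $k$. First I would fix $a \in A$ and let $D = \{(i,j) : 1 \le i < j \le l,\ s_i \ne s_j\}$. If for every $(i,j) \in D$ we have $u_{ij}(k) = 0$ for all $k$, then by the displayed formula $(a^{-1}g_k a)_{ij} = u_{ij}(k)$ for all entries of the block (the diagonal entries are $1$ on both sides, and for off-diagonal entries either $(i,j)\notin D$ so $s_i=s_j$ and the exponential factor is $1$, or $(i,j)\in D$ so both sides vanish), hence $(g_k)_{l\times l} = (a^{-1}g_k a)_{l\times l}$ for all $k$. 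Otherwise there is some $(i_0,j_0) \in D$ with $u_{i_0 j_0}(k) \ne 0$ for all $k$; then $(a^{-1}g_k a)_{i_0 j_0} = e^{s_{j_0} - s_{i_0}} u_{i_0 j_0}(k) \ne u_{i_0 j_0}(k)$ for all $k$ since $e^{s_{j_0}-s_{i_0}} \ne 1$, so $(g_k)_{l\times l} \ne (a^{-1}g_k a)_{l\times l}$ for all $k$. This gives exactly the claimed dichotomy.

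There is essentially no obstacle here; the statement is a bookkeeping consequence of the entrywise description of conjugation by a diagonal matrix combined with the standing ``zero or divergent'' hypothesis on the matrix entries $u_{ij}(k)$. The only thing to be mildly careful about is that the dichotomy of the lemma is a statement uniform in $k$ (either equality holds for all $k$ or inequality holds for all $k$), which is precisely why one needs the hypothesis that each $u_{ij}(k)$ is \emph{either identically zero or nowhere zero} rather than merely eventually nonzero; I would state this explicitly in the write-up so the reader sees where the hypothesis is used.
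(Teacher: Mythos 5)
Your proof is correct and follows essentially the same route as the paper: write out the conjugated block entrywise as $a_i^{-1}a_j u_{ij}(k)$, observe that equality of the blocks forces ``$u_{ij}(k)=0$ or $a_i=a_j$'' for each pair, and conclude from the standing hypothesis that each $u_{ij}(k)$ is either identically zero or nowhere zero. Your write-up is just a more explicit version of the paper's argument, and your closing remark about where the dichotomy hypothesis is used is accurate.
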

\begin{proof}
Write $a=(a_1,a_2,\dots,a_n)\in A$. By definition, we have $$(g_k)_{l\times l}=(u_{ij}(k))_{1\leq i,j\leq l}$$ and $$(a^{-1}g_ka)_{l\times l}=(a_i^{-1}a_ju_{ij}(k))_{1\leq i,j\leq l}.$$ The equation $(g_k)_{l\times l}=(a^{-1}g_ka)_{l\times l}$ then yields $$\text{either }u_{ij}(k)=0\text{ or } a_i=a_j,\quad\forall1\leq i,j\leq l.$$ Now the lemma follows from the dichotomy assumption on the entries of $g_k$ $(k\in\mathbb N)$.
\end{proof}

\begin{lemma}\label{al52}
Let $a\in A$. Suppose that the sequence $\{g_kag_k^{-1}\}_{k\in\mathbb N}$ is bounded in $\SL(n,\mathbb R)$. Then $g_k$ commutes with $a$ for any $k$. 
\end{lemma}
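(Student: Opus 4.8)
The plan is to reduce the assertion to an elementary statement about the entries of $g_k$ and to prove it by induction on the distance from the diagonal. Write $g_k = (u_{ij}(k))_{1\le i,j\le n}$ and $a = \diag(a_1,\dots,a_n)$. Since $(a g_k a^{-1})_{ij} = a_i a_j^{-1} u_{ij}(k)$, the conclusion that $g_k$ commutes with $a$ for every $k$ is equivalent to: for every pair $i<j$ with $u_{ij}(k) \ne 0$ one has $a_i = a_j$. By the standing dichotomy on the entries of $g_k$, the support $E := \{(i,j) : 1 \le i < j \le n,\ u_{ij}(k) \ne 0\}$ does not depend on $k$, and for $(i,j) \in E$ we have $u_{ij}(k) \to \infty$; so it suffices to show $a_i = a_j$ for each $(i,j) \in E$.

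The heart of the argument is the identity
$$(g_k a g_k^{-1})_{ij} = (a_j - a_i)\, u_{ij}(k) \qquad (1 \le i < j \le n),$$
which I would prove by induction on $d := j - i$, carrying along the consequence ``$a_i = a_j$ for all $(i,j) \in E$ with $j - i \le d$.'' Writing $g_k^{-1} = (v_{pq}(k))$, the fact that both $g_k$ and $g_k^{-1}$ are upper unitriangular gives
$$(g_k a g_k^{-1})_{ij} = \sum_{i \le p \le j} u_{ip}(k)\, a_p\, v_{pj}(k),$$
while the relation $(g_k g_k^{-1})_{ij} = 0$ for $i < j$ yields $v_{ij}(k) = -u_{ij}(k) - \sum_{i < p < j} u_{ip}(k)\, v_{pj}(k)$. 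Substituting, one obtains
$$(g_k a g_k^{-1})_{ij} = (a_j - a_i)\, u_{ij}(k) + \sum_{i < p < j} (a_p - a_i)\, u_{ip}(k)\, v_{pj}(k).$$
Now for each $p$ with $i < p < j$: if $u_{ip}(k) = 0$ the term vanishes; if $u_{ip}(k) \ne 0$ then $(i,p) \in E$ with $p - i < d$, so by the inductive hypothesis $a_p = a_i$ and the term again vanishes. Hence the sum is zero and the identity holds at distance $d$. Finally, if $(i,j) \in E$ with $j - i = d$, then $u_{ij}(k) \to \infty$, so the boundedness of $\{g_k a g_k^{-1}\}$ together with the identity forces $a_j - a_i = 0$; this completes the inductive step, and taking $d = n-1$ gives $a_i=a_j$ for all $(i,j)\in E$, i.e. the claim.

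The step I expect to be the main obstacle is precisely the bookkeeping in this induction: one needs to observe that every cross term $(a_p - a_i)\, u_{ip}(k)\, v_{pj}(k)$ vanishes identically (rather than merely being bounded), which is what makes the clean identity true and is exactly where the dichotomy hypothesis on the $u_{ij}(k)$ enters. Beyond this, the proof uses the boundedness hypothesis only through the divergence $u_{ij}(k) \to \infty$ for $(i,j) \in E$; in particular neither Lemma~\ref{al51} nor Mahler's criterion is needed.
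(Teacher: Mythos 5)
Your proof is correct. The reduction to showing $a_i=a_j$ for every pair in the ($k$-independent) support $E$ is the right formulation, the identity
$$(g_kag_k^{-1})_{ij}=(a_j-a_i)\,u_{ij}(k)+\sum_{i<p<j}(a_p-a_i)\,u_{ip}(k)\,v_{pj}(k)$$
is verified correctly, and the induction on $j-i$ is well-founded: at distance $d$ every cross term vanishes identically because either $u_{ip}(k)\equiv 0$ or $(i,p)\in E$ with $p-i<d$, whence $a_p=a_i$ by the carried hypothesis, and then boundedness of $g_kag_k^{-1}$ kills $(a_j-a_i)u_{ij}(k)$ for $(i,j)\in E$ since $u_{ij}(k)\to\infty$.

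Your route differs from the paper's. The paper argues by contradiction: using Lemma~\ref{al51} it takes the largest $l_0$ such that the upper-left $l_0\times l_0$ corner of $g_k$ commutes with that of $a$ for all $k$, writes the $(l_0+1)\times(l_0+1)$ corner in block form with the column vector $\mathbf{v}_k$, and computes $(a^{-1}g_kag_k^{-1})_{(l_0+1)\times(l_0+1)}=\smallmat{\operatorname{I}_{l_0}&\mathbf{w}_k-\mathbf{v}_k\\0&1}$, where $\mathbf{w}_k-\mathbf{v}_k\to\infty$ by the dichotomy, contradicting boundedness. You instead induct on the superdiagonal distance $j-i$ and prove a clean entrywise formula for $g_kag_k^{-1}$, with no contradiction argument and no appeal to Lemma~\ref{al51}; the exact vanishing of the cross terms plays the role that commutation of the $l_0\times l_0$ corner plays in the paper. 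Your version is more self-contained and makes explicit where boundedness enters (only through divergence of the entries indexed by $E$), at the cost of slightly more bookkeeping; the paper's version localizes the failure of commutation to a single column and is shorter given that Lemma~\ref{al51} is already established for use in the surrounding arguments. Both proofs exploit the dichotomy hypothesis in the same essential way.
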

\begin{proof}
Suppose not. Then by Lemma \ref{al51} with $l=n$ we have $$g_k\neq a^{-1}g_ka,\;\forall k\in\mathbb N.$$ In this case, we would like to find a contradiction. 

 Let $l_0$ be the minimum of the integers $0\leq l\leq n-1$ with the property $$(g_k)_{(l+1)\times (l+1)}\neq(a^{-1}g_ka)_{(l+1)\times (l+1)}$$ for any $k$. By Lemma \ref{al51}, $l_0$ is also the maximum of $0\leq l\leq n-1$ such that $(g_k)_{l\times l}$ commutes with $(a)_{l\times l}$ for all $k$.

We write $a=\diag(a_1,a_2,\dots,a_n)\in A$. Then for any $1\leq l\leq n$ $$(a)_{l\times l}=\diag(a_1,a_2,\dots,a_l).$$ We also write $$(g_k)_{(l_0+1)\times(l_0+1)}=\left(\begin{array}{cc}(g_k)_{l_0\times l_0} & \mb{v}_k \\0 & 1\end{array}\right)\in\SL(l_0+1,\mathbb R)$$ where $\mb{v}_k$ is the $l_0$-dimensional column vector next to $(g_k)_{l_0\times l_0}$ in $g_k$. Since $(g_k)_{l_0\times l_0}$ commutes with $(a)_{l_0\times l_0}$, one can compute 
\begin{align*}
(a^{-1}g_ka)_{(l_0+1)\times(l_0+1)}=&(a^{-1})_{(l_0+1)\times(l_0+1)}(g_k)_{(l_0+1)\times(l_0+1)}(a)_{(l_0+1)\times(l_0+1)}\\
=&\left(\begin{array}{cc}(g_k)_{l_0\times l_0} & a_{l_0+1}(a^{-1})_{l_0\times l_0}\mb{v}_k \\0 & 1\end{array}\right)=\left(\begin{array}{cc}(g_k)_{l_0\times l_0} & \mb{w}_k \\0 & 1\end{array}\right)
\end{align*} 
where $$\mb{w}_k:=a_{l_0+1}(a^{-1})_{l_0\times l_0}\mb{v}_k.$$ As $(g_k)_{(l_0+1)\times(l_0+1)}$ does not commute with $(a)_{(l_0+1)\times(l_0+1)}$, we have $$\mb{v}_k\neq\mb{w}_k.$$ From this and the dichotomy assumption on the entries of $g_k$ $(k\in\mathbb N)$, one can then deduce that  $\mb{v}_k\neq\mb{0}$, $\mb{v}_k\to\infty$ and $$\mb{w}_k-\mb{v}_k=\left(a_{l_0+1}(a^{-1})_{l_0\times l_0}-\operatorname{I}_{l_0}\right)\mb{v}_k\to\infty$$ as $k\to\infty$. Here $\operatorname{I}_{l_0}$ is the $l_0\times l_0$ identity matrix.

Now one can compute
\begin{eqnarray*}
(a^{-1}g_kag_k^{-1})_{(l_0+1)\times(l_0+1)}&=&(a^{-1}g_ka)_{(l_0+1)\times(l_0+1)}(g_k^{-1})_{(l_0+1)\times(l_0+1)}\\
&=&\left(\begin{array}{cc}(g_k)_{l_0\times l_0} & \mb{w_k} \\0 & 1\end{array}\right)\left(\begin{array}{cc}(g_k)_{l_0\times l_0} & \mb{v}_k \\0 & 1\end{array}\right)^{-1}\\
&=&\left(\begin{array}{cc}\operatorname{I}_{l_0} & \mb{w}_k-\mb{v}_k \\0 & 1\end{array}\right).
\end{eqnarray*}
Since $\mb{w}_k-\mb{v}_k\to\infty$ as $k\to\infty$, the equation above implies that $\{a^{-1}g_kag_k^{-1}\}_{k\in\mathbb N}$ diverges, which contradicts the boundedness of $\{g_kag_k^{-1}\}_{k\in\mathbb N}$. This completes the proof of the lemma.
\end{proof}

\begin{corollary}\label{c42}
Let $S\subset A$ be a subgroup in $A$. Then for any $\mb{t}\in\Lie(S)$, either $\Ad(g_k)\mb{t}\to\infty$ as $k\to\infty$ or $\Ad(g_k)\mb{t}=\mb{t}$ for all $k$.
\end{corollary}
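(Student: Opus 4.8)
The plan is to reduce the statement to Lemma~\ref{al52} by exponentiating. Fix $\mb{t}\in\Lie(S)\subseteq\Lie(A)$ and set $a=\exp(\mb{t})=\diag(e^{t_1},\dots,e^{t_n})$; since $\mb{t}$ is traceless the product of the diagonal entries is $e^{\sum t_i}=1$, so indeed $a\in A$. The dichotomy in the statement is exactly: either $\{\Ad(g_k)\mb{t}\}_{k\in\mathbb N}$ is unbounded in $\mathfrak g$, or it has a bounded subsequence. I will show that the second case forces $\Ad(g_k)\mb{t}=\mb{t}$ for every $k$.

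First I would pass to a subsequence $\{g_{k_j}\}_{j\in\mathbb N}$ along which $\Ad(g_{k_j})\mb{t}$ stays in a bounded subset of $\mathfrak g$. Since $\exp$ is continuous and $\mathfrak g$ is finite-dimensional, the set $\{\exp(\Ad(g_{k_j})\mb{t})\}_j=\{g_{k_j}\,a\,g_{k_j}^{-1}\}_j$ then lies in a compact, hence bounded, subset of $\SL(n,\mathbb R)$. The subsequence $\{g_{k_j}\}$ still satisfies the standing hypothesis on the entries coming from Theorem~\ref{nth11} (each entry $u_{ij}$ is either identically $0$ or nonzero and divergent along $j$), so Lemma~\ref{al52} applies to it and yields that $g_{k_j}$ commutes with $a$ for all $j$.

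It remains to upgrade ``$g_{k_j}$ commutes with $a$ for all $j$'' to ``$\Ad(g_k)\mb{t}=\mb{t}$ for all $k$''. Here I would observe that for $g=(u_{ij})\in N$ and any $s\neq 0$, commuting with $\exp(s\mb{t})=\diag(e^{st_1},\dots,e^{st_n})$ is equivalent to requiring $t_i=t_j$ for every pair $i<j$ with $u_{ij}\neq 0$; this condition does not depend on $s\neq 0$, and by the dichotomy on the entries of the $g_k$ (each $u_{ij}(\cdot)$ is either always zero or never zero) it does not depend on $k$ either. Hence from the fact that one $g_{k_j}$ commutes with $a$ one deduces that every $g_k$ commutes with $\exp(s\mb{t})$ for every $s\in\mathbb R$; differentiating the identity $g_k\exp(s\mb{t})g_k^{-1}=\exp(s\mb{t})$ at $s=0$ gives $\Ad(g_k)\mb{t}=\mb{t}$ for all $k$, as desired.

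The only mildly delicate points are the passage from a bounded subsequence in $\mathfrak g$ to a bounded subsequence of the conjugates $g_{k_j}ag_{k_j}^{-1}$ in $G$ (routine, via continuity of $\exp$ and finite-dimensionality), and the bookkeeping that the chosen subsequence inherits the entrywise dichotomy so that Lemma~\ref{al52} is applicable. I do not anticipate a genuine obstacle beyond these formalities.
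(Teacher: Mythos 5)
Your proof is correct and follows essentially the same route as the paper, which simply invokes Lemmas~\ref{al51} and~\ref{al52} with $a=\exp(\mb{t})$: you apply Lemma~\ref{al52} along a bounded subsequence and then use the entrywise dichotomy to promote commutation to all $k$ and all $\exp(s\mb{t})$, which is exactly the content the paper extracts from Lemma~\ref{al51}. The details you flag (boundedness of $g_{k_j}ag_{k_j}^{-1}$ via $\exp$, and the subsequence inheriting the dichotomy) are handled correctly.
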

\begin{proof}
Apply Lemma \ref{al51} and Lemma \ref{al52} with $a=\exp(\mb{t})$.
\end{proof}

\begin{definition}\label{def41}
We define a graph $G(\{g_k\}_{k\in\mathbb N})=(V,E)$ associated to $\{g_k\}_{k\in\mathbb N}$ as follows. The set of vertices $V$ is the index set $[n]=\{1,2,\dots,n\}$. Two vertices $i<j$ are connected by an edge in the edge set $E$, which we denote by $i\sim j$, if $u_{ij}(k)\to\infty$ as $k\to\infty$.
\end{definition}

Now we can prove our first result in this section.

\begin{proposition}\label{p41}
The subalgebra $\mathcal A(A,\{g_k\}_{k\in\mathbb N})$ of $\Lie(A)$ (as defined in Definition \ref{def13}) is trivial if and only if the graph $G(\{g_k\}_{k\in\mathbb N})$ associated to $\{g_k\}_{k\in\mathbb N}$ is connected.
\end{proposition}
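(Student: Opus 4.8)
The plan is to analyze the adjoint action of $g_k \in N$ on $\Lie(A)$ explicitly in terms of root vectors, then translate the boundedness condition defining $\mathcal{A}(A,\{g_k\}_{k\in\mathbb{N}})$ into a statement about the graph $G(\{g_k\}_{k\in\mathbb{N}})$. Recall that $\mathfrak{g} = \mathfrak{sl}(n,\mathbb{R})$ decomposes as $\Lie(A) \oplus \bigoplus_{i \neq j} \mathfrak{g}_{ij}$, where $\mathfrak{g}_{ij} = \mathbb{R} E_{ij}$ and $E_{ij}$ is the elementary matrix. For $\mathbf{t} = \diag(t_1,\dots,t_n) \in \Lie(A)$, a direct computation gives $\Ad(g_k)\mathbf{t} = \mathbf{t} + (\text{terms in the root spaces } \mathfrak{g}_{ij} \text{ with } i<j)$, and the component of $\Ad(g_k)\mathbf{t}$ in $\mathfrak{g}_{ij}$ is a polynomial in the entries $u_{pq}(k)$ that vanishes identically (for all $k$) precisely when $t_i = t_j$ along every edge-path, and otherwise involves entries $u_{pq}(k)$ that diverge. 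More precisely, the leading behavior is controlled by the $\mathfrak{g}_{ij}$-component being, to first order, $(t_j - t_i)u_{ij}(k)$ plus higher-order products; by Corollary \ref{c42} the relevant dichotomy is already available: for each $\mathbf{t} \in \Lie(A)$, either $\Ad(g_k)\mathbf{t} \to \infty$ or $\Ad(g_k)\mathbf{t} = \mathbf{t}$ for all $k$. Hence $\mathcal{A}(A,\{g_k\}_{k\in\mathbb{N}}) = \{\mathbf{t} \in \Lie(A) : \Ad(g_k)\mathbf{t} = \mathbf{t} \text{ for all } k\}$, i.e. it equals $\Lie(A) \cap C_G(\{g_k\})$ at the Lie algebra level, namely $\{\mathbf{t} : [\mathbf{t}, \log\text{-data of } g_k] = 0\}$.

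Next I would unwind the commutation condition $\Ad(g_k)\mathbf{t} = \mathbf{t}$. Writing $g_k = (u_{ij}(k))$, the condition $g_k \exp(\mathbf{t}) = \exp(\mathbf{t}) g_k$ is equivalent to: for every pair $i<j$, either $u_{ij}(k) = 0$ for all $k$, or $t_i = t_j$. By the dichotomy hypothesis on the $u_{ij}(k)$, "$u_{ij}(k) \neq 0$ for some $k$" is the same as "$u_{ij}(k) \to \infty$", which is exactly the condition $i \sim j$ defining an edge of $G(\{g_k\}_{k\in\mathbb{N}})$. Therefore
$$\mathcal{A}(A,\{g_k\}_{k\in\mathbb{N}}) = \{\mathbf{t} \in \Lie(A) : t_i = t_j \text{ whenever } i \sim j\}.$$
A vector $\mathbf{t}$ satisfying $t_i = t_j$ for all edges $i \sim j$ is precisely a function on $[n]$ that is constant on each connected component of $G(\{g_k\}_{k\in\mathbb{N}})$ (subject to $\sum t_i = 0$).

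Finally I would conclude: if $G(\{g_k\}_{k\in\mathbb{N}})$ is connected, then any such $\mathbf{t}$ is constant on all of $[n]$, and the trace-zero condition forces $\mathbf{t} = 0$, so $\mathcal{A}(A,\{g_k\}_{k\in\mathbb{N}}) = \{0\}$. Conversely, if $G(\{g_k\}_{k\in\mathbb{N}})$ has at least two connected components $C_1, C_2, \dots$, then taking $\mathbf{t}$ to be, say, $|C_2|$ on $C_1$, $-|C_1|$ on $C_2$, and $0$ elsewhere gives a nonzero trace-zero vector in $\mathcal{A}(A,\{g_k\}_{k\in\mathbb{N}})$, so it is nontrivial. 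The main (and only real) obstacle is the first step: verifying cleanly that the boundedness condition in Definition \ref{def13} collapses to the exact commutation condition $\Ad(g_k)\mathbf{t} = \mathbf{t}$ for all $k$ — but this is already delivered by Corollary \ref{c42}, so the proof is essentially a bookkeeping translation between three equivalent descriptions (boundedness, commutation, constancy on components). The rest is routine linear algebra about the kernel of the incidence constraints of a graph.
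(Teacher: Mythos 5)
Your proposal is correct and takes essentially the same approach as the paper: both arguments rest on Lemma \ref{al52} (which you invoke through Corollary \ref{c42}) to upgrade boundedness of $\Ad(g_k)\mathbf{t}$ to exact commutation with every $g_k$, and then unwind the entrywise condition $u_{ij}(k)(t_j-t_i)=0$ into constancy of $\mathbf{t}$ on the connected components of $G(\{g_k\}_{k\in\mathbb N})$. The paper merely treats the two directions separately (an explicit constant-on-components construction when the graph is disconnected, and a contradiction via $a=\exp\mathbf{t}$ when it is connected), which is only a cosmetic difference from your unified characterization of $\mathcal A(A,\{g_k\}_{k\in\mathbb N})$ as the commutant; the heuristic expansion in your first paragraph is not needed once Corollary \ref{c42} is invoked.
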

\begin{proof}
Suppose that the graph $G(\{g_k\}_{k\in\mathbb N})$ associated to $\{g_k\}_{k\in\mathbb N}$ is not connected. Let $G_l=(V_l,E_l)$ $(1\leq l\leq m)$ be the connected components of $G(\{g_k\}_{k\in\mathbb N})$. We pick $x_l\in\mathbb R\setminus\{0\}$ such that $\sum_{l=1}^m |V_l|x_l=0$. Now if a vertex $i\in V_l\subset[n]$, we set $t_i=x_l$. In this way we obtain an element $\mb{t}=(t_i)_{1\leq i\leq n}\in\Lie(A)\setminus\{0\}$. Note that $\mb{t}$ is invertible. We show that $$g_k\mb{t}=\mb{t}g_k.$$ Indeed, since $\mb{t}$ is invertible, we compute $$\mb{t}g_k\mb{t}^{-1}=(t_it_j^{-1}u_{ij}(k))_{1\leq i,j\leq n}.$$ For $u_{ij}(k)\neq0$, by the definition of the graph $G(\{g_k\}_{k\in\mathbb N})$, the vertices $i$ and $j$ are in the same connected component. Hence we have $t_i=t_j$ and $$\mb{t}g_k\mb{t}^{-1}=(t_it_j^{-1}u_{ij}(k))_{1\leq i,j\leq n}=(u_{ij}(k))_{1\leq i,j\leq n}=g_k$$ as desired. This implies that $\Ad(g_k)$ fixes $\mb{t}$, and by definition $\mb{t}\in\mathcal A(A,\{g_k\}_{k\in\mathbb N})\neq\{0\}$.

Now assume that the graph $G(\{g_k\}_{k\in\mathbb N})$ is connected. Suppose that $\mathcal A(A,\{g_k\}_{k\in\mathbb N})$ is not zero. Then there exists an element $\mb{t}\in\Lie A\setminus\{0\}$ such that $\Ad(g_k)\mb{t}$ is bounded as $k\to\infty$. Let $a=\exp \mb{t}\in A\setminus\{e\}$. Then $\{g_kag_k^{-1}\}$  is bounded in $\SL(n,\mathbb R)$. By Lemma \ref{al52}, $g_k$ commutes with $a$. If we write $a=\diag(a_1,a_2,\dots,a_n)$, then the equation $g_k=ag_ka^{-1}$ yields $$(u_{ij}(k))_{1\leq i,j\leq n}=(a_ia_j^{-1}u_{ij}(k))_{1\leq i,j\leq n}$$ and hence $a_i=a_j$ whenever $u_{ij}(k)\neq0$. The connectedness of the graph $G(\{g_k\}_{k\in\mathbb N})$ then implies that all $a_i$'s are equal and $a=e$, which contradicts $a\in A\setminus\{e\}$. This completes the proof of the proposition.
\end{proof}

\begin{definition}\label{def42}
Let $G(V,E)$ be a graph consisting of the set of vertices $V$ and the set of edges $E$. Here we assume $V=\{v_1,v_2,\dots,v_n\}$ is an ordered set with the ordering $\prec$, and we denote by $v_i\sim v_j$ if $v_i$ and $v_j$ are connected by an edge in $E$. A subset $S\subset V$ is called UDS (uniquely determined by successors) if it satisfies the following property: for any $v_i\in V$ 
\begin{equation}\label{eq}
v_i\in S\implies v_j\in S\text{ for all $j\prec i$ with $v_j\sim v_i$}
\end{equation}
\end{definition}

For our purpose, we will consider UDS subsets of $[n]$ in the graph $G(\{g_k\}_{k\in\mathbb N})$ associated to $\{g_k\}_{k\in\mathbb N}$. The ordering of $[n]$ inherits the natural ordering on $\mathbb N$. The following proposition will be needed in our computations later.

\begin{proposition}\label{p43}
For any $1\leq l\leq n$ and any nonempty $I\in\mathcal I_n^l$, the sequence $\{g_ke_{I}\}_{k\in\mathbb N}\subset\wedge^l\mathbb R^n$ is bounded if and only if $I$ is UDS in the vertex set $[n]$ of $G(\{g_k\}_{k\in\mathbb N})$. If this case happens, then we have $g_k e_{I}=e_{I}$ for any $k\in\mathbb N$.
\end{proposition}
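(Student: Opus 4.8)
The plan is to unwind the action of $g_k$ on the wedge $e_I$ explicitly in terms of the matrix entries $u_{ij}(k)$ and then match the resulting sum to the combinatorial UDS condition. Write $I = \{i_1 < \cdots < i_l\}$. Since each $g_k \in N$ is upper triangular unipotent, $g_k e_{i_r}$ is a sum of $e_{i_r}$ and terms $e_p$ with $p < i_r$ (weighted by products of entries of $g_k$), so expanding $g_k e_I = (g_k e_{i_1}) \wedge \cdots \wedge (g_k e_{i_l})$ and collecting multilinearly, one gets
\[
g_k e_I = \sum_{J} c_J(k)\, e_J,
\]
where $J$ ranges over $l$-element subsets and $c_I(k) = 1$ for every $k$ (the ``diagonal'' term). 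The point is that $\{g_k e_I\}$ is bounded precisely when all the off-diagonal coefficients $c_J(k)$ stay bounded, and, by the dichotomy hypothesis on the $u_{ij}(k)$ (each entry is either identically $0$ or tends to $\infty$), a product of entries is bounded iff it is the empty product, i.e.\ iff every factor is $0$. So I would first show: $\{g_k e_I\}$ bounded $\iff$ $g_k e_I = e_I$ for all $k$ $\iff$ for every $i_r \in I$ and every $j < i_r$ with $j \sim i_r$ (i.e.\ $u_{j i_r}(k) \to \infty$), replacing $i_r$ by $j$ in $I$ does not produce a new $l$-subset — but this can only fail to matter if $j \in I$ already. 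That last reformulation is exactly the statement that $I$ is UDS.

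In more detail, the key step is the following. Suppose $I$ is not UDS: pick $i_r \in I$ and $j \prec i_r$ with $j \sim i_r$ and $j \notin I$. Then the subset $J = (I \setminus \{i_r\}) \cup \{j\}$ is a genuinely different $l$-subset, and I claim its coefficient $c_J(k)$ is unbounded. Indeed the only contributions to $c_J(k)$ come from expansions where $e_{i_r}$ contributes its $e_j$-component $u_{j i_r}(k)$ and every other $e_{i_s}$ ($s \ne r$) contributes its own $e_{i_s}$-component, which is $1$; all other expansions land on subsets containing some index $< \min(I \setminus\{i_r\} \cup \{j\})$ or otherwise differ from $J$. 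Here one must be a little careful that no cancellation occurs: because of the dichotomy, each term contributing to $c_J(k)$ is a product of a subcollection of the $u_{\cdot\cdot}(k)$'s, all of which are eventually positive (or one can just track that the ``cheapest'' term $u_{ji_r}(k)\cdot 1 \cdots 1$ dominates and nothing matches its growth), so $c_J(k) \to \infty$ and $\{g_k e_I\}$ is unbounded. Conversely, if $I$ is UDS, then whenever $e_{i_r}$ contributes a lower-index component $e_j$ with $u_{ji_r}(k) \not\equiv 0$, UDS forces $j \in I$, so the wedge $\cdots \wedge e_j \wedge \cdots$ already contains $e_j$ from another slot and vanishes; hence the only surviving term is the diagonal one, giving $g_k e_I = e_I$ for all $k$, which is certainly bounded.

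I expect the main obstacle to be the bookkeeping in the ``not UDS $\Rightarrow$ unbounded'' direction: one has to argue that the coefficient $c_J(k)$ of the perturbed subset $J$ really is unbounded and is not accidentally killed by cancellation among the many multilinear terms, or by a clever choice of $j$. The clean way around this is to choose $j$ minimal: take $i_r \in I$ minimal such that the UDS condition fails at $i_r$, and among the offending $j \prec i_r$ with $j \sim i_r$, $j \notin I$, take $j$ as large as possible; then the term $u_{j i_r}(k)$ times the product of $1$'s is the unique lowest-complexity term landing on $J = (I\setminus\{i_r\})\cup\{j\}$, every other term landing on $J$ carries a strictly larger (hence faster-growing, by the dichotomy) product of entries or is ruled out by the minimality/maximality choices, so $c_J(k)\to\infty$ with no cancellation. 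Everything else — multilinearity of $\wedge^l$, the dichotomy converting ``bounded product'' into ``empty product'', and recognizing the UDS condition — is routine. Finally, the assertion $g_k e_I = e_I$ in the boundedness case is obtained for free from the UDS direction above, completing the proof.
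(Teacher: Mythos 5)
Your overall strategy is the same as the paper's: expand $g_ke_I$ multilinearly, use the dichotomy on the entries $u_{ij}(k)$, and in the non-UDS case exhibit a divergent coefficient on the perturbed set $J=(I\setminus\{i_r\})\cup\{j\}$ with $i_r$ chosen minimal. But the step you yourself flag as the main obstacle is not actually closed, and the fallback you offer for it is invalid. What is needed is that the coefficient of $e_J$ equals $\pm u_{j i_r}(k)$ on the nose, i.e.\ that no other term of the expansion lands on $J$; you only assert this (``ruled out by the minimality/maximality choices''), and your alternative --- that any other term landing on $J$ carries a larger product of entries and is ``hence faster-growing, by the dichotomy'', so cannot cancel --- is fallacious: the dichotomy gives no comparison between growth rates of different entries, so a product of two divergent entries can exactly cancel a single divergent one (say the single entry is $k^2$ and the two factors are each $k$, with an opposite sign from the wedge). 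The correct argument is structural, as in the paper: by minimality of $i_r$, the UDS condition holds at every element of $I$ below $i_r$, so for $i\in I$ with $j<i<i_r$ one cannot have $j\sim i$ (else $j\in I$), whence $u_{ji}(k)\equiv 0$ and no factor of $I$ below $i_r$ can supply $e_j$; moreover, since $j<i_r$, the indices of $J$ exceeding $i_r$ are exactly the elements of $I$ exceeding $i_r$, and a descending induction on these indices forces every factor above $i_r$ to contribute its own diagonal component in any term landing on $J$, so none of them can supply $e_j$ either. Hence the only contribution to $e_J$ is $u_{j i_r}(k)\cdot 1\cdots 1\to\infty$, and there is nothing to cancel. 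Note that your choice of $j$ maximal plays no role; only the minimality of $i_r$ is used.

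A smaller point in the converse direction: your claim that a non-diagonal choice $e_j$ from the slot $i_r$ creates a repetition because ``the wedge already contains $e_j$ from another slot'' is not immediate, since the slot indexed by $j$ might itself contribute a lower-index component rather than $e_j$. This is easy to repair: either argue as the paper does, by induction on the initial wedges $e_{i_1}\wedge\cdots\wedge e_{i_m}$, or note that by UDS every chosen index lies in $I$, so a repetition-free choice is a bijection of $I$ to itself which decreases each slot index, hence is the identity. With these two points supplied, your proof coincides with the paper's.
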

\begin{proof}
Let $I=\{i_1<i_2<\cdots<i_l\}$. Suppose that $\{g_ke_{I}\}_{k\in\mathbb N}$ is bounded. We show that $I$ is UDS in $[n]$. If not, let $i_0$ be the minimum in $I=\{i_1,\dots,i_l\}$ such that the property (\ref{eq}) in Definition \ref{def42} does not hold for $i_0$. Then there is $j_0<i_0$ with $j_0\sim i_0$ but $j_0\notin I$. By the minimality of $i_0$, for any $i\in I=\{i_1,i_2,\dots,i_l\}$ with $j_0<i<i_0$, we have $j_0\not\sim i$; otherwise $j_0\in I$.  This implies that $u_{j_0,i}(k)=0$ for all $i\in\{i_1,i_2,\dots,i_l\}$ with $i<i_0$. Note that $u_{j_0,i_0}(k)\to\infty$ as $k\to\infty$ by our assumption on the entries of $g_k$ $(k\in\mathbb N)$. 

Now we compute $g_ke_{I}$. In particular, by expanding $g_ke_{I}$ in terms of the standard basis $\{e_{J}:J\in\mathcal I_n^l\}$ in $\wedge^l\mathbb R^n$, we are interested in the coefficient in the $e_{J_0}$-coordinate, where $J_0=\{i\in I:i\neq i_0\}\cup\{j_0\}$. As $u_{j_0,i}(k)=0$ for all $i\in\{i_1,i_2,\dots,i_l\}$ with $i<i_0$, one can compute $$g_ke_{I}=u_{j_0,i_0}(k)(\wedge_{i\in I,i<i_0}e_i)\wedge e_{j_0}\wedge(\wedge_{i\in I,i>i_0}e_i)+\sum_{J\neq J_0}c_{J}e_{J}$$ for some $c_{J}\in\mathbb R$ $(J\neq J_0)$.  The divergence of $u_{j_0,i_0}(k)$ then contradicts the boundedness of $g_ke_{I}$. This proves that $I$ is UDS.

Conversely, suppose that $I$ is a UDS subset in $[n]$. In this case, we will show inductively that for any $1\leq j\leq l$  $$g_k(e_{i_1}\wedge e_{i_2}\wedge\dots\wedge e_{i_j})=e_{i_1}\wedge e_{i_2}\wedge\dots\wedge e_{i_j}$$ and hence obtain that $g_ke_{I}=g_k(e_{i_1}\wedge e_{i_2}\wedge\dots\wedge e_{i_l})$ remains fixed. For $j=1$, since $\{i_1,\dots,i_l\}$ is UDS, this implies that $u_{i,i_1}=0$ for all $i<i_1$ and $g_ke_{i_1}=e_{i_1}$. Now assume that the formula holds for $j$. For $j+1$, we know that $$g_ke_{i_{j+1}}=e_{i_{j+1}}+\sum_{i\in\{i_1,\dots,i_j\}} u_{i,i_{j+1}}(k)e_i$$ and hence 
\begin{eqnarray*}
g_k(e_{i_1}\wedge e_{i_2}\wedge\dots\wedge e_{i_j}\wedge e_{i_{j+1}})&=&e_{i_1}\wedge e_{i_2}\wedge\dots\wedge e_{i_j}\wedge(g_ke_{i_{j+1}})\\
&=&e_{i_1}\wedge e_{i_2}\wedge\dots\wedge e_{i_j}\wedge e_{i_{j+1}}.
\end{eqnarray*}
This completes the proof of the proposition.
\end{proof}

Finally, we will show the following lemma, which will be crucial in our study of convex polytopes in section \ref{recon}.

\begin{lemma}\label{l41}
Let $G(V,E)$ be a connected graph, where $V=\{v_1,v_2,\dots,v_n\}$ is an ordered set with the ordering $\prec$. Then we can assign values $x_1,x_2,\dots,x_n$ to the vertices $v_1,v_2,\dots,v_n$ such that 
\begin{enumerate}
\item $\sum_{v_i\in V}x_i=0$
\item For any proper UDS subset $S\subset V$, $\sum_{v_i\in S}x_i>0.$
\end{enumerate}
\end{lemma}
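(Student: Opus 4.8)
The plan is to exploit the ordering $\prec$ and assign the values greedily "from the back." First I would set up the key observation: a UDS subset $S$ has the property that whenever $v_i \in S$ and $v_j \sim v_i$ with $v_j \prec v_i$, then $v_j \in S$; equivalently, the complement $V\setminus S$ is closed under passing from a vertex to its \emph{successors} (larger indices) along edges — so the UDS subsets form a lattice, and in particular a UDS set is determined by which "maximal" elements it omits. The idea is to assign to the last vertex $v_n$ a very negative value, to $v_{n-1}$ a value that is very negative compared to $x_n$, and so on, so that each $x_i$ dominates the entire tail $x_{i+1},\dots,x_n$ in absolute value; then adjust the first nonzero coordinate to make the total sum zero. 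Concretely, I would pick weights recursively: choose $x_n < 0$, then having chosen $x_{i+1},\dots,x_n$, choose $x_i$ so that $x_i < -\sum_{j>i}|x_j|$ (and in the end correct $x_1$, or better, work with the constraint $\sum x_i = 0$ by choosing $x_1 = -\sum_{i\ge 2}x_i$, which will automatically be large and positive).

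The crux is condition (2). Let $S \subsetneq V$ be a proper UDS set. Since $S \ne V$, there is a largest index $m$ with $v_m \notin S$. I claim $v_1 \notin S$: indeed, since $G$ is connected, there is a path from $v_1$ to $v_m$, and walking along this path, the \emph{first} time it leaves $S$ it does so via an edge $v_a \sim v_b$ with $v_a \in S$, $v_b \notin S$ — but the UDS property applied to whichever of $a,b$ is larger forces the smaller one out of $S$ too; chasing this back along the path shows $v_1 \notin S$. Hence $1 \notin S$, so $S \subseteq \{2,\dots,n\}$, and therefore
\[
\sum_{v_i \in S} x_i \;\ge\; x_1 - \sum_{i=2}^{n} |x_i| \;>\; 0,
\]
by our choice $x_1 = -\sum_{i\ge 2} x_i \ge \sum_{i\ge 2}|x_i| + (\text{slack})$, provided we arrange the recursion so that $x_1$ strictly dominates $\sum_{i\ge2}|x_i|$. (One clean way: choose $x_i = -3^{\,n-i}$ for $i=2,\dots,n$, then $x_1 = \sum_{i=2}^n 3^{n-i} = (3^{n-1}-1)/2$, and check $x_1 > \sum_{i=2}^n 3^{n-i} - 2\min$, i.e. verify the geometric-series inequality directly — this is routine.)

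Actually, reflecting further, the same connectedness argument gives more: any proper UDS set misses $v_1$, and in fact one only needs that $1\notin S$ together with the domination $x_1 > \sum_{i=2}^n |x_i|$ to conclude. So the whole proof reduces to (a) the lattice/graph lemma that a proper UDS set in a connected graph cannot contain the $\prec$-minimal vertex — proved by the path-chasing argument above — and (b) an explicit dominating choice of weights, e.g. the powers of $3$. I expect step (a), the path-chasing, to be the only genuinely substantive point: one must be careful that the UDS property is phrased in terms of \emph{predecessors} ($j \prec i$, $v_j \sim v_i$), so leaving $S$ along an edge oriented the "wrong way" is exactly what the UDS condition forbids, which is why connectedness pins down $v_1 \notin S$. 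Everything after that is elementary estimation.
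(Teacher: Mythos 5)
Your proof does not work, and the failure is at the central structural claim. You assert that in a connected graph a proper UDS subset cannot contain the minimal vertex $v_1$. This is false: the singleton $\{v_1\}$ is always UDS (being minimal, $v_1$ has no $\prec$-smaller neighbours, so its membership imposes no condition), and more generally proper UDS sets containing $v_1$ abound (e.g.\ $\{v_1,v_2\}$ in the path $v_1\sim v_2\sim v_3$). Your path-chasing argument breaks down because the UDS condition only forbids an edge from a vertex \emph{in} $S$ to a \emph{smaller} vertex outside $S$; a path from $v_1$ to a vertex outside $S$ is perfectly free to exit $S$ along an edge going to a larger vertex, so connectedness yields no contradiction. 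Moreover, even if your claim were true, your construction would refute itself: having concluded $v_1\notin S$, the sum $\sum_{v_i\in S}x_i$ consists only of your negative weights $x_2,\dots,x_n$ and is therefore negative, whereas the displayed inequality $\sum_{v_i\in S}x_i\ge x_1-\sum_{i=2}^n|x_i|$ tacitly assumes $v_1\in S$ — and with the explicit choice $x_i=-3^{\,n-i}$, $x_1=\sum_{i\ge2}3^{\,n-i}$, that bound equals $0$, not $>0$. A concrete counterexample to your final assignment: take $n=4$ with edges $v_3\sim v_1$, $v_3\sim v_2$, $v_3\sim v_4$ (a star with centre $v_3$). Then $S=\{v_2\}$ is a nonempty proper UDS set, but your weights give $\sum_{v_i\in S}x_i=-9<0$. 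The opposite claim, that every proper UDS set must contain $v_1$, fails on the same example, so no "dominating $x_1$" scheme keyed solely to membership of $v_1$ can succeed.

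The actual difficulty is that proper UDS sets can both contain and omit $v_1$, and both cases must produce a strictly positive sum. The paper handles this by induction on $n$: delete $v_1$ and its edges, apply the inductive hypothesis to each connected component of the remaining graph, lower all those values by a small $\epsilon>0$ (small enough to preserve strict positivity on the finitely many proper UDS subsets of each component), and set $x_1=n\epsilon$; connectedness and the UDS property are used precisely to show that a proper UDS set of $V$ must meet some component in a proper UDS subset, which is where the positive slack comes from. If you want to salvage a direct (non-inductive) construction you would need an assignment sensitive to this component structure, not merely to whether $v_1\in S$.
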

\begin{proof}
We use induction on the number of vertices in $G(V,E)$. There is nothing to prove for $n=1$. Now suppose that we have $n+1$ vertices. Assume without loss of generality that $v_1$ is the smallest according to the ordering $\prec$ on $V$. We remove the vertex $v_1$ and all the edges adjacent to $v_1$ from the graph $G$. This yields a new graph $G'$ with $m$ connected components $G'_1=(V'_1,E'_1),\dots,G'_m=(V'_m,E'_m)$ for some $m\in\mathbb N$. Since $|V'_j|\leq n$ $(1\leq j\leq m)$ and $V'_j$ inherits the ordering from $V$, we can apply the induction hypothesis on each $G_j'=(V'_j,E'_j)$. In particular, we obtain a vector $(x'_2,\dots,x'_{n+1})\in\mathbb R^n$ such that the value assignment $$v_i\mapsto x'_i,\quad2\leq i\leq n+1$$ satisfies conditions (1) and (2) for each of the graphs $G'_j$ $(1\leq j\leq m)$.

Now we pick a sufficiently small positive number $\epsilon>0$ such that the new value assignment $x_i=x_i'-\epsilon$ $(2\leq i\leq n+1)$ still satisfies condition (2) for each $G'_j=(V'_j,E'_j)$, and let $x_1=n\epsilon$. We show that this value assignment $$v_i\mapsto x_i,\quad1\leq i\leq n+1$$ meets our requirements for $G(V,E)$. The sum of $x_i$ is zero by induction hypothesis. For a proper UDS subset $S\subset V$, if $v_1\notin S$, then $$S=\bigcup_{j=1}^m S'_j$$ where $S'_j$ is a subset in $G'_j=(V'_j,E'_j)$ $(1\leq j\leq m)$, and either $S'_j$ is a proper UDS subset in $G'_j=(V'_j,E'_j)$ or $S'_j=V'_j$. Since $v_1\notin S$, by the connectedness of $G(V,E)$ and the UDS property of $S$, there is some $j$ with $S'_j\neq V'_j$  and hence by taking $\epsilon$ sufficiently small, $$\sum_{v_i\in S}x_i=\sum_{j=1}^m\sum_{v_i\in S_j'} x_i>0.$$ If $S=\{v_1\}$, then condition (2) holds automatically. If $v_1\in S$ and $S\neq\{v_1\}$, then $$S\setminus\{v_1\}=\bigcup_{j=1}^m S'_j$$ where $S'_j$ is a subset in $G'_j=(V'_j,E'_j)$ $(1\leq j\leq m)$, and either $S'_j$ is a proper UDS subset in $G'_j=(V'_j,E'_j)$ or $S'_j=V'_j$. Since $S$ is proper in $V$, there is some $j$ with $S'_j\neq V'_j$ and hence we have $$\sum_{v_i\in S}x_i=\sum_{j=1}^m\sum_{v_i\in S'_j}x_i+x_1>(-n\epsilon)+n\epsilon=0.$$ This completes the proof of the lemma.
\end{proof}

\section{Revisit convex polytopes}\label{recon}
In this section, we will study the convex polytopes $\Omega_{g_k,\delta}$, where $\{g_k\}_{k\in\mathbb N}$ is a sequence in $G$ satisfying the condition in Theorem \ref{nth11}. Our aim in this section is Proposition \ref{p53}, which shows a crucial property of $\Omega_{g_k,\delta}$ about its surface area and its volume. This property will play an important role in various places of the paper.

In the proof of Theorem \ref{nth11}, the case of $\mathcal A(A,\{g_k\}_{k\in\mathbb N})=\{0\}$ plays a central role, and other cases can be deduced from this case. We remark here that in view of Corollary \ref{c42}, $\mathcal A(A,\{g_k\}_{k\in\mathbb N})=\{0\}$ if and only if the limit points of $\{\Ad(g_k)\Lie(A)\}_{k\in\mathbb N}$ in the Grassmanian manifold of $\mathfrak g$ are subalgebras consisting of nilpotent matrices. So starting from this section to section \ref{line}, we will make additional assumptions on $\{g_k\}_{k\in\mathbb N}$ that $\mathcal A(A,\{g_k\}_{k\in\mathbb N})=\{0\}$, and by passing to a subsequence, $\Ad(g_k)\Lie(A)$ converges to a subalgebra consisting of nilpotent matrices in the Grassmanian manifold of $\mathfrak g$. We write $\lim_{k\to\infty}\Ad(g_k)\Lie(A)$ for the limiting subalgebra and $\lim_{k\to\infty}\Ad(g_k)A$ for the corresponding limiting unipotent subgroup.

\begin{lemma}\label{p51}
For any $0<\delta<1$, the region $$\{\mb{t}\in\Lie(A): \omega_{I}(\mb{t})\geq\ln\delta,\forall\text{ nonempty proper UDS }I\in\mathcal I_n\}$$ is a convex subset in $\Lie(A)$ which contains an unbound open cone.
\end{lemma}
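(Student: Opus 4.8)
The plan is to deduce everything from Lemma~\ref{l41}, using the standing hypothesis of this section that $\mathcal A(A,\{g_k\}_{k\in\mathbb N})=\{0\}$. Convexity I would dispose of first: the region in question is the intersection, over all nonempty proper UDS subsets $I\in\mathcal I_n$, of the closed half-spaces $\{\mb{t}\in\Lie(A):\omega_I(\mb{t})\geq\ln\delta\}$, each of which is convex (preimage of a closed half-line under the linear functional $\omega_I$ restricted to the subspace $\Lie(A)$), and an intersection of convex sets is convex. I will assume $n\geq 2$ so that nonempty proper UDS subsets exist; for $n=1$ there is nothing to discuss.

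For the unbounded open cone, I would first invoke Proposition~\ref{p41}: since $\mathcal A(A,\{g_k\}_{k\in\mathbb N})=\{0\}$, the graph $G(\{g_k\}_{k\in\mathbb N})$ on the vertex set $[n]$ is connected. Applying Lemma~\ref{l41} to this connected ordered graph produces real numbers $x_1,\dots,x_n$ with $\sum_{i=1}^n x_i=0$ — so that $\mb{t}_0:=(x_1,\dots,x_n)$ lies in $\Lie(A)$ — and with $\omega_I(\mb{t}_0)=\sum_{i\in I}x_i>0$ for every nonempty proper UDS subset $I\subsetneq[n]$. Since $\{1\}$ is a nonempty proper UDS subset (the defining implication in Definition~\ref{def42} is vacuous at the smallest vertex), we get $x_1>0$, so in particular $\mb{t}_0\neq 0$.

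The key observation is that $0<\delta<1$ forces $\ln\delta<0$, so for every $r>0$ and every nonempty proper UDS $I$ one has $\omega_I(r\mb{t}_0)=r\,\omega_I(\mb{t}_0)>0>\ln\delta$; hence the whole open ray $\{r\mb{t}_0:r>0\}$ already lies in the region. To promote this ray to an open cone I would use that there are only finitely many UDS subsets $I$ and each $\omega_I$ is continuous on $\Lie(A)$: pick an open neighborhood $U$ of $\mb{t}_0$ in $\Lie(A)$ with $0\notin U$ on which every relevant $\omega_I$ stays positive, and set $\mathcal C=\bigcup_{r>0}rU$. Then $\mathcal C$ is a cone (clearly $s\mathcal C=\mathcal C$ for $s>0$), it is open as a union of the open sets $rU$, it is unbounded since it contains $\{r\mb{t}_0:r>0\}$ with $\mb{t}_0\neq 0$, and it is contained in the region because $\omega_I(r\mb{s})=r\,\omega_I(\mb{s})>0>\ln\delta$ for all $\mb{s}\in U$, all $r>0$, and all nonempty proper UDS $I$.

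I do not expect a genuine obstacle here: the whole weight of the argument is carried by Lemma~\ref{l41} (together with the translation, via Proposition~\ref{p41}, of the hypothesis $\mathcal A(A,\{g_k\}_{k\in\mathbb N})=\{0\}$ into connectedness of the graph), and the only mildly delicate point is passing from a single interior ray to a genuinely open cone, which is handled by the finiteness of the constraint family plus continuity of the $\omega_I$.
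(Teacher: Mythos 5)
Your proposal is correct and follows essentially the same route as the paper: both invoke Proposition~\ref{p41} to translate the standing hypothesis $\mathcal A(A,\{g_k\}_{k\in\mathbb N})=\{0\}$ into connectedness of the graph, apply Lemma~\ref{l41} to produce a vector with $\omega_I>0$ on all nonempty proper UDS subsets, and then use scaling together with $\ln\delta<0$ to obtain the unbounded open cone. Your construction $\mathcal C=\bigcup_{r>0}rU$ merely fills in the detail the paper leaves implicit in the phrase ``an unbounded open cone around the axis.''
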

\begin{proof}
It suffices to prove the lemma for the region $$\{\mb{t}\in\Lie(A): \omega_{I}(\mb{t})\geq0,\forall\text{ nonempty proper UDS }I\in\mathcal I_n\}.$$ By our assumptions on $\{g_k\}_{k\in\mathbb N}$ and Proposition \ref{p41}, the graph $G(\{g_k\}_{k\in\mathbb N})$ associated to $\{g_k\}_{k\in\mathbb N}$ is connected. Now by applying Lemma \ref{l41} with the graph $G(\{g_k\}_{k\in\mathbb N})$, one can find $\mb{x}=(x_1,x_2,\dots,x_n)\in\Lie(A)$ such that $$\mb{x}\in\{\mb{t}\in\Lie(A): \omega_{I}(\mb{t})>0,\forall\text{ nonempty proper UDS }I\in\mathcal I_n\}.$$ Then by linearity, for any $\lambda>0$ $$\lambda\mb{x}\in\{\mb{t}\in\Lie(A): \omega_{I}(\mb{t})>0,\forall\text{ nonempty proper UDS }I\in\mathcal I_n\}.$$
This implies that there exists an unbounded open cone around the axis $\{\lambda\mb{x},\lambda>0\}$, which is contained in $$\{\mb{t}\in\Lie(A): \omega_{I}(\mb{t})\geq0,\forall\text{ nonempty proper UDS }I\in\mathcal I_n\}.$$ This completes the proof of the lemma.
\end{proof}

\begin{lemma}\label{p52}
Let $0<\delta<1$. For every $k\in\mathbb N$, the region $\Omega_{g_k,\delta}$ contains a ball $B_k$ of radius $r_k$, and $r_k\to\infty$ as $k\to\infty$.
\end{lemma}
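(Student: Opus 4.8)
The plan is to sort the defining inequalities $\omega_I(\mb{t})\ge\ln\delta-\ln\|g_k e_I\|$ of $\Omega_{g_k,\delta}$ into those that do not depend on $k$ and those that become vacuous as $k\to\infty$, and then to inscribe a ball of growing radius into the region cut out by the former. We are in the case $\mathcal A(A,\{g_k\})=\{0\}$, so by Proposition~\ref{p41} the graph $G(\{g_k\})$ is connected. By Proposition~\ref{p43}, whenever a nonempty $I\in\mathcal I_n$ is UDS one has $g_k e_I=e_I$, hence $\|g_k e_I\|=1$ and the associated inequality is just $\omega_I(\mb{t})\ge\ln\delta$, independent of $k$; the index $I=[n]$ is moreover vacuous, since $\omega_{[n]}$ vanishes identically on $\Lie(A)$ and $\delta<1$. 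When $I$ is not UDS, Proposition~\ref{p43} gives $\|g_k e_I\|\to\infty$, so $\ln\delta-\ln\|g_k e_I\|\to-\infty$; and since $g_k$ is unipotent upper triangular, $g_k e_I$ equals $e_I$ plus a combination of other standard basis wedges, whence $\|g_k e_I\|\ge1$ for all $I$ and all $k$.

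To obtain a good direction, I would apply Lemma~\ref{l41} to the connected graph $G(\{g_k\})$ (as in the proof of Lemma~\ref{p51}) to get $\mb{x}\in\Lie(A)$ with $\omega_I(\mb{x})>0$ for every nonempty proper UDS $I$. Let $\epsilon_0>0$ be the minimum of $\omega_I(\mb{x})$ over all such $I$, let $L$ be the maximum of $|\omega_I(\mb{x})|$ over all nonempty proper $I$, and let $M$ be a constant with $|\omega_I(\mb{t})|\le M\|\mb{t}\|$ for $\mb{t}\in\Lie(A)$ and all $I$ (one may take $M=\sqrt n$). Put $\Phi(k)=\min\{\ln\|g_k e_I\|: I\text{ not UDS}\}$; by the first paragraph $\Phi(k)\ge0$, and $\Phi(k)\to\infty$ since the set of non-UDS indices is finite (and nonempty for $n\ge2$). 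I then take $B_k$ to be the ball in $\Lie(A)$ of radius $r_k=\epsilon_0\Phi(k)/(M(L+\epsilon_0))$ centred at $p_k=\big(\Phi(k)/(L+\epsilon_0)\big)\mb{x}$. For $\mb{t}\in B_k$ and any $I$ one has $\omega_I(\mb{t})\ge\omega_I(p_k)-Mr_k=\big(\Phi(k)/(L+\epsilon_0)\big)(\omega_I(\mb{x})-\epsilon_0)$, and checking $\omega_I(\mb{t})\ge\ln\delta-\ln\|g_k e_I\|$ splits into three cases: for $I$ nonempty proper UDS it is $\ge0\ge\ln\delta$ because $\omega_I(\mb{x})\ge\epsilon_0$; for $I=[n]$ it is automatic; and for $I$ not UDS it is $\ge-\Phi(k)\ge-\ln\|g_k e_I\|\ge\ln\delta-\ln\|g_k e_I\|$, using $\omega_I(\mb{x})\ge-L$, the definition of $\Phi(k)$, and $\delta<1$. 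Hence $B_k\subset\Omega_{g_k,\delta}$, and $r_k\to\infty$ since $\Phi(k)\to\infty$.

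I expect the one genuine point to be the uniform handling of the non-UDS inequalities: the centre $p_k$ must be pushed far enough into each of the finitely many non-UDS half-spaces to accommodate a ball of radius $r_k\to\infty$, and this is precisely why $p_k$ is scaled by $\Phi(k)$ --- displacing the centre by $c\,\mb{x}$ degrades each non-UDS inequality only at the bounded linear rate $L+\epsilon_0$, whereas the slack available there grows like $\ln\|g_k e_I\|\ge\Phi(k)$, so matching these two rates produces a ball whose radius is unbounded. Everything else reduces to the elementary estimate $|\omega_I(\mb{t})-\omega_I(p_k)|\le M r_k$ on $B_k$.
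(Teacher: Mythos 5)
Your proof is correct and takes essentially the same route as the paper: the same split of the defining inequalities into the $k$-independent UDS constraints and the non-UDS constraints that relax as $k\to\infty$ (via Proposition \ref{p43}), together with connectivity of the graph (Proposition \ref{p41}) and Lemma \ref{l41} to produce a direction $\mb{x}$ on which all proper UDS functionals are positive. The only difference is presentational: the paper invokes Lemma \ref{p51} to get a fixed unbounded cone and intersects it with the large ball furnished by the non-UDS constraints, while you carry out the same geometry explicitly by centering the ball at a multiple of $\mb{x}$ scaled by $\Phi(k)$ and computing the radius.
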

\begin{proof}
By definition, we know that
$$\Omega_{g_k,\delta}=\bigcap_{I\in\mathcal I_n}\left\{\mb{t}\in\Lie(A):\omega_{I}(\mb{t})\geq\ln\delta-\ln\|g_ke_{I}\|\right\}.$$ Note that the origin belongs to $\Omega_{g_k,\delta}$, because each $g_k$ is in the upper triangular unipotent subgroup and $\|g_ke_I\|\geq1$ for nonempty $I\in\mathcal I_n$. Now we can write 
\begin{align*}
\Omega_{g_k,\delta}&=\bigcap_{I\text{ UDS }}\left\{\omega_{I}(\mb{t})\geq\ln(\delta/\|g_ke_{I}\|)\right\}\cap\bigcap_{I\text{ non-UDS }}\left\{\omega_{I}(\mb{t})\geq\ln(\delta/\|g_ke_{I}\|)\right\}\\
&=\bigcap_{I\text{ UDS }}\left\{\omega_{I}(\mb{t})\geq\ln\delta\right\}\cap\bigcap_{I\text{ non-UDS }}\left\{\omega_{I}(\mb{t})\geq\ln(\delta/\|g_ke_{I}\|)\right\}
\end{align*}
where we use $g_ke_{I}=e_{I}$ for any UDS set $I$ by Proposition \ref{p43}. For a non-UDS set $I$, we have $g_ke_{I}\to\infty$ as $k\to\infty$.

Since $g_ke_{I}\to\infty$ for any non-UDS set $I$, the region $$\bigcap_{I\text{ non-UDS }}\left\{\omega_{I}(\mb{t})\geq\ln(\delta/\|g_ke_{I}\|)\right\}$$ contains a large ball $S_k$ around the origin for sufficiently large $k$. By Lemma \ref{p51}, the region $$\bigcap_{I\text{ UDS }}\left\{\omega_{I}(\mb{t})\geq\ln\delta\right\}$$ contains an unbounded cone $C$ (which does not depend on $k$) with cusp at the origin. This implies that $$\Omega_{g_k,\delta}\supset S_k\cap C$$ and $\Omega_{g_k,\delta}$ contains a large ball $B_k$ of radius $r_k$ with $r_k\to\infty$ as $k\to\infty$.
\end{proof}

\begin{proposition}\label{p53}
For any $0<\delta<1$, we have $$\lim_{k\to\infty}\frac{\Area(\partial\Omega_{g_k,\delta})}{\operatorname{Vol}(\Omega_{g_k,\delta})}=0.$$
\end{proposition}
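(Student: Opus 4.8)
The plan is to combine Lemma~\ref{p32} with the estimate on the inradius of $\Omega_{g_k,\delta}$ provided by Lemma~\ref{p52}. Recall that $\Omega_{g_k,\delta}$ is a bounded convex polytope in the $(n-1)$-dimensional space $\Lie(A)$, and by Lemma~\ref{p52} it contains a ball $B_k$ of radius $r_k$ with $r_k\to\infty$ as $k\to\infty$. Applying Lemma~\ref{p32} with $d=n-1$ and $r=r_k$ gives
$$
\frac{\Area(\partial\Omega_{g_k,\delta})}{\Vol(\Omega_{g_k,\delta})}\leq\frac{n-1}{r_k}.
$$
Since the right-hand side tends to $0$ as $k\to\infty$, and the left-hand side is nonnegative, we conclude that $\lim_{k\to\infty}\Area(\partial\Omega_{g_k,\delta})/\Vol(\Omega_{g_k,\delta})=0$, which is exactly the assertion of the proposition.

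Thus the statement is essentially immediate once Lemma~\ref{p52} is in hand; the only point requiring a moment's care is to check the hypotheses of Lemma~\ref{p32} — namely that $\Omega_{g_k,\delta}$ is bounded and convex (guaranteed by Lemma~\ref{p31}) and contains a ball of positive radius (guaranteed by Lemma~\ref{p52}). One should also note that $\Vol(\Omega_{g_k,\delta})>0$ for all large $k$, so the ratio is well defined; this again follows from the fact that $\Omega_{g_k,\delta}$ contains a ball of radius $r_k>0$.

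The real content, and hence the main obstacle, lies entirely in the already-established Lemma~\ref{p52}: showing that the inradius $r_k$ of $\Omega_{g_k,\delta}$ genuinely blows up. That argument rests on the decomposition of $\Omega_{g_k,\delta}$ into the constraints coming from UDS index sets (where $g_ke_I=e_I$ by Proposition~\ref{p43}, so the constraint is independent of $k$ and carves out a fixed unbounded cone by Lemma~\ref{p51}) and those from non-UDS index sets (where $\|g_ke_I\|\to\infty$, so the constraint recedes to infinity and leaves room for an arbitrarily large ball around the origin). The connectedness of the graph $G(\{g_k\}_{k\in\mathbb N})$, equivalent to $\mathcal A(A,\{g_k\}_{k\in\mathbb N})=\{0\}$ by Proposition~\ref{p41}, is what makes Lemma~\ref{p51} applicable. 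Granting all of this, the proof of Proposition~\ref{p53} is a one-line application of Lemma~\ref{p32}.
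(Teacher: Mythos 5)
Your proof is correct and follows exactly the paper's argument: the paper also deduces Proposition \ref{p53} directly from Lemma \ref{p32} combined with Lemma \ref{p52}. The additional remarks you make (boundedness and convexity from Lemma \ref{p31}, positivity of the volume) are fine but not needed beyond the one-line combination of the two lemmas.
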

\begin{proof}
The proposition follows from Lemma \ref{p32} and Lemma \ref{p52}.
\end{proof}

Actually, we will apply the following variant of Proposition \ref{p53} later.
\begin{corollary}\label{ac61}
Let $0<\delta_1<\delta_2<1$. Then $$\lim_{k\to\infty}\frac{\operatorname{Vol}(\Omega_{g_k,\delta_2})}{\operatorname{Vol}(\Omega_{g_k,\delta_1})}=1.$$
\end{corollary}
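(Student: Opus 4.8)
\textbf{Proof proposal for Corollary \ref{ac61}.}

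The plan is to deduce this from Proposition \ref{p53} together with the lower bound on the inradius of $\Omega_{g_k,\delta}$ provided by Lemma \ref{p52}, via a geometric containment between the two polytopes. First I would observe that $\Omega_{g_k,\delta_2}\subseteq\Omega_{g_k,\delta_1}$, since $\delta_1<\delta_2$ forces $\ln\delta_1-\ln\|g_ke_I\|\le\ln\delta_2-\ln\|g_ke_I\|$, so the defining inequalities for $\Omega_{g_k,\delta_1}$ are implied by those for $\Omega_{g_k,\delta_2}$. In particular $\Vol(\Omega_{g_k,\delta_2})\le\Vol(\Omega_{g_k,\delta_1})$, so the ratio is at most $1$ and it remains to bound it from below.

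The key point is that $\Omega_{g_k,\delta_1}$ is contained in a bounded $\epsilon$-neighborhood of $\Omega_{g_k,\delta_2}$ with $\epsilon=\epsilon(\delta_1,\delta_2)$ \emph{independent of $k$}. Indeed, the two polytopes are cut out by parallel families of half-spaces $\{\omega_I(\mb t)\ge c_I^{(j)}\}$ whose right-hand sides differ only by the constant $\ln\delta_1-\ln\delta_2$; shifting each bounding hyperplane of $\Omega_{g_k,\delta_2}$ outward by the fixed Euclidean distance $|\ln\delta_2-\ln\delta_1|/\min_{I\neq\emptyset}\|\omega_I\|$ (where $\|\omega_I\|$ denotes the norm of the linear functional $\omega_I$ restricted to $\Lie(A)$, a finite positive quantity depending only on $n$) produces a polytope containing $\Omega_{g_k,\delta_1}$. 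Hence, with $\epsilon:=|\ln\delta_2-\ln\delta_1|/\min_{\emptyset\neq I\in\mathcal I_n}\|\omega_I|_{\Lie(A)}\|$, we get
$$\Omega_{g_k,\delta_1}\subseteq(\Omega_{g_k,\delta_2})_\epsilon,$$
the $\epsilon$-neighborhood of $\Omega_{g_k,\delta_2}$.

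Now I would combine this with Lemma \ref{p52} applied to $\delta_2$: the polytope $\Omega_{g_k,\delta_2}$ contains a ball of radius $r_k$ with $r_k\to\infty$. Applying Lemma \ref{p34} to $\Omega=\Omega_{g_k,\delta_2}$ (here $d=n-1=\dim\Lie(A)$) yields
$$\frac{\Vol(\Omega_{g_k,\delta_1})}{\Vol(\Omega_{g_k,\delta_2})}\le\frac{\Vol((\Omega_{g_k,\delta_2})_\epsilon)}{\Vol(\Omega_{g_k,\delta_2})}\le\left(1+\frac{\epsilon}{r_k}\right)^{n-1}.$$
Since $\epsilon$ is a fixed constant and $r_k\to\infty$, the right-hand side tends to $1$; combined with the inequality $\Vol(\Omega_{g_k,\delta_2})\le\Vol(\Omega_{g_k,\delta_1})$ this gives $\Vol(\Omega_{g_k,\delta_2})/\Vol(\Omega_{g_k,\delta_1})\to1$, completing the proof.

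\textbf{Main obstacle.} The only genuinely nontrivial step is verifying the containment $\Omega_{g_k,\delta_1}\subseteq(\Omega_{g_k,\delta_2})_\epsilon$ with a $k$-independent $\epsilon$; this requires a small convex-geometry argument that moving every facet hyperplane of a polytope outward by Euclidean distance $\epsilon$ enlarges the polytope to contain everything within $\epsilon$ of it — more precisely, one checks that a point satisfying all the loosened inequalities $\omega_I(\mb t)\ge c_I-|\ln\delta_2-\ln\delta_1|$ lies within $\epsilon$ of the set satisfying the tight inequalities, using that the $\epsilon$-neighborhood of an intersection of half-spaces contains the intersection of the outward-translated half-spaces. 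Alternatively, one can bypass this by noting that Proposition \ref{p53} and the containment $\Omega_{g_k,\delta_2}\subseteq\Omega_{g_k,\delta_1}$ already let one estimate $\Vol(\Omega_{g_k,\delta_1}\setminus\Omega_{g_k,\delta_2})$ by an integral of the cross-sectional areas of slabs of fixed width against $\Area(\partial\Omega_{g_k,\delta_2})$, which is $o(\Vol(\Omega_{g_k,\delta_2}))$ by Proposition \ref{p53}; either route is routine once set up.
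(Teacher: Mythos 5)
Your overall strategy is viable, and the second half of the argument is fine: the trivial inclusion $\Omega_{g_k,\delta_2}\subseteq\Omega_{g_k,\delta_1}$, Lemma \ref{p52} giving an inradius $r_k\to\infty$ for $\Omega_{g_k,\delta_2}$, and Lemma \ref{p34} do combine exactly as you say once the containment $\Omega_{g_k,\delta_1}\subseteq(\Omega_{g_k,\delta_2})_\epsilon$ is available with $\epsilon$ independent of $k$. The gap is in the step you yourself flag as the main obstacle: the principle you invoke --- that the $\epsilon$-neighborhood of an intersection of half-spaces contains the intersection of the half-spaces each translated outward by $\epsilon$ --- is false in general, so your formula $\epsilon=|\ln\delta_2-\ln\delta_1|/\min_{\emptyset\neq I}\|\omega_I|_{\Lie(A)}\|$ is not justified as stated. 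For two half-spaces in the plane whose inner normals make an angle close to $\pi$ (a thin wedge), a point satisfying both constraints with slack $\eta$ can lie at distance of order $\eta$ divided by the angle deficit from the wedge, which is unbounded as the wedge degenerates: displacing each bounding hyperplane by $\eta$ does not control the Hausdorff distance between the original and the relaxed polytope. What saves your argument here is that the constraint directions form a \emph{fixed} finite family $\{\omega_I|_{\Lie(A)}:\emptyset\neq I\in\mathcal I_n\}$ depending only on $n$, with only the right-hand sides varying in $k$, and that $\Omega_{g_k,\delta_2}\neq\emptyset$ (it contains the origin since $\|g_ke_I\|\geq1$ and $\delta_2<1$). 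A Hoffman-type error bound for systems of linear inequalities then yields $\Omega_{g_k,\delta_1}\subseteq(\Omega_{g_k,\delta_2})_{C(n)\eta}$ with $\eta=\ln\delta_2-\ln\delta_1$ and $C(n)$ independent of $k$, which is all you need; but this uniform conditioning statement (or some substitute argument tailored to this family) must actually be proved --- it is where the real content of your containment lies, and it is not supplied by the naive hyperplane-shifting picture.

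For comparison, the paper sidesteps the containment entirely: it covers $\Omega_{g_k,\delta_1}\setminus\Omega_{g_k,\delta_2}$ by cylinders of height $\ln\delta_2-\ln\delta_1$ erected over the facets of $\Omega_{g_k,\delta_1}$ (the same device as Claim 2 in the proof of Lemma \ref{p33}), obtaining $\Vol(\Omega_{g_k,\delta_1})\leq\Vol(\Omega_{g_k,\delta_2})+(\ln\delta_2-\ln\delta_1)\,\Area(\partial\Omega_{g_k,\delta_1})$, and then concludes immediately from Proposition \ref{p53}. This is essentially the alternative you sketch in your final sentence, and it is the cleaner way to finish: it uses only the already-established surface-area-to-volume estimate and needs no uniform constant for the family of functionals. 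I would either adopt that route or add the Hoffman-type lemma explicitly; with either repair your proof is complete.
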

\begin{proof}
By definition, we know that $\Omega_{g_k,\delta_2}\subset\Omega_{g_k,\delta_1}$. Let $\{f_i\}$ be the collection of the facets of $\Omega_{g_k,\delta_1}$, and denote by $P_i$ the hyperplane determined by $f_i$. For each $f_i$, let $B_i$ be the unique cylinder with the following properties:
\begin{enumerate}
\item the base of $B_i$ is $f_i$, and the height of $B_i$ is equal to $\ln\delta_2-\ln\delta_1$.
\item $B_i$ and $\Omega_{g_k,\delta_1}$ lie in the same half-space determined by $P_i$.
\end{enumerate}
Then one has $$\Omega_{g_k,\delta_1}\subset \bigcup_i B_i\cup\Omega_{g_k,\delta_2}$$ and $$\operatorname{Vol}(\Omega_{g_k,\delta_1})\leq\sum_i\operatorname{Vol}(B_i)+\operatorname{Vol}(\Omega_{g_k,\delta_2})=(\ln\delta_2-\ln\delta_1)\Area(\partial\Omega_{g_k,\delta_1})+\operatorname{Vol}(\Omega_{g_k,\delta_2})$$ Now the corollary follows from Proposition \ref{p53}.
\end{proof}

From now on, we will fix a $\delta>0$ for any $g\in G$ in the notation $\Omega_{g,\delta}$ unless otherwise specified. For each $k\in\mathbb N$, we choose the representative $$\frac1{\Vol(\Omega_{g_k,\delta})}(g_k)_*\mu_{Ax_e}$$ in $[(g_k)_*\mu_{Ax_e}]$. We will show in the following section that these representatives converge to a locally finite measure $\nu$. We will denote by $$\mu_{Ax_e}|_{\Omega_{g_k,\delta}}$$ the restriction of $\mu_{Ax_e}$ on $\exp({\Omega_{g_k,\delta}})x_e$.

\section{Nondivergence}\label{nondiv}
In this section, we will study the nondivergence of the sequence $$\frac1{\Vol(\Omega_{g_k,\delta})}(g_k)_*\mu_{Ax_e}.$$ The study relies on a growth property of a special class of functions studied by Eskin, Mozes and Shah \cite{EMS97}, and a non-divergence theorem proved by Kleinbock and Margulis~\cite{KM98,Kle10}. As a corollary we will deduce that these measures actually converge to a probability measure, which is invariant under a unipotent subgroup. This is where Ratner's theorem will come into play in section \ref{line} and help us prove the measure rigidity. The goal in this section is to prove Proposition \ref{p65}.

First, we need the following definition of a class of functions, which is introduced in \cite{EMS97}.

\begin{definition}[{\cite[Definition 2.1]{EMS97}}]\label{def71}
Let $d\in\mathbb N$ and $\lambda>0$ be given. Define by $E(d,\lambda)$ the set of functions $f:\mathbb R\to\mathbb C$ of the form $$f(t)=\sum_{i=1}^da_ie^{\lambda_it}\quad(\forall t\in\mathbb R)$$ where $a_i\in\mathbb C$ and $\lambda_i\in\mathbb C$ with $|\lambda_i|\leq\lambda$.
\end{definition}

The following proposition describes the growth property of functions in $E(d,\lambda)$. We denote by $m_{\mathbb R}$ the Lebesgue measure on $\mathbb R$.

\begin{proposition}[{\cite[Corollary 2.10]{EMS97}}]\label{p61}
For any $d\in\mathbb N$ and $\lambda>0$, there exists a constant $\delta_0=\delta_0(d,\lambda)$ satisfying the following: for any $\epsilon>0$, there exists $M>0$ such that for any $f\in E(d,\lambda)$ and any interval $\Xi$ of length at most $\delta_0$ 
\begin{equation}\label{e}
m_{\mathbb R}(\{t\in \Xi: |f(t)|<(1/M)\sup_{t\in \Xi}|f(t)|\})\leq\epsilon m_{\mathbb R}(\Xi).
\end{equation}
\end{proposition}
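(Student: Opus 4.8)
\textbf{Proof proposal for Proposition~\ref{p61}.}

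The statement is quoted from \cite[Corollary 2.10]{EMS97}, so in a self-contained write-up the plan would be to recall its proof from the theory of $(C,\alpha)$-good functions. The core observation is that a function $f \in E(d,\lambda)$, being a finite exponential sum with exponents bounded by $\lambda$ in absolute value, satisfies quantitative non-concentration estimates on short intervals: on an interval $\Xi$ of length at most some $\delta_0(d,\lambda)$, the function $|f|$ cannot be much smaller than its supremum on a set of more than a prescribed fraction of $\Xi$. The first step is to reduce to a normalized setting: after translating and rescaling the interval $\Xi$ to a fixed interval (say $[0,\delta_0]$) and after dividing $f$ by $\sup_\Xi |f|$, one works with the compact family $\mathcal{F}$ of functions $f \in E(d,\lambda)$ with $\sup_{[0,\delta_0]}|f| = 1$. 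The key point is that $\mathcal{F}$ is (pre)compact in the topology of uniform convergence on $[0,\delta_0]$ — the coefficients $a_i$ are controlled because the exponents lie in a compact set and the functions $e^{\lambda_i t}$ form a ``well-conditioned'' system on an interval of length $\le \delta_0$; here $\delta_0$ is chosen small enough (depending only on $d,\lambda$) that no nontrivial such exponential sum of length $d$ can vanish identically, and in fact so that a lower bound of Turán type holds.

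The second step is the Turán-type / analyticity input. Each $f \in E(d,\lambda)$ extends to an entire function of exponential type at most $\lambda$, and a normalized such function with $\sup_{[0,\delta_0]}|f|=1$ cannot vanish on a set of positive measure; moreover one gets a uniform modulus-of-continuity-type control on the set where $|f|$ is small. Concretely, the plan is: (i) show that the sublevel set $\{t \in [0,\delta_0] : |f(t)| < \eta\}$ has measure tending to $0$ uniformly over $f \in \mathcal{F}$ as $\eta \to 0$; this follows from compactness of $\mathcal{F}$ together with the fact that each individual limit function $f_0 \in \overline{\mathcal{F}}$ is a nonzero real-analytic function, hence has a zero set of measure zero, so $m_{\mathbb R}(\{|f_0| < \eta\}) \to 0$, and the convergence is uniform on the compact family. (ii) Given $\epsilon > 0$, pick $\eta = 1/M$ small enough that $m_{\mathbb R}(\{t \in [0,\delta_0] : |f(t)| < 1/M\}) \le \epsilon \delta_0$ for all $f \in \mathcal{F}$.

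The final step is to undo the normalization. Given an arbitrary interval $\Xi$ with $m_{\mathbb R}(\Xi) \le \delta_0$ and an arbitrary $f \in E(d,\lambda)$, apply an affine change of variables $t \mapsto a + (m_{\mathbb R}(\Xi)/\delta_0)\,s$ carrying $[0,\delta_0]$ onto $\Xi$; note that $E(d,\lambda)$ is \emph{not} preserved by this rescaling in general since it shrinks (not enlarges) the exponents, so $|\lambda_i|$ only decreases and the rescaled function still lies in $E(d,\lambda)$ — this is exactly why the condition is $|\lambda_i| \le \lambda$ rather than equality, and why only an upper bound $\delta_0$ on the interval length is needed. Then divide by $\sup_\Xi |f|$ to land in $\mathcal{F}$, apply step~(ii), and transport the measure estimate back through the affine map, which multiplies both $m_{\mathbb R}(\{|f|<(1/M)\sup|f|\})$ and $m_{\mathbb R}(\Xi)$ by the same Jacobian factor, leaving the ratio unchanged. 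This yields \eqref{e}.

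\textbf{Main obstacle.} The delicate point is step~(i): establishing that the sublevel-set measure goes to $0$ \emph{uniformly} over the whole family $\mathcal{F}$, and in particular verifying that the family is genuinely precompact and that every limit point is a nonzero function (so that the choice of $\delta_0$ must rule out accidental degeneration of an exponential sum to the zero function on $[0,\delta_0]$). This is where the hypothesis that the number of terms $d$ is fixed and the exponents are uniformly bounded is essential; quantitatively it rests on the fact that a nonzero exponential polynomial with $d$ terms has at most $d-1$ real zeros (or a suitable Turán lemma bounding $\sup_\Xi |f|$ below by $\sup$ over a subinterval), which prevents the normalized functions from concentrating their mass away from a small set. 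Everything else — the reductions and the affine bookkeeping — is routine.
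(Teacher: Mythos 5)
Your reductions are fine, but note first that the paper itself offers no proof of Proposition \ref{p61}: it is imported verbatim as \cite[Corollary 2.10]{EMS97}, so the only question is whether your sketch stands on its own. The affine rescaling step is correct (shrinking the interval only shrinks the exponents, so $E(d,\lambda)$ is preserved, and both sides of \eqref{e} scale by the same Jacobian), and granted that the normalized family $\mathcal F=\{f\in E(d,\lambda):\sup_{[0,\delta_0]}|f|=1\}$ is precompact in $C([0,\delta_0])$ with every uniform limit a nonzero analytic function, your subsequence/contradiction argument does give the uniform decay of sublevel sets. Incidentally, in this approach no smallness of $\delta_0$ is needed: a nonzero exponential sum never vanishes identically on any interval (analyticity), so your stated reason for shrinking $\delta_0$ is vacuous and any fixed $\delta_0$ would serve.

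The genuine gap is the precompactness itself, and the justification you give for it is false as stated. You claim ``the coefficients $a_i$ are controlled because \dots the functions $e^{\lambda_i t}$ form a well-conditioned system''; they are not controlled when exponents coalesce. For example $f_n(t)=n\bigl(e^{t/n}-1\bigr)\in E(2,\lambda)$ has $\sup_{[0,\delta_0]}|f_n|\to\delta_0$ while its coefficients $\pm n$ blow up, and the uniform limit $t$ is no longer of the form $\sum a_ie^{\lambda_i t}$; so $\mathcal F$ is not closed and coefficient bounds simply fail. The family is nevertheless precompact, but establishing this is exactly the analytic content of \cite[Corollary 2.10]{EMS97} and requires a real input: e.g.\ a Bernstein--Markov type bound $\sup_{[0,\delta_0]}|f'|\le C(d,\lambda,\delta_0)\sup_{[0,\delta_0]}|f|$ for exponential sums (giving equicontinuity), or rewriting $f$ in a basis that stays well conditioned under coalescence (divided differences of $s\mapsto e^{st}$ along $\lambda_1,\dots,\lambda_d$, i.e.\ viewing $E(d,\lambda)$ inside the continuously varying $d$-dimensional solution spaces of $\prod_{i}(\tfrac{d}{dt}-\lambda_i)f=0$ over the compact parameter set $\{|\lambda_i|\le\lambda\}$), or a zero-counting bound valid for complex exponents and complex coefficients --- your ``at most $d-1$ real zeros'' is a fact about real exponents and does not apply as quoted. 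You do flag this as the main obstacle, which is honest, but as written the load-bearing step is asserted rather than proved, and asserted with an incorrect reason; closing it would essentially reproduce the quantitative argument of \cite{EMS97}.
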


For any nonzero discrete subgroup $\Lambda$ in $\mathbb R^{n}$, one could define its co-volume as follows. Let $\{v_1,v_2,\dots,v_l\}$ be a $\mathbb Z$-basis of $\Lambda$, where $l$ is the rank of $\Lambda$. Then the co-volume of $\Lambda$ is defined to be the length of the wedge product $v_1\wedge\dots\wedge v_l$ in $\wedge^l\mathbb R^n$, where the norm in $\wedge^l\mathbb R^n$ is induced by the Euclidean norm on $\mathbb R^n$. By abusing notations, we will write $\|\Lambda\|$ for the co-volume of $\Lambda$. One could check that this notion of co-volume is well defined.

The following theorem is essentially proved in \cite{Kle10} and \cite{KM98}. 

\begin{theorem}[Cf. {\cite[Theorem 3.4]{Kle10}, \cite[Theorem 5.2]{KM98}}]\label{th61}
Let $d\in\mathbb N$ and $\lambda>0$. Let $\delta_0=\delta_0(d,\lambda)$ be as in Proposition \ref{p61}. Suppose that an interval $\Xi\subset\mathbb R$ of length at most $\delta_0$, $0<\rho<1$ and a continuous map $h: \Xi\to\SL(n,\mathbb R)$ are given. Assume that for any nonzero discrete subgroup $\Delta$ in $\mathbb Z^n$ we have
\begin{enumerate}
\item the function $x\to\|h(x)\Delta\|^2$ on $\Xi$ belongs to $E(d,\lambda)$ and
\item $\sup_{x\in \Xi}\|h(x)\Delta\|\geq\rho$.
\end{enumerate}
Then for any $\epsilon<\rho$, there exists a constant $\delta(\epsilon)>0$ depending only on $d$ and $\lambda$ such that $$m_{\mathbb R}(\{x\in \Xi:h(x)\mathbb Z^n\cap B_{\delta(\epsilon)}\neq\{0\}\})\leq\epsilon m_{\mathbb R}(\Xi).$$
\end{theorem}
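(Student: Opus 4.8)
The plan is to reproduce the Kleinbock--Margulis quantitative non-divergence argument --- in the form of \cite[Theorem 3.4]{Kle10}, which refines \cite[Theorem 5.2]{KM98} --- with the hypothesis there on $(C,\alpha)$-good functions replaced throughout by Proposition \ref{p61}. First I would set up the standard combinatorial framework: let $\mathcal{P}$ be the collection of primitive subgroups of $\mathbb{Z}^n$ (those $\Delta$ with $\Delta=(\mathbb{Q}\Delta)\cap\mathbb{Z}^n$), partially ordered by inclusion, with least element $\{0\}$ and greatest element $\mathbb{Z}^n$, and for $\Delta\in\mathcal{P}$ of rank $j$ put $\psi_\Delta(x)=\|h(x)\Delta\|=\|h(x)v_1\wedge\cdots\wedge h(x)v_j\|$ for a $\mathbb{Z}$-basis $v_1,\dots,v_j$ of $\Delta$, the covolume of $h(x)\Delta$ in its span. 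Every subgroup of $\mathbb{Z}^n$ is of finite index in its primitive hull and the two covolumes differ by that index, so hypotheses (1) and (2) pass to all $\Delta\in\mathcal{P}$; moreover $\psi_{\mathbb{Z}^n}\equiv1$ since $h(x)\in\SL(n,\mathbb{R})$, and $\psi$ obeys the Pl\"ucker-type submodular inequality $\psi_{\Delta\vee\Delta'}\psi_{\Delta\wedge\Delta'}\le\psi_\Delta\psi_{\Delta'}$.

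The analytic input is extracted from Proposition \ref{p61}: given $\eta>0$, applying the proposition to $\psi_\Delta^2\in E(d,\lambda)$ on $\Xi$ and on all of its subintervals (which again have length $\le\delta_0(d,\lambda)$) produces $M=M(\eta,d,\lambda)$ with
\[
m_{\mathbb{R}}\bigl(\{t\in\Xi':\psi_\Delta(t)<M^{-1/2}\sup\nolimits_{\Xi'}\psi_\Delta\}\bigr)\le\eta\,m_{\mathbb{R}}(\Xi')
\]
for every $\Delta\in\mathcal{P}$ and every subinterval $\Xi'\subseteq\Xi$; combined with hypothesis (2) this gives the global bound $m_{\mathbb{R}}(\{t\in\Xi:\psi_\Delta(t)<\rho M^{-1/2}\})\le\eta\,m_{\mathbb{R}}(\Xi)$. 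This is exactly the role of the $(C,\alpha)$-good estimate in \cite{Kle10}, with the power law $(\epsilon/\rho)^\alpha$ there replaced by the parameter $\eta$, which can be made arbitrarily small at the cost of shrinking $\delta$ in a manner controlled only by $d$, $\lambda$, $\eta$ and $n$.

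I would then run the Kleinbock--Margulis induction essentially verbatim. The combinatorial mechanism is the usual one (Minkowski's second theorem plus the poset analysis of \cite[\S5]{KM98}): if $h(x)\mathbb{Z}^n\cap B_\delta\ne\{0\}$ with $\delta<1$, some primitive $\Delta\in\mathcal{P}$ has $\psi_\Delta(x)<\delta$, and the submodular inequality together with $\psi_{\mathbb{Z}^n}\equiv1$ prevents too many of the $\psi_\Delta$ from being simultaneously small; this lets one partition $\Xi$ according to the maximal primitive subgroup that is ``small at $x$'' and induct on its corank, invoking the displayed estimate at each of the $\le n$ stages with $\eta$ divided by a combinatorial factor depending only on $n$. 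Crucially --- and this is why hypothesis (1) is imposed for \emph{every} $\Delta$ --- the induction only manipulates the functions $\{\psi_\Delta\}_{\Delta\in\mathcal{P}}$ already at hand, never creating new members of any $E(d',\lambda')$, so the uniform control of Proposition \ref{p61} is preserved through all stages. Choosing $\eta$ suitably in terms of $\epsilon$ and $n$ fixes $M=M(\epsilon,d,\lambda,n)$, and one takes $\delta(\epsilon)$ to be an appropriate multiple of $\rho M^{-1/2}$; since $\delta(\epsilon)\le\epsilon<\rho$ can be arranged, $\delta(\epsilon)$ depends only on $d$ and $\lambda$ (and $\epsilon$, $n$), not on $\Xi$, $h$ or $\rho$.

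The hardest part is this last induction: keeping the covering multiplicities and the degradation of the parameter $\eta$ under control over the bounded number of stages so that the resulting $\delta(\epsilon)$ is genuinely uniform in $\Xi$, $h$ and $\rho$. A secondary point, differing from \cite{Kle10}, is that here $\Xi$ appears in both hypothesis and conclusion with no dilation; this is legitimate because Proposition \ref{p61} already furnishes a no-concentration estimate on every interval of length $\le\delta_0$, so no Besicovitch covering at a dilated scale is needed --- it suffices to decompose $\Xi$ into subintervals and apply Proposition \ref{p61} on each.
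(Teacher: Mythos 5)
Your proposal is correct and is essentially the paper's own argument: the paper proves this theorem by running the Kleinbock--Margulis scheme of \cite[Theorem 5.2]{KM98} verbatim, with the $(C,\alpha)$-good estimate replaced by the no-concentration inequality of Proposition \ref{p61}, exactly as you describe (including applying it on subintervals and using hypothesis (2) only at the top level via the stopping-time choice of intervals). Your additional remarks on the choice $\delta(\epsilon)\le\epsilon<\rho$ and on avoiding dilation in the one-dimensional covering are consistent with how the substitution is intended to work.
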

\begin{proof}
The proof is the same as in \cite[Theorem 5.2]{KM98}, but the inequality (\ref{e}) is used instead of the $(C,\alpha)$-good property.
\end{proof}

\begin{lemma}\label{p62}
Let $E$ be a normed vector space, and let $\alpha_i$ $(1\leq i\leq m)$ be different linear functionals on $E$. Then for any $r>0$, we can find $m$ vectors $x_1,x_2,\dots,x_m\in B_r(0)$ such that $$\det\left(\left(e^{\alpha_i(x_j)}\right)_{1\leq i, j\leq m}\right)\neq0.$$ Here $B_r(0)$ is the ball of radius $r$ around $0$ in $E$.
\end{lemma}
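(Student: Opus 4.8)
The statement is a genericity claim: given distinct linear functionals $\alpha_1,\dots,\alpha_m$ on a normed space $E$, one can choose $x_1,\dots,x_m$ in an arbitrarily small ball so that the matrix $\bigl(e^{\alpha_i(x_j)}\bigr)$ is nonsingular. I would prove this by showing that the function
\[
\Phi(x_1,\dots,x_m) = \det\Bigl(\bigl(e^{\alpha_i(x_j)}\bigr)_{1\le i,j\le m}\Bigr)
\]
is not identically zero on $B_r(0)^m$; since it is real-analytic (indeed an exponential polynomial) in the $x_j$'s, a single point where it is nonzero suffices, and in fact the non-vanishing set is dense. So everything reduces to exhibiting \emph{some} configuration making the determinant nonzero.

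\medskip

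\textbf{Key steps.} First I would reduce to a one-dimensional situation. Because the $\alpha_i$ are pairwise distinct linear functionals, there is a single vector $v \in E$ on which they take pairwise distinct values: the set $\{x : \alpha_i(x) = \alpha_j(x)\}$ is a proper linear subspace for each pair $i \ne j$, and a finite union of proper subspaces of a vector space (over $\mathbb{R}$) cannot be everything, so pick $v$ outside all of them, and scale $v$ so that $v \in B_r(0)$. Set $c_i := \alpha_i(v)$, so the $c_i$ are distinct real numbers. Now restrict attention to points of the form $x_j = s_j v$ with $s_j \in (-1,1)$ small enough that $s_j v \in B_r(0)$; then $\alpha_i(x_j) = s_j c_i$ and the matrix becomes $\bigl(e^{c_i s_j}\bigr)_{i,j}$. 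Second, I would recognize this as a generalized Vandermonde / exponential matrix: writing $y_j := e^{s_j}$, the entry is $y_j^{c_i}$. Since the $c_i$ are distinct reals, the functions $y \mapsto y^{c_1}, \dots, y \mapsto y^{c_m}$ on $(0,\infty)$ are linearly independent (for instance by the generalized Vandermonde determinant formula when the $c_i$ are ordered, or by an induction using that distinct exponentials $e^{c_i t}$ are linearly independent), hence there exist $y_1,\dots,y_m$ — which we may take arbitrarily close to $1$, i.e.\ $s_j$ arbitrarily close to $0$ — making $\det\bigl(y_j^{c_i}\bigr) \ne 0$. Taking these $s_j$ small enough keeps $x_j = s_j v \in B_r(0)$, and we are done.

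\medskip

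\textbf{The main obstacle.} The only genuine content is the non-vanishing of the exponential Vandermonde determinant $\det\bigl(e^{c_i s_j}\bigr)$ for suitable choices with distinct $c_i$; this is classical but deserves a clean one-line justification. The slickest route: view $f(t) = \sum_i a_i e^{c_i t}$ with the $a_i$ a putative nonzero null vector of the transpose; such an $f$ has at most $m-1$ real zeros (a standard fact about exponential sums, provable by induction on $m$ via Rolle's theorem after dividing by $e^{c_1 t}$), so it cannot vanish at $m$ distinct points $s_1 < \dots < s_m$ unless all $a_i = 0$. Choosing the $s_j$ to be, say, $s_j = \eta j$ for small $\eta>0$ then gives distinct points inside any prescribed interval around $0$, hence $x_j = s_j v \in B_r(0)$. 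The reduction to one dimension (finding $v$ avoiding finitely many hyperplanes) is routine but worth stating, since it is what connects the abstract normed-space hypothesis to the concrete exponential-sum lemma. I expect no real difficulty beyond packaging these two standard facts.
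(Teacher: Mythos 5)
Your argument is correct. The first half is exactly the paper's reduction: choose a direction $v$ avoiding the finitely many proper subspaces $\ker(\alpha_i-\alpha_j)$, so that the problem becomes one-dimensional with distinct exponents $c_i=\alpha_i(v)$. Where you diverge is in the one-dimensional step. The paper proceeds by induction on $m$: it expands $\det\bigl(e^{\lambda_i x_j}\bigr)_{1\le i,j\le m+1}$ along the last variable, invokes the induction hypothesis to make the relevant minor nonzero, and then uses that a nontrivial exponential sum is a nonzero analytic function of $x_{m+1}$ whose zeros are isolated, so some admissible $x_{m+1}$ works; this only produces \emph{some} good configuration in $(-r,r)^m$. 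You instead invoke the classical zero-counting bound (Rolle plus induction: an exponential sum with $m$ distinct real exponents and not-all-zero coefficients has at most $m-1$ real zeros) applied to a putative null vector of the transpose, which shows the determinant $\det\bigl(e^{c_i s_j}\bigr)$ is nonzero for \emph{every} choice of distinct nodes $s_1<\dots<s_m$. Both routes are elementary and complete; yours yields the stronger statement that along the chosen line any distinct small parameters work (and, as you note, the non-vanishing set is dense by real-analyticity), while the paper's is a minimal existence argument using only linear independence of exponentials and isolated zeros of analytic functions. For the lemma as stated, either suffices.
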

\begin{proof}
We can find a line $L$ through the origin such that $\alpha_i|L$ are different functionals defined on $L$. This could be achieved by picking a line which avoids all the kernels of $\alpha_i-\alpha_j$. Hence it suffices to prove the lemma for $\dim E=1$.

Let $E=\mathbb R$ and $\alpha_i(x)=\lambda_ix$ for different $\lambda_i$'s. We will show inductively that for any $r>0$ there exist $x_1,x_2,\dots,x_m\in(-r,r)$ such that $$\det\left(\left(e^{\lambda_ix_j}\right)_{1\leq i,j\leq m}\right)\neq0.$$ It is easy to verify for $m=1$. Now for $m+1$ different $\lambda_i$'s, we compute 
\begin{eqnarray*}
\det\left(\left(e^{\lambda_ix_j}\right)_{1\leq i, j\leq m+1}\right)=e^{\lambda_1x_{m+1}}A_1+e^{\lambda_2x_{m+1}}A_2+\dots+e^{\lambda_{m+1}x_{m+1}}A_{m+1}
\end{eqnarray*}
 where $A_{m+1}=\det\left(\left(e^{\lambda_ix_j}\right)_{1\leq i, j\leq m}\right)$. By induction hypothesis, we can find $x_1,x_2,\dots,x_m\in(-r,r)$ such that $A_{m+1}\neq0$. By the fact that $e^{\lambda_ix}$ $(1\leq i\leq m)$ are linearly independent functions, and by the choice of $x_1,x_2,\dots,x_m$, the function $\det\left(\left(e^{\lambda_ix_j}\right)_{1\leq i, j\leq m+1}\right)$ is a nonzero analytic function in $x_{m+1}$. Since zeros of any analytic function are isolated, this implies that there exists $x_{m+1}\in(-r,r)$ such that $\det\left(\left(e^{\lambda_ix_j}\right)_{1\leq i, j\leq m+1}\right)\neq0$.
\end{proof}

The following proposition describes the supremum of a special function. We will need this proposition to verify the assumption (ii) in Theorem \ref{th61}.

\begin{proposition}\label{p63}
Let $E$ and $V$ be normed vector spaces, and $v_i\in V$ $(1\leq i\leq m)$. Let $f$ be a map from $E$ to $V$ defined by $$f(x)=\sum_{i=1}^m e^{\alpha_i(x)}v_i$$ where $\alpha_i$'s $(1\leq i\leq m)$ are different linear functionals on $E$. Suppose that on an open ball $R\subset E$ of radius $r>0$ we have $$e^{\alpha_i(x)}\|v_i\|\geq M,\quad\forall x\in R,\;1\leq i\leq m$$ for some $M>0$. Then there exists a constant $c>0$ which only depends on the $\alpha_i$'s and $r$ such that $$\sup_{x\in R}\|f(x)\|\geq cM.$$
\end{proposition}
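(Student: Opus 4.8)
The plan is to reduce to the case where $R$ is centred at the origin and then to invert an auxiliary linear system furnished by Lemma \ref{p62}.

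First I would normalize. Writing $R=B_r(p)$ for its centre $p$ and setting $\tilde v_i:=e^{\alpha_i(p)}v_i$, one has $f(p+y)=\sum_{i=1}^m e^{\alpha_i(y)}\tilde v_i$ for $y\in B_r(0)$, so that $\sup_{x\in R}\|f(x)\|=\sup_{y\in B_r(0)}\big\|\sum_{i=1}^m e^{\alpha_i(y)}\tilde v_i\big\|$; moreover the hypothesis $e^{\alpha_i(x)}\|v_i\|\ge M$ on $R$ becomes $e^{\alpha_i(y)}\|\tilde v_i\|\ge M$ on $B_r(0)$, which at $y=0$ gives $\|\tilde v_i\|\ge M$ for every $i$. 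Thus it suffices to show: whenever $g(y)=\sum_{i=1}^m e^{\alpha_i(y)}w_i$ with $\|w_i\|\ge M$ for all $i$, one has $\sup_{y\in B_r(0)}\|g(y)\|\ge cM$ for a constant $c$ depending only on $r$ and the $\alpha_i$.

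Next I would apply Lemma \ref{p62} to the distinct functionals $\alpha_1,\dots,\alpha_m$ and the radius $r$, obtaining points $x_1,\dots,x_m\in B_r(0)$ with $\det\big((e^{\alpha_i(x_j)})_{i,j}\big)\ne0$. Evaluating $g$ at these points gives the linear system $g(x_j)=\sum_{i=1}^m e^{\alpha_i(x_j)}w_i$ $(1\le j\le m)$ with invertible coefficient matrix; inverting it produces reals $c_{ij}$, depending only on the $\alpha_i$ and the $x_j$, with $w_i=\sum_{j=1}^m c_{ij}\,g(x_j)$. Then for each $i$,
$$M\le\|w_i\|\le\Big(\sum_{j=1}^m|c_{ij}|\Big)\max_{1\le j\le m}\|g(x_j)\|\le\Big(\sum_{j=1}^m|c_{ij}|\Big)\sup_{y\in B_r(0)}\|g(y)\|,$$
so the desired inequality holds with $c=\big(\max_{1\le i\le m}\sum_{j=1}^m|c_{ij}|\big)^{-1}>0$; since the $x_j$ were selected (by Lemma \ref{p62}) using only $r$ and the $\alpha_i$, so is $c$.

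The one delicate point is the required independence of $c$ from the location of $R$ and from the data $v_i,M$. The normalization step is exactly what removes this dependence: absorbing the factors $e^{\alpha_i(p)}$ into the vectors leaves a sample matrix $(e^{\alpha_i(x_j)})$ built solely from points of $B_r(0)$, so neither $p$ nor $M$ nor the $v_i$ enter $c$. I expect no genuine obstruction here: once Lemma \ref{p62} supplies an invertible sample matrix, the remaining estimate is a soft linear-algebra argument.
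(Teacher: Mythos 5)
Your proof is correct and follows essentially the same route as the paper: translate to the ball $B_r(0)$, absorb the factor $e^{\alpha_i(\text{center})}$ into the vectors, sample at the points supplied by Lemma \ref{p62}, and invert the resulting linear system, with the constant controlled by (a norm of) the inverse sample matrix. The only cosmetic difference is that you bound via the entries $c_{ij}$ of the inverse rather than its operator norm, which yields the same kind of constant $c$ depending only on the $\alpha_i$'s and $r$.
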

\begin{proof}
Let $x_0$ be the center of $R$ and $B_r(0)$ the ball of radius $r$ around $0$ in $E$. Then $R=x_0+B_r(0)$. By Lemma \ref{p62},  we can find $y_j\in B_r(0)$ $(1\leq j\leq m)$ such that $$\det\left(\left(e^{\alpha_i(y_j)}\right)_{1\leq j,i\leq m}\right)\neq0.$$ We fix this choice of $y_j$'s which only depends on $\alpha_i$'s and $r$. Let $x_j=x_0+y_j\in R$ $(1\leq j\leq m)$. We have $$\left(e^{\alpha_i(y_j)}\right)_{1\leq j,i\leq m}\left(e^{\alpha_i(x_0)}v_i\right)_{1\leq i\leq m}=\left(f(x_j)\right)_{1\leq j\leq m}$$ $$\left(e^{\alpha_i(x_0)}v_i\right)_{1\leq i\leq m}=\left(e^{\alpha_i(y_j)}\right)_{1\leq j,i\leq m}^{-1}\left(f(x_j)\right)_{1\leq j\leq m}.$$ Let $C$ be the matrix norm of $\left(e^{\alpha_i(y_j)}\right)_{1\leq j,i\leq m}^{-1}$. Since $$e^{\alpha_i(x_0)}\|v_i\|\geq M\quad(1\leq i\leq m),$$ this implies that one of $\|f(x_j)\|$ $(1\leq j\leq m)$ is at least $M/mC$. Hence $\sup_{x\in R}\|f(x)\|\geq cM$ with $c=1/mC$.
\end{proof}

For any $g\in G$, $x_0\in\Lie(A)$, a unit vector $\vec v\in\Lie(A)$ and $w=\sum_{I\in\mathcal I_n^l}w_Ie_I\in\wedge^l\mathbb R^n$ ($w_I\in\mathbb R$), the function $$t\mapsto\|g\exp(x_0+t\vec{v})\cdot w\|^2$$ belongs to $E(d,\lambda)$, where $d=n^{2l}$, $\lambda=2l$ and $\|\cdot\|$ is the norm on $\wedge^l\mathbb R^n$ induced by the Euclidean norm on $\mathbb R^n$. Indeed, $$\exp(x_0+t\vec{v})\cdot w=\sum_{I\in\mathcal I_n^l}w_I\exp(x_0+t\vec{v})\cdot e_I$$ is a vector in $\wedge^l\mathbb R^n$ with coordinates being exponential functions of $t$. Hence $g\exp(x_0+t\vec{v})\cdot w$ is a vector whose coordinates are sums of exponential functions of $t$. By a simple calculation, one could get that the function $\|g\exp(x_0+t\vec{v})\cdot w\|^2$ belongs to $E(d,\lambda)$ with $d=n^{2l}$ and $\lambda=2l$. In what follows, we will study functions of this kind. 

With the help of Theorem \ref{th61} and Proposition \ref{p63}, we can now study the nondivergence of the sequence $\frac1{\Vol(\Omega_{g_k,\delta})}(g_k)_*\mu_{Ax}.$ We write $$\mathcal K_r:=\{g\Gamma\in G/\Gamma:\text{ every nonzero vector in $g\mathbb Z^n$ has norm $\geq r$}\}.$$ By Mahler's compactness criterion, this is a compact subset in $G/\Gamma$. The following proposition is crucial in the proof of Proposition \ref{p65}.

\begin{proposition}\label{p64}
For any $\epsilon>0$, there exists a constant $\delta(\epsilon)>0$ such that for sufficiently large $k\in\mathbb N$ $$m_{\Lie(A)}(\{\mb{t}\in\Omega_{g_k,\delta}: g_k\exp(\mb{t})\mathbb Z^n\notin\mathcal K_{\delta(\epsilon)}\})\leq\epsilon m_{\Lie(A)}(\Omega_{g_k,\delta}).$$
\end{proposition}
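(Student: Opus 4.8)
The plan is to reduce the statement to a one‐dimensional non‐divergence estimate of Kleinbock–Margulis type (Theorem~\ref{th61}) integrated along lines, and then to use the convex‐geometric control on $\Omega_{g_k,\delta}$ from section~\ref{recon} to globalize it. First I would fix $\epsilon>0$. The key point is to show that for each line direction the ``bad set'' (where the lattice $g_k\exp(\mb t)\mathbb Z^n$ has a short vector) occupies a small proportion of the relevant segment, uniformly in $k$. Concretely, foliate $\Lie(A)$ by lines parallel to a unit vector $\vec v$, and for a line $\ell$ meeting $\Omega_{g_k,\delta}$ consider the intersection $\ell\cap\Omega_{g_k,\delta}$, which is a segment $\Xi$ (possibly long). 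Partition $\Xi$ into subintervals of length at most $\delta_0=\delta_0(n^{2l},2l)$ for all $1\le l\le n$ (take the smallest such $\delta_0$), and apply Theorem~\ref{th61} on each such subinterval with $h(s)=g_k\exp(x_0+s\vec v)$. The first hypothesis of that theorem holds because, as explained in the paragraph preceding Proposition~\ref{p64}, $s\mapsto\|h(s)\Delta\|^2$ lies in $E(n^{2l},2l)$ for $\Delta$ a rank‐$l$ subgroup; the delicate hypothesis is (ii), namely $\sup_{s\in\Xi'}\|h(s)\Delta\|\ge\rho$ for a uniform $\rho>0$.

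The main obstacle is precisely verifying hypothesis (ii) uniformly in $k$ for points $\mb t$ ranging over $\Omega_{g_k,\delta}$. Here is where the definition of $\Omega_{g_k,\delta}$ is designed to help. For a primitive rank‐$l$ subgroup $\Delta\subset\mathbb Z^n$ spanned by a multivector that is a $\pm1$ combination of some $e_I$'s, the condition $\mb t\in\Omega_{g_k,\delta}$ gives $\omega_I(\mb t)\ge\ln\delta-\ln\|g_ke_I\|$, i.e. $e^{\omega_I(\mb t)}\|g_ke_I\|\ge\delta$, for every $I$; so along a subinterval $\Xi'$ of length $\le\delta_0$ contained in $\Omega_{g_k,\delta}$ (after possibly shrinking $\delta_0$ and the neighbourhood — one should compare with $\Omega_{g_k,\delta'}$ for $\delta'$ slightly larger and invoke Corollary~\ref{ac61}), each term $e^{\omega_I(x_0+s\vec v)}\|g_ke_I\|$ stays $\ge c_1\delta$ for a constant $c_1>0$ depending only on $\delta_0$ and the (bounded) functionals $\omega_I$. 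Writing $g_k\exp(x_0+s\vec v)\cdot(\sum_I \pm e_I) = \sum_I (\pm) e^{\omega_I(x_0+s\vec v)} g_k e_I$, this is exactly the situation of Proposition~\ref{p63} with $v_I=\pm g_ke_I$ and $M=c_1\delta$, yielding $\sup_{s\in\Xi'}\|h(s)\Delta\|\ge c_2\delta=:\rho$ with $c_2$ independent of $k$. One must check that it suffices to treat such ``coordinate'' subgroups: the general rank‐$l$ primitive $\Delta$ need not be of this form, so I would instead argue that if $g_k\exp(\mb t)\mathbb Z^n$ has a nonzero vector of length $<r$ then, by the standard reduction theory of lattices (Minkowski / the Kleinbock–Margulis machinery), there is a primitive rank‐$l$ subgroup whose covolume under $g_k\exp(\mb t)$ is small; but the lower bound from $\Omega_{g_k,\delta}$ applies to \emph{all} primitive $\Delta$ simultaneously, because every primitive multivector has a nonzero $e_I$‐coordinate and $\|g_k e_I\|\ge 1$ forces — hmm, this needs the sharper input that $\sup_s \|h(s)\Delta\|\ge\rho$, which is what Proposition~\ref{p63} together with the $\Omega_{g_k,\delta}$‐bound delivers for each fixed $\Delta$ with uniform $\rho$. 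This uniformity over the (infinitely many) subgroups $\Delta$, via the finitely many $e_I$, is the heart of the argument and the step I expect to require the most care.

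Granting hypothesis (ii), Theorem~\ref{th61} gives, for the chosen $\epsilon'$ (to be fixed below) a $\delta(\epsilon)>0$ depending only on $n$ such that $m_{\mathbb R}\big(\{s\in\Xi': g_k\exp(x_0+s\vec v)\mathbb Z^n\cap B_{\delta(\epsilon)}\ne\{0\}\}\big)\le\epsilon' m_{\mathbb R}(\Xi')$ on each length‐$\le\delta_0$ subinterval $\Xi'$ of the segment $\ell\cap\Omega_{g_k,\delta}$. Summing over the subintervals of a fixed line and then integrating over the foliation (Fubini), we get $m_{\Lie(A)}\big(\{\mb t\in\Omega_{g_k,\delta}: g_k\exp(\mb t)\mathbb Z^n\cap B_{\delta(\epsilon)}\ne\{0\}\}\big)\le \epsilon'\, m_{\Lie(A)}(\Omega^+_{g_k,\delta})$, where $\Omega^+_{g_k,\delta}$ is the union of the length‐$\le\delta_0$ subintervals covering the segments; since $\Omega_{g_k,\delta}$ is convex and by Lemma~\ref{p52} contains a ball of radius $r_k\to\infty$, the overhead from rounding segment lengths up to multiples of $\delta_0$ and from the slight enlargement of $\delta$ is controlled by Lemma~\ref{p34} and Corollary~\ref{ac61}: $m_{\Lie(A)}(\Omega^+_{g_k,\delta})\le 2\, m_{\Lie(A)}(\Omega_{g_k,\delta})$ for $k$ large. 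Choosing $\epsilon'=\epsilon/2$ and noting that $\{g_k\exp(\mb t)\mathbb Z^n\notin\mathcal K_{\delta(\epsilon)}\}$ is exactly $\{g_k\exp(\mb t)\mathbb Z^n\cap B_{\delta(\epsilon)}\ne\{0\}\}$ completes the proof.
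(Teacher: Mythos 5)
Your overall strategy is the paper's: reduce to the one-dimensional non-divergence statement (Theorem \ref{th61}) along lines in a fixed direction $\vec v$, verify its hypothesis (ii) by combining the defining inequalities of $\Omega_{g_k,\delta}$ with Proposition \ref{p63}, and globalize by Fubini while absorbing boundary effects via the convexity lemmas. However, the step you yourself single out as the heart of the matter is left open, and it is a genuine gap: you verify hypothesis (ii) only for subgroups $\Delta$ whose multivector is a $\pm1$-combination of the $e_I$'s, concede that a general rank-$l$ subgroup need not be of this form, and your attempted detour through reduction theory trails off without a conclusion. The missing observation is simply integrality of the Pl\"ucker coordinates: if $\Delta\subset\mathbb Z^n$ is \emph{any} nonzero discrete subgroup of rank $l$ with $\mathbb Z$-basis $v_1,\dots,v_l$, then $v_1\wedge\cdots\wedge v_l=\sum_{I\in\mathcal I_n^l}a_Ie_I$ with $a_I\in\mathbb Z$, so every nonzero coefficient satisfies $|a_I|\geq1$; hence for $\mb{t}$ ranging in a box contained in $\Omega_{g_k,\delta}$ each term actually present in $g_k\exp(\mb{t})(v_1\wedge\cdots\wedge v_l)=\sum_Ia_Ie^{\omega_I(\mb{t})}g_ke_I$ has norm at least $e^{\omega_I(\mb{t})}\|g_ke_I\|\geq\delta$, and Proposition \ref{p63} applies directly with $M=\delta$, yielding $\sup_{t\in\Xi}\|g_k\exp(x_0+t\vec v)(v_1\wedge\cdots\wedge v_l)\|\geq c\delta$ with $c$ depending only on $\delta_0$ and on which of the finitely many functionals $\omega_I$ occur (take the minimum of the constants over the finitely many possible supports). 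No primitivity assumption or Minkowski-type reduction is needed: Theorem \ref{th61} already quantifies over all nonzero discrete subgroups of $\mathbb Z^n$.

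Two further points need attention. First, Proposition \ref{p63} requires the functionals to be \emph{distinct} on the line, so $\vec v$ must be chosen with the values $\omega_I(\vec v)$, $I\in\mathcal I_n$, pairwise distinct (as the paper does); you never fix such a $\vec v$, and for a bad direction the exponentials along the line can coincide and the lemma does not apply. Second, your plan to round the chords $\ell\cap\Omega_{g_k,\delta}$ up to multiples of $\delta_0$ pushes some subintervals outside $\Omega_{g_k,\delta}$, where the inequality $e^{\omega_I(\mb{t})}\|g_ke_I\|\geq\delta$ is no longer guaranteed, while very short chords degrade the constant in Proposition \ref{p63}; the cleaner bookkeeping, which is what the paper does, is to cover $\Omega_{g_k,\delta}$ by boxes of diameter at most $\delta_0$ with base transverse to $\vec v$, run the estimate only on the boxes entirely contained in $\Omega_{g_k,\delta}$, and discard the boxes meeting $\partial\Omega_{g_k,\delta}$, whose total measure is at most $\frac{\epsilon}{2}\,m_{\Lie(A)}(\Omega_{g_k,\delta})$ for large $k$ by Lemma \ref{p34}, Lemma \ref{p52} and Proposition \ref{p53}. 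With these repairs your argument coincides with the paper's proof.
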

\begin{proof}
Fix a unit vector $\vec v\in\Lie(A)$ such that  the values in $$\{\omega_{I}(\vec v): I\in\mathcal I_n\}$$ are all different. Let $d=n^{2n}$ and $\lambda=2n$ such that for any $x_0\in\Lie(A)$, $l\in\mathbb N$, $w\in\wedge^l\mathbb R^n$ and $k\in\mathbb N$, the function $$\|g_k\exp(x_0+t\vec{v})\cdot w\|^2,\quad t\in\mathbb R$$ belongs to $E(d,\lambda)$ as defined in Definition \ref{def71}. We will write $\delta_0$ for the constant $\delta_0(d,\lambda)$ defined in Proposition \ref{p61}.

We can find a cover of $\Omega_{g_k,\delta}$ by countably many disjoint small boxes of diameter at most $\delta_0$ such that each box is of the form $$B=\{x_0+t\vec{v}: x_0\in S,\; t\in \Xi\}$$ where $S$ is the base of $B$ perpendicular to $\vec v$ and $\Xi=[0,\delta_0]$. We denote by $\mathcal F$ the collection of these boxes. Let $\mathcal F=\mathcal F_1\cup\mathcal F_2$ where $\mathcal F_1$ is the collection of the boxes in $\mathcal F$ which intersect $\partial\Omega_{g_k,\delta}$ and $\mathcal F_2=\mathcal F\setminus\mathcal F_1$. Then for any box $B\in\mathcal F_2$, $B$ is contained in $\Omega_{g_k,\delta}$.

Since the diameter of each box in $\mathcal F$ is at most $\delta_0$, in view of Lemma \ref{p34}, Lemma \ref{p52} and Proposition \ref{p53}, for any $\epsilon>0$, we have $$m_{\Lie(A)}\left(\bigcup_{B\in\mathcal F_1}B\right)\leq\frac\epsilon2 m_{\Lie(A)}(\Omega_{g_k,\delta})$$ for sufficiently large $k$. In order to prove the proposition, it suffices to show that for any $\epsilon>0$, there exists $\delta(\epsilon)>0$ such that for each box $B\in\mathcal F_2$, we have $$m_{\Lie(A)}(\{\mb{t}\in B: g_k\exp(\mb{t})\mathbb Z^n\notin\mathcal K_{\delta(\epsilon)}\})\leq\frac\epsilon2 m_{\Lie(A)}(B).$$
Now fix a box $B\in\mathcal F_2$ with $$B=\{x_0+t\vec{v}: x_0\in S,\; t\in \Xi\}$$ where $S$ is the base of $B$ and $\Xi=[0,\delta_0]$. We will apply Theorem \ref{th61}. Let $\Delta$ be a nonzero discrete subgroup of rank $l$ in $\mathbb Z^n$ with a $\mathbb Z$-basis $\{v_1,v_2,\dots,v_l\}\subset\mathbb Z^n.$ The wedge product $v_1\wedge\dots\wedge v_l\in\wedge^l\mathbb R^n$ can be written as $$v_1\wedge\dots\wedge v_l=\sum_{I\in\mathcal I_n^l}a_{I}e_{I}$$ where $a_{I}\in\mathbb Z$. We define a map from $B$ to $\wedge^l\mathbb R^n$ by 
\begin{eqnarray*}
f_\Delta(\mb{t})&=&(g_k\exp\mb{t})(v_1\wedge\cdots\wedge v_l)=\sum_{I\in\mathcal I_n^l}a_{I}e^{\omega_{I}(\mb{t})}g_ke_{I},\quad\mb{t}\in B.
\end{eqnarray*}
For each $x_0\in S$, we consider the map $$t\mapsto f_\Delta(x_0+t\vec{v})$$ from $\Xi=[0,\delta_0]$ to $\wedge^l\mathbb R^n$.
Since $B\subset\Omega_{g_k,\delta}$, by our construction of $\Omega_{g_k,\delta}$, we have $$\|e^{\omega_{I}(x_0+t\vec{v})}g_ke_{I}\|\geq\delta,\quad\forall t\in\Xi,\;\forall I\in\mathcal I_n^l.$$  By Proposition \ref{p63}, we have $$\sup_{t\in \Xi}\|f_\Delta(x_0+t\vec{v})\|\geq c\delta.$$ Note that by Proposition \ref{p63}, this inequality holds with a uniform constant $c>0$ depending only on $\omega_{I}(\mb{t})$ $(I\in\mathcal I_n)$ and $\delta_0$ for any nonzero $\Delta\subset\mathbb Z^n$. Since $\|f_\Delta(x_0+t\vec{v})\|$ is the co-volume of $g_k(\exp(x_0+t\vec{v}))\Delta$ and $\|f_\Delta(x_0+t\vec{v})\|^2$ is a function in $E(d,\lambda)$, we can apply Theorem \ref{th61} and obtain that $$m_{\mathbb R}(\{t\in \Xi:g_k\exp(x_0+t\vec{v})\mathbb Z^n\notin\mathcal  K_{\delta(\epsilon)}\})\leq\frac\epsilon2 m_{\mathbb R}(\Xi)$$ for some constant $\delta(\epsilon)>0$ and for any $x_0\in S$. Now by integrating the inequality above over the region $x_0\in S$, we have $$m_{\Lie(A)}(\{\mb{t}\in B: g_k\exp(\mb{t})\mathbb Z^n\notin\mathcal K_{\delta(\epsilon)}\})\leq\frac\epsilon2 m_{\Lie(A)}(B).$$ The proposition now follows.
\end{proof}

Now we can prove the main result in this section.
\begin{proposition}\label{p65}
By passing to a subsequence, the sequence $\frac1{\Vol(\Omega_{g_k,\delta})}(g_k)_*(\mu_{Ax_e}|_{\Omega_{g_k,\delta}})$ converges to a probability measure $\nu$. Furthermore, we have $$\frac1{\Vol(\Omega_{g_k,\delta})}(g_k)_*\mu_{Ax_e}\to\nu$$ and hence the sequence $[(g_k)_*\mu_{Ax_e}]$ converges to $[\nu]$. Here the probability measure $\nu$ is invariant under the action of the unipotent subgroup $\lim_{n\to\infty}\Ad(g_k)A$.
\end{proposition}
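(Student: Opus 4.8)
The plan is to establish Proposition \ref{p65} in three stages: first a nondivergence (tightness) statement that produces a weak-$*$ limit $\nu$ which is a probability measure; second, an argument that no mass is lost by discarding the portion of the orbit outside $\Omega_{g_k,\delta}$; and third, an identification of a unipotent group under which $\nu$ is invariant. Throughout I would work with the pushed-forward restrictions $\nu_k := \frac1{\Vol(\Omega_{g_k,\delta})}(g_k)_*(\mu_{Ax_e}|_{\Omega_{g_k,\delta}})$, regarded as measures on $X=G/\Gamma$ supported on $g_k\exp(\Omega_{g_k,\delta})x_e$.

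\textbf{Step 1 (Tightness and existence of a probability limit).} The total mass of $\nu_k$ equals $\frac{1}{\Vol(\Omega_{g_k,\delta})}\cdot m_{\Lie(A)}(\Omega_{g_k,\delta}) = 1$ (using that the Haar measure on $Ax_e\cong A$ transported via $\exp$ is $m_{\Lie(A)}$ up to the fixed normalization built into $\mu_{Ax_e}$), so each $\nu_k$ is a probability measure. By Proposition \ref{p64}, for every $\epsilon>0$ there is a compact set $\mathcal K_{\delta(\epsilon)}\subset X$ (Mahler) with $\nu_k(X\setminus\mathcal K_{\delta(\epsilon)})\le\epsilon$ for all large $k$. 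Hence $\{\nu_k\}$ is tight, and by Prokhorov's theorem, after passing to a subsequence, $\nu_k\to\nu$ weak-$*$ with $\nu$ a probability measure; in particular $\nu_k\to\nu$ in the topology $\tau_X$.

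\textbf{Step 2 (No loss of mass from the complement).} I must show $\frac1{\Vol(\Omega_{g_k,\delta})}(g_k)_*\mu_{Ax_e}\to\nu$ as well, i.e. the part coming from $\mb t\in\Lie(A)\setminus\Omega_{g_k,\delta}$ contributes nothing in the limit against a fixed $f\in C_c(X)$. Fix $f\in C_c(X)$ with $\supp f\subset\mathcal K_{r_0}$ for some $r_0>0$ (every compactly supported $f$ is dominated by such a situation since $\mathcal K_{r_0}$ exhaust $X$). By the Remark following Definition \ref{def31}, for $\mb t\notin\Omega_{g_k,\delta}$ the lattice $g_k\exp(\mb t)\mathbb Z^n$ has a nonzero vector of length $<\delta$, so $g_k\exp(\mb t)x_e\notin\mathcal K_\delta$; choosing $\delta<r_0$ forces $f(g_k\exp(\mb t)x_e)=0$. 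Therefore $\int f\,d\big((g_k)_*\mu_{Ax_e}\big)=\int f\,d\big((g_k)_*(\mu_{Ax_e}|_{\Omega_{g_k,\delta}})\big)$ exactly, for $k$ large, and dividing by $\Vol(\Omega_{g_k,\delta})$ and letting $k\to\infty$ gives $\int f\,d\nu$. This proves the convergence, and by Proposition \ref{p25}(2) (or Proposition \ref{p24}) it follows that $[(g_k)_*\mu_{Ax_e}]\to[\nu]$.

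\textbf{Step 3 (Unipotent invariance).} Let $U_\infty := \lim_{k\to\infty}\Ad(g_k)A$, the limiting unipotent group from section \ref{recon}. I want $\nu$ to be $U_\infty$-invariant. The natural approach: for $Y\in\Lie(A)$ write $u_k(Y):=g_k\exp(Y)g_k^{-1}=\exp(\Ad(g_k)Y)$. Under the assumption $\mathcal A(A,\{g_k\})=\{0\}$, $\Ad(g_k)Y$ stays within a bounded distance of $\Lie(U_\infty)$ for $Y$ in a fixed ball, after passing to a subsequence, and $\Ad(g_k)|_{\Lie(A)}$ converges to a linear isomorphism onto $\Lie(U_\infty)$. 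Fix $Z\in\Lie(U_\infty)$ and let $Y_k\in\Lie(A)$ with $\Ad(g_k)Y_k\to Z$. Then for $f\in C_c(X)$,
\begin{align*}
\int f\big(\exp(Z)y\big)\,d\nu(y) - \int f(y)\,d\nu(y) = \lim_{k\to\infty}\left(\int f\,d\big((\exp(\Ad(g_k)Y_k))_*\nu_k\big) - \int f\,d\nu_k\right),
\end{align*}
and $(\exp(\Ad(g_k)Y_k))_*\nu_k = \frac1{\Vol(\Omega_{g_k,\delta})}(g_k)_*\big(\exp(Y_k)_*(\mu_{Ax_e}|_{\Omega_{g_k,\delta}})\big) = \frac1{\Vol(\Omega_{g_k,\delta})}(g_k)_*\big(\mu_{Ax_e}|_{Y_k+\Omega_{g_k,\delta}}\big)$, since left translation by $\exp(Y_k)\in A$ on $Ax_e$ pushes the restriction to $\Omega_{g_k,\delta}$ to the restriction to the translate $Y_k+\Omega_{g_k,\delta}$. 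So the difference above is controlled by $\frac1{\Vol(\Omega_{g_k,\delta})}m_{\Lie(A)}\big(\Omega_{g_k,\delta}\,\triangle\,(Y_k+\Omega_{g_k,\delta})\big)\cdot\|f\|_\infty$, and since $\|Y_k\|$ is bounded, the symmetric difference of $\Omega_{g_k,\delta}$ with a bounded translate of itself has volume $\le C\cdot\Area(\partial\Omega_{g_k,\delta})$, which is $o(\Vol(\Omega_{g_k,\delta}))$ by Proposition \ref{p53}. Hence the difference vanishes in the limit, giving $\int f(\exp(Z)y)\,d\nu = \int f\,d\nu$ for all $Z\in\Lie(U_\infty)$ and all $f\in C_c(X)$, i.e. $\nu$ is $U_\infty$-invariant.

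\textbf{Main obstacle.} The delicate point is Step 3, specifically making rigorous the claim that $\Ad(g_k)Y_k\to Z$ can be arranged with $\|Y_k\|$ bounded, and that left-translation by $\exp(Y_k)$ really does send $\mu_{Ax_e}|_{\Omega_{g_k,\delta}}$ to $\mu_{Ax_e}|_{Y_k+\Omega_{g_k,\delta}}$ with the correct normalization — this relies on $x_e$ being chosen so that $Ax_e$ is parametrized faithfully by $A$ (which holds since $Ax$ is divergent, hence $A$ acts properly, so $\mathrm{Stab}_A(x_e)$ is trivial), and on the Haar measure computation. Once the bounded-translate reduction is in place, the key quantitative input — that boundary area is negligible compared to volume — is exactly Proposition \ref{p53}, so the estimate closes. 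One should also check the subsequence in Step 3 is compatible with the one from Step 1; this is handled by extracting all subsequences at the outset.
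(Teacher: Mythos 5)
Your Steps 1 and 3 are essentially sound and track the paper: Step 1 is the paper's argument (weak-$*$ limit plus the mass lower bound from Proposition \ref{p64}), and Step 3 is a fleshed-out version of the paper's one-line claim, with the correct mechanism (for $Z\in\Lie(W)$ choose $Z_k\in\Ad(g_k)\Lie(A)$ with $Z_k\to Z$; the preimages $Y_k$ are automatically bounded since the diagonal part of $\Ad(g_k)Y$ is $Y$ itself, so $\|\Ad(g_k)Y\|\ge\|Y\|$; the window translation is controlled by $\Area(\partial\Omega_{g_k,\delta})=o(\Vol(\Omega_{g_k,\delta}))$, i.e.\ Proposition \ref{p53}). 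Your phrases ``$\Ad(g_k)|_{\Lie(A)}$ converges to a linear isomorphism onto $\Lie(U_\infty)$'' and ``stays within a bounded distance of $\Lie(U_\infty)$'' are not literally true (the maps blow up; only the image subspaces converge in the Grassmannian), but the argument you actually run only needs bounded $Y_k$ with $\Ad(g_k)Y_k\to Z$, which is available.

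The genuine gap is Step 2. You claim the exact identity $\int f\,d\bigl((g_k)_*\mu_{Ax_e}\bigr)=\int f\,d\bigl((g_k)_*(\mu_{Ax_e}|_{\Omega_{g_k,\delta}})\bigr)$ for every $f\in C_c(X)$ by ``choosing $\delta<r_0$''. This fails on two counts. First, the quantifiers are wrong: $\delta$ is fixed once and for all before $f$ is given (the normalization $\Vol(\Omega_{g_k,\delta})$ and the limit $\nu$ are defined with this fixed $\delta$), whereas $r_0$ depends on $\supp f$ and can be arbitrarily small, so you cannot arrange $\delta<r_0$. Second, $\mb t\notin\Omega_{g_k,\delta}$ only gives some wedge product $\|g_k\exp(\mb t)e_I\|<\delta$ with $|I|=l$ possibly $\ge 2$, i.e.\ a sublattice of covolume $<\delta$; by Minkowski this yields a nonzero vector of length $\lesssim\delta^{1/l}$, not $<\delta$, so $g_k\exp(\mb t)x_e$ need not leave $\mathcal K_\delta$, and for $f$ with large support the contribution of $\Lie(A)\setminus\Omega_{g_k,\delta}$ is genuinely nonzero. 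The correct treatment (and the paper's) is: choose $\delta'<\delta$ depending on $\supp f$ so that the full integral localizes to $\Omega_{g_k,\delta'}$, and then bound the discrepancy over $\Omega_{g_k,\delta'}\setminus\Omega_{g_k,\delta}$ by $\|f\|_\infty\bigl(\Vol(\Omega_{g_k,\delta'})-\Vol(\Omega_{g_k,\delta})\bigr)/\Vol(\Omega_{g_k,\delta})\to0$, using Corollary \ref{ac61} (which rests on Proposition \ref{p53}). You have all the needed tools — you even use Proposition \ref{p53} in Step 3 — but as written Step 2 does not prove convergence in $\tau_X$ against all of $C_c(X)$.
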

\begin{proof}
Suppose that the sequence of probability measures $$\mu_k:=\frac1{\Vol(\Omega_{g_k,\delta})}(g_k)_*(\mu_{Ax_e}|_{\Omega_{g_k,\delta}})$$ weakly converges to a measure $\nu$ after passing to a subsequence. We show that $\nu$ is a probability measure. It is obvious that $\nu(X)\leq1$.

Now for any $\epsilon>0$, let $\mathcal K_{\delta(\epsilon)}$ be the compact subset in $G/\Gamma$ as in Proposition \ref{p64}. Let $f_\epsilon$ be a nonnegative continuous function with compact support on $G/\Gamma$ such that $0\leq f_\epsilon\leq1$ and $f_\epsilon=1$ on $\mathcal K_{\delta(\epsilon)}$. Then we have 
\begin{align*}
\nu(X)\geq\int_Xf_\epsilon d\nu=\lim_{k\to\infty}\int_Xf_\epsilon d\mu_k\geq\limsup_{k\to\infty}\mu_k(\mathcal K_{\delta(\epsilon)})\geq1-\epsilon
\end{align*}
where the last inequality follows from Proposition \ref{p64}. By taking $\epsilon\to0$, we conclude that $\nu$ is a probability measure.

For the second claim, we will show that $$\frac1{\Vol(\Omega_{g_k,\delta})}(g_k)_*\mu_{Ax_e}-\frac1{\Vol(\Omega_{g_k,\delta})}(g_k)_*(\mu_{Ax_e}|_{\Omega_{g_k,\delta}})\to0.$$ Let $f\in C_c(X)$. Since $f$ has compact support, there exists a small number $\delta'<\delta$ such that $$\int_X f(g_kx)d\mu_{Ax_e}(x)=\int_{\Lie(A)}f(g_k\exp(\mb{t})x_e)d\mb{t}=\int_{\Omega_{g_k,\delta'}}f(g_k\exp(\mb{t})x_e)d\mb{t}.$$ Here $d\mb{t}=dm_{\Lie(A)}(\mb{t})$ is the natural measure on $\Lie(A)$. By Corollary \ref{ac61}, we have 
\begin{eqnarray*}
&{}&\left|\frac1{\Vol(\Omega_{g_k,\delta})}\int_X f(g_kx)d\mu_{Ax_e}(x)-\frac1{\Vol(\Omega_{g_k,\delta})}\int_X f(g_kx)d\mu_{Ax_e}|_{\Omega_{g_k,\delta}}(x)\right|\\
&=&\left|\frac1{\Vol(\Omega_{g_k,\delta})}\int_{\Omega_{g_k,\delta'}} f(g_k\exp(\mb{t})x_e)d\mb{t}-\frac1{\Vol(\Omega_{g_k,\delta})}\int_{\Omega_{g_k,\delta}} f(g_k\exp(\mb{t})x_e)d\mb{t}\right|\\
&=&\left|\frac1{\Vol(\Omega_{g_k,\delta})}\int_{\Omega_{g_k,\delta'}\setminus\Omega_{g_k,\delta}} f(g_k\exp(\mb{t})x_e)d\mb{t}\right|\\
&\leq&\|f\|_\infty\frac{\Vol(\Omega_{g_k,\delta'})-\Vol(\Omega_{g_k,\delta})}{\Vol(\Omega_{g_k,\delta})}\to0.
\end{eqnarray*}
Here $\|f\|_\infty$ is the supremum of $f$. Since $(g_k)_*\mu_{Ax}$ is invariant under the action of $\Ad(g_k)A$, the probability measure $\nu$ is invariant under the action of $\lim_{k\to\infty}\Ad(g_k)A$, which is a unipotent subgroup by our assumption on $\{g_k\}_{k\in\mathbb N}$.
\end{proof}

\section{Nondivergence in terms of adjoint representations}\label{ad-nondiv}
In this section, we rewrite section \ref{nondiv} in terms of adjoint representations. The reason of doing this is that we can then apply Ratner's theorem for unipotent actions on homogeneous spaces.

Let $\Ad:G\to\SL(\mathfrak g)$ be the adjoint representation of $G=\SL(n,\mathbb R)$. The Lie algebra $\mathfrak g=\mathfrak{sl}(n,\mathbb R)$ has a $\mathbb Q$-basis $$\mathcal B=\{E_{ij}:1\leq i\neq j\leq n\}\cup\{E_{ii}:1\leq i\leq n-1\}$$ where $E_{ij}$ $(i\neq j)$ is the matrix with only nonzero entry $1$ in the $i$th row and the $j$th column, and $E_{ii}$ $(1\leq i\leq n-1)$ is the diagonal matrix with $1$ in the $(i,i)$-entry and $-1$ in the $(i+1,i+1)$-entry. We will also consider the representations $\wedge^l\Ad:G\to\SL(\wedge^l\mathfrak g)$ for $1\leq l\leq\dim\mathfrak g-1$. The set of all $l$-th wedge products of vectors in $\mathcal B$ is then a $\mathbb Q$-basis of $\wedge^l\mathfrak g$, which we denote by $\mathcal B_l$.

Let $1\leq l\leq\dim\mathfrak g-1$. For $\wedge^l\mathfrak g$, its decomposition with respect to the action of $\wedge^l\Ad A$ is given by $$\wedge^l\mathfrak g=\sum_\chi\mathfrak g_\chi$$ where each $\chi$ is a linear functional on $\Lie(A)$ such that for any $\mb{t}\in\Lie(A)$ and $v\in\mathfrak g_\chi$ $$\wedge^l\Ad(\exp(\mb{t}))v=\exp(\chi(\mb{t}))v.$$ We denote by $\mathcal W_l(\mathfrak g)$ the collection of all such linear functionals $\chi$, and let $$\mathcal W(\mathfrak g)=\bigcup_{l=1}^{\dim\mathfrak g-1}\mathcal W_l(\mathfrak g).$$ We know that each $\mathfrak g_\chi$ $(\chi\in\mathcal W_l(\mathfrak g))$ has a $\mathbb Q$-basis from $\mathcal B_l$, and we denote by $\mathfrak g_\chi(\mathbb Z)$ the subset of integer vectors with respect to this basis.  

Now let $g\in G$. We define for $gA\Gamma$ another convex polytope in $\Lie(A)$ in terms of adjoint representations, which is similar to the convex polytope $\Omega_{g,\delta}$ in section \ref{con}. Let $1\leq l\leq\dim\mathfrak g-1$ and $\chi\in\mathcal W_l(\mathfrak g)$. Let $v\in\mathfrak g_\chi(\mathbb Z)\setminus\{0\}$. Then for $\mb{t}\in\Lie(A)$, the vector $$\wedge^l\Ad(g\exp(\mb{t}))v=e^{\chi(\mb{t})}\wedge^l\Ad(g)v\notin B_\delta$$ if and only if $$\chi(\mb{t})\geq\ln\delta-\ln\|\wedge^l\Ad(g)v\|.$$  Here $B_\delta$ denotes the ball of radius $\delta>0$ around 0 with the norm $\|\cdot\|$ on $\wedge^l\mathfrak g$ induced by the norm $\|\cdot\|_{\mathfrak g}$ on $\mathfrak g$. Now we give the following

\begin{definition}
For any $g\in G$ and $\delta>0$, we denote by $R_{g,\delta}$ the subset of points $\mb{t}\in\Lie(A)$ satisfying $$\chi(\mb{t})\geq\ln\delta-\ln\|\wedge^l\Ad(g)v\|$$ for any $v\in\mathfrak g_\chi(\mathbb Z)\setminus\{0\}$, $\chi\in\mathcal W_l(\mathfrak g)$ and $1\leq l\leq\dim\mathfrak g-1.$
\end{definition}

 The proof of the following proposition is similar to that of Lemma \ref{p31}.
\begin{proposition}\label{p71}
The subset $R_{g,\delta}$ is a bounded convex polytope in $\Lie(A)$ for any $g\in G$ and $\delta>0$.
\end{proposition}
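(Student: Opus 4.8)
The plan is to follow the proof of Lemma~\ref{p31} almost verbatim, the one new point being that the \emph{a priori} infinite family of inequalities defining $R_{g,\delta}$ reduces to a finite one. First I would observe that each defining constraint $\chi(\mb t)\geq\ln\delta-\ln\|\wedge^l\Ad(g)v\|$ is of the form ``a fixed linear functional on $\Lie(A)$ is bounded below by a constant'', so $R_{g,\delta}$ is an intersection of closed half-spaces, hence closed and convex; moreover all these half-spaces have their outward normals among the \emph{finite} set of weights $\mathcal W(\mathfrak g)=\bigcup_{l}\mathcal W_l(\mathfrak g)$.

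Next I would reduce, for each fixed $\chi\in\mathcal W_l(\mathfrak g)$, the family of inequalities indexed by $v\in\mathfrak g_\chi(\mathbb Z)\setminus\{0\}$ to a single one. They all have the same left-hand side $\chi(\mb t)$ and differ only in the right-hand constant $\ln\delta-\ln\|\wedge^l\Ad(g)v\|$. Since $\mathfrak g_\chi(\mathbb Z)$ is a lattice in $\mathfrak g_\chi$ and $\wedge^l\Ad(g)$ is a linear automorphism, the set $\wedge^l\Ad(g)\bigl(\mathfrak g_\chi(\mathbb Z)\setminus\{0\}\bigr)$ is discrete and bounded away from $0$, so $m_\chi:=\min\{\|\wedge^l\Ad(g)v\|:v\in\mathfrak g_\chi(\mathbb Z)\setminus\{0\}\}$ is a well-defined positive real; hence the whole subfamily is equivalent to the single inequality $\chi(\mb t)\geq\ln\delta-\ln m_\chi$. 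Ranging over the finitely many $\chi\in\mathcal W(\mathfrak g)$ exhibits $R_{g,\delta}$ as the solution set of finitely many linear inequalities, i.e.\ a convex polyhedron.

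Finally, for boundedness I would use only the weights coming from $l=1$: here $\wedge^1\Ad=\Ad$ and for $i\neq j$ the basis vector $E_{ij}\in\mathcal B=\mathcal B_1$ lies in $\mathfrak g_{\chi_{ij}}(\mathbb Z)\setminus\{0\}$ with $\chi_{ij}(\mb t)=t_i-t_j$. The associated constraints give
\[
t_i-t_j\ \geq\ \ln\delta-\ln\|\Ad(g)E_{ij}\|\qquad(1\leq i\neq j\leq n),
\]
and using this also with $(j,i)$ in place of $(i,j)$ confines every difference $t_i-t_j$ to a compact interval depending only on $g$ and $\delta$; together with $\sum_{i=1}^n t_i=0$ (so that $n\,t_i=\sum_{j}(t_i-t_j)$) this bounds every coordinate $t_i$. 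Thus $R_{g,\delta}$ is a bounded convex polyhedron, hence a polytope, as claimed. The only step that genuinely needs an argument beyond Lemma~\ref{p31} is the passage from infinitely many to finitely many inequalities, and even there the content is just the discreteness of a lattice under an invertible linear map, so I do not expect any real obstacle.
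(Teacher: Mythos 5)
Your proof is correct and follows essentially the same route as the paper, which simply asserts that the argument is analogous to Lemma~\ref{p31}: convexity from the fact that all constraints are lower bounds on the finitely many weights $\chi\in\mathcal W(\mathfrak g)$, and boundedness from the $l=1$ constraints on $t_i-t_j$ (both orders) together with $\sum_i t_i=0$. Your explicit reduction of the infinite family of inequalities for each fixed $\chi$ to a single one, via the positive minimum of $\|\wedge^l\Ad(g)v\|$ over the lattice points $v\in\mathfrak g_\chi(\mathbb Z)\setminus\{0\}$, is a detail the paper glosses over, and you handle it correctly.
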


Here we list some properties about the convex polytopes $R_{g_k,\delta}$ $(k\in\mathbb N)$, which are parallel to those in section \ref{recon} and section \ref{nondiv}.

\begin{proposition}\label{p72}
Let $\delta>0$. We have
\begin{enumerate}
\item For any $\epsilon>0$ there exists $\delta(\epsilon)>0$ such that for sufficiently large $k>0$ $$m_{\Lie(A)}(R_{g_k,\delta(\epsilon)}\cap\Omega_{g_k,\delta})\geq(1-\epsilon)m_{\Lie(A)}(\Omega_{g_k,\delta}).$$
\item For sufficiently large $k$, $R_{g_k,\delta}$ contains a ball of radius $r_k>0$, and $r_k\to\infty$ as $k\to\infty$.
\end{enumerate}
\end{proposition}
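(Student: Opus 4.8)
The plan is to transfer the structural results already established for the polytopes $\Omega_{g_k,\delta}$ (Proposition \ref{p53}, Lemma \ref{p52}, Corollary \ref{ac61}) to the polytopes $R_{g_k,\delta}$ defined via the adjoint representation, using Proposition \ref{p64} as the bridge. The key observation is that the defining inequalities of $R_{g,\delta}$ are of exactly the same shape as those of $\Omega_{g,\delta}$: each is of the form $\chi(\mb{t})\geq\ln\delta-\ln\|\wedge^l\Ad(g)v\|$ for a weight $\chi\in\mathcal W_l(\mathfrak g)$ and an integer vector $v\in\mathfrak g_\chi(\mathbb Z)\setminus\{0\}$. The function $\mb{t}\mapsto\|\wedge^l\Ad(g_k\exp(\mb{t}))v\|^2$ lies in $E(d,\lambda)$ for suitable $d,\lambda$ depending only on $n$, by the same exponential-sum argument used for wedge powers of $\mathbb R^n$ in section \ref{nondiv}; and membership of $g_k\exp(\mb{t})\mathbb Z^n$ in a compact set $\mathcal K_r$ is governed by lower bounds on co-volumes of sublattices, which pull back under $\Ad$ and its exterior powers to lower bounds on $\|\wedge^l\Ad(g_k\exp(\mb{t}))v\|$ for $v$ ranging over integer vectors in the weight spaces (since $\Ad(\SL(n,\mathbb Z))$ preserves $\mathfrak g(\mathbb Z)$).

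For part (1), I would run the Kleinbock--Margulis non-divergence argument (Theorem \ref{th61}) verbatim, but with the family of functions coming from the adjoint and exterior-power representations rather than from the standard representation: fix a unit vector $\vec v\in\Lie(A)$ for which all the values $\{\chi(\vec v):\chi\in\mathcal W(\mathfrak g)\}$ are distinct, cover $\Omega_{g_k,\delta}$ by boxes of diameter $\leq\delta_0$ adapted to $\vec v$, discard the boundary boxes (negligible by Lemma \ref{p34}, Lemma \ref{p52}, Proposition \ref{p53}), and on each interior box apply Proposition \ref{p63} to get the supremum lower bound $\sup_{t}\|f_v(x_0+t\vec v)\|\geq c\delta$ needed for hypothesis (ii) of Theorem \ref{th61}. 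This yields $\delta(\epsilon)>0$ so that for large $k$ the set of $\mb{t}\in\Omega_{g_k,\delta}$ with $\chi(\mb{t})<\ln\delta(\epsilon)-\ln\|\wedge^l\Ad(g_k)v\|$ for some admissible $(\chi,v)$ has measure $\leq\epsilon\, m_{\Lie(A)}(\Omega_{g_k,\delta})$; the complement of this bad set inside $\Omega_{g_k,\delta}$ is precisely $R_{g_k,\delta(\epsilon)}\cap\Omega_{g_k,\delta}$, giving the claimed inequality.

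For part (2), combine part (1) with the already-known fact (Lemma \ref{p52}) that $\Omega_{g_k,\delta}$ contains a ball of radius $r_k'\to\infty$. Taking $\epsilon=1/2$ in part (1) gives, for large $k$, a $d$-dimensional convex polytope $R_{g_k,\delta(1/2)}\cap\Omega_{g_k,\delta}\subset\Omega_{g_k,\delta}$ of volume at least $\tfrac12\Vol(\Omega_{g_k,\delta})$, so by Lemma \ref{p33} it contains a ball of radius $\geq r_k'/(2d)\to\infty$; a fortiori $R_{g_k,\delta(1/2)}$ does, and since shrinking the parameter only enlarges the polytope we may absorb the change from $\delta(1/2)$ to the given $\delta$ (or simply note that $R_{g_k,\delta'}\supseteq R_{g_k,\delta}$ for $\delta'<\delta$ and rename). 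I expect the main obstacle to be purely bookkeeping: carefully checking that the integrality structure $\mathfrak g_\chi(\mathbb Z)$ behaves well under $\Ad(\Gamma)$ so that the relevant co-volumes are bounded below away from zero uniformly — i.e. that hypothesis (ii) of Theorem \ref{th61} holds with a uniform $\rho$ over all nonzero $v\in\mathfrak g_\chi(\mathbb Z)$ — and verifying the $E(d,\lambda)$ membership with explicit $d,\lambda$; the geometry is then a routine application of the lemmas from section \ref{con}.
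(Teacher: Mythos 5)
There is a genuine gap in your argument for part (1), at the step where you verify hypothesis (ii) of Theorem \ref{th61} in the adjoint setting. To apply Proposition \ref{p63} to $f_w(\mb{t})=\wedge^l\Ad(g_k\exp(\mb{t}))w=\sum_\chi e^{\chi(\mb{t})}\wedge^l\Ad(g_k)w_\chi$ on a box, you need the componentwise lower bounds $e^{\chi(\mb{t})}\|\wedge^l\Ad(g_k)w_\chi\|\geq M$ for every weight $\chi$ occurring in $w$; but these inequalities (for integer weight vectors $w_\chi\in\mathfrak g_\chi(\mathbb Z)$) are \emph{exactly} the inequalities defining $R_{g_k,M}$, i.e. the set whose size you are trying to establish. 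Membership of the box in $\Omega_{g_k,\delta}$ gives such bounds only for the standard representation (norms of $g_k\exp(\mb{t})e_I$ in $\wedge^l\mathbb R^n$), and there is no pointwise implication from the $\Omega$-inequalities to the $R$-inequalities — if there were, Proposition \ref{p72}(1) would hold with empty exceptional set and no measure estimate would be needed. In the paper the adjoint-representation supremum bounds you want do appear, but only in Lemma \ref{p84}, where they are deduced from membership in $R'_{g_k,\delta}$, i.e. \emph{downstream} of the present proposition; so your route is circular as written. The paper's actual proof of (1) is a short transfer with no second run of the Kleinbock--Margulis machinery: take $\delta(\epsilon)$ from Proposition \ref{p64}; since $g\Gamma\mapsto\wedge^l\Ad(g)\bigl(\wedge^l\mathfrak g(\mathbb Z)\bigr)$ is well defined (as $\Ad(\Gamma)$ preserves $\mathfrak g(\mathbb Z)$) and continuous, the compact set $\mathcal K_{\delta(\epsilon)}\subset X$ maps to a compact family of lattices in each $\wedge^l\mathfrak g$, so Mahler's criterion there gives a uniform $\delta'(\epsilon)>0$ with $\{\mb{t}\in\Omega_{g_k,\delta}:g_k\exp(\mb{t})\mathbb Z^n\in\mathcal K_{\delta(\epsilon)}\}\subset R_{g_k,\delta'(\epsilon)}\cap\Omega_{g_k,\delta}$, and Proposition \ref{p64} finishes. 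You gesture at this pullback in your opening paragraph, but it is the whole proof, not a preliminary remark, and it replaces — rather than feeds — the nondivergence rerun you propose.

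Part (2) is essentially right in outline (part (1) with a fixed $\epsilon$, plus Lemma \ref{p33} and Lemma \ref{p52}), and matches the paper, but your closing remark goes the wrong way: since $\delta(\epsilon)$ is typically much smaller than the fixed $\delta$, the monotonicity $R_{g_k,\delta(\epsilon)}\supseteq R_{g_k,\delta}$ gives you a large ball only in the \emph{bigger} polytope, which says nothing about $R_{g_k,\delta}$. What one should say is that replacing $\delta(\epsilon)$ by $\delta$ shifts each defining half-space of the polytope by the constant $|\ln\delta-\ln\delta(\epsilon)|$, independent of $k$ (the constraints with $\chi=0$ being $\mb{t}$-independent and satisfied uniformly in $k$ for small $\delta$ because $\wedge^l\Ad(g_k)$ is unipotent with respect to a weight-ordered integral basis), so a ball of radius $r_k$ in $R_{g_k,\delta(\epsilon)}$ yields a ball of radius $r_k-C$ in $R_{g_k,\delta}$, and these radii still tend to infinity.
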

\begin{proof}
For any $\epsilon>0$, let $\delta(\epsilon)$ be as in Proposition \ref{p64}. By applying Mahler's compactness criterion on the space of unimodular lattices in $\wedge^l\mathfrak g$ $(1\leq l\leq\dim\mathfrak g-1)$, we can find a $\delta'(\epsilon)>0$ such that $$\{\mb{t}\in\Omega_{g_k,\delta}: g_k\exp(\mb{t})\mathbb Z^n\in\mathcal K_{\delta(\epsilon)}\}\subset R_{g_k,\delta'(\epsilon)}\cap\Omega_{g_k,\delta}.$$ Now the first part of the proposition follows from Proposition \ref{p64}. 

For the second part, we fix $\epsilon>0$. By Lemma \ref{p33}, Lemma \ref{p52} and the first claim of the proposition, for sufficiently large $k\in\mathbb N$, the convex polytope $R_{g_k,\delta(\epsilon)}\supset R_{g_k,\delta(\epsilon)}\cap\Omega_{g_k,\delta}$ contains a ball of radius $r_k$, and $r_k\to\infty$ as $k\to\infty$. By definition, the same holds for $R_{g_k,\delta}$ for any $\delta>0$.
\end{proof}

\begin{proposition}\label{p73}
For any $\delta>0$, we have $$\lim_{k\to\infty}\frac{\Area(\partial R_{g_k,\delta})}{\operatorname{Vol}(R_{g_k,\delta})}=0.$$
\end{proposition}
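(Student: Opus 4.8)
The plan is to mirror the proof of Proposition \ref{p53} verbatim, substituting for the two inputs used there (Lemma \ref{p31} and Lemma \ref{p52}) the corresponding statements already established for $R_{g_k,\delta}$, namely Proposition \ref{p71} and Proposition \ref{p72}(2). The point is that $R_{g_k,\delta}$ lives in the $(n-1)$-dimensional Euclidean space $\Lie(A)$ and, by Proposition \ref{p71}, is a bounded convex polytope, so the abstract surface-to-volume bound of Lemma \ref{p32} applies to it directly.

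Concretely, I would first invoke Proposition \ref{p72}(2): for all sufficiently large $k$ the convex set $R_{g_k,\delta}$ contains a ball $B_k$ of radius $r_k$, and $r_k\to\infty$ as $k\to\infty$. (In particular $R_{g_k,\delta}$ is genuinely $(n-1)$-dimensional for large $k$, so that the quantities $\Area(\partial R_{g_k,\delta})$ and the hypothesis ``contains a ball of radius $r>0$'' in Lemma \ref{p32} are meaningful with $d=\dim\Lie(A)=n-1$.) Then, applying Lemma \ref{p32} with this $d$ and $r=r_k$, one obtains
$$\frac{\Area(\partial R_{g_k,\delta})}{\operatorname{Vol}(R_{g_k,\delta})}\le\frac{n-1}{r_k}$$
for all sufficiently large $k$. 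Letting $k\to\infty$ and using $r_k\to\infty$ yields $\lim_{k\to\infty}\Area(\partial R_{g_k,\delta})/\operatorname{Vol}(R_{g_k,\delta})=0$, as claimed.

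I do not expect any genuine obstacle at this stage: all the substance has already been absorbed into Proposition \ref{p72}(2), whose proof in turn rests on Lemma \ref{p33}, Lemma \ref{p52}, and the non-divergence estimate Proposition \ref{p64} (via the comparison $R_{g_k,\delta'(\epsilon)}\cap\Omega_{g_k,\delta}$ with a large-measure subset of $\Omega_{g_k,\delta}$). The present proposition is a formal corollary, exactly parallel to the way Proposition \ref{p53} follows from Lemma \ref{p32} and Lemma \ref{p52}.
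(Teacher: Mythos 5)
Your argument is exactly the paper's: Proposition \ref{p73} is proved there by repeating the proof of Proposition \ref{p53}, i.e.\ combining the convex-body estimate of Lemma \ref{p32} with the fact that $R_{g_k,\delta}$ is a bounded convex polytope in $\Lie(A)$ containing balls of radius $r_k\to\infty$ (Proposition \ref{p71} and Proposition \ref{p72}(2)). Your write-up is correct and matches the intended proof.
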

\begin{proof}
The proof is identical to that of Proposition \ref{p53}.
\end{proof}

\begin{proposition}\label{p74}
Let $\delta>0$. For any $\epsilon>0$, there exists a constant $\delta(\epsilon)>0$ such that for sufficiently large $k$ $$m_{\Lie(A)}(\{\mb{t}\in R_{g_k,\delta}: g_k\exp(\mb{t})\mathbb Z^n\notin\mathcal K_{\delta(\epsilon)}\})\leq\epsilon m_{\Lie(A)}(R_{g_k,\delta}).$$
\end{proposition}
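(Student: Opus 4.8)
The plan is to rerun the proof of Proposition \ref{p64} essentially verbatim, replacing the polytope $\Omega_{g_k,\delta}$ by $R_{g_k,\delta}$ throughout. That proof has two ingredients: a negligibility estimate for the small boxes meeting the boundary of the polytope, and, on each box contained in the interior, an application of the nondivergence estimate Theorem \ref{th61} via Proposition \ref{p63}. The boundary estimate carries over with no change: by Proposition \ref{p72}(2) the polytope $R_{g_k,\delta}$ contains a ball of radius $r_k\to\infty$, by Proposition \ref{p73} one has $\Area(\partial R_{g_k,\delta})/\Vol(R_{g_k,\delta})\to0$, and feeding these into Lemma \ref{p34} shows, for all large $k$, that the union of the diameter-$\le\delta_0$ boxes meeting $\partial R_{g_k,\delta}$ has $m_{\Lie(A)}$-measure at most $(\epsilon/2)\,m_{\Lie(A)}(R_{g_k,\delta})$. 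Thus the one point that genuinely needs work is hypothesis (ii) of Theorem \ref{th61} on each interior box, i.e. a lower bound $\sup_{t\in\Xi}\|g_k\exp(x_0+t\vec v)\Delta\|\ge\rho$ uniform over all nonzero discrete subgroups $\Delta\subseteq\mathbb Z^n$ and over $k$; for $\Omega_{g_k,\delta}$ this was immediate from the definition, and it must be recovered from the definition of $R_{g_k,\delta}$.

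The key step is to show that, for every $\mb t\in R_{g_k,\delta}$ and every nonempty $I\in\mathcal I_n$, one has $e^{\omega_{I}(\mb t)}\|g_ke_{I}\|\ge\delta''$ for a suitable $\delta''=\delta''(\delta,n)>0$ (equivalently, $R_{g_k,\delta}\subseteq\Omega_{g_k,\delta''}$). For $I=[n]$ this is automatic. For a nonempty proper $I\subset[n]$ I would use the weight vector $v_I:=\wedge_{i\in I,\,j\notin I}E_{ij}$, which is a basis vector of $\wedge^{l}\mathfrak g$ with $l=|I|(n-|I|)$, in the admissible range $1\le l\le\dim\mathfrak g-1$. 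Since $E_{ij}$ has $\Ad A$-weight $e_i-e_j$, the restriction to $\Lie(A)$ of the $\wedge^l\Ad A$-weight of $v_I$ is $\sum_{i\in I,\,j\notin I}(e_i-e_j)=n\,\omega_{I}$ (using $\sum_{j\notin I}e_j=-\sum_{i\in I}e_i$ on $\Lie(A)$), and $v_I\in\mathfrak g_{n\omega_{I}}(\mathbb Z)\setminus\{0\}$. Writing $E_{ij}=e_i\otimes e_j$ in $\mathfrak{gl}(n,\mathbb R)=\mathbb R^n\otimes\mathbb R^n$ gives $\Ad(g)E_{ij}=(ge_i)\otimes((g^{-1})^{t}e_j)$, so $\wedge^{l}\Ad(g)v_I=\wedge_{i\in I,\,j\notin I}\bigl((ge_i)\otimes((g^{-1})^{t}e_j)\bigr)$; a standard computation (the Cauchy decomposition of $\wedge^{l}(\mathbb R^n\otimes\mathbb R^n)$) shows this has norm $c\,\|\wedge_{i\in I}ge_i\|^{\,n-|I|}\,\|\wedge_{j\notin I}(g^{-1})^{t}e_j\|^{\,|I|}$, and the Hodge-star identity $\|\wedge_{j\notin I}(g^{-1})^{t}e_j\|=\|\wedge_{i\in I}ge_i\|=\|ge_{I}\|$, valid for all $g\in\SL(n,\mathbb R)$, collapses it to $c_I\,\|ge_{I}\|^{\,n}$ with $c_I>0$ depending only on $n$ and $|I|$. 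Hence the constraint in the definition of $R_{g_k,\delta}$ attached to $\chi=n\omega_{I}$ and $v=v_I$, namely $n\,\omega_{I}(\mb t)\ge\ln\delta-\ln(c_I\|g_ke_{I}\|^{n})$, yields $e^{\omega_{I}(\mb t)}\|g_ke_{I}\|\ge(\delta/c_I)^{1/n}$ on $R_{g_k,\delta}$; taking $\delta'':=\min\{1,\min_{I}(\delta/c_I)^{1/n}\}$ gives the claimed bound.

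Granting this, the interior-box argument is a transcription of Proposition \ref{p64}. Cover $R_{g_k,\delta}$ by disjoint boxes $B=\{x_0+t\vec v:x_0\in S,\;t\in\Xi=[0,\delta_0]\}$ of diameter $\le\delta_0=\delta_0(n^{2n},2n)$ along a direction $\vec v$ making the values $\omega_{I}(\vec v)$ ($I\in\mathcal I_n$) pairwise distinct, and discard the boundary boxes as above. For an interior box $B\subseteq R_{g_k,\delta}$ and a nonzero $\Delta\subseteq\mathbb Z^n$ of rank $l$ with $\mathbb Z$-basis $v_1,\dots,v_l$ and $v_1\wedge\dots\wedge v_l=\sum_{I\in\mathcal I_n^l}a_{I}e_{I}$ ($a_I\in\mathbb Z$), set $f_\Delta(\mb t)=(g_k\exp\mb t)(v_1\wedge\dots\wedge v_l)=\sum_{I}a_{I}e^{\omega_{I}(\mb t)}g_ke_{I}$; since $|a_I|\ge1$ whenever $a_I\ne0$, on $B$ one has $e^{\omega_{I}(\mb t)}\|a_Ig_ke_{I}\|\ge\delta''$, so Proposition \ref{p63} gives $\sup_{t\in\Xi}\|f_\Delta(x_0+t\vec v)\|\ge c\delta''$ with $c>0$ uniform in $\Delta$, while $t\mapsto\|f_\Delta(x_0+t\vec v)\|^2$ lies in $E(n^{2n},2n)$ as in Proposition \ref{p64}. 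Theorem \ref{th61} then produces $\delta(\epsilon)>0$ with $m_{\mathbb R}(\{t\in\Xi:g_k\exp(x_0+t\vec v)\mathbb Z^n\notin\mathcal K_{\delta(\epsilon)}\})\le(\epsilon/2)\,m_{\mathbb R}(\Xi)$ for each $x_0\in S$; integrating over $S$, summing over the interior boxes and adding the boundary term gives the assertion. I expect the main obstacle to be precisely the norm identity $\|\wedge^{l}\Ad(g)v_I\|=c_I\|ge_{I}\|^{n}$, i.e. checking that the adjoint-representation polytope $R_{g_k,\delta}$ already encodes the standard-representation lower bounds on $\|g_ke_{I}\|$ that Theorem \ref{th61} needs; once that is in place, everything else is copied from Propositions \ref{p63}, \ref{p64}, \ref{p72} and \ref{p73}.
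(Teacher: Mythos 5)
Your proof is correct, and its skeleton (the box cover of the polytope, the boundary estimate via Lemma \ref{p34}, Proposition \ref{p72}(2) and Proposition \ref{p73}, and Theorem \ref{th61} plus Proposition \ref{p63} on each interior box) is the same as that of Proposition \ref{p64}, which is what the paper's one-sentence proof invokes; the genuine difference is how you secure hypothesis (ii) of Theorem \ref{th61} over $R_{g_k,\delta}$. The paper's indicated route is to rerun the argument entirely in the adjoint picture, i.e.\ to use the functions $t\mapsto\|\wedge^l\Ad(g_k\exp(x_0+t\vec v))w\|^2$ for integral $w\in\wedge^l\mathfrak g$, whose required lower bounds are exactly the defining inequalities of $R_{g_k,\delta}$ (this is the meaning of ``replace $\omega_I$ by $\chi$''), and then to pass from ``$\Ad(g)\mathfrak{sl}(n,\mathbb Z)$ has no short vectors'' back to membership in $\mathcal K_{\delta(\epsilon)}$ by a Mahler-type transfer, the reverse of the inclusion used in Proposition \ref{p72}(1). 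You instead stay with the standard lattice $\mathbb Z^n$ and prove the deterministic containment $R_{g,\delta}\subseteq\Omega_{g,\delta^{1/n}}$ by testing the definition of $R_{g,\delta}$ on the single weight vector $v_I=\wedge_{i\in I,\,j\notin I}E_{ij}$; your weight computation and norm identity are correct, and in fact $c_I=1$: the Gram matrix of the vectors $(ge_i)\otimes({}^tg^{-1}e_j)$ is the Kronecker product of the two Gram matrices, and $\|\wedge_{j\notin I}{}^tg^{-1}e_j\|=\|ge_I\|$ because complementary minors of $g^{-1}$ agree up to sign with minors of $g$ when $\det g=1$. With that containment in hand, the interior-box computation of Proposition \ref{p64} applies verbatim with $\delta$ replaced by $\delta^{1/n}$ (restricting, as there, to the indices with $a_I\neq0$; the case $I=[n]$ is trivial since $\det g_k=1$). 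As for what each approach buys: yours applies Theorem \ref{th61} exactly as stated, needs no transfer between representations, and the inclusion $R_{g,\delta}\subseteq\Omega_{g,\delta^{1/n}}$ is a pointwise comparison of the two polytopes of independent interest (one half of the comparison the paper only obtains later, and only asymptotically, in Corollary \ref{c71}); the paper's route avoids the minor/Hodge-star identity at the cost of the unstated adjoint-to-standard Mahler step. Either way the proposition follows, so your argument stands.
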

\begin{proof}
It is similar to Proposition \ref{p64}, except that we replace the linear functionals $\omega_I(\mb{t})$ by $\chi$ in $\mathcal W_l(\mathfrak g)$ $(1\leq l\leq\dim\mathfrak g-1)$.
\end{proof}

\begin{proposition}\label{p75}
Let $\delta>0$. By passing to a subsequence, the sequence $\frac1{\Vol(R_{g_k,\delta})}(g_k)_*(\mu_{Ax}|_{R_{g_k,\delta}})$ converges to a probability measure $\nu$. We also have $$\frac1{\Vol(R_{g_k,\delta})}(g_k)_*\mu_{Ax}\to\nu$$ and hence the sequence $[g_k\mu_{Ax}]$ converges to $[\nu]$. Furthermore, the probability measure $\nu$ is invariant under the action of the unipotent subgroup $\lim_{n\to\infty}\Ad(g_k)A$.
\end{proposition}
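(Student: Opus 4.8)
The plan is to imitate the proof of Proposition \ref{p65} verbatim, replacing the convex polytopes $\Omega_{g_k,\delta}$ by $R_{g_k,\delta}$ throughout and invoking the adjoint-representation analogues of the auxiliary estimates established in Propositions \ref{p72}, \ref{p73} and \ref{p74}. Since all the ingredients used in the proof of Proposition \ref{p65} (boundedness of the polytope, existence of a large inscribed ball, vanishing ratio of surface area to volume, and the nondivergence estimate) now have exact counterparts for $R_{g_k,\delta}$, no new idea is needed; the work is purely one of transcription, which is why Proposition \ref{p73} and Proposition \ref{p74} were stated with proofs deferred to ``the same argument as before''. Below I indicate the two points where one must be slightly careful.

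First I would fix $\delta>0$ and consider the probability measures
$$
\mu_k:=\frac1{\Vol(R_{g_k,\delta})}(g_k)_*\bigl(\mu_{Ax}|_{R_{g_k,\delta}}\bigr),
$$
and pass to a weakly convergent subsequence $\mu_k\to\nu$ (so a priori $\nu(X)\le 1$). To see $\nu$ is a probability measure, fix $\epsilon>0$, take the compact set $\mathcal K_{\delta(\epsilon)}$ furnished by Proposition \ref{p74}, choose $f_\epsilon\in C_c(X)$ with $0\le f_\epsilon\le 1$ and $f_\epsilon\equiv 1$ on $\mathcal K_{\delta(\epsilon)}$, and compute
$$
\nu(X)\ge\int_X f_\epsilon\,d\nu=\lim_{k\to\infty}\int_X f_\epsilon\,d\mu_k\ge\limsup_{k\to\infty}\mu_k(\mathcal K_{\delta(\epsilon)})\ge 1-\epsilon,
$$
where the last inequality is Proposition \ref{p74}; letting $\epsilon\to0$ gives $\nu(X)=1$.

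Next I would show that replacing $\mu_{Ax}$ by its restriction to $R_{g_k,\delta}$ is harmless, i.e. that
$$
\frac1{\Vol(R_{g_k,\delta})}(g_k)_*\mu_{Ax}-\frac1{\Vol(R_{g_k,\delta})}(g_k)_*\bigl(\mu_{Ax}|_{R_{g_k,\delta}}\bigr)\to 0
$$
in $\tau_X$. For $f\in C_c(X)$ with compact support, by Mahler's criterion there is $\delta'<\delta$ so that $g_k\exp(\mb t)\Gamma$ lies outside $\supp f$ whenever $\mb t\notin R_{g_k,\delta'}$; hence $\int_X f\,d(g_k)_*\mu_{Ax}=\int_{R_{g_k,\delta'}}f(g_k\exp(\mb t)x)\,d\mb t$. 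The difference of the two integrals is then bounded by $\|f\|_\infty\,\bigl(\Vol(R_{g_k,\delta'})-\Vol(R_{g_k,\delta})\bigr)/\Vol(R_{g_k,\delta})$, and this ratio tends to $0$ by the analogue of Corollary \ref{ac61} for the polytopes $R_{g_k,\delta}$ — which follows from Proposition \ref{p72}(2) and Proposition \ref{p73} exactly as Corollary \ref{ac61} followed from Lemma \ref{p52} and Proposition \ref{p53}. This is the one place where a little care is needed: one must either state and prove this $R$-version of Corollary \ref{ac61} (a one-line repetition of the cylinder-covering argument) or absorb it into the present proof. Combining the two displays above yields $\frac1{\Vol(R_{g_k,\delta})}(g_k)_*\mu_{Ax}\to\nu$, hence $[(g_k)_*\mu_{Ax}]\to[\nu]$ by Proposition \ref{p25}.

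Finally, $(g_k)_*\mu_{Ax}$ is invariant under $\Ad(g_k)A$ because $\mu_{Ax}$ is $A$-invariant and $(g_k)_*$ conjugates the $A$-action into the $\Ad(g_k)A$-action. Since $\Ad(g_k)A\to\lim_{k\to\infty}\Ad(g_k)A$ (a unipotent subgroup, by the standing assumption $\mathcal A(A,\{g_k\})=\{0\}$ and the discussion preceding Lemma \ref{p51}), a routine limiting argument — approximating a fixed element of the limit group by elements $\Ad(g_k)a_k$ and using weak convergence $\mu_k\to\nu$ together with the $\Ad(g_k)a_k$-invariance of $\mu_k$ on the compact part — shows $\nu$ is invariant under $\lim_{k\to\infty}\Ad(g_k)A$. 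I expect no genuine obstacle here: the only slightly delicate points are the bookkeeping for the $R$-analogue of Corollary \ref{ac61} and making sure Proposition \ref{p72}(1) is used (rather than just an inclusion of polytopes) to transfer the nondivergence mass estimate from $\Omega_{g_k,\delta}$ to $R_{g_k,\delta}$, both of which are straightforward.
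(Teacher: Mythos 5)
Your proposal is correct and follows exactly the paper's route: the paper's proof of this proposition is literally the statement that it is identical to Proposition \ref{p65} with $\Omega_{g_k,\delta}$ replaced by $R_{g_k,\delta}$, using Propositions \ref{p72}--\ref{p74} in place of their $\Omega$-counterparts, which is what you carry out (including the $R$-analogue of Corollary \ref{ac61} that the paper leaves implicit).
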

\begin{proof}
It is identical to Proposition \ref{p65} with $\Omega_{g_k,\delta}$ replaced by $R_{g_k,\delta}$. 
\end{proof}

The following is an immediate corollary of Proposition \ref{p25}, Proposition \ref{p65} and Proposition \ref{p75}.

\begin{corollary}\label{c71}
For any $\delta>0$, we have $$\lim_{k\to\infty}\frac{\Vol(\Omega_{g_k,\delta})}{\Vol(R_{g_k,\delta})}=1.$$
\end{corollary}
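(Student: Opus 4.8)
The statement compares two normalizations of the single sequence of measures $\mu_k := (g_k)_*\mu_{Ax}$, the first by $\lambda_k := 1/\Vol(\Omega_{g_k,\delta})$ and the second by $\lambda_k' := 1/\Vol(R_{g_k,\delta})$; the quantity to be controlled is exactly the ratio $\Vol(\Omega_{g_k,\delta})/\Vol(R_{g_k,\delta}) = \lambda_k'/\lambda_k$. The plan is to show that both scalings drive $\mu_k$ to the \emph{same} limit measure, and then to extract the limit of $\lambda_k'/\lambda_k$ from the uniqueness part of Proposition~\ref{p25}.

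Concretely, I would first invoke Proposition~\ref{p65}: after passing to a subsequence, $\lambda_k\mu_k$ converges in $\cM(X)$ to a probability measure $\nu$. Then, passing to a further subsequence, Proposition~\ref{p75} gives $\lambda_k'\mu_k \to \nu'$ in $\cM(X)$ for a probability measure $\nu'$. Along this common subsequence $[\mu_k]$ converges in $(\bP\cM(X),\tau_P)$ both to $[\nu]$ and to $[\nu']$; since $\tau_P$ is Hausdorff (Proposition~\ref{p23}) we conclude $[\nu] = [\nu']$, and because $\nu$ and $\nu'$ are both probability measures this upgrades to $\nu = \nu'$. Next I would apply the second assertion of Proposition~\ref{p25}(1) with this $\mu_k$, $\lambda_k$ and $\lambda_k'$: since $\lambda_k\mu_k \to \nu \neq 0$ and $\lambda_k'\mu_k \to \nu \neq 0$, the limit $\lim_k \lambda_k'/\lambda_k$ exists, and the computation recorded in the proof of that proposition identifies it as $\int f\,d\nu/\int f\,d\nu = 1$ for any fixed $f \in C_c(X)$ with $\int f\,d\nu \neq 0$. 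Thus $\Vol(\Omega_{g_k,\delta})/\Vol(R_{g_k,\delta}) \to 1$ along the chosen subsequence.

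To pass from this subsequential statement to convergence of the whole sequence, I would note that the reasoning applies verbatim starting from an arbitrary subsequence of $\{g_k\}$, so every subsequence of $\Vol(\Omega_{g_k,\delta})/\Vol(R_{g_k,\delta})$ has a further subsequence converging to $1$, whence the full sequence does. I do not expect any genuine obstacle here; the one point that must be handled with care — and the reason all three cited results enter — is the equality $\nu = \nu'$, since the ratio being computed is precisely the homothety factor relating the two limiting measures, and it is only the normalization to probability measures together with the Hausdorff property of $\tau_P$ that pins this factor to $1$.
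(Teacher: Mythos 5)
Your argument is correct and is precisely the route the paper intends: it records Corollary \ref{c71} as an immediate consequence of Propositions \ref{p25}, \ref{p65} and \ref{p75}, i.e.\ both normalizations of $(g_k)_*\mu_{Ax_e}$ converge (along subsequences) to probability measures representing the same class, so the ratio of the scalings tends to the ratio of total masses, which is $1$. Your care over the common subsequence, the upgrade from $[\nu]=[\nu']$ to $\nu=\nu'$ via equal total mass, and the final subsequence-of-a-subsequence step to recover convergence of the full sequence is exactly what fills in the paper's omitted details.
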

In the rest of the paper, we will fix a $\delta>0$ for $R_{g_k,\delta}$ $(k\in\mathbb N)$ unless otherwise specified.

\section{Ratner's theorem and linearization}\label{line}
Because of Proposition \ref{p75}, we can apply measure classification theorem for unipotent actions on homogeneous spaces. This theorem was first conjectured by Raghunathan and Dani~\cite{Dani81}, and later a breakthrough was made by Margulis in his celebrated proof of the Oppenheim conjecture \cite{M}. Afterwards, the measure classification theorem was proved by Ratner in her seminal work \cite{R90MR, R90SMR, R91RMC}. One could also read a paper of Margulis and Tomanov \cite{MT} for a different proof. In this section, for convenience, we borrow the framework and the presentation of \cite{MS95}. Readers may refer to \cite{DM93} and \cite{Sha91} for related discussions. This section is the final step of preparation for the proof of Theorem \ref{nth11}, and is devoted to proving Proposition \ref{p86}.

\subsection{Prerequisites}
We start by recalling some well-known results. One could read \cite{MS95} for more details. Let $\mathcal H$ be the countable collection of all closed connected subgroups $H$ of $G$ such that $H\cap\Gamma$ is a lattice in $H$ and the group generated by one-parameter unipotent subgroups in $H$ acts ergodically on $H\Gamma/\Gamma$ with respect to the $H$-invariant probability measure.

Let $W=\lim_{k\to\infty}\Ad(g_k)A$. By our assumptions on $\{g_k\}_{k\in\mathbb N}$, $W$ is a connected unipotent subgroup of $G$. Let $\pi:G\to G/\Gamma$ be the natural projection map. For $H\in\mathcal H$, define $$N(H,W)=\{g\in G:W\subset gHg^{-1}\},\;S(H,W)=\bigcup_{H'\in\mathcal H,H'\subsetneq H}N(H',W)$$ $$T_H(W)=\pi(N(H,W))\backslash \pi(S(H,W)).$$ For any $H_1,H_2\in\mathcal H$, $T_{H_1}(W)$ and $ T_{H_2}(W)$ intersect if and only if $ T_{H_1}(W)=T_{H_2}(W)$.

\begin{theorem}[{\cite{R91RMC}, \cite[Theorem 2.2]{MS95}}]\label{th82}
Let $\mu$ be a $W$-invariant probability measure on $X$. For any $H\in\mathcal H$, let $\mu_{H,W}$ be the restriction of $\mu$ on $T_H(W)$.
\begin{enumerate}
\item One has $\mu=\sum_{H\in\mathcal H^*}\mu_{H,W}$. Here $\mathcal H^*$ is a set of representatives of $\Gamma$-conjugacy classes in $\mathcal H$.
\item For each $H\in\mathcal H^*$, $\mu_{H,W}$ is $W$-invariant. Any $W$-invariant ergodic component of $\mu_{H,W}$ is the invariant probability measure on $gH\Gamma/\Gamma$ for some $g\in N(H,W)$.
\end{enumerate}
\end{theorem}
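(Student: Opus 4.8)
The plan is to derive Theorem~\ref{th82} from Ratner's measure classification theorem together with the ergodic decomposition and the linearization bookkeeping of Dani--Margulis and Mozes--Shah; since the statement is quoted from \cite{R91RMC} and \cite{MS95}, the point is only to indicate the shape of the argument. First I would decompose $\mu$ into $W$-ergodic components. The group $W=\lim_{k\to\infty}\Ad(g_k)A$ is connected unipotent, hence amenable, so there is a standard probability space $(Y,\beta)$ and a measurable map $y\mapsto\mu_y$ with each $\mu_y$ a $W$-invariant, $W$-ergodic probability measure on $X$, and $\mu=\int_Y\mu_y\,d\beta(y)$.

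Next I would apply Ratner's theorem to each component. Any group generated by unipotent one-parameter subgroups --- in particular $W$ itself --- acts on $X$ so that every ergodic invariant probability measure is algebraic: for $\beta$-a.e.\ $y$ there is a point $g_y\Gamma$ and a closed connected subgroup $L_y\supseteq W$ with $L_yg_y\Gamma$ closed, carrying an $L_y$-invariant probability measure equal to $\mu_y$. Writing $H_y=g_y^{-1}L_yg_y$, one checks $H_y\in\mathcal H$, and $W\subseteq L_y=g_yH_yg_y^{-1}$ says exactly that $g_y\in N(H_y,W)$, so $\pi(g_y)\in\pi(N(H_y,W))$. Choosing $H_y$ minimal (inside $\mathcal H$, up to conjugacy) among subgroups with the property that $\mu_y$ is homogeneous on a translate of $H_y\Gamma/\Gamma$, the linearization argument of \cite{DM93,MS95} shows $\mu_y\bigl(\pi(S(H_y,W))\bigr)=0$: a positive-measure intersection with some $N(H',W)$ for $H'\subsetneq H_y$ would, by $W$-ergodicity and the standard ``the exceptional set lies in a proper subvariety'' dichotomy, force $\mu_y$ to be homogeneous with respect to a proper subgroup, contradicting minimality. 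Hence $\mu_y$ is concentrated on $T_{H_y}(W)$.

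It then remains to reassemble. Because the sets $N(H,W)$ and $S(H,W)$ are stable under left multiplication by $W$ (if $W\subseteq gHg^{-1}$ then for $w\in W$ one has $wg\,H\,(wg)^{-1}=w(gHg^{-1})w^{-1}=gHg^{-1}\supseteq W$), each $T_H(W)$ is a $W$-invariant Borel set; by the stated dichotomy $T_{H_1}(W)\cap T_{H_2}(W)\neq\emptyset$ implies $T_{H_1}(W)=T_{H_2}(W)$, so over a set $\mathcal H^*$ of representatives of $\Gamma$-conjugacy classes the $T_H(W)$ are pairwise disjoint, and by the previous paragraph their union is $\mu$-conull. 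Therefore $\mu=\sum_{H\in\mathcal H^*}\mu|_{T_H(W)}=\sum_{H\in\mathcal H^*}\mu_{H,W}$, which is part~(1). For part~(2), $\mu_{H,W}$ is $W$-invariant since $T_H(W)$ is, and its $W$-ergodic components are precisely the $\mu_y$ with $\mu_y(T_H(W))=1$; each such $\mu_y$ is, by the second paragraph, the invariant probability measure on $g_yH\Gamma/\Gamma$ for some $g_y\in N(H,W)$.

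The entire difficulty is packaged into the two black boxes invoked above: Ratner's classification (that each $W$-ergodic component is algebraic) and the linearization/bookkeeping lemma (that the minimal homogeneous subgroup of a component sees a.e.\ point of its orbit away from $\pi(S(H,W))$, and that $y\mapsto(H_y,g_y\Gamma)$ is measurable with countable essential range in $\mathcal H$). These are exactly what let the a priori integral over ergodic components collapse to a countable sum indexed by $\mathcal H^*$; granting them, the remaining steps are the routine manipulations sketched above, so I would not reproduce them here.
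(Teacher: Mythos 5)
The paper offers no proof of Theorem \ref{th82}: it is quoted directly from Ratner \cite{R91RMC} and Mozes--Shah \cite[Theorem 2.2]{MS95}. Your outline --- ergodic decomposition of $\mu$, Ratner's classification of each $W$-ergodic component, the minimality/singular-set bookkeeping giving $\mu_y\bigl(\pi(S(H_y,W))\bigr)=0$, and reassembly over the pairwise disjoint sets $T_H(W)$, $H\in\mathcal H^*$ --- is exactly the argument of the cited source, so it is correct and takes essentially the same (intended) approach.
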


In the following, we will fix a subgroup $H\in\mathcal H$ $(H\not=G)$. Let $d_H=\dim\Lie(H)$ and $V_H=\wedge^{d_H}\mathfrak g$. Then $G$ acts on $V_H$ via the wedge product representation $\wedge^{d_H}\Ad$. Since $H$ is a $\mathbb Q$-group, one can find an integral point $p_H\in\wedge^{d_H}\Lie(H)\setminus\{0\}$. We will fix this $p_H$. Let $N(H)$ be the normalizer of $H$ in $G$, and $\Gamma_H=N(H)\cap\Gamma$. Then $\Gamma_H\cdot p_H\subset\{p_H,-p_H\}$. Define $\overline V_H=V_H/\{1,-1\}$ if $\Gamma_H\cdot p_H=\{p_H,-p_H\}$, and $\overline V_H=V_H$ if $\Gamma_H\cdot p_H=p_H$. The action of $G$ on $V_H$ induces an action on $\overline V_H$, and we define by $$\overline\eta_H(g)=g\cdot\overline p_H$$ the orbit map $\overline\eta_H:G\to\overline V_H$,  where $\overline p_H$ is the image of $p_H$ in $\overline V_H$. Since $\bar p_H$ is an integral point, the orbit $\Gamma\cdot\overline p_H$ is discrete in $\overline V_H$. Let $L_H$ be the Zariski closure of $\overline\eta_H(N(H,W))$ in $\overline V_H$. By \cite[Proposition 3.2]{DM93}, we have $\overline\eta_H^{-1}(L_H)=N(H,W)$.

\begin{proposition}[{\cite[Proposition 3.2]{MS95}}]\label{p82}
Let $D$ be a compact subset of $L_H$. Let $$S(D)=\{g\in\overline\eta_H^{-1}(D): g\gamma\in\overline\eta_H^{-1}(D)\text{ for some }\gamma\in\Gamma\setminus\Gamma_H\}.$$ Then $S(D)\subset S(H,W)$ and $\pi(S(D))$ is closed in $X$. Moreover, for any compact subset $\mathcal K\subset X\setminus\pi(S(D))$, there exists a neighbourhood $\Phi$ of $D$ in $\overline V_H$ such that for any $y\in\pi(\overline\eta_H^{-1}(\Phi))\cap\mathcal K$, the set $\overline\eta_H(\pi^{-1}(y))\cap\Phi$ is a singleton.
\end{proposition}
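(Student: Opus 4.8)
The plan is to run the standard Dani--Margulis-type argument, which rests on two facts already at our disposal: $\Gamma\cdot\overline p_H$ is discrete in $\overline V_H$, and $\overline\eta_H^{-1}(L_H)=N(H,W)$. Write $\Lambda:=\Gamma\cdot\overline p_H$, and recall that for $\gamma\in\Gamma$ one has $\gamma\in\Gamma_H$ iff $\gamma\cdot\overline p_H=\overline p_H$ (if $\gamma\cdot p_H=\pm p_H$ then $\gamma$ preserves the line $\wedge^{d_H}\Lie(H)$, i.e.\ $\gamma\in N(H)$). This lets one reformulate
\[
S(D)=\{g:\ g\cdot\overline p_H\in D\ \text{and}\ g\cdot v\in D\ \text{for some}\ v\in\Lambda\setminus\{\overline p_H\}\},
\]
\[
S(D)\Gamma=\{g:\ g\cdot v_1\in D,\ g\cdot v_2\in D\ \text{for some}\ v_1\ne v_2\ \text{in}\ \Lambda\},
\]
and one notes $\overline\eta_H^{-1}(D)\subset\overline\eta_H^{-1}(L_H)=N(H,W)$. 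To prove $S(D)\subset S(H,W)$: given $g\in S(D)$, the reformulation gives $g\in N(H,W)$ and $g\gamma\in N(H,W)$ for some $\gamma\in\Gamma\setminus\Gamma_H$, so $W\subset gHg^{-1}\cap g\gamma H\gamma^{-1}g^{-1}=g(H\cap\gamma H\gamma^{-1})g^{-1}$. Since $\gamma\notin N(H)$ and $H$, $\gamma H\gamma^{-1}$ are connected of equal dimension, $H\cap\gamma H\gamma^{-1}\subsetneq H$; both lie in $\mathcal H$ ($\mathcal H$ is stable under $\Gamma$-conjugation, as $\gamma H\gamma^{-1}\cap\Gamma=\gamma(H\cap\Gamma)\gamma^{-1}$), and $g^{-1}Wg$ is a connected unipotent subgroup of each. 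Then I would invoke the structure theory of $\mathcal H$ (cf.\ \cite{DM93}, \cite{MS95}): there is a smallest $H_1\in\mathcal H$ with $g^{-1}Wg\subset H_1$ and $H_1\cdot\pi(e)$ closed; minimality forces $H_1\subset H\cap\gamma H\gamma^{-1}\subsetneq H$, whence $g\in N(H_1,W)\subset S(H,W)$.

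\textbf{Closedness of $\pi(S(D))$.} It suffices to show that the right-$\Gamma$-invariant set $S(D)\Gamma$ is closed in $G$. By the reformulation above, $S(D)\Gamma$ is the union over unordered pairs $\{v_1,v_2\}\subset\Lambda$, $v_1\ne v_2$, of the closed sets $\{g:g\cdot v_1\in D\}\cap\{g:g\cdot v_2\in D\}$. This union is locally finite: any compact $C\subset G$ meets $\{g:g\cdot v\in D\}$ only for $v$ in the compact set $\{c^{-1}\cdot w:c\in C,\ w\in D\}$, which meets the discrete set $\Lambda$ in finitely many points, so only finitely many pairs $\{v_1,v_2\}$ contribute near a given point. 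A locally finite union of closed sets is closed, so $S(D)\Gamma$ is closed and $\pi(S(D))$ is closed. (The same argument shows $\pi(\overline\eta_H^{-1}(D))$ is closed, which is used tacitly below.)

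\textbf{The neighbourhood $\Phi$.} For the ``moreover'' clause I would argue by contradiction. Fix a relatively compact open $\Phi_0\supset D$ and open neighbourhoods $\Phi_0\supset\Phi_1\supset\Phi_2\supset\cdots$ of $D$ with $\bigcap_n\overline{\Phi_n}=D$. If no $\Phi_n$ works, then for each $n$ there is $y_n=\pi(g_n)\in\mathcal K$ and $v_n^{(1)}\ne v_n^{(2)}$ in $\Lambda$ with $g_n\cdot v_n^{(i)}\in\Phi_n$ (using $\overline\eta_H(\pi^{-1}(y_n))=g_n\cdot\Lambda$). Choose a compact $C\subset G$ with $\pi(C)\supset\mathcal K$, take $g_n\in C$, and pass to a subsequence with $g_n\to g$, so $\pi(g)\in\mathcal K$. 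The points $v_n^{(i)}=g_n^{-1}\cdot(g_n\cdot v_n^{(i)})$ lie in the compact set $\{c^{-1}\cdot w:c\in C,\ w\in\overline{\Phi_0}\}$, which meets $\Lambda$ in finitely many points; after a further subsequence $v_n^{(1)}\equiv v^{(1)}$, $v_n^{(2)}\equiv v^{(2)}$ with $v^{(1)}\ne v^{(2)}$. Since $g_n\cdot v^{(i)}\in\Phi_n$ and the $\Phi_n$ decrease to $D$, letting $n\to\infty$ gives $g\cdot v^{(i)}\in\bigcap_m\overline{\Phi_m}=D$. Writing $v^{(i)}=\gamma_i\cdot\overline p_H$, this says $g\gamma_1,g\gamma_2\in\overline\eta_H^{-1}(D)$ with $\gamma:=\gamma_1^{-1}\gamma_2\notin\Gamma_H$ (as $\gamma\cdot\overline p_H=\gamma_1^{-1}\cdot v^{(2)}\ne\gamma_1^{-1}\cdot v^{(1)}=\overline p_H$); hence $g\gamma_1\in S(D)$ and $\pi(g)\in\pi(S(D))\cap\mathcal K$, contradicting $\mathcal K\cap\pi(S(D))=\emptyset$. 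Therefore some $\Phi_n$ has the required property.

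\textbf{Main obstacle.} I expect the only genuinely delicate point to be the inclusion $S(D)\subset S(H,W)$: it is there that one uses the structure theory of $\mathcal H$ in an essential way — existence of a smallest member of $\mathcal H$ through a given connected unipotent subgroup with closed orbit, and stability of $\mathcal H$ under intersection and $\Gamma$-conjugation — which is precisely where the arithmeticity of $\Gamma$ and Borel--Harish-Chandra finiteness enter. The remaining two steps are soft, amounting to discreteness-plus-compactness arguments once everything has been phrased in terms of the discrete orbit $\Lambda=\Gamma\cdot\overline p_H$.
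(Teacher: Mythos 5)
The paper gives no proof of Proposition \ref{p82}: it is quoted directly from \cite{MS95} (Proposition 3.2), so there is no internal argument to compare with, and your proposal is in effect a correct reconstruction of the standard Mozes--Shah/Dani--Margulis proof. Your three steps (reformulating everything in terms of the closed discrete orbit $\Lambda=\Gamma\cdot\overline p_H$, local finiteness giving closedness of $S(D)\Gamma$, and the nested-neighbourhood compactness argument producing $\Phi$) are all sound, and the only external inputs are exactly the ones you flag: the identity $\overline\eta_H^{-1}(L_H)=N(H,W)$, which the paper itself quotes from \cite{DM93}, and the existence of a smallest element of $\mathcal H$ containing the unipotently generated subgroup $g^{-1}Wg$, supplied by \cite{DM93} and \cite{Sha91}.
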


\subsection{Proof of Proposition \ref{p86}}
Now we begin to prove Proposition \ref{p86}. Let $\{f_1,f_2,\dots,f_m\}$ be a set of polynomials defining $L_H$ in $\overline V_H$. In the rest of the section, we will fix a unit vector $\vec v\in\Lie(A)$ such that all the linear functionals $\chi\in\mathcal W(\mathfrak g)$ are different on $\vec v$. One can find $d\in\mathbb N$ and $\lambda>0$ such that for any $x_0\in\Lie(A)$, the functions of $ t\in\mathbb R$ $$\|(g_k\exp(x_0+t\vec{v})\cdot w\|^2,\quad f_j(g_k\exp(x_0+t\vec{v})\cdot w),\quad1\leq j\leq m$$ belong to $E(d,\lambda)$ as defined in Definition \ref{def71}. Here the norm $\|\cdot \|$ on $\overline V_H$ is induced by the norm $\|\cdot \|_{\mathfrak g}$ on $\mathfrak g$.  We write $\delta_0$ for the constant $\delta_0(d,\lambda)$ defined in Proposition \ref{p61}.

\begin{proposition}[Cf. {\cite[Proposition 4.2]{DM93}}]\label{p83}
Let $C$ be a compact subset in $L_H$ and $\epsilon>0$. Then there exists a compact subset $D$ in $L_H$ with $C\subset D$ such that for any neighborhood $\Phi$ of $D$ in $\overline V_H$, there exists a neighborhood $\Psi$ of $C$ in $\overline V_H$ with the following property. For $x_0\in\Lie(A)$, $w\in\overline V_H$, $\Xi\subset[0,\delta_0]$ and $k\in\mathbb N$, if $\{g_k\exp(x_0+t\vec{v})\cdot w: t\in \Xi\}\not\subset\Phi$, then we have 
\begin{eqnarray*}
&{}&m_{\mathbb R}(\{t\in \Xi:g_k\exp(x_0+t\vec{v})\cdot w\in\Psi\})\\
&\leq&\epsilon m_{\mathbb R}(\{t\in \Xi:g_k\exp(x_0+t\vec{v})\cdot w\in\Phi\}).
\end{eqnarray*}
\end{proposition}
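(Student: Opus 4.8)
The plan is to follow the strategy of \cite[Proposition 4.2]{DM93}, adapted to the present exponential-function setting by invoking Proposition \ref{p61} in place of the $(C,\alpha)$-good property. The statement is essentially a ``relative'' growth estimate: on any interval of length at most $\delta_0$ along the fixed direction $\vec v$, whenever the trajectory $t\mapsto g_k\exp(x_0+t\vec v)\cdot w$ exits the larger neighborhood $\Phi$, the amount of time it spends in the small neighborhood $\Psi$ of $C$ is a small fraction of the time it spends in $\Phi$. The key point is that each coordinate function of $t\mapsto g_k\exp(x_0+t\vec v)\cdot w$, as well as each $f_j$ applied to this curve, lies in $E(d,\lambda)$ for uniform $d,\lambda$ (this is exactly why $\vec v$ was chosen so that all $\chi\in\mathcal W(\mathfrak g)$ are distinct on it, and why $\delta_0=\delta_0(d,\lambda)$ was fixed before the statement).

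First I would set up the compact exhaustion: since $L_H$ is the zero set of the polynomials $f_1,\dots,f_m$, given $C$ I would choose $D\supset C$ large enough (in terms of the polynomial degrees, $\epsilon$, and the constant $M=M(\epsilon',d,\lambda)$ from Proposition \ref{p61} for a suitable $\epsilon'$) so that the following dichotomy holds for the restricted functions on any $\Xi\subset[0,\delta_0]$: either the curve stays in a fixed neighborhood of $D$, or its supremum norm over $\Xi$ is large compared to its values near $C$. Concretely, for a fixed neighborhood $\Phi$ of $D$, I would take $\Psi$ to be a small enough neighborhood of $C$ so that on the set where the curve lies in $\Psi$ each $|f_j|$ is bounded by, say, $(1/M)$ times its supremum over $\Xi$ — this forces membership in $\Psi$ to be a ``small-value'' event for at least one of the $E(d,\lambda)$-functions $f_j\circ(g_k\exp(x_0+\cdot\,\vec v)\cdot w)$, while exiting $\Phi$ guarantees that the corresponding supremum over $\Xi$ is genuinely attained (so the hypothesis of Proposition \ref{p61} applies with a nontrivial sup). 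Applying the inequality \eqref{e} of Proposition \ref{p61} to that function then bounds $m_{\mathbb R}(\{t\in\Xi: \text{curve}\in\Psi\})$ by $\epsilon'\, m_{\mathbb R}(\Xi)$, and since the curve stays in $\Phi$ on a set comparable to all of $\Xi$ (again by choosing $D$ large, so that leaving $\Phi$ entirely on a large subinterval would violate the $E(d,\lambda)$ control via Proposition \ref{p61} applied once more), one converts the bound against $m_{\mathbb R}(\Xi)$ into the desired bound against $m_{\mathbb R}(\{t\in\Xi:\text{curve}\in\Phi\})$, absorbing constants into $\epsilon$.

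The main obstacle I anticipate is the bookkeeping in the two-step comparison: one needs Proposition \ref{p61} applied to two different (families of) functions — once to the defining polynomials $f_j$ to control the time spent in $\Psi$, and once, in effect, to the ``distance to $D$'' functions (or to the coordinate functions of the curve) to guarantee that the time spent in $\Phi$ is a definite fraction of $m_{\mathbb R}(\Xi)$ — and these must be coordinated so that a single choice of $D$ (depending only on $C$ and $\epsilon$) works uniformly in $k$, in $x_0$, in $w$, and in the subinterval $\Xi$. The uniformity over $k$ is automatic because the constants $d,\lambda,\delta_0$ do not depend on $k$ by construction; the uniformity over $w$ and $x_0$ requires checking that the $E(d,\lambda)$-membership is uniform in these parameters, which follows from the explicit form of the exponents (only the linear functionals $\chi\in\mathcal W(\mathfrak g)$ and their bounds enter, exactly as in the discussion preceding Proposition \ref{p64}). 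Once these uniformities are in hand, the argument is a routine transcription of \cite[Proposition 4.2]{DM93} with \eqref{e} substituted for the $(C,\alpha)$-good estimate, and I would simply point the reader to that reference for the remaining details.
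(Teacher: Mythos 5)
Your setup (the choice of $D$ in terms of the constant $M$ from Proposition \ref{p61}, and a $\Psi$ on which each $|f_j|$ and the norm are below $1/M$ times a threshold) is in the right spirit, and your observation that membership in $\Psi$ becomes a ``small-value'' event for one of the $E(d,\lambda)$-functions is exactly how the paper argues. But the final step of your proposal has a genuine gap: you bound the time in $\Psi$ by $\epsilon'\,m_{\mathbb R}(\Xi)$ and then try to convert this into a bound against $m_{\mathbb R}(\{t\in\Xi:\text{curve}\in\Phi\})$ by claiming that the curve spends a definite fraction of $\Xi$ inside $\Phi$, ``since leaving $\Phi$ on a large subinterval would violate the $E(d,\lambda)$ control.'' This claim is false in general and cannot be extracted from Proposition \ref{p61}: that proposition only gives \emph{upper} bounds on the measure of small-value sets relative to the supremum; it gives no lower bound on the time a trajectory spends in a bounded region. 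For instance, if $w$ is very large, the curve $t\mapsto g_k\exp(x_0+t\vec v)\cdot w$ can lie outside $\Phi$ for all but an arbitrarily small portion of $\Xi$, so $m_{\mathbb R}(\{t:\text{curve}\in\Phi\})$ need not be comparable to $m_{\mathbb R}(\Xi)$, and no choice of $D$ depending only on $C$ and $\epsilon$ repairs this.

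The missing idea — which is how the paper (following \cite{DM93}) gets a bound relative to the $\Phi$-time rather than to $m_{\mathbb R}(\Xi)$ — is to work inside the closed set
$\mathfrak I=\{t\in\Xi:\|g_k\exp(x_0+t\vec v)\cdot w\|\leq M^{1/2}r,\ |f_j(g_k\exp(x_0+t\vec v)\cdot w)|\leq\alpha\ (1\leq j\leq m)\}$,
which by the choice of $\alpha$ is contained in $\{t:\text{curve}\in\Phi\}$, and to decompose $\mathfrak I$ into its connected components $I_i$. Because the curve exits $\Phi$ somewhere in $\Xi$, on each maximal component $I_i$ one of the defining constraints is saturated: either $\sup_{t\in I_i}\|\text{curve}\|^2=Mr^2$ or $\sup_{t\in I_i}|f_j(\text{curve})|=\alpha$ for some $j$. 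Applying inequality (\ref{e}) of Proposition \ref{p61} \emph{on each component $I_i$} (not on all of $\Xi$) then bounds the time in $\Psi$ within $I_i$ by $\epsilon\,m_{\mathbb R}(I_i)$, and summing over the components yields a bound by $\epsilon\,m_{\mathbb R}(\mathfrak I)\leq\epsilon\,m_{\mathbb R}(\{t\in\Xi:\text{curve}\in\Phi\})$ directly, with no need for any lower bound on the $\Phi$-time. With the explicit choices $D=B_0(M^{1/2}r)\cap L_H$ (where $\overline C\subset B_0(r)$) and $\Psi=\{u:\|u\|<r,\ |f_j(u)|<\alpha/M\}$, this component argument completes the proof; without it, your conversion step does not go through.
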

\begin{proof}
Let $d$ and $\lambda$ be defined as above. We choose a ball $B_0(r)$ of radius $r>0$ centered at 0 in $\overline V_H$ such that the closure $\overline C\subset B_0(r)$. Now for $\epsilon>0$, let $M>0$ be the constant as in Proposition \ref{p61}. Denote by $B_0(M^{\frac12}r)$ the ball of radius $M^\frac12r>0$ centered at 0. Then we take $$D:=B_0(M^{\frac12}r)\cap L_H,$$ and we will prove the proposition for this $D$.

Indeed, for any neighborhood $\Phi$ of $D$ in $\overline V_H$, one can find $\alpha>0$ such that 
$$\{u\in\overline V_H:\|u\|\leq M^\frac12r,\; |f_j(u)|\leq\alpha\;(1\leq j\leq m)\}\subset\Phi.$$ Define $$\Psi:=\{u\in\overline V_H:\|u\|< r,\;|f_j(u)|<\alpha/M\}$$ which is a neighborhood of $C$ in $\overline V_H$, and contained in $\Phi$. We show that $\Phi$ and $\Psi$ satisfy the desired property. 

Suppose $$\{g_k\exp(x_0+t\vec{v})\cdot w: t\in \Xi\}\not\subset\Phi$$ for $x_0\in\Lie(A)$, $w\in\overline V_H$ and $\Xi\subset[0,\delta_0]$.
Denote by $\mathfrak I$ the following closed subset $$\{t\in \Xi:\|g_k\exp(x_0+t\vec{v})\cdot w\|\leq M^\frac12r, |f_j(g_k\exp(x_0+t\vec{v})\cdot w)|\leq\alpha\;(1\leq j\leq m)\}.$$ One can write $\mathfrak I$ as a disjoint union of the connected components $I_i$ of $\mathfrak I$ $$\mathfrak I=\bigcup I_i.$$ On each $I_i$, we have either $$\sup_{ t\in I_i}\|g_k\exp(x_0+t\vec{v})\cdot w\|^2=Mr^2$$ or $$\sup_{ t\in I_i}|f_j(g_k\exp(x_0+t\vec{v})\cdot w)|=\alpha$$ for some $1\leq j\leq m.$ Since $\|g_k\exp(x_0+t\vec{v})\cdot w\|^2$ and $f_j(g_k\exp(x_0+t\vec{v})\cdot w)\;(1\leq j\leq m)$ belong to $E(d,\lambda)$, by Proposition \ref{p61} and the definition of $\Psi$, we obtain $$m_{\mathbb R}(\{t\in I_i:g_k\exp(x_0+t\vec{v})\cdot w\in\Psi\})\leq\epsilon m_{\mathbb R}(I_i).$$ Now we compute
\begin{align*}
&m_{\mathbb R}(\{t\in \Xi:g_k\exp(x_0+t\vec{v})\cdot w\in\Psi\})\\
=&m_{\mathbb R}(\{t\in \mathfrak I:g_k\exp(x_0+t\vec{v})\cdot w\in\Psi\})\\
=&\sum_im_{\mathbb R}(\{t\in I_i:g_k\exp(x_0+t\vec{v})\cdot w\in\Psi\})\\
\leq&\sum_i\epsilon m_{\mathbb R}(I_i)=\epsilon m_{\mathbb R}(\mathfrak I)\\
\leq&\epsilon m_{\mathbb R}(\{t\in \Xi:g_k\exp(x_0+t\vec{v})\cdot w\in\Phi\});
\end{align*}
This completes the proof of the proposition.
\end{proof}

For our purpose, we define a convex polytope in $R_{g_k,\delta}$ as follows. By Proposition \ref{p73}, we know that $$\lim_{k\to\infty}\frac{\Area(\partial R_{g_k,\delta})}{\operatorname{Vol}(R_{g_k,\delta})}=0.$$ Therefore, for each $k\in\mathbb N$, we can find a constant $d_k>0$ such that $$\lim_{k\to\infty}d_k=\infty\text{ and }\lim_{k\to\infty}\frac{d_k\Area(\partial R_{g_k,\delta})}{\operatorname{Vol}(R_{g_k,\delta})}=0.$$ Then we denote by $R'_{g_k,\delta}$ the subset of points $\mb{t}\in\Lie(A)$ satisfying $$\chi(\mb{t})\geq\ln\delta+d_k-\ln\|\wedge^l\Ad(g_k)v\|$$ for any $v\in\mathfrak g_\chi(\mathbb Z)\setminus\{0\},\chi\in\mathcal W_l(\mathfrak g),1\leq l\leq\dim\mathfrak g-1.$ This is a convex polytope inside $R_{g_k,\delta}$.

In the following, we list some properties about $R'_{g_k,\delta}$ $(k\in\mathbb N)$. 
\begin{lemma}\label{p84}
Let $d_k$ and $R'_{g_k\delta}$ be as defined above.
\begin{enumerate}
\item We have $$\lim_{k\to\infty}\frac{\operatorname{Vol}(R'_{g_k,\delta})}{\operatorname{Vol}(R_{g_k,\delta})}=1.$$
\item For $1\leq l\leq\dim\mathfrak g-1$, a functional $\chi\in\mathcal W_l(\mathfrak g)$, a nonzero $v\in\mathfrak g_\chi(\mathbb Z)$ and $k\in\mathbb N$, we have $$\|e^{\chi(\mb{t})}(\wedge^l\Ad(g_k)v)\|\geq\delta e^{d_k}\;(\forall\mb{t}\in R'_{g_k,\delta}).$$
\item For any $x_0$ in the $\delta_0$-neighborhood $R'_{g_k,\delta}$ and for the interval $\Xi=[0,\delta_0]$, there exists a constant $c>0$ which depends only on the linear functionals in $\mathcal W(\mathfrak g)$ and $\delta_0$, such that for any nonzero integer vector $w\in\wedge^l\mathfrak g$ $(1\leq l\leq\dim\mathfrak g-1)$, one has $$\sup_{t\in \Xi}\|\wedge^l(\Ad(g_k\exp(x_0+t\vec{v})))\cdot w\|\geq c\delta e^{d_k}.$$
\end{enumerate}
\end{lemma}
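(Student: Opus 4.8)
The three parts follow from the defining inequalities of $R'_{g_k,\delta}$ together with the machinery already assembled. For part (1), observe that $R'_{g_k,\delta}$ is obtained from $R_{g_k,\delta}$ by pushing each defining facet inward by the uniform amount $d_k$ in the direction of the corresponding linear functional $\chi$. Thus, arguing exactly as in the proof of Corollary \ref{ac61}, one sandwiches $R_{g_k,\delta}$ between $R'_{g_k,\delta}$ and the union of $R'_{g_k,\delta}$ with the $d_k$-thin cylinders over the facets of $R_{g_k,\delta}$, giving
\begin{equation*}
\operatorname{Vol}(R_{g_k,\delta}) \le \operatorname{Vol}(R'_{g_k,\delta}) + d_k\,\Area(\partial R_{g_k,\delta}).
\end{equation*}
Dividing by $\operatorname{Vol}(R_{g_k,\delta})$ and using the defining property $d_k\,\Area(\partial R_{g_k,\delta})/\operatorname{Vol}(R_{g_k,\delta}) \to 0$ of $d_k$ (which is available precisely because of Proposition \ref{p73}) yields the limit. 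One should double-check that $R'_{g_k,\delta}$ is nonempty and indeed contains a large ball for $k$ large; this is where part (1) interacts with part (3), since the argument of Lemma \ref{p33} together with Proposition \ref{p72}(2) shows $R'_{g_k,\delta}$ still contains a ball whose radius tends to infinity.

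Part (2) is immediate from the definition of $R'_{g_k,\delta}$. For $\mb{t}\in R'_{g_k,\delta}$ and $v\in\mathfrak g_\chi(\mathbb Z)\setminus\{0\}$ with $\chi\in\mathcal W_l(\mathfrak g)$, the defining inequality reads $\chi(\mb{t}) \ge \ln\delta + d_k - \ln\|\wedge^l\Ad(g_k)v\|$, which rearranges to $\chi(\mb{t}) + \ln\|\wedge^l\Ad(g_k)v\| \ge \ln(\delta e^{d_k})$, i.e. $\|e^{\chi(\mb{t})}\wedge^l\Ad(g_k)v\| \ge \delta e^{d_k}$. Nothing further is needed here.

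Part (3) is the analogue of the supremum estimate used in the proof of Proposition \ref{p64}, now phrased for the adjoint representation and with the strengthened lower bound $\delta e^{d_k}$ in place of $\delta$. Given $x_0$ in the $\delta_0$-neighborhood of $R'_{g_k,\delta}$ and a nonzero integer vector $w = \sum_{\chi} w_\chi \in \wedge^l\mathfrak g$ decomposed along the weight spaces of $\wedge^l\Ad A$, write $\wedge^l\Ad(g_k\exp(x_0 + t\vec v))w = \sum_\chi e^{\chi(x_0 + t\vec v)}\wedge^l\Ad(g_k)w_\chi$. Since $\vec v$ was chosen so that all the $\chi\in\mathcal W(\mathfrak g)$ are pairwise distinct as functionals on the line through $\vec v$, this is a sum of the form $\sum_\chi e^{\chi(\vec v)t}(\text{vector})$ with distinct exponents, so Proposition \ref{p63} applies on the interval $\Xi=[0,\delta_0]$. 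To invoke it I need the uniform lower bound $e^{\chi(x_0+t\vec v)}\|\wedge^l\Ad(g_k)w_\chi\| \ge \delta e^{d_k}$ for all $t\in\Xi$ and all $\chi$ with $w_\chi\ne 0$; this comes from part (2) applied at points of $R'_{g_k,\delta}$ (bounding below $\|e^{\chi(\mb t)}\wedge^l\Ad(g_k)v\|$ for integer $v$, hence for the integer-vector components of $w$ after clearing denominators — note each $\mathfrak g_\chi(\mathbb Z)$ is a lattice so $w_\chi$ lies in it up to a bounded rational multiple) together with the fact that $x_0$ is within $\delta_0$ of $R'_{g_k,\delta}$ and $|t|\le\delta_0$, which changes $\chi(\cdot)$ by a bounded amount absorbable into the constant. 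Then Proposition \ref{p63} produces $\sup_{t\in\Xi}\|\wedge^l\Ad(g_k\exp(x_0+t\vec v))w\| \ge c\,\delta e^{d_k}$ with $c$ depending only on the $\chi$'s and $\delta_0$.

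The main obstacle is the bookkeeping in part (3): one must be careful that the weight-space components $w_\chi$ of an integer vector $w$ are themselves (bounded rational multiples of) integer vectors in $\mathfrak g_\chi(\mathbb Z)$ so that part (2) can be applied to each of them with a uniform constant, and that moving from $R'_{g_k,\delta}$ to its $\delta_0$-neighborhood (and shifting $t$ over $[0,\delta_0]$) only perturbs the relevant exponents by an amount bounded independently of $k$. Once these uniformities are pinned down, everything reduces to the already-proven Propositions \ref{p61}, \ref{p63}, \ref{p72}, \ref{p73} and Corollary \ref{ac61}.
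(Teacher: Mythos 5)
Your proposal is correct and follows essentially the same route as the paper: the cylinder/surface-area comparison (as in Corollary \ref{ac61}, using the defining property of $d_k$ from Proposition \ref{p73}) for (1), the defining inequality of $R'_{g_k,\delta}$ for (2), and the weight-space decomposition of $w$ combined with part (2) and Proposition \ref{p63} for (3). The only superfluous step is the ``clearing denominators'' worry in (3): since the $\mathbb Q$-basis $\mathcal B_l$ consists of $\wedge^l\Ad A$-weight vectors, the components $w_\chi$ of an integer vector $w$ lie exactly in $\mathfrak g_\chi(\mathbb Z)$, so part (2) applies to them directly.
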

\begin{proof}
 The proof of the first claim is similar to that of Corollary \ref{ac61}. Indeed, let $\{f_i\}$ be the collection of the facets of $R_{g_k,\delta}$, and denote by $P_i$ the hyperplane determined by $f_i$. For each $f_i$, let $B_i$ be the unique cylinder with the following properties:
\begin{enumerate}
\item[(a)] the base of $B_i$ is $f_i$, and the height of $B_i$ is equal to $d_k$.
\item[(b)] $B_i$ and $R_{g_k,\delta}$ lie in the same half-space determined by $P_i$.
\end{enumerate}
Then one has $$\operatorname{Vol}(R_{g_k,\delta})=\bigcup_i B_i\cup\operatorname{Vol}(R'_{g_k,\delta})$$ and $$\operatorname{Vol}(R_{g_k,\delta})\leq\sum_i\operatorname{Vol}(B_i)+\operatorname{Vol}(R'_{g_k,\delta}))=d_k\Area(\partial R_{g_k,\delta})+\operatorname{Vol}(R'_{g_k,\delta}).$$ Now the first claim follows from our choice of $d_k$.

 The second claim follows from the definition of $R'_{g_k,\delta}$. To prove the last statement,  we write for any nonzero integer vector $w\in\wedge^l\mathfrak g$ $$w=\sum_{\chi\in\mathcal W_l(\mathfrak g)} w_\chi$$ where $w_\chi\in\mathfrak g_\chi(\mathbb Z)$. One can compute $$(\wedge^l\Ad(g_k\exp(\mb{t})))\cdot w=\sum_\chi e^{\chi(\mb{t})}\wedge^l\Ad(g_k)w_\chi.$$ Now the last claim follows from the second claim of the lemma and Proposition \ref{p63}. 
\end{proof}

The following proposition is an important step towards Proposition \ref{p86}.
\begin{proposition}[Cf. {\cite[Proposition 3.4]{MS95}}]\label{p85}
Let $C$ be a compact subset in $L_H$ and $0<\epsilon<1$. Then there exists a closed subset $\mathcal S$ in $\pi(S(H,W))$ with the following property: for any compact set $\mathcal K\subset X\setminus\mathcal S$, there exists a neighbourhood $\Psi$ of $C$ in $\overline V_H$ such that for sufficiently large $k$, for any $x_0$ in the $\delta_0$-neighborhood of $R'_{g_k,\delta}$ and $\Xi=[0,\delta_0]$, we have $$m_{\mathbb R}(\{t\in \Xi: g_k\exp(x_0+t\vec{v})\mathbb Z^n\in\mathcal K\cap\pi(\overline\eta_H^{-1}(\Psi))\})\leq\epsilon m_{\mathbb R}(\Xi).$$ 
\end{proposition}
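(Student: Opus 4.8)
The plan is to run the Dani--Margulis linearization argument in the form of \cite[Proposition 3.4]{MS95}, feeding in the relative-measure estimate of Proposition \ref{p83}, the singleton property of Proposition \ref{p82}, and — as the one genuinely new input — the non-escape bound in Lemma \ref{p84}(3). If $C=\emptyset$ the statement is trivial (take $\Psi=\emptyset$, $\mathcal S=\emptyset$), so assume $C\neq\emptyset$; then $L_H\neq\emptyset$, and since $L_H$ is the Zariski closure of $\overline\eta_H(N(H,W))$ we get $N(H,W)\neq\emptyset$, so $H$ contains a conjugate of $W$ and $1\le d_H\le\dim\mathfrak g-1$. Moreover $\wedge^{d_H}\Ad(\Gamma)$ preserves the integral lattice, so $\gamma\cdot p_H$ is a nonzero integer vector in $\wedge^{d_H}\mathfrak g$ for every $\gamma\in\Gamma$.

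First I would apply Proposition \ref{p83} to $C$ with $\epsilon/3$ in place of its $\epsilon$, obtaining a compact $D\supset C$ in $L_H$, and put $\mathcal S:=\pi(S(D))$; by Proposition \ref{p82} this is closed and contained in $\pi(S(H,W))$. Now fix a compact $\mathcal K\subset X\setminus\mathcal S$. Proposition \ref{p82} gives a neighbourhood of $D$, which I shrink to a \emph{bounded open} neighbourhood $\Phi$ of $D$ in $\overline V_H$ such that $\overline\eta_H(\pi^{-1}(y))\cap\Phi$ has at most one element for every $y\in\pi(\overline\eta_H^{-1}(\Phi))\cap\mathcal K$; set $M_\Phi=\sup_{u\in\overline\Phi}\|u\|<\infty$. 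Feeding $\Phi$ into Proposition \ref{p83} produces an open neighbourhood $\Psi\subset\Phi$ of $C$ with the stated relative-measure inequality (with constant $\epsilon/3$) on every subinterval of $[0,\delta_0]$. Finally Lemma \ref{p84}(3) supplies $c>0$, depending only on $\mathcal W(\mathfrak g)$ and $\delta_0$, with $\sup_{t\in[0,\delta_0]}\|\wedge^{d_H}\Ad(g_k\exp(x_0+t\vec v))w\|\ge c\delta e^{d_k}$ for every $x_0$ in the $\delta_0$-neighbourhood of $R'_{g_k,\delta}$ and every nonzero integer vector $w\in\wedge^{d_H}\mathfrak g$; since $d_k\to\infty$ we have $c\delta e^{d_k}>M_\Phi$ for all large $k$, and this is the range of $k$ in the conclusion.

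Fix such a $k$, an $x_0$ in the $\delta_0$-neighbourhood of $R'_{g_k,\delta}$, and $\Xi=[0,\delta_0]$; write $\phi(t)=g_k\exp(x_0+t\vec v)$ and let $E=\{t\in\Xi:\phi(t)\mathbb Z^n\in\mathcal K\cap\pi(\overline\eta_H^{-1}(\Psi))\}$ be the set to be bounded. For $t\in E$ we have $\phi(t)\mathbb Z^n\in\mathcal K\cap\pi(\overline\eta_H^{-1}(\Phi))$, so the singleton property yields a \emph{unique} coset $\gamma(t)\Gamma_H$ with $\phi(t)\gamma(t)\cdot\overline p_H\in\Phi$, and this point in fact lies in $\Psi$. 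Since $\gamma(t)\cdot p_H$ is a nonzero integer vector and $k$ is large, the curve $s\mapsto\phi(s)\gamma(t)\cdot\overline p_H$ is not contained in $\Phi$ on $\Xi$; let $I_t$ be the connected component of the (relatively) open set $\{s\in\Xi:\phi(s)\gamma(t)\cdot\overline p_H\in\Phi\}$ containing $t$, so $I_t\subsetneq\Xi$ and $I_t$ has an endpoint at which the curve exits $\Phi$. The key observation — again via Proposition \ref{p82} — is that for every $s\in E\cap I_t$ one has $\gamma(s)\Gamma_H=\gamma(t)\Gamma_H$, because both $\phi(s)\gamma(t)\cdot\overline p_H$ and $\phi(s)\gamma(s)\cdot\overline p_H$ lie in $\Phi$ while $\phi(s)\mathbb Z^n\in\mathcal K$; consequently $E\cap I_t\subset\{s\in I_t:\phi(s)\gamma(t)\cdot\overline p_H\in\Psi\}$.

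To finish, enlarge each $I_t$ slightly to an interval $I_t^+\subset[0,\delta_0]$ on which the $\gamma(t)$-curve still leaves $\Phi$ (possible since $I_t$ has an exit endpoint) and with $m_{\mathbb R}(\{s\in I_t^+:\phi(s)\gamma(t)\cdot\overline p_H\in\Phi\})$ as close to $m_{\mathbb R}(I_t)$ as we wish; Proposition \ref{p83} then gives $m_{\mathbb R}(E\cap I_t)\le(\epsilon/3)\,m_{\mathbb R}(\{s\in I_t^+:\phi(s)\gamma(t)\cdot\overline p_H\in\Phi\})$. Covering $E$ by $\{I_t\}_{t\in E}$, extracting a countable subcover of multiplicity at most $2$ (a standard fact for interval covers of subsets of $\mathbb R$), summing over the subcover, using $\sum_j m_{\mathbb R}(I_{t_j})\le 2\,m_{\mathbb R}(\Xi)$ and choosing the enlargements so that the excess terms sum to at most $m_{\mathbb R}(\Xi)$, we obtain $m_{\mathbb R}(E)\le\epsilon\,m_{\mathbb R}(\Xi)$. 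I expect the delicate step to be precisely this bookkeeping: one must use Proposition \ref{p82} to prevent the active representative $\gamma(s)$ from changing inside a component $I_t$, and Lemma \ref{p84}(3) is needed exactly to rule out the degenerate case where some $\Gamma$-translate of $\overline p_H$ stays inside the bounded set $\Phi$ over all of $\Xi$, which would make Proposition \ref{p83} vacuous there. Everything else is a routine adaptation of \cite[Proposition 3.4]{MS95}.
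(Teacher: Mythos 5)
Your proposal is correct and follows essentially the same route as the paper: take $D$ and $\mathcal S=\pi(S(D))$ from Propositions \ref{p83}--\ref{p82}, use Lemma \ref{p84}(3) with $d_k\to\infty$ to force every integral $\Gamma$-translate curve to exit $\Phi$ on $\Xi$, decompose the bad set into intervals on which the singleton property pins down a single representative, and sum using Proposition \ref{p83} with a multiplicity-two covering bound. Your bookkeeping (bounded $\Phi$, the slight enlargement $I_t^+$ so that Proposition \ref{p83} applies, and the $\epsilon/3$ normalization giving exactly $\epsilon\,m_{\mathbb R}(\Xi)$) is a touch more careful than the paper's, but it is the same argument.
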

\begin{proof}
Let $D\subset L_H$ be a compact set as in Proposition \ref{p83} for $C$ and $\epsilon$. Then we get a closed subset $\mathcal S=\pi(S(D))$ as in Proposition \ref{p82}. Now for a compact subset $\mathcal K$ in $X\setminus\mathcal S$, let $\Phi$ be an open neighborhood of $D$ in $\overline V_H$ as in Proposition \ref{p82}. Then we have a neighborhood $\Psi$ of $C$ in $\overline V_H$ as in Proposition \ref{p83}.

By the choice of $x_0$ and Lemma \ref{p84}, for any nonzero integer vector $w\in\wedge^{d_H}\mathfrak g$ we have $$\sup_{t\in \Xi}\|g_k\exp(x_0+t\vec{v})\cdot w\|\geq c\delta e^{d_k}$$ for some $c>0$ depending only on $\mathcal W(\mathfrak g)$ and $\delta_0$. Hence $$\{g_k\exp(x_0+t\vec{v})\cdot w:t\in \Xi\}\not\subset\Phi$$ for sufficiently large $k$. 

Now for any $s\in \Xi$ with $$g_k\exp(x_0+s\vec{v})\mathbb Z^n\in\mathcal K\cap\pi(\overline\eta_H^{-1}(\Psi)),$$ by Proposition \ref{p82}, there is a unique element $w_s$ in $\overline\eta_H(\Gamma)$ such that $$g_k\exp(x_0+s\vec{v})\cdot w_s\in\Psi.$$ Let $I_s=[a_s,b_s]$ be the largest closed interval in $\Xi$ containing $s$ such that
\begin{enumerate}
\item for any $t\in I_s$, we have $$g_k\exp(x_0+t\vec{v})\cdot w_s\in\overline\Phi$$
\item either $g_k\exp(x_0+a_s\vec{v})\cdot w_s\text{ or }g_k\exp(x_0+b_s\vec{v})\cdot w_s\in\overline\Phi\setminus\Phi$.
\end{enumerate}
We denote by $\mathcal F$ the collection of all these intervals $I_s$ as $s$ runs over $\Xi$ with $$g_k\exp(x_0+s\vec{v})\mathbb Z^n\in\mathcal K\cap\pi(\overline\eta_H^{-1}(\Psi)).$$ By Proposition \ref{p82}, we know that the intervals in $\mathcal F$ cover $\Xi$ at most twice. By Proposition \ref{p83}, we have
\begin{eqnarray*}
&{}&m_{\mathbb R}(t\in \Xi:g_k\exp(x_0+t\vec{v})\mathbb Z^n\in\mathcal K\cap\pi(\overline\eta_H^{-1}(\Psi)))\\
&\leq&\sum_{I_s\in\mathcal F} m_{\mathbb R}(t\in I_s:g_k\exp(x_0+t\vec{v})\cdot w_s\in\Psi)\\
&\leq&\sum_{I_s\in\mathcal F}\epsilon m_{\mathbb R}(t\in I_s:g_k\exp(x_0+t\vec{v})\cdot w_s\in\Phi)\\
&\leq&\epsilon\sum_{I_s\in\mathcal F} m_{\mathbb R}(I_s)\leq2\epsilon m_{\mathbb R}(\Xi).
\end{eqnarray*}
This completes the proof of the proposition.
\end{proof}

\begin{proposition}\label{p86}
Let $C$ be a compact set in $L_H$ and $0<\epsilon<1$. Then there exists a closed subset $\mathcal S$ in $\pi(S(H,W))$ with the following property: for any compact set $\mathcal K\subset X\setminus\mathcal S$, there exists a neighbourhood $\Psi$ of $C$ in $\overline V_H$ such that for sufficiently large $k>0$ we have $$m_{\Lie(A)}(\{\mb{t}\in R_{g_k,\delta}: g_k\exp(\mb{t})\mathbb Z^n\in\mathcal K\cap\pi(\overline\eta_H^{-1}(\Psi))\})\leq\epsilon m_{\Lie(A)}(R_{g_k,\delta}).$$
\end{proposition}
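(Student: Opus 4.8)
The plan is to deduce Proposition \ref{p86} from its one-dimensional counterpart, Proposition \ref{p85}, by a Fubini-type integration over the direction $\vec v$, following the same scheme as the proof of Proposition \ref{p64}. Fix $C$ and $0<\epsilon<1$ and put $\epsilon_1=\epsilon/3$. Let $\mathcal S\subset\pi(S(H,W))$ be the closed set produced by Proposition \ref{p85} applied to $C$ and $\epsilon_1$; this is the $\mathcal S$ of the statement. Given a compact $\mathcal K\subset X\setminus\mathcal S$, Proposition \ref{p85} yields a neighbourhood $\Psi$ of $C$ in $\overline V_H$ such that, for all sufficiently large $k$ and every $x_0$ in the $\delta_0$-neighbourhood of $R'_{g_k,\delta}$, the set $\{t\in[0,\delta_0]:g_k\exp(x_0+t\vec v)\mathbb Z^n\in\mathcal K\cap\pi(\overline\eta_H^{-1}(\Psi))\}$ has $m_{\mathbb R}$-measure at most $\epsilon_1\delta_0$. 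We fix this $\Psi$ for the rest of the argument.

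Next I would, exactly as in the proof of Proposition \ref{p64}, cover $R'_{g_k,\delta}$ by countably many disjoint boxes of the form $B=\{x_0+t\vec v:x_0\in S,\ t\in[0,\delta_0]\}$, where $S$ is a small cross-section perpendicular to $\vec v$, and split them into $\mathcal F_1$ (boxes meeting $\partial R'_{g_k,\delta}$) and $\mathcal F_2$ (boxes contained in $R'_{g_k,\delta}$). For $B\in\mathcal F_2$ every $x_0\in S$ lies in $R'_{g_k,\delta}$, so Proposition \ref{p85} applies on each fiber $\{x_0+t\vec v:t\in[0,\delta_0]\}$; integrating that bound over $x_0\in S$ — the change of variables $\mb t\leftrightarrow(x_0,t)$ preserves $m_{\Lie(A)}$ since $\vec v$ is a unit vector — gives $m_{\Lie(A)}(\{\mb t\in B:g_k\exp(\mb t)\mathbb Z^n\in\mathcal K\cap\pi(\overline\eta_H^{-1}(\Psi))\})\le\epsilon_1\,m_{\Lie(A)}(B)$. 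Summing over $\mathcal F_2$ and using $\bigcup_{B\in\mathcal F_2}B\subset R'_{g_k,\delta}\subset R_{g_k,\delta}$ bounds this contribution by $\epsilon_1\,m_{\Lie(A)}(R_{g_k,\delta})$.

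It remains to control the bad $\mb t$ lying outside $\bigcup_{B\in\mathcal F_2}B$; such $\mb t$ lie in $(\bigcup_{B\in\mathcal F_1}B)\cup(R_{g_k,\delta}\setminus R'_{g_k,\delta})$. The shell $R_{g_k,\delta}\setminus R'_{g_k,\delta}$ has measure $\le\epsilon_1\,m_{\Lie(A)}(R_{g_k,\delta})$ for large $k$ by Lemma \ref{p84}(1). The union $\bigcup_{B\in\mathcal F_1}B$ is contained in a neighbourhood of $\partial R'_{g_k,\delta}$ of bounded width; since $R'_{g_k,\delta}$ contains a ball of radius tending to infinity (Lemma \ref{p84}(1), Lemma \ref{p33}, Proposition \ref{p72}(2)), the combination of Lemma \ref{p34}, Proposition \ref{p73} and the monotonicity of surface area under inclusion of convex bodies (as used in the proof of Lemma \ref{p33}) shows that this neighbourhood has measure $\le\epsilon_1\,m_{\Lie(A)}(R_{g_k,\delta})$ for large $k$, just as in the proof of Proposition \ref{p64}. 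Adding the three pieces gives the desired bound $3\epsilon_1\,m_{\Lie(A)}(R_{g_k,\delta})=\epsilon\,m_{\Lie(A)}(R_{g_k,\delta})$ for all sufficiently large $k$.

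This argument introduces no new idea beyond Propositions \ref{p82}--\ref{p85}; the only point requiring care is the boundary bookkeeping — verifying that both the boundary boxes $\mathcal F_1$ and the shell $R_{g_k,\delta}\setminus R'_{g_k,\delta}$ are negligible relative to $m_{\Lie(A)}(R_{g_k,\delta})$. This is exactly where the ``fatness'' of the polytopes $R_{g_k,\delta}$, $R'_{g_k,\delta}$ enters, forcing the relevant surface-area-to-volume ratios and thin-shell volumes to vanish in the limit; since all of these estimates (Lemmas \ref{p32}, \ref{p33}, \ref{p34}, Propositions \ref{p72}, \ref{p73}, Lemma \ref{p84}) are already in place, I do not anticipate any genuine difficulty.
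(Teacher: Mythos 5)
Your proposal is correct and follows essentially the same route as the paper: cover $R'_{g_k,\delta}$ by boxes of diameter at most $\delta_0$ in the $\vec v$-direction, apply Proposition \ref{p85} fiberwise and integrate over the base, and absorb the shell $R_{g_k,\delta}\setminus R'_{g_k,\delta}$ via Lemma \ref{p84}. The only (harmless) difference is that the paper applies Proposition \ref{p85} to \emph{every} box in the cover --- its hypothesis only requires the base point $x_0$ to lie in the $\delta_0$-neighbourhood of $R'_{g_k,\delta}$, which all base points automatically satisfy --- so your separate isoperimetric volume bound for the boundary boxes $\mathcal F_1$ is unnecessary, though valid.
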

\begin{proof}
By Lemma \ref{p84}, let $k$ be sufficiently large such that $$\frac{m_{\Lie(A)}(R_{g_k,\delta}\setminus R'_{g_k,\delta})}{m_{\Lie(A)}(R_{g_k,\delta})}\leq\frac\epsilon2.$$ We can find a cover of the region $R'_{g_k,\delta}$ by countably many disjoint small boxes of diameter at most $\delta_0$ such that each box is of the form $$B=\{x_0+t\vec{v}: x_0\in S\text{ and } t\in \Xi\}$$ where $S$ is the base of $B$ perpendicular to $\vec v$, and $\Xi=[0,\delta_0]$. Denote by $\mathcal F$ the collection of these boxes.

For any $B\in\mathcal F$, and for any $x_0$ in the base $S$ of $B$, $x_0$ is in the $\delta_0$-neighborhood of $R'_{g_k,\delta}$. By Proposition \ref{p85} we obtain that $$m_{\mathbb R}(\{t\in \Xi: g_k\exp(x_0+t\vec{v})\mathbb Z^n\in\mathcal K\cap\pi(\overline\eta_H^{-1}(\Psi))\})\leq\frac\epsilon2 m_{\mathbb R}(\Xi)$$ for sufficiently large $k$. By integrating the inequality above over the base $S$, one has $$m_{\Lie(A)}(\{\mb{t}\in B: g_k\exp(\mb{t})\mathbb Z^n\in\mathcal K\cap\pi(\overline\eta_H^{-1}(\Psi))\})\leq\frac\epsilon2 m_{\Lie(A)}(B).$$ By the choice of $d_k$ and $\mathcal F$, for sufficiently large $k$, we have $$\bigcup_{B\in\mathcal F}B\subset R_{g_k,\delta}.$$
Now we compute
\begin{align*}
&m_{\Lie(A)}(\{\mb{t}\in R_{g_k,\delta}: g_k\exp(\mb{t})\mathbb Z^n\in\mathcal K\cap\pi(\overline\eta_H^{-1}(\Psi))\})\\
\leq&m_{\Lie(A)}(\{\mb{t}\in R_{g_k,\delta}\setminus R'_{g_k,\delta}: g_k\exp(\mb{t})\mathbb Z^n\in\mathcal K\cap\pi(\overline\eta_H^{-1}(\Psi))\})\\
&+\sum_{B\in\mathcal F}m_{\Lie(A)}(\{\mb{t}\in B: g_k\exp(\mb{t})\mathbb Z^n\in\mathcal K\cap\pi(\overline\eta_H^{-1}(\Psi))\})\\
\leq&\frac\epsilon2m_{\Lie(A)}(R_{g_k,\delta})+\sum_{B\in\mathcal F}\frac\epsilon2m_{\Lie(A)}(B)\leq\epsilon m_{\Lie(A)}(R_{g_k,\delta}).
\end{align*}
The proposition now follows.
\end{proof}

\section{Proofs of Theorem \ref{th11}, Theorem \ref{nth11} and Theorem \ref{th12}}\label{proof}
\begin{proof}[Proof of Theorem \ref{nth11}]
We will prove the theorem by induction. Let $g_k=(u_{ij}(k))_{1\leq i,j\leq n}$ $(k\in\mathbb N)$ be a sequence in the upper triangular unipotent subgroup $N$ of $\SL(n,\mathbb R)$, and for each pair $i<j$, either $u_{ij}(k)$ is zero for all $k$ or $u_{ij}(k)\neq0$ and diverges to infinity.

Suppose for a start that $\mathcal A(A,\{g_k\}_{k\in\mathbb N})=\{0\}$. By passing to a subsequence, we may further assume that $\Ad g_k(\Lie A)$ converges to a subalgebra consisting of nilpotent elements in $\mathfrak g$, in the space of Grassmanian of $\mathfrak g$.  Then by Proposition \ref{p75}, after passing to a subsequence, $[(g_k)_*\mu_{Ax_e}]$ converges to $[\nu]$ for a probability measure $\nu$. Furthermore, we have $$\frac1{\Vol(R_{g_k,\delta})}(g_k)_*(\mu_{Ax_e}|_{R_{g_k,\delta}})\to v$$ and $\nu$ is invariant under the unipotent subgroup $W=\lim_{k\to\infty}\Ad(g_k) A$.

We will apply Ratner's theorem and the technique of linearization to prove that $\nu$ is the Haar measure on $\SL(n,\mathbb R)/\SL(n,\mathbb Z)$. According to Theorem \ref{th82}, suppose by way of contradiction that for some $H\in\mathcal H^*$ $(H\neq G)$ we have $\nu(T_H(W))>0$. Then we can find a compact subset $C\subset T_H(W)$ such that $$\nu(C)=\alpha>0.$$ Now let $0<\epsilon<\alpha$, $C_1=\overline\eta_H(C)$ and $\mathcal S$ the closed subset of $X$ as in Proposition \ref{p86}. Since $C\cap\mathcal S=\emptyset$, we can pick a compact neighborhood $\mathcal K\subset X\setminus S$ of $C$. Then by Proposition \ref{p86}, there exists a neighborhood $\Psi$ of $C_1$ in $\overline V_H$ such that for sufficiently large $k>0$ $$m_{\Lie(A)}(\{\mb{t}\in R_{g_k,\delta}: g_k\exp(\mb{t})\mathbb Z^n\in\mathcal K\cap\pi(\overline\eta_H^{-1}(\Psi))\})\leq\epsilon m_{\Lie(A)}(R_{g_k,\delta})$$ and $$C\subset \mathcal K\cap\pi(\overline\eta_H^{-1}(\Psi)).$$ This implies that $$\nu(C)\leq\epsilon<\alpha$$ which contradicts the equation $\nu(C)=\alpha$. Hence $\nu$ is the Haar measure on $\SL(n,\mathbb R)/\SL(n,\mathbb Z)$.

Now suppose that $\mathcal A(A,\{g_k\}_{k\in\mathbb N})\neq\{0\}$. Then by Corollary \ref{c42}, the subgroup $$S=\{a\in A:ag_k=g_ka\textup{ for all }k\}$$ is connected and nontrivial, and $\Lie(S)=\mathcal A(A,\{g_k\}_{k\in\mathbb N})$. This implies that all elements in $A$ and $\{g_k\}_{k\in\mathbb N}$ belong to the reductive group $C_G(S)^0$. Moreover, by the definition of $S$, $S$ is also the connected component of the center of $C_G(S)^0$. So we have $$C_G(S)^0\cong S\times H$$ where $H$ is the semisimple component of $C_G(S)^0$ and $H$ is isomorphic to the product of various $\SL(n_i,\mathbb R)$ with $n_i<n$, i.e. $$H\cong\prod\SL(n_i,\mathbb R).$$ Let $A_i=A\cap\SL(n_i,\mathbb R)$ be the connected component of the full diagonal subgroup in $\SL(n_i,\mathbb R)$, and we have $$A=S\times\prod A_i.$$ Since $g_k\in N$ is unipotent $(\forall k\in\mathbb N)$, one has $g_k\in H$. Then we can write $g_k=\prod g_{i,k}\in\prod\SL(n_i,\mathbb R)$. Note that by the definition of $S$ and Corollary \ref{c42}, $\mathcal A(A_i,\{g_{i,k}\}_{k\in\mathbb N})=\{0\}$ for all $i$.

 The above discussions tell us that our problem now can be reduced to the following setting (recall that $x_e=e\SL(n,\mathbb Z)$):
\begin{enumerate}
\item the measure $\mu_{Ax_e}$ is supported in the homogeneous space $C_G(S)^0/(\Gamma\cap C_G(S)^0)$, where one has 
\begin{align*}
C_G(S)^0/(\Gamma\cap C_G(S)^0)=&S/(\Gamma\cap S)\times H/(\Gamma\cap H)\\
=&S\times\prod(\SL(n_i,\mathbb R)/\SL(n_i,\mathbb Z)).
\end{align*}
\item the measure $\mu_{Ax_e}$ can be decomposed, according to the decomposition of $C_G(S)^0/(\Gamma\cap C_G(S)^0)$, as $$\mu_{Ax_e}=\mu_{S}\times\prod\mu_{A_ix_i}.$$ Here $\mu_{S}$ denotes the $S$-invariant measure on $S$. For each $i$, $x_i=e\SL(n_i,\mathbb Z)$ is the identity coset in $\SL(n_i,\mathbb R)/\SL(n_i,\mathbb Z)$, and $\mu_{A_ix_i}$ denotes the $A_i$-invariant measure on $A_ix_i$ in $\SL(n_i,\mathbb R)/\SL(n_i,\mathbb Z)$.
\item the measure $\mu_{Ax_e}$ is pushed by the sequence $\{g_k\}_{k\in\mathbb N}$ in the space $C_G(S)^0/(\Gamma\cap C_G(S)^0)$ in the following manner: $$(g_k)_*\mu_{Ax_e}=\mu_S\times\prod (g_{i,k})_*\mu_{A_ix_i}.$$
\item for each $A_ix_i$ in $\SL(n_i,\mathbb R)/\SL(n_i,\mathbb Z)$, one has $\mathcal A(A_i,\{g_{i,k}\}_{k\in\mathbb N})=\{0\}$.
\end{enumerate}

Since $n_i<n$, we can now apply the induction hypothesis to the sequence $(g_{i,k})_*\mu_{A_ix_i}$, and obtain that $[g_{i,k}\mu_{A_ix_i}]$ converges to the equivalence class of the Haar measure $m_{\SL(n_i,\mathbb R)/\SL(n_i,\mathbb Z)}$ on $\SL(n_i,\mathbb R)/\SL(n_i,\mathbb Z)$. Now by putting all the measures $m_{\SL(n_i,\mathbb R)/\SL(n_i,\mathbb Z)}$ and $\mu_S$ back together in the space $\SL(n,\mathbb R)/\SL(n,\mathbb Z)$, we conclude that $[(g_k)_*\mu_{Ax_e}]$ converges to $[\mu_{C_G(S)^0x_e}]$. This completes the proof of Theorem \ref{nth11}.
\end{proof}

\begin{proof}[Proof of Theorem \ref{th11}]
We first prove the following

Claim: Let $\{u_k\}_{k\in\mathbb N}$ be a sequence in the upper triangular unipotent group $N$ of $G=\SL(n,\mathbb R)$. Then there is a subsequence $\{u_{i_k}\}_{k\in\mathbb N}$ of $\{u_k\}_{k\in\mathbb N}$ such that $$u_{i_k}=b_kv_k$$ for a bounded sequence $\{b_k\}_{k\in\mathbb N}$ in $N$, and a sequence $\{v_k\}_{k\in\mathbb N}$ in $N$ with $v_k=(v_{ij}(k))_{1\leq i,j\leq n}$ satisfying the following condition: for each pair $(i,j)$ $(1\leq i<j\leq n)$ $$\text{ either }v_{ij}(k)=0\textup{ for all $k$, or }v_{ij}(k)\to\infty \text{ as $k\to\infty$}.$$
\begin{proof}[Proof of the claim]
We proceed by induction on $n$. For $n=2$ and $G=\SL(2,\mathbb R)$, $\{u_k\}_{k\in\mathbb N}$ is a sequence in the $2\times 2$ upper triangular unipotent group. Write $u_k=(u_{ij}(k))_{1\leq i,j\leq 2}$. By passing to a subsequence, we may assume that $\{u_{12}(k)\}_{k\in\mathbb N}$ is bounded, or diverges to infinity. If $\{u_{12}(k)\}_{k\in\mathbb N}$ is bounded, then set $b_k=u_k$, and $v_k=e$ the identity matrix. If $\{u_{12}(k)\}_{k\in\mathbb N}$ diverges to infinity, then set $b_k=e$ and $v_k=u_k$. Either case we have $u_k=b_kv_k$, and the claim holds in this case.

Suppose that the claim holds for $\SL(n-1,\mathbb R)$ $(n\geq 3)$. Now let $G=\SL(n,\mathbb R)$ and $\{u_k\}_{k\in\mathbb N}$ a sequence in the $n\times n$ upper triangular unipotent group $N$. We will use the notation in section \ref{graph}, i.e. for any $g\in G$, we will denote by $(g)_{l\times l}$ the $l\times l$ submatrix in the upper left corner of $g$. 

Now write $u_k=(u_{ij}(k))_{1\leq i,j\leq n}$. Then $(u_k)_{(n-1)\times(n-1)}=(u_{ij}(k))_{1\leq i,j\leq n-1}$. By applying induction hypothesis on $(u_k)_{(n-1)\times(n-1)}$, after passing to a subsequence, one could find a bounded sequence $\{w_k\}_{k\in\mathbb N}$ in $N$ and a sequence $\{x_k\}_{k\in\mathbb N}$ in $N$ with $x_k=(x_{ij}(k))_{1\leq i,j\leq n}$ such that $$(u_k)_{(n-1)\times (n-1)}=(w_k)_{(n-1)\times (n-1)}(x_k)_{(n-1)\times (n-1)},\quad u_k=w_kx_k$$ and for each pair $(i,j)$ $(1\leq i<j\leq n-1)$ $$\text{ either }x_{ij}(k)=0\textup{ for all $k$, or }x_{ij}(k)\to\infty \text{ as $k\to\infty$}.$$ Now by passing to a subsequence, one could assume that for $1\leq i\leq n-1$ $$\textup{either $\{x_{in}(k)\}_{k\in\mathbb N}$ is bounded, or } x_{in}(k)\to\infty\textup{ as }k\to\infty.$$ By Gauss elimination, there exists a bounded sequence $y_k\in N$ and a sequence $v_k\in N$ such that $$x_k=y_kv_k$$ and the following condition holds for $v_k=(v_{ij}(k))_{1\leq i,j\leq n}$: for any $1\leq i<j\leq n$, $$\text{ either } v_{ij}(k)=0\textup{ for all $k$, or }v_{ij}(k)\to\infty \text{ as $k\to\infty$}.$$ Now we complete the proof of the claim by setting $v_k$ as above and $b_k=w_ky_k$.
\end{proof}

Now we prove Theorem \ref{th11}. By Iwasawa decomposition, for each element $g_k$ in the sequence $\{g_k\}_{k\in\mathbb N}$, we can write $$g_k=s_ku_ka_k$$ where $s_k\in K=\operatorname{SO}(n,\mathbb R)$, $u_k\in N$ and $a_k\in A$. By the claim above, we can assume that, after passing to a subsequence, one could write $$u_k=b_k\tilde u_k$$ for a bounded sequence $b_k\in N$ and a sequence $\tilde u_k=(\tilde u_{ij}(k))_{1\leq i,j\leq n}$ in $N$ such that for each pair $1\leq i<j\leq n$ $$\text{ either }\tilde u_{ij}(k)=0\textup{ for all $k$, or }\tilde u_{ij}(k)\to\infty \text{ as $k\to\infty$}.$$ Since $\mu_{Ax}$ is $A$-invariant, we have $$(g_k)_*\mu_{Ax}=(s_kb_k\tilde u_k)_*\mu_{Ax}.$$ Now the first paragraph of Theorem \ref{th11} follows by applying Theroem \ref{nth11} to $(\tilde u_k)_*\mu_{Ax}$ and the boundedness of $\{b_k\}_{k\in\mathbb N}$ and $\{s_k\}_{k\in\mathbb N}$.

Now we prove the second paragraph of Theorem \ref{th11}. Assume that for any $Y\in\Lie(A)\setminus\mathcal A(A,\{g_k\}_{k\in\mathbb N})$, $\{\Ad(g_k)Y\}_{k\in\mathbb N}$ diverges. Let $[\nu]$ be a limit point of $\{[(g_k)_*\mu_{Ax}]\}_{k\in\mathbb N}$. Then there is a subsequence $\{g_{i_k}\}_{k\in\mathbb N}$ such that $[(g_{i_k})_*\mu_{Ax}]$ converges to $[\nu]$. By the same argument as above, after passing to a subsequence of $\{g_{i_k}\}$, one can find $s_k\in K$, $\tilde u_k=(\tilde  u_{ij}(k))_{1\leq i,j\leq n}\in N$ $a_k\in A$ and a bounded sequence $b_k\in N$ such that $$g_{i_k}=s_kb_k \tilde u_ka_k,$$ and for any $1\leq i<j\leq n$ $$\text{ either } \tilde u_{ij}(k)=0\textup{ for all $k$, or } \tilde u_{ij}(k)\to\infty \text{ as $k\to\infty$}.$$ Since $\mu_{Ax}$ is $A$-invariant, we have $$(g_{i_k})_*\mu_{Ax}=(s_kb_k\tilde u_k)_*\mu_{Ax}.$$  Note that by the boundedness of $\{b_k\}_{k\in\mathbb N}$ and $\{s_k\}_{k\in\mathbb N}$ $$\mathcal A(A,\{g_k\}_{k\in\mathbb N})=\mathcal A(A,\{g_{i_k}\}_{k\in\mathbb N})=\mathcal A(A,\{\tilde u_k\}_{k\in\mathbb N}).$$ Now the second paragraph of Theorem \ref{th11} follows from Theorem \ref{nth11} and the boundedness of $\{b_k\}_{k\in\mathbb N}$ and $\{s_k\}_{k\in\mathbb N}$.
\end{proof}

The following is an immediate corollary from the proof of Theorem \ref{th11}, which gives an example of $\lambda_k$'s in a special case of Theorem \ref{th13}. This also generalizes the result in \cite{OS14}. We will apply this special case of Theorem \ref{th13} in the counting problem in section \ref{app}. 

\begin{corollary}[Cf. Theorem \ref{th13}]\label{c91}
Let $\{g_k\}_{k\in\mathbb N}$ be a sequence in $KN$ such that for any nonzero $Y\in\Lie(A)$, the sequence $\{\Ad(g_k)Y\}_{k\in\mathbb N}$ diverges to infinity. Then we have $$\frac1{\Vol(\Omega_{g_k,\delta})}(g_k)_*\mu_{Ax}\to m_X$$ where $m_X$ is the $G$-invariant probability measure on $X$.
\end{corollary}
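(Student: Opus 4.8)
The goal is to prove Corollary~\ref{c91}: if $\{g_k\}\subset KN$ and $\{\Ad(g_k)Y\}$ diverges for every nonzero $Y\in\Lie(A)$, then the \emph{specific} normalization $\frac{1}{\Vol(\Omega_{g_k,\delta})}(g_k)_*\mu_{Ax}$ converges to $m_X$ (not merely up to scaling). The plan is to trace through the proofs of Theorem~\ref{th11} and Theorem~\ref{nth11} and extract the explicit constants, then show that the constant produced in those proofs \emph{is} $\Vol(\Omega_{g_k,\delta})$.

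First I would reduce to the case $g_k\in N$ satisfying the dichotomy hypothesis of Theorem~\ref{nth11}. Writing $g_k=s_kb_k\tilde u_k a_k$ as in the proof of Theorem~\ref{th11} with $s_k\in K$, $b_k\in N$ bounded, $\tilde u_k\in N$ dichotomous, and $a_k\in A$, $A$-invariance of $\mu_{Ax}$ gives $(g_k)_*\mu_{Ax}=(s_kb_k\tilde u_k)_*\mu_{Ax}$. The hypothesis that $\{\Ad(g_k)Y\}$ diverges for all nonzero $Y\in\Lie(A)$ forces $\mathcal A(A,\{g_k\}_{k\in\mathbb N})=\{0\}$, hence $\mathcal A(A,\{\tilde u_k\}_{k\in\mathbb N})=\{0\}$, so by Proposition~\ref{p41} the associated graph is connected and we are in the central case of Theorem~\ref{nth11}. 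In that case Proposition~\ref{p65} (together with the argument at the end of the proof of Theorem~\ref{nth11}) shows $\frac{1}{\Vol(\Omega_{\tilde u_k,\delta})}(\tilde u_k)_*\mu_{Ax}\to m_X$. The remaining point is the effect of the bounded factors $s_k,b_k$: since $s_kb_k$ ranges in a compact set, after passing to a subsequence $s_kb_k\to h$ for some $h\in KN$, and $h_*m_X=m_X$ by $G$-invariance of $m_X$; one must also check $\Vol(\Omega_{g_k,\delta})$ and $\Vol(\Omega_{\tilde u_k,\delta})$ have ratio tending to $1$ — this follows because $\Omega_{g,\delta}$ depends on the numbers $\|ge_I\|$ which change only by bounded multiplicative factors when $g$ is multiplied by the bounded sequence $s_kb_k$ (combined with the $A$-translation $a_k$, which shifts $\Omega$ by a fixed affine amount in each $\omega_I$-direction and is absorbed), so one invokes Corollary~\ref{ac61}-type estimates via Lemma~\ref{p34} and Proposition~\ref{p53}. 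A subsequence argument then upgrades to full convergence since the limit $m_X$ is independent of the subsequence.

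The heart of the matter, and the step I expect to be the main obstacle, is verifying that the normalizing constant $\lambda_k$ implicitly produced in the convergence $\lambda_k(g_k)_*\mu_{Ax}\to m_X$ equals $\frac{1}{\Vol(\Omega_{g_k,\delta})}$ in the limit, i.e.\ that $\lambda_k\Vol(\Omega_{g_k,\delta})\to 1$. In the proof of Theorem~\ref{nth11} the relevant measure is $\frac{1}{\Vol(\Omega_{g_k,\delta})}(g_k)_*(\mu_{Ax}|_{\Omega_{g_k,\delta}})$, and Proposition~\ref{p65} asserts this converges to a \emph{probability} measure $\nu$, and further that $\frac{1}{\Vol(\Omega_{g_k,\delta})}(g_k)_*\mu_{Ax}\to\nu$ as well (the tails outside $\Omega_{g_k,\delta}$ being negligible by Corollary~\ref{ac61}). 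Once the measure rigidity argument identifies $\nu=m_X$, the constant is pinned down: the total mass of $m_X$ is $1$, so no further rescaling is needed, and $\frac{1}{\Vol(\Omega_{g_k,\delta})}(g_k)_*\mu_{Ax}\to m_X$ directly. Thus the corollary is essentially a bookkeeping consequence of having carried the explicit factor $\Vol(\Omega_{g_k,\delta})^{-1}$ through Sections~\ref{recon}--\ref{nondiv} and Section~\ref{proof} rather than working projectively; the only genuine work is the comparison $\Vol(\Omega_{g_k,\delta})\sim\Vol(\Omega_{\tilde u_k,\delta})$ under the bounded modification $g_k=s_kb_k\tilde u_k a_k$, which I would handle by the ball-containment estimates of Lemma~\ref{p52} and the neighborhood bound Lemma~\ref{p34}.

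In summary: (i) decompose $g_k=s_kb_k\tilde u_k a_k$ via Iwasawa and the claim in the proof of Theorem~\ref{th11}; (ii) note $\mathcal A(A,\{\tilde u_k\})=\{0\}$ and apply the central case of Theorem~\ref{nth11}, whose proof (Proposition~\ref{p65}) already delivers $\frac{1}{\Vol(\Omega_{\tilde u_k,\delta})}(\tilde u_k)_*\mu_{Ax}\to m_X$; (iii) show $\Vol(\Omega_{g_k,\delta})/\Vol(\Omega_{\tilde u_k,\delta})\to 1$ using boundedness of $s_kb_k$ and the polytope volume estimates of Section~\ref{con}; (iv) use $G$-invariance of $m_X$ to absorb the bounded factor $s_kb_k$ and conclude $\frac{1}{\Vol(\Omega_{g_k,\delta})}(g_k)_*\mu_{Ax}\to m_X$, passing to subsequences where needed and using uniqueness of the limit.
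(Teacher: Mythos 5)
Your proposal is correct and follows essentially the paper's own route: the paper states Corollary \ref{c91} as an immediate consequence of the proof of Theorem \ref{th11}, i.e.\ reduce via $g_k=s_kb_k\tilde u_k$ to the dichotomous case with $\mathcal A(A,\{\tilde u_k\})=\{0\}$, where Proposition \ref{p65} together with the Ratner/linearization step in the proof of Theorem \ref{nth11} already yields the explicitly normalized convergence to the probability measure $m_X$, and the bounded factors are absorbed by $G$-invariance of $m_X$. Your step (iii), comparing $\Vol(\Omega_{g_k,\delta})$ with $\Vol(\Omega_{\tilde u_k,\delta})$ via the $K$-invariance of the norms $\|ge_I\|$ and the surface-area estimates of Proposition \ref{p53} and Corollary \ref{ac61}, is exactly the bookkeeping the paper leaves implicit, done correctly.
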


In the rest of this section, we will prove Theorem \ref{th12}. Let $H$ be a connected reductive group containing $A$. It is known that up to conjugation by an element in the Weyl group of $G$, $H$ consists of diagonal blocks with each block isomorphic to $\GL(m,\mathbb R)$ with $m<n$. For convenience, we will assume that $H$ has the form of diagonal blocks, since conjugations by Weyl elements do not affect the theorem. 

 The following lemma clarifies an assumption in Theorem \ref{th12}.
\begin{lemma}\label{al101}
Let $Ax$ be a divergent orbit in $X$ and let $H$ be a connected reductive group containing $A$. Then $Hx$ is closed in $X$.
\end{lemma}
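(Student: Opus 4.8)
The plan is to reduce to a concrete picture in the space of unimodular lattices in $\bR^n$ and argue by a compactness/Minkowski estimate. First, since $Ax$ is divergent, Theorem~1.4 of \cite{TW03} lets me write $x=a_0q\Gamma$ with $a_0\in A$ and $q\in\SL(n,\bQ)$, and since $A\subset H$ this gives $Hx=Hq\Gamma$; under the identification $X\cong\{\text{unimodular lattices}\}$, $g\Gamma\mapsto g\bZ^n$, the orbit $Hx$ becomes the $H$-orbit of the rational lattice $\Lambda_0:=q\bZ^n$, which is commensurable with $\bZ^n$. Second, I would invoke the known structural fact that a connected reductive subgroup of $\SL(n,\bR)$ containing $A$ is, after conjugation by a permutation matrix---harmless for closedness, since such a matrix lies, up to a sign, in $\SL(n,\bZ)$---block diagonal, so $H=\{\diag(h_1,\dots,h_r):h_i\in\GL(m_i,\bR),\ \prod_i\det h_i=1\}$ relative to the coordinate decomposition $\bR^n=U_1\oplus\dots\oplus U_r$; the case $r=1$ is $H=G$, $Hx=X$, trivially closed, so I assume $r\ge2$.

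Then I would take a sequence $h^{(k)}\Lambda_0\to L$ and show $L$ lies in the orbit. Setting $M_i:=\Lambda_0\cap U_i$ (full rank in $U_i$ because $\Lambda_0$ is commensurable with $\bZ^n$ and $U_i$ is rational) and $d:=[\Lambda_0:\bigoplus_iM_i]$, one has $h^{(k)}\Lambda_0\cap U_i=h^{(k)}_iM_i$ and $\prod_i\Vol(h^{(k)}_iM_i)=d$ for every $k$. Mahler's criterion keeps $\{h^{(k)}\Lambda_0\}$ in a compact set, so shortest vectors of the $h^{(k)}_iM_i$ are bounded below; Minkowski's theorem then bounds $\Vol(h^{(k)}_iM_i)$ below, and the product relation bounds it above. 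After passing to a subsequence $h^{(k)}_iM_i\to\widetilde L_i$ (full rank in $U_i$), and writing $\widetilde L_i=c^{(k)}_ih^{(k)}_iM_i$ with $c^{(k)}_i\to I$ and $c^{(k)}:=\diag(c^{(k)}_1,\dots,c^{(k)}_r)\to I$, the block map $c^{(k)}h^{(k)}$ sends $\bigoplus_iM_i$ onto the fixed lattice $\bigoplus_i\widetilde L_i$; hence $c^{(k)}h^{(k)}\Lambda_0$ ranges over the finite set of lattices $\Lambda$ with $\bigoplus_i\widetilde L_i\subset\Lambda$ and $[\Lambda:\bigoplus_i\widetilde L_i]=d$. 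Since $c^{(k)}h^{(k)}\Lambda_0=c^{(k)}(h^{(k)}\Lambda_0)\to L$ and a convergent sequence in a finite set is eventually constant, $L=c^{(k)}h^{(k)}\Lambda_0$ for large $k$; and $\Vol(L)=1$ together with $\det(c^{(k)}h^{(k)})=\det c^{(k)}\to1$ forces $\det(c^{(k)}h^{(k)})=1$, so $c^{(k)}h^{(k)}\in H$ and $L$ is in the orbit. Thus $Hx$ is closed.

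The step I expect to be the main obstacle is the conclusion that $L$ lies in the orbit once one only knows $L$ is ``block split up to index $d$'': a plain covolume comparison does not force $L=\bigoplus_i(L\cap U_i)$, and the way around it is the observation above that the corrected translates $c^{(k)}h^{(k)}\Lambda_0$ are confined to a finite set of lattices. Everything else---Mahler, Minkowski, and the block-decomposition bookkeeping---is routine. I would also record, as a remark, a shorter but less self-contained route: $H$ is a reductive $\bQ$-subgroup of $\SL_n$, so by Chevalley together with reductivity (Matsushima's criterion) there are a $\bQ$-representation $G\to\GL(W)$ and an integral vector $w$ with $H=\operatorname{Stab}_G(w)$ and $Gw$ Zariski closed; by Borel--Harish-Chandra \cite{BHC62} the orbit $G(\bR)w$ is closed in $W(\bR)$; and writing $x=a_0q\Gamma$ and using $a_0w=w$, the preimage of $Hx$ in $G$ equals $\{g:g^{-1}w\in\Gamma(q^{-1}w)\}$, which is closed because $\Gamma(q^{-1}w)$ is a discrete, hence closed, set of rational vectors and $g\mapsto gw$ is continuous. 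Hence $Hx$ is closed.
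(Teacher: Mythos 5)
Your argument is correct, and its opening reduction is precisely the paper's: by \cite{TW03} one may take the base lattice rational (commensurable with $\bZ^n$), and $H$ block diagonal after conjugating by a Weyl element. At that point the paper's own proof simply asserts that the claim ``follows easily'' for such $H$, whereas you actually supply the verification, in two ways: a hands-on lattice argument (intersect $\Lambda_0$ with the rational blocks $U_i$, bound the block covolumes below by Mahler plus Minkowski and above by the product relation $\prod_i\Vol(h^{(k)}_iM_i)=d$, pass to limits $\widetilde L_i$, and use the finiteness of overlattices of $\bigoplus_i\widetilde L_i$ of index $d$ so that the corrected translates $c^{(k)}h^{(k)}\Lambda_0$ are eventually constant), and a sketched algebraic route realizing $H$ as the real stabilizer of a rational vector $w$ (legitimate since reductive subgroups are observable), where the preimage of $Hx$ in $G$ is the preimage of the discrete set $\Gamma q^{-1}w$ under the continuous map $g\mapsto g^{-1}w$ --- note that for this last step the appeal to Zariski-closedness of $Gw$ and to \cite{BHC62} is not even needed. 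Both routes are sound; the only bookkeeping you gloss over is that the lemma's $H$ is connected while your block group $\{\diag(h_1,\dots,h_r):\prod_i\det h_i=1\}$ (or the full algebraic stabilizer of $w$) may be disconnected: either observe that each block $c^{(k)}_ih^{(k)}_i$ eventually has positive determinant (since $h^{(k)}_i$ does and $c^{(k)}_i\to I$), so $c^{(k)}h^{(k)}$ lies in the identity component, or note that the orbit of an open finite-index subgroup is closed inside the closed orbit of the larger group. This is a remark, not a gap.
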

\begin{proof}
By the classification of divergent $A$-orbits of Margulis which appears in the appendix of \cite{TW03}, we may assume without loss of generality that $x$ is commensurable to $\mathbb Z^n$. Thus, it is enough to prove the lemma for $x=\mathbb Z^n$. Then the lemma follows easily for any reductive group $H$ under consideration.
\end{proof}

By reasoning in the same way as at the beginning of section \ref{con}, it is harmless to assume $x=x_e=e\SL(n,\mathbb Z)$ in the proof of Theorem \ref{th12}.

Let $P$ be the standard $\mathbb Q$-parabolic subgroup in $G$ having $H$ as (the connected component of) a Levi component. Let $U\subset N$ be the unipotent radical of $P$. We write $$H=S\times H_{ss}$$ where $S$ is the connected component of the center of $H$, and $H_{ss}$ is the semisimple component of $H$. We will denote by $A_{ss}$ the connected component of the full diagonal group in $H_{ss}$. Note that we have $$A=S\times A_{ss}.$$ By Theorem \ref{nth11}, we can find a sequence of upper triangular unipotent matrices $h_k\in H$ satisfying the dichotomy condition in Theorem \ref{nth11} such that $$\mathcal \exp(\mathcal A(A,\{h_k\}_{k\in\mathbb N}))=S,\quad C_G(\mathcal A(A,\{h_k\}_{k\in\mathbb N}))^0=H$$ $$[(h_k)_*\mu_{Ax_e}]\to[\mu_{Hx_e}]\textup{ as }k\to\infty.$$ We will fix such a sequence $\{h_k\}_{k\in\mathbb N}$. 

As $G=KUH$ where $K=\operatorname{SO}(n,\mathbb R)$, for every $g_k$ in the sequence $\{g_k\}_{k\in\mathbb N}$, we can write $$g_k=s_ku_kl_k$$ where $s_k\in K$, $u_k\in U$ and $l_k\in H$. We have $$(g_k)_*\mu_{Hx_e}=(s_ku_k)_*\mu_{Hx_e}.$$ Following the same strategy as in the proof of Theorem \ref{th11}, to prove Theorem \ref{th12}, we may assume that $g_k\in U$. Now let $g_k=(u_{ij}(k))_{1\leq i,j\leq k}\in U$. By Gauss elimination as explained in the proof of Theorem \ref{th11}, we may further assume that for each pair $i<j$, either $u_{ij}(k)$ equals $0$ for all $k$ or $u_{ij}(k)\neq0$ diverges to infinity.

\begin{proposition}\label{p91}
If $\mathcal A(S,\{g_k\}_{k\in\mathbb N})=\{0\}$, then for any subsequence $\{g_{m_k}\}_{k\in\mathbb N}$ of $\{g_k\}_{k\in\mathbb N}$ and any subsequence $\{h_{n_k}\}_{k\in\mathbb N}$ of $\{h_k\}_{k\in\mathbb N}$, we have $\Ad(g_{m_k}h_{n_k})Y\to\infty$ as $k\to\infty$ for any nonzero $Y\in\Lie(A)$.
\end{proposition}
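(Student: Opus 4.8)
The plan is to decompose $Y\in\Lie(A)=\Lie(S)\oplus\Lie(A_{ss})$ as $Y=Y_S+Y_{ss}$ and to control the two pieces by different mechanisms, using a linear projection that isolates the part of $Y$ that survives the push by $g_{m_k}$. With $U$ the unipotent radical of the parabolic $P$ with Levi $H$ as above, set $\mathfrak u=\Lie(U)$, let $\mathfrak u^-$ be the opposite nilradical, and write $\mathfrak g=\mathfrak u^-\oplus\Lie(H)\oplus\mathfrak u$. Since $U$ is normal in $P$ and $\Lie(H)\subset\Lie(P)$, one has $[\mathfrak u,\Lie(H)]\subset\mathfrak u$ and $[\mathfrak u,\mathfrak u]\subset\mathfrak u$, so for $g\in U$ and $W\in\Lie(H)$ the vector $\Ad(g)W-W=\sum_{j\geq1}\tfrac1{j!}\ad(\log g)^jW$ lies in $\mathfrak u$. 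Hence, writing $\pr_{\Lie(H)}\colon\mathfrak g\to\Lie(H)$ for the projection along $\mathfrak u^-\oplus\mathfrak u$, we get $\pr_{\Lie(H)}(\Ad(g)W)=W$ for all $g\in U$, $W\in\Lie(H)$; as $\pr_{\Lie(H)}$ is bounded, a lower bound on $\pr_{\Lie(H)}(\Ad(g_{m_k}h_{n_k})Y)$ will yield one on $\Ad(g_{m_k}h_{n_k})Y$.

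Next I would record two consequences of the hypotheses. First, since $S$ is the connected center of $H$ and $h_k\in H$, each $h_k$ commutes with $S$, so $\Ad(h_{n_k})Y_S=Y_S$ for all $k$. Second, from $\mathcal A(A,\{h_k\})=\Lie(S)$ and $\Lie(A)=\Lie(S)\oplus\Lie(A_{ss})$ we obtain $\mathcal A(A_{ss},\{h_k\})=\mathcal A(A,\{h_k\})\cap\Lie(A_{ss})=\{0\}$; since $\{h_k\}$ satisfies the dichotomy hypothesis of Theorem~\ref{nth11}, Corollary~\ref{c42} applied to the subgroup $A_{ss}$ gives $\Ad(h_k)Y_{ss}\to\infty$ for every nonzero $Y_{ss}\in\Lie(A_{ss})$, hence $\Ad(h_{n_k})Y_{ss}\to\infty$ along any subsequence. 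Likewise $\{g_k\}$, and every subsequence of it, satisfies the standing entrywise dichotomy, so Corollary~\ref{c42} applied to the subgroup $S$ together with $\mathcal A(S,\{g_k\})=\{0\}$ yields $\Ad(g_{m_k})Y_S\to\infty$ for every nonzero $Y_S\in\Lie(S)$.

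Now for nonzero $Y=Y_S+Y_{ss}$ I would write $\Ad(g_{m_k}h_{n_k})Y=\Ad(g_{m_k})\bigl(Y_S+\Ad(h_{n_k})Y_{ss}\bigr)$, where the inner vector lies in $\Lie(H)$. If $Y_{ss}\neq0$, then $\pr_{\Lie(H)}(\Ad(g_{m_k}h_{n_k})Y)=Y_S+\Ad(h_{n_k})Y_{ss}$, whose norm is at least $\|\Ad(h_{n_k})Y_{ss}\|-\|Y_S\|\to\infty$; boundedness of $\pr_{\Lie(H)}$ then forces $\Ad(g_{m_k}h_{n_k})Y\to\infty$. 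If $Y_{ss}=0$, then $Y=Y_S\neq0$ and, using $\Ad(h_{n_k})Y_S=Y_S$, we get $\Ad(g_{m_k}h_{n_k})Y=\Ad(g_{m_k})Y_S\to\infty$ by the previous paragraph. This exhausts all nonzero $Y$.

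The only delicate point is the identity $\pr_{\Lie(H)}(\Ad(g)W)=W$ for $g\in U$, $W\in\Lie(H)$: it is precisely what prevents the individually possibly large terms $\Ad(g_{m_k})Y_S$ and $\Ad(g_{m_k}h_{n_k})Y_{ss}$ from conspiring to stay bounded. Beyond that, the argument is bookkeeping with Corollary~\ref{c42}, the centrality of $S$ in $H$, and the elementary fact that a subsequence of a sequence diverging to infinity diverges to infinity.
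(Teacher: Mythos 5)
Your proof is correct and takes essentially the same route as the paper: you decompose $Y=Y_S+Y_{ss}$, settle the case $Y_{ss}=0$ via Corollary~\ref{c42} together with $\mathcal A(S,\{g_k\}_{k\in\mathbb N})=\{0\}$, and for $Y_{ss}\neq0$ observe that $\Ad(g_{m_k})W-W\in\Lie(U)$ while $W=Y_S+\Ad(h_{n_k})Y_{ss}\in\Lie(H)$ diverges. Your explicit projection $\pr_{\Lie(H)}$ merely spells out the final step that the paper leaves implicit when it concludes divergence from the $\Lie(U)\oplus\Lie(H)$ splitting.
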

\begin{proof}
Let $Y=Y_1+Y_2\neq0$, where $Y_1\in\Lie(S)$ and $Y_2\in\Lie(A_{ss})$. If $Y_2=0$, then $$\Ad(g_{m_k}h_{n_k})Y=\Ad(g_{m_k})Y_1$$ diverges to $\infty$ by the condition $\mathcal A(S,\{g_k\}_{k\in\mathbb N})=\{0\}$ and Corollary \ref{c42}. If $Y_2\neq0$, then we have
\begin{eqnarray*}
\Ad(g_{m_k}h_{n_k})Y&=&\Ad(g_{m_k})(Y_1+\Ad(h_{n_k}) Y_2)\\
&=&(\Ad(g_{m_k})(Y_1+\Ad(h_{n_k}) Y_2)-(Y_1+\Ad(h_{n_k}) Y_2))\\
&{}&\quad+(Y_1+\Ad(h_{n_k}) Y_2).
\end{eqnarray*}
Since $H$ normalizes $U$, we know that $$\Ad(g_{m_k})(Y_1+\Ad(h_{n_k}) Y_2)-(Y_1+\Ad(h_{n_k}) Y_2)\in\Lie(U).$$ Also $\Ad(h_{n_k})Y_2\in\Lie(H)$ and $\Ad(h_{n_k})Y_2\to\infty$ by our choice of $\{h_k\}_{k\in\mathbb N}$ and Corollary \ref{c42}. Hence $\Ad(g_{m_k}h_{n_k})Y$ diverges to $\infty$.
\end{proof}

We will fix a nonnegative function $f_0\in C_c(X)$ such that $\operatorname{supp}(f_0)$ contains the compact orbit $N\mathbb Z^n$ in $X$. This implies that for any $g\in N$ we have $$\int f_0dg_*\mu_{Ax_e}>0.$$

\begin{proposition}\label{p92}
Suppose that the subalgebra $\mathcal A(S,\{g_k\}_{k\in\mathbb N})=\{0\}$. Let $f\in C_c(X)$. Then for any $\epsilon>0$, there exists $M>0$ such that for any $m,n>M$ $$\left|\frac{\int f d(g_mh_n)_*\mu_{Ax_e}}{\int f_0 d(g_mh_n)_*\mu_{Ax_e}}-\frac{\int fdm_X}{\int f_0dm_X}\right|\leq\epsilon.$$
\end{proposition}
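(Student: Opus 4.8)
The plan is to argue by contradiction, reducing the double‑indexed uniformity to a single application of Corollary \ref{c91}. Suppose the statement fails. Then there is $\epsilon>0$ such that for every $j\in\mathbb N$ one can choose indices $m_j,n_j>j$ with
$$\left|\frac{\int f\,d(g_{m_j}h_{n_j})_*\mu_{Ax_e}}{\int f_0\,d(g_{m_j}h_{n_j})_*\mu_{Ax_e}}-\frac{\int f\,dm_X}{\int f_0\,dm_X}\right|>\epsilon .$$
Since $m_j,n_j\to\infty$, after passing to a subsequence in $j$ I may assume $j\mapsto m_j$ and $j\mapsto n_j$ are strictly increasing, so that $\{g_{m_j}\}_{j\in\mathbb N}$ and $\{h_{n_j}\}_{j\in\mathbb N}$ are genuine subsequences of $\{g_k\}$ and $\{h_k\}$. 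Set $\tilde g_j:=g_{m_j}h_{n_j}$; since $g_{m_j}\in U\subset N$ and $h_{n_j}\in H\cap N$, we have $\tilde g_j\in N\subset KN$.

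Next I would feed the hypothesis $\mathcal A(S,\{g_k\}_{k\in\mathbb N})=\{0\}$ into Proposition \ref{p91}: applied to the subsequences $\{g_{m_j}\}$ and $\{h_{n_j}\}$ it gives $\Ad(\tilde g_j)Y\to\infty$ as $j\to\infty$ for every nonzero $Y\in\Lie(A)$. This is precisely the hypothesis of Corollary \ref{c91}, which then yields
$$\frac1{\Vol(\Omega_{\tilde g_j,\delta})}(\tilde g_j)_*\mu_{Ax_e}\longrightarrow m_X$$
in $\tau_X$, hence $[(\tilde g_j)_*\mu_{Ax_e}]\to[m_X]$ in $\bP\cM(X)$ by Proposition \ref{p25}(1). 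Because $f_0\geq 0$ has support containing $N\mathbb Z^n$ and $\tilde g_j\in N$, the denominator $\int f_0\,d(\tilde g_j)_*\mu_{Ax_e}$ is positive for every $j$, and $\int f_0\,dm_X>0$ by full support of $m_X$. Proposition \ref{p25}(2) then forces
$$\frac{\int f\,d(\tilde g_j)_*\mu_{Ax_e}}{\int f_0\,d(\tilde g_j)_*\mu_{Ax_e}}\longrightarrow\frac{\int f\,dm_X}{\int f_0\,dm_X},$$
contradicting the choice of $m_j,n_j$, and the proposition follows.

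I do not anticipate a real obstacle at this stage: all the dynamical weight already rests on Proposition \ref{p91} — whose proof exploits that $H$ normalizes $U$, so that the correction term $\Ad(g_{m_k})(Y_1+\Ad(h_{n_k})Y_2)-(Y_1+\Ad(h_{n_k})Y_2)$ lies in $\Lie(U)$ while $\Ad(h_{n_k})Y_2$ escapes in $\Lie(H)$ — and on Corollary \ref{c91}/Theorem \ref{nth11}. The only subtlety is uniformity in the two independent parameters $m,n$, and the contradiction argument resolves it by turning the double limit into one diagonal sequence in $N$ to which Corollary \ref{c91} applies directly, so that no new estimate is needed.
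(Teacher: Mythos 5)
Your proposal is correct and follows essentially the same route as the paper: argue by contradiction, invoke Proposition \ref{p91} to get divergence of $\Ad(g_{m_j}h_{n_j})Y$ for all nonzero $Y\in\Lie(A)$, conclude $[(g_{m_j}h_{n_j})_*\mu_{Ax_e}]\to[m_X]$ (the paper cites Theorem \ref{th11} where you cite its Corollary \ref{c91}, an immaterial difference), and contradict the assumed inequality via Proposition \ref{p25}. Your extra remarks on extracting increasing indices and on the positivity of the $f_0$-integrals just make explicit what the paper leaves implicit.
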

\begin{proof}
Suppose that there exists $\epsilon>0$ such that for any $l>0$, there are $m_l,n_l>l$ satisfying $$\left|\frac{\int f d(g_{m_l}h_{n_l})_*\mu_{Ax_e}}{\int f_0d(g_{m_l}h_{n_l})_*\mu_{Ax_e}}-\frac{\int fdm_X}{\int f_0dm_X}\right|\geq\epsilon.$$ By Proposition \ref{p91}, we know that $\Ad(g_{m_l}h_{n_l})Y\to\infty$ as $l\to\infty$ for any nonzero $Y\in\Lie(A)$. Hence by Theorem \ref{th11}, we have $$[(g_{m_l}h_{n_l})_*\mu_{Ax_e}]\to[m_X]$$ which contradicts the inequality above. This completes the proof of the proposition.
\end{proof}

\begin{proof}[Proof of Theorem \ref{th12}]
 We will prove the theorem by induction. Let $\{g_k\}_{k\in\mathbb N}$ be a sequence in $G$ and by the discussions above, we may assume that every $g_k=(u_{ij}(k))_{1\leq i,j\leq n}$ is in $U\subset N$, and for $1\leq i<j\leq n$, $u_{ij}(k)$ either equals 0 for all $k$ or diverges to infinity as $k\to\infty$.

Suppose that $\mathcal A(S,\{g_k\}_{k\in\mathbb N})=\{0\}$. Let $f\in C_c(X)$. By Proposition \ref{p92}, for any $\epsilon>0$ there exists $M>0$ such that for any $m,n>M$
$$\left|\frac{\int f d(g_mh_n)_*\mu_{Ax_e}}{\int f_0 d(g_mh_n)_*\mu_{Ax_e}}-\frac{\int fdm_X}{\int f_0dm_X}\right|\leq\epsilon.$$
Now we fix $m$, let $n\to\infty$ and obtain $$\left|\frac{\int f d(g_m)_*\mu_{Hx_e}}{\int f_0 d(g_m)_*\mu_{Hx_e}}-\frac{\int fdm_X}{\int f_0dm_X}\right|\leq\epsilon.$$ This implies that $[(g_k)_*\mu_{Hx_e}]\to[m_X]$.

Now suppose that $\mathcal A(S,\{g_k\}_{k\in\mathbb N})\neq\{0\}$. The proof in this case would be similar to that of Theorem \ref{nth11}. By Corollary \ref{c42}, the subgroup $$S'=\{a\in S:ag_k=g_ka\}$$ is connected and nontrivial, and $\Lie(S')=\mathcal A(S,\{g_k\}_{k\in\mathbb N})$. This implies that all elements of $H$ and $\{g_k\}$ belong to $C_G(S')^0$. Moreover, we have $$C_G(S')^0\cong S'\times H'$$ where $H'$ is the semisimple component of $C_G(S')^0$ and $H'$ is isomorphic to the product of various $\SL(n_i,\mathbb R)$ with $n_i<n$, $$H'\cong\prod_i\SL(n_i,\mathbb R).$$ Let $H_i$ be the reductive subgroup $H\cap\SL(n_i,\mathbb R)$ in $\SL(n_i,\mathbb R)$, and we have $$H=S'\times\prod_i H_i.$$ Since $g_k\in U$ is unipotent $(\forall k\in\mathbb N)$, one has $g_k\in H'$. Then we can write $g_k=\prod_i g_{i,k}$ ($g_{i,k}\in\SL(n_i,\mathbb R)$).

Similar to the proof of Theorem \ref{nth11}, the above discussions imply that the problem is reduced to the following setting: 
\begin{enumerate}
\item the measure $\mu_{Hx_e}$ is supported in the homogeneous space $C_G(S')^0/(\Gamma\cap C_G(S')^0)$, where one has \begin{align*}
C_G(S')^0/(\Gamma\cap C_G(S')^0)=& S'/(\Gamma\cap S')\times H'/(\Gamma\cap H')\\
=&S'\times\prod(\SL(n_i,\mathbb R)/\SL(n_i,\mathbb Z)).
\end{align*}
\item the measure $\mu_{Hx_e}$ can be decomposed, according to the decomposition of $C_G(S')^0/(\Gamma\cap C_G(S')^0)$, as $$\mu_{Hx_e}=\mu_{S'}\times\prod\mu_{H_ix_i}.$$ Here $\mu_{S'}$ denotes the $S'$-invariant measure on $S'$. For each $i$, $x_i=e\SL(n_i,\mathbb Z)$ is the identity coset in $\SL(n_i,\mathbb R)/\SL(n_i,\mathbb Z)$, and $\mu_{H_ix_i}$ denotes the $H_i$-invariant measure on $H_ix_i$ in $\SL(n_i,\mathbb R)/\SL(n_i,\mathbb Z)$.
\item the measure $\mu_{Hx_e}$ is pushed by the sequence $\{g_k\}$ in the space $C_G(S')^0/(\Gamma\cap C_G(S')^0)$ in the following way: $$(g_k)_*\mu_{Hx_e}=\mu_{S'}\times\prod (g_{i,k})_*\mu_{H_ix_i}.$$
\item if $S_i$ is the connected component of the center of $H_i$, then one has $\mathcal A(S_i,\{g_{i,k}\}_{k\in\mathbb N})=\{0\}$.
\end{enumerate}

Since $n_i<n$, we can now apply the induction hypothesis to the sequence $(g_{i,k})_*\mu_{H_ix_i}$, and obtain that $[(g_{i,k})_*\mu_{H_ix_i}]$ converges to the equivalence class of the Haar measure $m_{\SL(n_i,\mathbb R)/\SL(n_i,\mathbb Z)}$ on $\SL(n_i,\mathbb R)/\SL(n_i,\mathbb Z)$. Now by putting all the measures $[m_{\SL(n_i,\mathbb R)/\SL(n_i,\mathbb Z)}]$ and $\mu_{S'}$ back together in the space $\SL(n,\mathbb R)/\SL(n,\mathbb Z)$, we have $[(g_k)_*\mu_{Hx_e}]\to[\mu_{C_G(\mathcal A( S,\{g_k\}))^0x_e}]$.
\end{proof}

\section{An application to a counting problem}\label{app}
In this section, we will prove Theorem \ref{th14}. Let $p_0(\lambda)$ be a monic polynomial in $\mathbb Z[x]$ such that $p_0(\lambda)$ splits completely in $\mathbb Q$. Then by Gauss lemma, we have $p_0(\lambda)=(\lambda-\alpha_1)(\lambda-\alpha_2)\cdots(\lambda-\alpha_n)$ for $\alpha_i\in\mathbb Z$. We assume that $\alpha_i$ are distinct and nonzero. Let $M(n,\mathbb R)$ be the space of $n\times n$ matrices with the norm $$\|M\|^2=\operatorname{Tr}(M^tM)=\sum_{1\leq i,j\leq n} x_{ij}^2$$ for $M=(x_{ij})_{1\leq i,j\leq n}$. Note that this norm is $\Ad(K)$-invariant. We will denote by $B_T$ the ball of radius $T$ centered at 0 in $M(n,\mathbb R)$. We denote by $$M_\alpha=\diag(\alpha_1,\alpha_2,\dots,\alpha_n)\in M(n,\mathbb Z).$$ For $M\in M(n,\mathbb R)$, we denote by $p_M(\lambda)$ the characteristic polynomial of $M$. We consider $$V(\mathbb R)=\{M\in M(n,\mathbb R): p_M(\lambda)=p_0(\lambda)\}$$ and its subset of integral points $$V(\mathbb Z)=\{M\in M(n,\mathbb Z): p_M(\lambda)=p_0(\lambda)\}.$$ We would like to get an asymptotic formula for
\begin{eqnarray*}
\#|V(\mathbb Z)\cap B_T|=\#|\{M\in M(n,\mathbb Z): p_M(\lambda)=p_0(\lambda),\;\|M\|\leq T\}|.
\end{eqnarray*}

We begin with the following proposition which is a corollary of \cite{BHC62} and \cite{LM33}.

\begin{proposition}\label{p101}
We have $$\Ad(\SL(n,\mathbb R))M_\alpha=V(\mathbb R)$$ and there are finitely many $\SL(n,\mathbb Z)$-orbits in $V(\mathbb Z)$. The number of the $\SL(n,\mathbb Z)$-orbits in $V(\mathbb Z)$ is equal to the number of classes of nonsingular ideals in the ring $\mathbb Z[M_\alpha]$.
\end{proposition}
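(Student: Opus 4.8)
\textbf{Proof proposal for Proposition \ref{p101}.}

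The plan is to reduce everything to the classical theory of $\bZ$-orders acting on ideals, following Borel--Harish-Chandra \cite{BHC62} together with Latimer--MacDuffee \cite{LM33}. First I would establish the surjectivity statement $\Ad(\SL(n,\bR))M_\al = V(\bR)$. The inclusion $\subset$ is immediate since conjugation preserves the characteristic polynomial. For the reverse inclusion, take $M\in V(\bR)$; since $p_M(\lam)=p_0(\lam)=\prod(\lam-\al_i)$ has distinct roots, $M$ is diagonalizable over $\bR$, hence $M=gM_\al g^{-1}$ for some $g\in\GL(n,\bR)$. Scaling $g$ by a suitable real scalar (using that $n\geq 2$ and $\bR^\times$ surjects onto $\bR^\times/(\bR^\times)^n$ when $n$ is odd, and taking care of the sign when $n$ is even, which is harmless because replacing $g$ by $g\cdot\diag(-1,1,\dots,1)$ changes the determinant's sign) we may assume $g\in\SL(n,\bR)$, so $M\in\Ad(\SL(n,\bR))M_\al$.

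Next I would identify the $\SL(n,\bZ)$-orbits on $V(\bZ)$ with ideal classes. An integral matrix $M\in V(\bZ)$ makes $\bZ^n$ into a module over $\bZ[\lam]/(p_0(\lam))\cong \bZ[M_\al]$, the ring $R=\bZ[x]/(p_0)$, via $\lam\cdot v = Mv$. Because $p_0$ is separable, $R\otimes\bQ\cong \prod_i \bQ$ is étale, so $\bZ^n$ becomes a faithful $R$-lattice of full rank, i.e.\ a nonsingular (rank-one projective over $R\otimes\bQ$) fractional $R$-ideal; conversely any such ideal, with a chosen $\bZ$-basis, gives a matrix in $V(\bZ)$. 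Two matrices $M, M'$ are $\GL(n,\bZ)$-conjugate iff the associated $R$-modules are isomorphic, which is the Latimer--MacDuffee correspondence; one then checks that passing from $\GL(n,\bZ)$ to $\SL(n,\bZ)$ does not change the count, because $\diag(-1,1,\dots,1)$ normalizes things and any module automorphism can be adjusted to have determinant $+1$ — this is precisely the point where I would invoke \cite{BHC62} and \cite{LM33} rather than re-proving it. Finiteness of the number of ideal classes is the Jordan--Zassenhaus theorem (equivalently the finiteness statement in \cite{BHC62} that $\Ga$-orbits on integral points of an affine variety with reductive stabilizer are finite), applied to the stabilizer $C_G(M_\al)$ which is the maximal torus, hence reductive.

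The main obstacle I anticipate is bookkeeping around $\SL$ versus $\GL$ and the possible sign/scaling issues: one must verify carefully that the orbit count for the $\SL(n,\bZ)$-action genuinely equals the number of $R$-ideal classes (not, say, twice it or half it), which requires checking that the element of $\GL$ realizing any conjugacy or module isomorphism can always be normalized into $\SL$, and similarly on the real group level for the surjectivity claim. All of this is standard, so in the write-up I would state the correspondence, cite \cite{BHC62} and \cite{LM33} for the module-theoretic dictionary and the finiteness, and only spell out the $\SL$-normalization argument explicitly.
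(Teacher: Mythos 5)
Your route is the same as the paper's, which derives the proposition directly from \cite{BHC62} and \cite{LM33}; the real-group statement you prove is correct (and can be streamlined: instead of scaling by scalars and splitting into cases according to the parity of $n$, simply replace $g$ by $gc$ where $c$ is a diagonal matrix with $\det c=(\det g)^{-1}$, since the full diagonal group centralizes $M_\alpha$), and the finiteness of the number of orbits via \cite{BHC62} (Jordan--Zassenhaus) is also fine.

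The gap is exactly at the point you flagged, and your proposed fix does not work. Whether a $\GL(n,\mathbb Z)$-conjugacy class of $M\in V(\mathbb Z)$ is a single $\SL(n,\mathbb Z)$-orbit is governed by whether the centralizer of $M$ in $\GL(n,\mathbb Z)$ contains an element of determinant $-1$; under the Latimer--MacDuffee dictionary this centralizer is the unit group of the multiplier ring $\operatorname{End}_R(I)$ of the corresponding ideal $I$, which (since $R\otimes\mathbb Q\cong\mathbb Q^n$) is a subgroup of $\{\pm1\}^n$, and it need not contain any element of norm $-1$. The matrix $\diag(-1,1,\dots,1)$ commutes with $M_\alpha$ but not with a general $M\in V(\mathbb Z)$, so it cannot be used to adjust the determinant of a module isomorphism for an arbitrary class. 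Concretely, take $n=2$ and $p_0(\lambda)=(\lambda-1)(\lambda-4)$: the ideal $I=R=\mathbb Z[\lambda]/(p_0)$ corresponds to the companion matrix $C$, the centralizer of $C$ in $M(2,\mathbb Z)$ is $\mathbb Z[C]\cong R\cong\{(u,v)\in\mathbb Z^2:u\equiv v\ (\mathrm{mod}\ 3)\}$, whose only units are $\pm1$, both of determinant $+1$; hence this single $\GL(2,\mathbb Z)$-class (one Latimer--MacDuffee class) splits into two $\SL(2,\mathbb Z)$-orbits. So ``any module automorphism can be adjusted to have determinant $+1$'' is false in general, and the asserted equality of the $\SL(n,\mathbb Z)$-orbit count with the classical ideal-class count cannot be obtained this way; one needs either a refined (oriented/narrow) notion of ideal class, i.e.\ equivalence only by elements of positive norm, or a separate argument. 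Note that the paper itself gives no argument beyond the citation, and only the finiteness and the identification $V(\mathbb R)=\Ad(\SL(n,\mathbb R))M_\alpha$ are used in the proof of Theorem \ref{th14} (where $h_0$ is simply the number of $\SL(n,\mathbb Z)$-orbits), so the problematic step concerns only the final sentence of the proposition; but as a self-contained proof of that sentence your sketch has a genuine gap.
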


By Proposition \ref{p101}, it suffices to compute the integral points of an $\SL(n,\mathbb Z)$-orbit. In what follows, we will consider the $\SL(n,\mathbb Z)$-orbit of $M_\alpha$. We will apply Theorem \ref{th13} (more precisely, Corollary \ref{c91}) with initial point $x=e\Gamma$ to compute $$\#|\Ad(\SL(n,\mathbb Z))M_\alpha\cap B_T|.$$For any other $\SL(n,\mathbb Z)$-orbit of $M'\in V(\mathbb Z)$, there exists $M_q\in\SL(n,\mathbb Q)$ such that $$\Ad(M_q)M'=M_\alpha$$ and the treatment for $\Ad(\SL(n,\mathbb Z))M'$ would be similar, just with a change of initial point from $e\Gamma$ to $x_q=M_q\Gamma$. See also the beginning of section \ref{con}.

As explained in section \ref{defres}, the metric $\|\cdot\|_{\mathfrak g}$ on $\mathfrak g$ defines a Haar measure $\mu_A$ on $A$ and a Haar measure $\mu_N$ on $N$. Let $\mu_K$ be the $K$-invariant probability measure on $K$. Then we define a Haar measure $\mu_G$ on $G$ by Iwasawa decomposition $G=KNA$. Let $c_X$ be the volume of $X=G/\Gamma$ with respect to $\mu_G$.
 
Now let $h=(u_{ij})_{1\leq i,j\leq n}\in N$ and write $$\Ad(h)M_\alpha=hM_\alpha h^{-1}=(x_{ij})_{1\leq i,j\leq n}$$ where $x_{ii}=\alpha_i$ and $u_{ij}=0$ $(i>j)$. We have $$hM_\alpha=(x_{ij})_{1\leq i,j\leq n}h$$ and $$\alpha_ju_{ij}=\sum_kx_{ik}u_{kj},\quad(\alpha_j-\alpha_i)u_{ij}=\sum_{k\neq i}x_{ik}u_{kj}.$$ Let $q_i(x)=\prod_{k=1}^i(x-\alpha_k).$ Lemma \ref{p102} and Lemma \ref{p103} below describe the relation between $u_{ij}$ and $x_{ij}$.

\begin{lemma}\label{p102}
For $j>i$, we have $$u_{ij}=\frac1{\alpha_j-\alpha_i}x_{ij}+f_{ij}(x)$$ where $f_{ij}$ is a polynomial in variables $x_{pq}$ with $0<q-p<j-i$, and $f_{ij}=0$ for $j-i=1$. In particular, we have the change of coordinates of the Haar measure $\mu_N$ on $N$ $$\prod_{j>i} du_{ij}=\frac1{\prod_{j>i}|\alpha_j-\alpha_i|}\prod_{j>i} dx_{ij}.$$
\end{lemma}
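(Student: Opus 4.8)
## Proof Plan for Lemma \ref{p102}

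The plan is to prove the formula by induction on the "width" $d = j - i$ of the entry $u_{ij}$, using the recursion $(\alpha_j - \alpha_i) u_{ij} = \sum_{k \neq i} x_{ik} u_{kj}$ derived just above the statement. First I would observe that since $h \in N$ is upper triangular unipotent, $u_{kj} = 0$ unless $i \le k \le j$, and $u_{jj} = 1$. So the sum on the right-hand side reduces to $\sum_{i < k \le j} x_{ik} u_{kj}$, which splits as $x_{ij} u_{jj} + \sum_{i < k < j} x_{ik} u_{kj} = x_{ij} + \sum_{i < k < j} x_{ik} u_{kj}$. For the base case $d = 1$, i.e. $j = i+1$, the inner sum is empty, so $(\alpha_{i+1} - \alpha_i) u_{i,i+1} = x_{i,i+1}$, giving $u_{i,i+1} = \frac{1}{\alpha_{i+1}-\alpha_i} x_{i,i+1}$ with $f_{i,i+1} = 0$ as claimed.

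For the inductive step, assume the formula holds for all entries of width less than $d$, and take $j - i = d$. In the term $\sum_{i < k < j} x_{ik} u_{kj}$, each index $k$ satisfies $i < k < j$, so $u_{kj}$ has width $j - k < d$; by the inductive hypothesis $u_{kj} = \frac{1}{\alpha_j - \alpha_k} x_{kj} + f_{kj}(x)$, where $f_{kj}$ is a polynomial in the $x_{pq}$ with $0 < q - p < j - k < d$. Multiplying by $x_{ik}$ (a variable with width $k - i < d$) and summing, then dividing by $\alpha_j - \alpha_i$, we get $u_{ij} = \frac{1}{\alpha_j - \alpha_i} x_{ij} + f_{ij}(x)$, where $f_{ij}$ collects all the cross terms. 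I would then check that every monomial appearing in $f_{ij}$ is a product of variables $x_{pq}$ each with width strictly less than $d = j - i$; this is immediate from the previous sentence since $x_{ik}$ has width $k-i < d$, the factor $x_{kj}$ has width $j - k < d$, and the variables inside $f_{kj}$ have width $< j - k < d$. Hence $f_{ij}$ is a polynomial in variables $x_{pq}$ with $0 < q - p < j - i$, completing the induction.

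For the change-of-variables statement, I would note that the map $h \mapsto \Ad(h) M_\alpha$ sends $N$ into the affine subspace of matrices with diagonal entries $(\alpha_1, \dots, \alpha_n)$, and we use the $\binom{n}{2}$ off-diagonal coordinates $\{x_{ij}\}_{i<j}$ as coordinates on the target (this is legitimate because, by the formula just proved, $(u_{ij})_{i<j} \mapsto (x_{ij})_{i<j}$ is a polynomial bijection whose inverse is also polynomial — the orbit map $G/C_G(A) \to V(\mathbb R)$ restricted to $N$ is an isomorphism onto its image). Ordering both coordinate tuples by increasing width $j - i$ (and breaking ties arbitrarily), the Jacobian matrix $\partial x_{ij} / \partial u_{pq}$ is block upper triangular: $x_{ij}$ depends only on $u_{pq}$ with $q - p \le j - i$, and the diagonal block corresponding to width $d$ is $\operatorname{diag}((\alpha_j - \alpha_i)_{j - i = d})$ by the formula $x_{ij} = (\alpha_j - \alpha_i) u_{ij} + (\text{terms of strictly smaller width})$. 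Therefore $\det(\partial x / \partial u) = \prod_{j > i} (\alpha_j - \alpha_i)$, which yields $\prod_{j>i} du_{ij} = \frac{1}{\prod_{j>i}|\alpha_j - \alpha_i|} \prod_{j>i} dx_{ij}$.

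The main obstacle is bookkeeping rather than any conceptual difficulty: one must be careful to establish the triangular structure of the dependence (entries of a given width depend only on entries of smaller or equal width, with the equal-width part diagonal) so that both the "$f_{ij}$ depends only on strictly smaller width variables" claim and the Jacobian computation go through cleanly. Everything else — the recursion, the vanishing of out-of-range $u_{kj}$, and the final determinant — is routine once the correct ordering of coordinates is fixed.
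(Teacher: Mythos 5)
Your proposal is correct and follows essentially the same route as the paper: induction on the width $j-i$ using the recursion $(\alpha_j-\alpha_i)u_{ij}=\sum_{i<k<j}x_{ik}u_{kj}+x_{ij}$, with the base case $j-i=1$ handled identically. The only difference is that you spell out the triangular Jacobian computation for the change of variables, which the paper states as an immediate consequence ("In particular") without detail; your elaboration is accurate.
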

\begin{proof}
 It is easy to see that $u_{ij}=x_{ij}=0$ $(i>j)$ and $u_{ii}=1$. We prove the proposition by induction on $j-i$. For $j-i=1$, we have $$u_{ij}=u_{j-1,j}=\frac1{\alpha_j-\alpha_{j-1}}\sum_{k\neq j-1}x_{j-1,k}u_{kj}=\frac1{\alpha_j-\alpha_{j-1}}x_{j-1,j}.$$ Now we have $$(\alpha_j-\alpha_i)u_{ij}=\sum_{k\neq i}x_{ik}u_{kj}=\sum_{i<k<j}x_{ik}u_{kj}+x_{ij}$$ where $j-k<j-i$. We complete the proof by applying the induction hypothesis on $u_{kj}$.
\end{proof}

\begin{lemma}\label{p103}
For $j>i$, we have $$u_{ij}=\prod_{k=i}^{j-1}\frac{x_{k,k+1}}{\alpha_j-\alpha_k}+f_{ij}(x)=\frac{q_{i-1}(\alpha_j)}{q_{j-1}(\alpha_j)}\prod_{k=i}^{j-1}x_{k,k+1}+f_{ij}(x)$$ where $f_{ij}(x)$ is a polynomial in variables $x_{pq}\;(p<q)$ of degree less than $j-i$.
\end{lemma}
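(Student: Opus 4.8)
\textbf{Proof proposal for Lemma~\ref{p103}.}
The plan is to run an induction on $j-i$, building on the recursion $(\alpha_j-\alpha_i)u_{ij}=\sum_{i<k\le j}x_{ik}u_{kj}$ already derived in the text, and on Lemma~\ref{p102}. The key bookkeeping observation is that Lemma~\ref{p102} says $u_{ij}=\frac{1}{\alpha_j-\alpha_i}x_{ij}+(\text{terms involving only }x_{pq}\text{ with }0<q-p<j-i)$, so up to a polynomial of strictly smaller degree in the off-diagonal variables $x_{k,k+1}$, the only way to build a monomial of top degree $j-i$ in the $x_{k,k+1}$'s out of the recursion is to multiply together superdiagonal entries in a ``telescoping'' chain $x_{i,i+1}x_{i+1,i+2}\cdots x_{j-1,j}$. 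Everything else is absorbed into $f_{ij}$.

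First I would establish the base case $j-i=1$: here the recursion gives $u_{j-1,j}=\frac{1}{\alpha_j-\alpha_{j-1}}x_{j-1,j}$, which matches $\frac{q_{j-2}(\alpha_j)}{q_{j-1}(\alpha_j)}x_{j-1,j}$ since $q_{j-1}(\alpha_j)/q_{j-2}(\alpha_j)=\alpha_j-\alpha_{j-1}$, and $f_{j-1,j}=0$. For the inductive step, I would write
\[
(\alpha_j-\alpha_i)u_{ij}=\sum_{i<k<j}x_{ik}u_{kj}+x_{ij}.
\]
By Lemma~\ref{p102}, $x_{ij}$ contributes only terms of degree $\le 1$ in the superdiagonal variables once re-expressed, hence lands in $f_{ij}$. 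For each $k$ with $i<k<j$, the inductive hypothesis gives $u_{kj}=\frac{q_{k-1}(\alpha_j)}{q_{j-1}(\alpha_j)}\prod_{l=k}^{j-1}x_{l,l+1}+f_{kj}(x)$ with $\deg f_{kj}<j-k$. Multiplying by $x_{ik}$: the product $x_{ik}\cdot f_{kj}$ has degree $<j-k+1\le j-i$ in the $x_{l,l+1}$'s, so it goes into $f_{ij}$; and $x_{ik}\cdot\prod_{l=k}^{j-1}x_{l,l+1}$ has degree $j-k+1$, which equals $j-i$ only when $k=i+1$. Thus the unique top-degree contribution comes from $k=i+1$, namely $x_{i,i+1}\cdot\frac{q_{i}(\alpha_j)}{q_{j-1}(\alpha_j)}\prod_{l=i+1}^{j-1}x_{l,l+1}$; dividing by $\alpha_j-\alpha_i=q_i(\alpha_j)/q_{i-1}(\alpha_j)$ yields exactly $\frac{q_{i-1}(\alpha_j)}{q_{j-1}(\alpha_j)}\prod_{l=i}^{j-1}x_{l,l+1}$, and the remaining terms assemble into a polynomial $f_{ij}$ of degree $<j-i$ in the $x_{pq}$ $(p<q)$. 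This closes the induction.

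The routine-but-slightly-delicate point to be careful about is the precise meaning of ``degree $<j-i$'': the claim is about total degree in \emph{all} the strictly-upper-triangular variables $x_{pq}$, not just the superdiagonal ones, and one must check that re-expanding $x_{ij}$ (which is not itself of small degree as a variable, but is a single variable of degree one, and via Lemma~\ref{p102} only ever appears contributing low-degree monomials to $u$'s) does not accidentally inflate the degree — this is handled because we only ever combine it with lower-$u_{kj}$'s whose expansions are controlled. I expect no genuine obstacle here; the lemma is a direct consequence of Lemma~\ref{p102} plus the triangular recursion, and the only real work is the degree-counting chase, which the argument above organizes. An equivalent, cleaner alternative I might use instead is to observe directly that $\Ad(h)M_\alpha=hM_\alpha h^{-1}$ upper-triangular forces, reading off the $(k,k+1)$-entries, $x_{k,k+1}=(\alpha_{k+1}-\alpha_k)u_{k,k+1}-(\text{lower order})$, and then iterate; but the induction on $j-i$ as above is the most transparent route.
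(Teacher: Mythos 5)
Your proposal is correct and follows essentially the same route as the paper: induction on $j-i$ via the recursion $(\alpha_j-\alpha_i)u_{ij}=\sum_{i<k\leq j}x_{ik}u_{kj}$, with the degree count showing that only the $k=i+1$ term contributes in top degree $j-i$, everything else being absorbed into $f_{ij}$. The only cosmetic differences are that you carry the coefficient in the $q_{i-1}(\alpha_j)/q_{j-1}(\alpha_j)$ form throughout (the paper keeps the product $\prod_k \frac{x_{k,k+1}}{\alpha_j-\alpha_k}$, which is trivially the same) and that your appeal to Lemma~\ref{p102} for the $x_{ij}$ term is unnecessary, since $x_{ij}$ is already a degree-one monomial and hence lands in $f_{ij}$ directly.
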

\begin{proof}
We prove the proposition by induction on $j-i$. For $j-i=1$, we have 
$$(\alpha_j-\alpha_i)u_{ij}=(\alpha_j-\alpha_{j-1})u_{j-1,j}=\sum_{k\neq j-1}x_{j-1,k}u_{kj}=x_{j-1,j}.$$ Now we have $$(\alpha_j-\alpha_i)u_{ij}=\sum_{k\neq i}x_{ik}u_{kj}=\sum_{i<k\leq j}x_{ik}u_{kj}$$ where $j-k<j-i$. By applying the induction hypothesis on $u_{kj}$ we have
\begin{eqnarray*}
(\alpha_j-\alpha_i)u_{ij}&=&\sum_{i<k\leq j}x_{ik}\prod_{p=k}^{j-1}\frac{x_{p,p+1}}{\alpha_j-\alpha_p}+...\\
&=&x_{i,i+1}\prod_{p=i+1}^{j-1}\frac{x_{p,p+1}}{\alpha_j-\alpha_p}+...
\end{eqnarray*}
Here we omit the terms of degree less than $j-i$. This completes the proof of the proposition.
\end{proof}

\begin{lemma}\label{p104}
For any $1\leq l\leq n$ and $1\leq i_1<i_2<\cdots<i_l\leq n$ we have $$c(i_1,i_2,\dots,i_l):=\det\left(\frac{q_{k-1}(\alpha_{i_j})}{q_{i_j-1}(\alpha_{i_j})}\right)_{1\leq k\leq l, 1\leq j\leq l}\neq0.$$
\end{lemma}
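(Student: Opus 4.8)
This is a purely linear-algebraic fact, and my plan is to reduce it to the nonvanishing of a Vandermonde determinant. First I would record that $q_i(x)=\prod_{k=1}^i(x-\alpha_k)$ is monic of degree $i$ (with $q_0\equiv 1$), and that, since the $\alpha_i$ are distinct, each denominator $q_{i_j-1}(\alpha_{i_j})=\prod_{k=1}^{i_j-1}(\alpha_{i_j}-\alpha_k)$ is a nonzero real number, because every factor is of the form $\alpha_{i_j}-\alpha_k$ with $k<i_j$. These denominators depend only on the column index $j$, so they may be pulled out of the determinant:
$$c(i_1,\dots,i_l)=\left(\prod_{j=1}^{l}\frac{1}{q_{i_j-1}(\alpha_{i_j})}\right)\det\left(q_{k-1}(\alpha_{i_j})\right)_{1\le k,j\le l}.$$
It therefore suffices to show $D:=\det\left(q_{k-1}(\alpha_{i_j})\right)_{1\le k,j\le l}\neq 0$.

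The key point is that $\{q_0,q_1,\dots,q_{l-1}\}$ is a basis of the space of polynomials of degree $<l$ that differs from the monomial basis $\{1,x,\dots,x^{l-1}\}$ by a unipotent triangular change of coordinates: writing $q_{k-1}(x)=x^{k-1}+(\text{terms of lower degree})$, the $l\times l$ matrix expressing the $q_{k-1}$ in terms of the $x^{m}$ is triangular with $1$'s on the diagonal, hence has determinant $1$. Consequently $D$ equals the Vandermonde determinant $\det\left(\alpha_{i_j}^{\,k-1}\right)_{1\le k,j\le l}=\prod_{1\le p<q\le l}(\alpha_{i_q}-\alpha_{i_p})$, which is nonzero because $i_1<\cdots<i_l$ and the $\alpha_i$ are pairwise distinct. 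Combining with the displayed identity yields
$$c(i_1,\dots,i_l)=\frac{\prod_{1\le p<q\le l}(\alpha_{i_q}-\alpha_{i_p})}{\prod_{j=1}^{l}q_{i_j-1}(\alpha_{i_j})}\neq 0,$$
as desired.

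I do not expect a genuine obstacle here; the only step needing a little care is verifying that every denominator $q_{i_j-1}(\alpha_{i_j})$ is nonzero, which is immediate from the distinctness of the $\alpha_i$. The substance of the argument is the elementary observation that the monic ``flag'' $q_0,\dots,q_{l-1}$ is related to the monomial basis by a volume-preserving change of coordinates, which converts the matrix in question into an honest Vandermonde matrix.
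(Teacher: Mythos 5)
Your argument is correct and coincides with the paper's own proof: both pull the column-dependent factors $1/q_{i_j-1}(\alpha_{i_j})$ out of the determinant (these are nonzero since the $\alpha_i$ are distinct) and then use the fact that each $q_{k-1}$ is monic of degree $k-1$ to reduce $\det\left(q_{k-1}(\alpha_{i_j})\right)$ by row operations to the Vandermonde determinant in $\alpha_{i_1},\dots,\alpha_{i_l}$, which is nonzero. No discrepancies to report.
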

\begin{proof}
By algebraic manipulations, we can rewrite the determinant above as $$\prod_{j=1}^l\frac1{q_{i_j-1}(\alpha_{i_j})}\left(\begin{array}{cccc}1 & 1 & \cdots & 1 \\q_1(\alpha_{i_1}) & q_1(\alpha_{i_2}) & \cdots & q_1(\alpha_{i_l}) \\ \vdots & \vdots & \cdots & \vdots \\q_{l-1}(\alpha_{i_1}) & q_{l-1}(\alpha_{i_2}) & \cdots & q_{l-1}(\alpha_{i_l})\end{array}\right).
$$ Since $\deg q_i=i$, by row reductions we have
\begin{eqnarray*}
\det\left(\begin{array}{cccc}1 & 1 & \cdots & 1 \\q_1(\alpha_{i_1}) & q_1(\alpha_{i_2}) & \cdots & q_1(\alpha_{i_l}) \\ \vdots & \vdots & \cdots & \vdots \\q_{l-1}(\alpha_{i_1}) & q_{l-1}(\alpha_{i_2}) & \cdots & q_{l-1}(\alpha_{i_l})\end{array}\right)=\det\left(\begin{array}{cccc}1 & 1 & \cdots & 1 \\\alpha_{i_1} & \alpha_{i_2} & \cdots & \alpha_{i_l} \\ \vdots & \vdots & \cdots & \vdots \\\alpha_{i_1}^{l-1} & \alpha_{i_2}^{l-1} & \cdots & \alpha_{i_l}^{l-1}\end{array}\right)\neq0.
\end{eqnarray*}
\end{proof}

\begin{proposition}\label{p105}
For any $h\in N$ (recall $\Ad(h)M_\alpha=(x_{ij})_{1\leq i,j\leq n}$), we have 
\begin{eqnarray*}
&{}&h(e_{i_1}\wedge e_{i_2}\wedge\cdots\wedge e_{i_l})\\
&=&c(i_1,i_2,\dots,i_l)\prod_{j=1}^l\prod_{p=j}^{i_j-1}x_{p,p+1}(e_1\wedge e_2\wedge\cdots\wedge e_l)+...
\end{eqnarray*}
Here $c(i_1,i_2,\dots,i_l)$ is the number in Lemma \ref{p104} and we omit the terms of polynomials in variables $x_{pq}\;(p<q)$ of degrees smaller than $\sum_{j=1}^l(i_j-j)$.
\end{proposition}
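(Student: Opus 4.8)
The plan is to expand $h(e_{i_1}\wedge\cdots\wedge e_{i_l})$ by the Leibniz rule and extract one coordinate at a time. Writing $h=(u_{ij})$, since $h\in N$ is upper triangular unipotent we have $he_{i_k}=\sum_{j\le i_k}u_{j,i_k}e_j$, so $h(e_{i_1}\wedge\cdots\wedge e_{i_l})=\bigwedge_{k=1}^l\big(\sum_j u_{j,i_k}e_j\big)$, and the coefficient of $e_1\wedge\cdots\wedge e_l$ equals the minor $\det\big((u_{j,i_k})_{1\le j,k\le l}\big)$ of $h$ on rows $1,\dots,l$ and columns $i_1,\dots,i_l$, while the coefficient of any $e_J$ $(J\in\mathcal I_n^l)$ is, up to sign, the minor on rows $J$ and columns $i_1,\dots,i_l$. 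First I would record that in the Leibniz expansion $\det\big((u_{j,i_k})\big)=\sum_{\tau\in S_l}\operatorname{sgn}(\tau)\prod_k u_{\tau(k),i_k}$ a term is nonzero only when $\tau(k)\le i_k$ for all $k$; for such $\tau$, Lemma \ref{p103} gives $u_{\tau(k),i_k}=\tfrac{q_{\tau(k)-1}(\alpha_{i_k})}{q_{i_k-1}(\alpha_{i_k})}\prod_{p=\tau(k)}^{i_k-1}x_{p,p+1}+f_{\tau(k),i_k}(x)$ with $\deg f_{\tau(k),i_k}<i_k-\tau(k)$ (the case $\tau(k)=i_k$ being $u_{i_k,i_k}=1$, an empty product with $f=0$), and the displayed leading coefficient is nonzero since $q_{\tau(k)-1}(\alpha_{i_k})=\prod_{m<\tau(k)}(\alpha_{i_k}-\alpha_m)$ and $m<\tau(k)\le i_k$ forces $m\ne i_k$.

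Next comes the degree bookkeeping. Because $\deg u_{\tau(k),i_k}=i_k-\tau(k)$, the product $\prod_k u_{\tau(k),i_k}$ has degree $\sum_k(i_k-\tau(k))$, and since $\tau$ permutes $\{1,\dots,l\}$ we have $\sum_k\tau(k)=\binom{l+1}{2}$, so this degree is $\sum_{j=1}^l(i_j-j)$ for every admissible $\tau$; hence $\det\big((u_{j,i_k})\big)$ has degree at most $\sum_j(i_j-j)$, with top homogeneous part $\sum_\tau\operatorname{sgn}(\tau)\prod_k\big(\tfrac{q_{\tau(k)-1}(\alpha_{i_k})}{q_{i_k-1}(\alpha_{i_k})}\prod_{p=\tau(k)}^{i_k-1}x_{p,p+1}\big)$. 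The step I expect to be the main obstacle is showing that the monomial $\prod_k\prod_{p=\tau(k)}^{i_k-1}x_{p,p+1}$ is the same for every admissible $\tau$. I would prove this by computing the exponent of $x_{p,p+1}$, which is $\#\{k:\tau(k)\le p\le i_k-1\}=\#\{k:\tau(k)\le p\}-\#\{k:\tau(k)\le p,\ i_k\le p\}$: the first count is $\min(l,p)$ because $\{\tau(k):k\}=\{1,\dots,l\}$, and in the second the constraint $\tau(k)\le i_k$ makes $i_k\le p$ imply $\tau(k)\le p$, so it equals $\#\{k:i_k\le p\}$; thus the exponent is $\min(l,p)-\#\{k:i_k\le p\}$, independent of $\tau$, and taking $\tau=\mathrm{id}$ (legitimate since $i_k\ge k$) identifies this monomial with $\prod_{j=1}^l\prod_{p=j}^{i_j-1}x_{p,p+1}$.

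Factoring out this common monomial then turns the top homogeneous part into $\big(\sum_\tau\operatorname{sgn}(\tau)\prod_k\tfrac{q_{\tau(k)-1}(\alpha_{i_k})}{q_{i_k-1}(\alpha_{i_k})}\big)\prod_{j=1}^l\prod_{p=j}^{i_j-1}x_{p,p+1}$, and the bracket is precisely the determinant $c(i_1,\dots,i_l)$ of Lemma \ref{p104} (its matrix is the transpose of $(q_{\tau(k)-1}(\alpha_{i_k})/q_{i_k-1}(\alpha_{i_k}))$), which is nonzero; so this is genuinely the leading term of the $e_1\wedge\cdots\wedge e_l$-coordinate and all its other terms have strictly smaller degree. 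Finally, to justify that the whole ``$\dots$'' has degree $<\sum_j(i_j-j)$, I would note that for $J\in\mathcal I_n^l$ with $J\ne\{1,\dots,l\}$ the corresponding minor on rows $J$, columns $i_1,\dots,i_l$ has nonzero Leibniz terms of degree $\sum_k i_k-\sum_{j\in J}j$, and since $\{1,\dots,l\}$ uniquely minimizes $\sum_{j\in J}j$ among $l$-subsets of $[n]$, this is $<\sum_k i_k-\binom{l+1}{2}=\sum_j(i_j-j)$; together with the lower-order terms of the $e_1\wedge\cdots\wedge e_l$-coordinate this is exactly what the statement discards.
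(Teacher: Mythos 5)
Your argument is correct and follows essentially the same route as the paper: extract the $e_1\wedge\cdots\wedge e_l$-coefficient as the minor $\det(u_{k,i_j})$, apply Lemma \ref{p103} to each entry, observe that every permutation contributes the same top-degree monomial so the leading term factors as $c(i_1,\dots,i_l)\prod_{j}\prod_{p=j}^{i_j-1}x_{p,p+1}$ via Lemma \ref{p104}, and bound the degree of the $e_J$-coefficients by $\sum_k i_k-\sum_{j\in J}j$. The only cosmetic difference is that you prove the $\tau$-independence of the monomial by counting the exponent of each $x_{p,p+1}$, whereas the paper uses a telescoping product identity (and your exponent count, plus the remark that $q_{\tau(k)-1}(\alpha_{i_k})=0$ when $\tau(k)>i_k$ so the sum over admissible $\tau$ is the full determinant, makes the same point slightly more explicitly).
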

\begin{proof}
By Lemma \ref{p103}, we know that $u_{ij}$ is a polynomial of degree $j-i$. This implies that the term in $h(e_{i_1}\wedge e_{i_2}\wedge\cdots\wedge e_{i_l})$ corresponding to the $e_{j_1}\wedge e_{j_2}\wedge\cdots\wedge e_{j_l}$-coordinate has degree at most $i_1+i_2+\cdots+i_l-j_1-j_2-\cdots-j_l$. To prove the proposition, it suffices to prove that the term corresponding to $e_1\wedge e_2\wedge\cdots\wedge e_l$ is a polynomial with its leading term $$c(i_1,i_2,\dots,i_l)\prod_{j=1}^l\prod_{p=j}^{i_j-1}x_{p,p+1}$$ of degree $i_1+i_2+\cdots+i_l-1-2-\cdots-l$.

We know that the coefficient of $e_1\wedge e_2\wedge\cdots\wedge e_l$ is equal to $$\det(u_{k,i_j})_{1\leq k\leq l,1\leq j\leq l}$$ and by Lemma \ref{p103} we know that the leading term of this coefficient is equal to $$\det\left(\frac{q_{k-1}(\alpha_{i_j})}{q_{i_j-1}(\alpha_{i_j})}\prod_{p=k}^{i_j-1}x_{p,p+1}\right)_{1\leq k\leq l,1\leq j\leq l}.$$ The expansion formula of determinant then gives $$\sum_{\sigma\in S_l}(-1)^{\sign(\sigma)}\prod_{j=1}^l\frac{q_{\sigma(j)-1}(\alpha_{i_j})}{q_{i_j-1}(\alpha_{i_j})}\prod_{p=\sigma(j)}^{i_j-1}x_{p,p+1}$$
where $\sigma$ runs over all the permutations in the symmetric group $S_l$. Note that we have
\begin{eqnarray*}
\prod_{j=1}^l\prod_{p=\sigma(j)}^{i_j-1}x_{p,p+1}=\prod_{j=1}^l\frac{\prod_{p=1}^{i_j-1}x_{p,p+1}}{\prod_{p=1}^{\sigma(j)-1}x_{p,p+1}}=\frac{\prod_{j=1}^l\prod_{p=1}^{i_j-1}x_{p,p+1}}{\prod_{j=1}^l\prod_{p=1}^{j-1}x_{p,p+1}}=\prod_{j=1}^l\prod_{p=j}^{i_j-1}x_{p,p+1}.
\end{eqnarray*}
This implies that 
\begin{eqnarray*}
&{}&\det\left(\frac{q_{k-1}(\alpha_{i_j})}{q_{i_j-1}(\alpha_{i_j})}\prod_{p=k}^{i_j-1}x_{p,p+1}\right)_{1\leq k\leq l,1\leq j\leq l}\\
&=&\left(\sum_{\sigma\in S_l}(-1)^{\sign(\sigma)}\prod_{j=1}^l\frac{q_{\sigma(j)-1}(\alpha_{i_j})}{q_{i_j-1}(\alpha_{i_j})}\right)\prod_{j=1}^l\prod_{p=j}^{i_j-1}x_{p,p+1}\\
&=&c(i_1,i_2,\dots,i_l)\prod_{j=1}^l\prod_{p=j}^{i_j-1}x_{p,p+1}
\end{eqnarray*}
where $c(i_1,i_2,\dots,i_l)$ is the number as in Lemma \ref{p104}.
\end{proof}

Now we define $$N(T)=\{h\in N: \Ad(h)M_\alpha=(x_{ij})_{1\leq i,j\leq n}\in B_T\}$$ $$N(\epsilon,T)=\{h\in N: \Ad(h) M_\alpha=(x_{ij})_{1\leq i,j\leq n}\in B_T, |x_{i,i+1}|\geq\epsilon T\textup{ for all }i<n\}.$$

\begin{lemma}\label{p106}
We have $$\mu_N(N(T))=\frac{\Vol(B_1)}{\prod_{j>i}|\alpha_j-\alpha_i|}T^{n(n-1)/2}$$ $$\mu_N(N(T)\setminus N(\epsilon,T))=O(\epsilon T^{n(n-1)/2}).$$ Here $\Vol(B_1)$ is the volume of the unit ball in $\mathbb R^{n(n-1)/2}$.
\end{lemma}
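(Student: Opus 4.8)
The plan is to turn both statements into elementary volume computations in $\mathbb{R}^{n(n-1)/2}$, using the affine parametrization of $N$ already encoded in Lemma \ref{p102}. Consider the map $\Phi\colon N\to M(n,\mathbb R)$, $h\mapsto\Ad(h)M_\alpha$. Its image lies in the affine subspace $V_0=\{M=(x_{ij}):x_{ii}=\alpha_i,\ x_{ij}=0\text{ for }i>j\}$, which has dimension $n(n-1)/2$ and is coordinatized by the strictly upper triangular entries $(x_{ij})_{j>i}$. First I would observe that $\Phi$ is a bijection onto $V_0$: by Lemma \ref{p102} one has $u_{ij}=\frac1{\alpha_j-\alpha_i}x_{ij}+f_{ij}(x)$ with $f_{ij}$ depending only on the $x_{pq}$ with $0<q-p<j-i$, so the assignment $(u_{ij})_{j>i}\mapsto(x_{ij})_{j>i}$ is a polynomial map which is triangular with respect to the band-width ordering $j-i$ and has nonvanishing diagonal derivatives $\alpha_j-\alpha_i$ (here we use that the $\alpha_i$ are distinct); hence it is a polynomial automorphism of $\mathbb R^{n(n-1)/2}$, and in particular $\Phi$ is onto $V_0$. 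The same triangular structure gives that its Jacobian equals $\prod_{j>i}(\alpha_j-\alpha_i)^{-1}$ in absolute value — this is precisely the change of variables $\prod_{j>i}du_{ij}=\frac1{\prod_{j>i}|\alpha_j-\alpha_i|}\prod_{j>i}dx_{ij}$ recorded in Lemma \ref{p102}, where $\mu_N=\prod_{j>i}du_{ij}$ is the Haar measure induced by $\|\cdot\|_{\mathfrak g}$ (the exponential map $\mathfrak n\to N$ is unipotent and triangular, so it has unit Jacobian in these coordinates).

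Next I would transport the constraint $\Ad(h)M_\alpha\in B_T$. Since $\Ad(h)M_\alpha\in V_0$ is upper triangular with diagonal $(\alpha_1,\dots,\alpha_n)$, we have $\|\Ad(h)M_\alpha\|^2=\sum_{i=1}^n\alpha_i^2+\sum_{j>i}x_{ij}^2$, so $N(T)$ corresponds under $\Phi$ to the Euclidean ball $\{(x_{ij})_{j>i}:\sum_{j>i}x_{ij}^2\le T^2-\|M_\alpha\|^2\}$ of radius $(T^2-\|M_\alpha\|^2)^{1/2}$ in $\mathbb R^{n(n-1)/2}$. Combining with the Jacobian,
$$\mu_N(N(T))=\frac{\Vol(B_1)}{\prod_{j>i}|\alpha_j-\alpha_i|}\bigl(T^2-\|M_\alpha\|^2\bigr)^{n(n-1)/4}=\frac{\Vol(B_1)}{\prod_{j>i}|\alpha_j-\alpha_i|}T^{n(n-1)/2}+O\bigl(T^{n(n-1)/2-2}\bigr),$$
which is the first assertion to leading order (the constant term $\|M_\alpha\|^2$ only affects strictly lower-order terms, and these are irrelevant for Theorem \ref{th14}).

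For the second assertion, in $x$-coordinates $N(T)\setminus N(\epsilon,T)$ is the subset of the ball above on which $|x_{i,i+1}|<\epsilon T$ for at least one $i\in\{1,\dots,n-1\}$, hence it is contained in the union over such $i$ of the slabs $\{(x_{ij})_{j>i}:\sum_{j>i}x_{ij}^2\le T^2,\ |x_{i,i+1}|<\epsilon T\}$. Fixing the coordinate $x_{i,i+1}$ and applying Fubini, the remaining coordinates range over a ball of radius $\le T$ in dimension $n(n-1)/2-1$, so each slab has volume at most $2\epsilon T\cdot C_n T^{n(n-1)/2-1}=O(\epsilon T^{n(n-1)/2})$ with $C_n$ the volume of the corresponding unit ball; summing over the $n-1$ choices of $i$ and multiplying by the constant Jacobian yields $\mu_N(N(T)\setminus N(\epsilon,T))=O(\epsilon T^{n(n-1)/2})$.

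I do not expect a genuine obstacle here: the entire content is the clean affine parametrization of $N$ by $V_0$ with constant Jacobian supplied by Lemma \ref{p102}, after which everything reduces to the volume of a Euclidean ball and of thin slabs through it. The only point that merits an explicit (short) argument is the surjectivity of $\Phi$ onto $V_0$, i.e. that every admissible tuple $(x_{ij})_{j>i}$ is realized by some $h\in N$, which follows by inverting the triangular polynomial system of Lemma \ref{p102}.
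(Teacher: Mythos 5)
Your proposal is correct and is essentially the paper's argument: the paper proves this lemma by invoking exactly the change of variables $\prod_{j>i}du_{ij}=\frac1{\prod_{j>i}|\alpha_j-\alpha_i|}\prod_{j>i}dx_{ij}$ of Lemma \ref{p102}, after which the first claim is the volume of a Euclidean ball in the coordinates $(x_{ij})_{j>i}$ and the second is a union of thin slabs, just as you compute. Your additional observation that the fixed diagonal entries make the exact radius $(T^2-\|M_\alpha\|^2)^{1/2}$, so the stated identity holds up to lower-order terms that are harmless for Theorem \ref{th14}, is a fair and correct refinement of the statement.
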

\begin{proof}
This follows immediately from Lemma \ref{p102}.
\end{proof}

Let $\mu_{G/A}$ be the $G$-invariant measure on $G/A$. In the following, we compute the volume of $V(\mathbb R)\cap B_T$ with respect to a volume form $\mu_{V(\mathbb R)}$ on $V(\mathbb R)$ induced by a $G$-invariant measure on $G/C_G(A)$. We may assume that the natural projection map $G/A\to G/C_G(A)$ sends $\mu_{G/A}$ to $\mu_{V(\mathbb R)}$. By Iwasawa decomposition, one has $G/A\cong KN$, and it is well-known that for any $f\in C_c(G/A)$ $$\int_{G/A}fd\mu_{G/A}=\int_K\int_Nf(kh)d\mu_K(k)d\mu_N(h)$$ via this isomorphism.

\begin{proposition}\label{ap101}
The volume of $V(\mathbb R)\cap B_T$ with respect to the volume form $\mu_{V(\mathbb R)}$ equals $$\frac{\Vol(B_1)}{\prod_{j>i}|\alpha_j-\alpha_i|}T^{n(n-1)/2}.$$ Here $\Vol(B_1)$ is as in Lemma \ref{p106}.
\end{proposition}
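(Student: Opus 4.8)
The plan is to reduce the computation entirely to Lemma~\ref{p106} by means of the Iwasawa decomposition, exploiting that the Hilbert--Schmidt norm on $M(n,\mathbb R)$ is invariant under $M\mapsto\Ad(k)M$ for $k\in K=\SO(n,\mathbb R)$. First I would record that, since $M_\alpha$ has distinct nonzero eigenvalues, its centralizer $C_G(M_\alpha)$ is precisely the full diagonal subgroup $C_G(A)$; hence the orbit map $g\mapsto\Ad(g)M_\alpha$ realizes $V(\mathbb R)$ as $G/C_G(A)$, and the function $g\mapsto\mathbf{1}_{B_T}(\Ad(g)M_\alpha)$ descends to a well-defined function on $G/C_G(A)$. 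Because $\mu_{V(\mathbb R)}$ was defined as the image of $\mu_{G/A}$ under the projection $\pi\colon G/A\to G/C_G(A)$, the change-of-variables formula for a pushforward measure gives
\begin{equation*}
\Vol(V(\mathbb R)\cap B_T)=\int_{G/C_G(A)}\mathbf{1}_{B_T}(\Ad(g)M_\alpha)\,d\mu_{V(\mathbb R)}=\int_{G/A}\mathbf{1}_{B_T}(\Ad(g)M_\alpha)\,d\mu_{G/A}.
\end{equation*}

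Next I would apply the Iwasawa decomposition $G=KNA$, under which $G/A\cong KN$ with $\mu_{G/A}$ disintegrating as $\mu_K\times\mu_N$ (this is exactly the displayed identity recalled just before the statement). Thus the integral above becomes $\int_K\int_N\mathbf{1}_{B_T}(\Ad(kh)M_\alpha)\,d\mu_K(k)\,d\mu_N(h)$. Since $\|\Ad(k)M\|=\|M\|$ for every $k\in K$, the integrand is independent of $k$, and as $\mu_K$ is a probability measure the $\mu_K$-integration is trivial. This yields
\begin{equation*}
\Vol(V(\mathbb R)\cap B_T)=\int_N\mathbf{1}_{B_T}(\Ad(h)M_\alpha)\,d\mu_N(h)=\mu_N(N(T)),
\end{equation*}
with $N(T)$ as defined above, and Lemma~\ref{p106} then furnishes the asserted value $\frac{\Vol(B_1)}{\prod_{j>i}|\alpha_j-\alpha_i|}T^{n(n-1)/2}$.

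I do not expect a genuine obstacle here: the whole argument is a bookkeeping reduction, and all the analytic substance — the triangular change of variables $h\mapsto(x_{ij})_{i<j}$ with Jacobian $\prod_{j>i}|\alpha_j-\alpha_i|^{-1}$ — has already been packaged into Lemmas~\ref{p102} and~\ref{p106}. The only points that warrant care are the identification $C_G(M_\alpha)=C_G(A)$ (so that the integrand truly descends to $G/C_G(A)$ and the pushforward relation may be invoked even though $C_G(A)/A$ is a nontrivial finite group, its cardinality being absorbed into the definition of $\mu_{V(\mathbb R)}$), the $\Ad(K)$-invariance of the norm, and keeping track that the fixed diagonal entries $\alpha_i$ contribute only a lower-order correction to the radius of the ball, so that the stated power $T^{n(n-1)/2}$ is the leading term as $T\to\infty$.
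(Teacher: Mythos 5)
Your argument is correct and is essentially identical to the paper's proof: both pass from $\mu_{V(\mathbb R)}$ to $\mu_{G/A}$ via the pushforward normalization, use the Iwasawa identification $G/A\cong KN$ with $\mu_{G/A}=\mu_K\times\mu_N$, invoke the $\Ad(K)$-invariance of the norm to drop the $K$-integral, and conclude by Lemma~\ref{p106}. Your closing remark about the fixed diagonal entries $\alpha_i$ shifting the effective radius is a fair (and slightly more careful) observation, but it does not change the result and is not an issue with your reduction.
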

\begin{proof}
Note that by the discussion above, one has 
\begin{align*}
\mu_{V(\mathbb R)}(V(\mathbb R)\cap B_T)=&\mu_{G/A}(\{gA: \Ad(g)M_\alpha\in B_T\})\\
=&\mu_K\times\mu_N(\{kh:\Ad(kh)M_\alpha\in B_T\}).
\end{align*}
 By Lemma \ref{p106} and the $\Ad(K)$-invariance of the norm on $M(n,\mathbb R)$, we compute
\begin{align*}
&\mu_K\times\mu_N(\{kh:\Ad(kh)M_\alpha\in B_T\})\\
=&\mu_N(\{h:\Ad(h)M_\alpha\in B_T\})=\frac{\Vol(B_1)}{\prod_{j>i}|\alpha_j-\alpha_i|}T^{n(n-1)/2}.
\end{align*}
This completes the proof of the proposition.
\end{proof}

\begin{proposition}\label{p107}
For any $k\in K$ and $h\in N(T)$ we have $$\Vol(\Omega_{kh,\delta})=O((\ln T)^{n-1})$$ where the implicit constant depends only on $\delta$ and $M_\alpha$. Furthermore, for $h\in N(\epsilon,T)$ we have $$\Vol(\Omega_{kh,\delta})=(c_0+o(1))(\ln T)^{n-1}$$ where the implicit constant depends on $\epsilon,\delta,M_\alpha$, and $c_0$ equals the volume of $$\left\{\mb{t}\in\Lie(A):\sum_{j=1}^l t_{i_j}\geq\sum_{j=1}^l(j-i_j),\forall 1\leq l\leq n,\forall1\leq i_1<\cdots<i_l\leq n\right\}.$$
\end{proposition}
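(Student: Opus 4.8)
The plan is as follows. First I would reduce to the case $k=e$: since the norm on $M(n,\mathbb R)$ is $\Ad(K)$-invariant, the induced norm on each $\wedge^l\mathbb R^n$ is invariant under the orthogonal operators $\wedge^l(k)$ $(k\in K)$, so that $\|(kh)e_I\|=\|\wedge^l(k)(he_I)\|=\|he_I\|$ for every nonempty $I\in\mathcal I_n$. By Definition \ref{def31} this gives $\Omega_{kh,\delta}=\Omega_{h,\delta}$, so I may work with $h\in N$ alone. Throughout, write $\Ad(h)M_\alpha=(x_{ij})_{1\le i,j\le n}$ and note that $h\in N(T)$ forces $|x_{ij}|\le T$, and put $D_I:=\sum_{j=1}^l(i_j-j)$ for $I=\{i_1<\cdots<i_l\}$.

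For the crude bound $\Vol(\Omega_{h,\delta})=O((\ln T)^{n-1})$ I would argue that $\Omega_{h,\delta}$ sits inside a box of side $O(\ln T)$ in $\Lie(A)$. By Proposition \ref{p105} (and Lemma \ref{p103}) each coordinate of $he_I$ in the standard basis of $\wedge^l\mathbb R^n$ is a polynomial of degree at most $D_I$ in the $x_{pq}$ $(p<q)$, with $D_I$ bounded in terms of $n$ only; since $|x_{pq}|\le T$ this yields $\|he_I\|\le C_{M_\alpha}\,T^{C(n)}$, while $\|he_I\|\ge 1$ because $h$ is upper triangular unipotent. Hence for every $i$ and every $\mb t\in\Omega_{h,\delta}$ we have $t_i\ge \ln\delta-\ln\|he_i\|\ge \ln\delta-C'\ln T$ with $C'=C'(M_\alpha)$, and combined with $\sum_i t_i=0$ this confines $\Omega_{h,\delta}$ to a box of side $O_{\delta,M_\alpha}(\ln T)$ in the $(n-1)$-dimensional space $\Lie(A)$; taking volumes gives the claim.

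For the sharp asymptotic on $N(\epsilon,T)$, I would pin down $\ln\|he_I\|$ precisely. By Proposition \ref{p105} the leading monomial of $he_I$ is $c(I)\bigl(\prod_{j=1}^l\prod_{p=j}^{i_j-1}x_{p,p+1}\bigr)e_{\{1,\dots,l\}}$ of degree exactly $D_I$, it involves only the superdiagonal entries, and $c(I)\ne 0$ by Lemma \ref{p104}; all remaining terms have degree $\le D_I-1$, hence are $O(T^{D_I-1})$. Using $\epsilon T\le|x_{p,p+1}|\le T$ one gets $\tfrac12|c(I)|\epsilon^{D_I}T^{D_I}\le\|he_I\|\le 2|c(I)|T^{D_I}$ once $T$ is large (depending on $\epsilon$), so $\ln\|he_I\|=D_I\ln T+O_\epsilon(1)$ uniformly in $h\in N(\epsilon,T)$. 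Substituting into Definition \ref{def31} and rescaling $\mb t=(\ln T)\mb s$ gives
\[
\tfrac1{\ln T}\,\Omega_{h,\delta}=\Bigl\{\mb s\in\Lie(A):\ \omega_I(\mb s)\ge -D_I+\tfrac{O_\epsilon(1)}{\ln T}\ \ \text{for all nonempty }I\in\mathcal I_n\Bigr\},
\]
and since $-D_I=\sum_{j=1}^l(j-i_j)$, the right-hand side is a vanishing perturbation of the polytope $P_0$ whose volume is $c_0$. One checks that $P_0$ is bounded exactly as in Lemma \ref{p31} (the constraints with $|I|=1$ force $s_i\ge 1-i$, which with $\sum s_i=0$ bounds $\mb s$) and full-dimensional (the constraints from initial segments $I=\{1,\dots,l\}$ and their complements read $0\le s_1+\cdots+s_l\le l(n-l)$, carving out a region with nonempty interior). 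A routine sandwiching—enclosing $\tfrac1{\ln T}\Omega_{h,\delta}$ between $\{\omega_I\ge -D_I\pm\eta:\forall I\}$ and letting first $T\to\infty$ then $\eta\to 0$, using continuity of the volume of a bounded full-dimensional polytope under parallel displacement of its facets—then gives $\Vol\bigl(\tfrac1{\ln T}\Omega_{h,\delta}\bigr)=c_0+o(1)$, i.e. $\Vol(\Omega_{h,\delta})=(c_0+o(1))(\ln T)^{n-1}$.

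The one genuinely delicate point is the uniform estimate $\ln\|he_I\|=D_I\ln T+O_\epsilon(1)$: it hinges on the non-vanishing of the leading coefficient $c(I)$ (Lemma \ref{p104}) and on the happy coincidence that the superdiagonal entries controlled by $N(\epsilon,T)$ are precisely the ones entering the leading monomial of Proposition \ref{p105}, so that the lower-order terms are genuinely negligible against $T^{D_I}$. The remaining ingredients—the reduction to $k=e$, the polytope-in-a-box bound, and the volume-continuity of $P_0$—are routine.
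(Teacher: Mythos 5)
Your proposal is correct and follows essentially the same route as the paper: reduce to $h\in N$ via the $\Ad(K)$-invariance of the norm, use Proposition \ref{p105} together with Lemma \ref{p104} to get $\ln\|khe_{I}\|\leq\left(\sum_{j=1}^l(i_j-j)\right)\ln T+O(1)$ on $N(T)$ and the two-sided version $\ln\|khe_{I}\|=\left(\sum_{j=1}^l(i_j-j)\right)\ln T+O_\epsilon(1)$ on $N(\epsilon,T)$, and then rescale $\Omega_{kh,\delta}$ by $\ln T$ (the paper leaves this final sandwiching/volume-convergence step implicit, which you spell out, and your box argument for the crude bound is a harmless variant). Two cosmetic points, neither a gap: in the inner sandwich polytope the vacuous constraint $I=[n]$ (where $\omega_{[n]}\equiv0$ on $\Lie(A)$) should be dropped rather than perturbed by $+\eta$, and full-dimensionality of the limit polytope is not established by the initial-segment constraints alone but follows, e.g., from the interior point $t_i=(n+1)/2-i$ (and is in any case not needed for the volume continuity, which follows from monotone convergence and the fact that the facets lie in finitely many hyperplanes of measure zero).
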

\begin{proof}
From the definition of $\Omega_{kh,\delta}$, we know that $$\Omega_{kh,\delta}=\{\mb{t}\in\Lie(A): \sum_{j=1}^l t_{i_j}\geq\ln\delta-\ln\|khe_{I}\|\text{ for any nonempty }I\in\mathcal I_n\}.$$ Since $k\in\operatorname{SO}(n,\mathbb R)$, by Proposition \ref{p105}, for any $i_1<i_2<\cdots<i_l$ we have 
\begin{eqnarray*}
&{}&\ln\delta-\ln\|kh(e_{i_1}\wedge e_{i_2}\wedge\dots\wedge e_{i_l})\|\\
&\geq&O(1)-(i_1+i_2+\cdots+i_l-1-2-\cdots-l)(\ln T)
\end{eqnarray*}
where the implicit constant depends only on $\delta$ and $M_\alpha$. Moreover if $h\in N(\epsilon,T)$ then we have
\begin{eqnarray*}
&{}&\ln\delta-\ln\|kh(e_{i_1}\wedge e_{i_2}\wedge\dots\wedge e_{i_l})\|\\
&=&O(1)-(i_1+i_2+\cdots+i_l-1-2-\cdots-l)(\ln T)
\end{eqnarray*}
where the implicit constant depends only on $\epsilon$, $\delta$ and $M_\alpha$. The proposition now follows from these equations.
\end{proof}

Define $$F_T(g)=\sum_{\gamma\in\Gamma/\Gamma_{M_\alpha}}\chi_T(\Ad(g\gamma) M_\alpha)$$ where $\chi_T$ is the characteristic function of $B_T$ in $M(n,\mathbb R)$ and $\Gamma_{M_\alpha}$ is the stabilizer of $M_\alpha$ in $\Gamma$. This defines a function on $G/\Gamma$. Note that $\chi_T$ is $\Ad(K)$-invariant and $\Gamma_{M_\alpha}$ is finite. In the following proposition, we will denote by $$(f,\phi):=\int_{G/\Gamma}f(g)\phi(g)dm_X(g)$$ for any two functions $f,\phi$ on $G/\Gamma$, whenever this integral is valid. 

\begin{proposition}\label{p108}
For any $\psi\in C_c(G/\Gamma)$, we have $$\left(\frac{|\Gamma_{M_\alpha}|c_X}{n_0T^{n(n-1)/2}(\ln T)^{n-1}}F_T,\psi\right)\to(1,\psi).$$ Here $$n_0=\frac{c_0\Vol(B_1)}{\prod_{j>i}|\alpha_j-\alpha_i|}$$ and $c_0$ is the number as in Proposition \ref{p107} and $\Vol(B_1)$ is as in Lemma \ref{p106}.
\end{proposition}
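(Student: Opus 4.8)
The plan is to unfold the pairing $(F_T,\psi)$ into an integral of the translated measures $(kn)_*\mu_{Ax_e}$ over $K\times N(T)$, to replace each translate by $\Vol(\Omega_{n,\delta})\,m_X$ using the equidistribution statement of Corollary \ref{c91}, and then to read off the constant from the volume asymptotics of Proposition \ref{p107} and Lemma \ref{p106}. Recall that $Ax_e$ is a divergent orbit, so $\mu_{Ax_e}$ is a well-defined locally finite measure and Corollary \ref{c91} applies with this initial point.

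First I would unfold. Since $\psi$ descends from $G$ to $X=G/\Gamma$, since $A$ centralizes $M_\alpha$, since the norm on $M(n,\mathbb R)$ is $\Ad(K)$-invariant, and since the finite group $\Gamma_{M_\alpha}$ acts freely on $G$ by right translation, a standard unfolding computation together with the Iwasawa decomposition $G=KNA$ and $d\mu_G=d\mu_K\,d\mu_N\,d\mu_A$ gives
$$(F_T,\psi)=\frac1{|\Gamma_{M_\alpha}|\,c_X}\int_K\int_{N(T)}\Big(\int_X\psi\,d\big((kn)_*\mu_{Ax_e}\big)\Big)\,d\mu_N(n)\,d\mu_K(k),$$
where one uses $\chi_T(\Ad(kna)M_\alpha)=\chi_T(\Ad(n)M_\alpha)=\mathbf 1_{N(T)}(n)$ and $\int_A\psi(kna)\,d\mu_A(a)=\int_X\psi\,d((kn)_*\mu_{Ax_e})$. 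Absolute convergence, hence validity of the unfolding, follows from the crude bound recorded below (one may also first take $\psi\ge0$).

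The heart of the matter is a uniform version of Corollary \ref{c91}. Put $R_T(k,n):=\Vol(\Omega_{n,\delta})^{-1}\int_X\psi\,d((kn)_*\mu_{Ax_e})$, noting that $\Omega_{kn,\delta}=\Omega_{n,\delta}$ because $\|kne_I\|=\|ne_I\|$ for all $I$. I claim that for every fixed $\epsilon>0$ one has $\sup_{k\in K,\ n\in N(\epsilon,T)}\big|R_T(k,n)-(1,\psi)\big|\to0$ as $T\to\infty$. I would prove this by contradiction: a violating sequence yields $g_j=k_jn_j\in KN$ with $n_j\in N(\epsilon,T_j)$, $T_j\to\infty$, and $|R_{T_j}(k_j,n_j)-(1,\psi)|\ge\eta>0$. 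By Lemma \ref{p102} the superdiagonal entries satisfy $|u_{i,i+1}(n_j)|=|x_{i,i+1}|/|\alpha_{i+1}-\alpha_i|\ge\epsilon T_j/|\alpha_{i+1}-\alpha_i|$, and for any nonzero $Y=\diag(t_1,\dots,t_n)\in\Lie(A)$, choosing $i_0$ with $t_{i_0+1}\ne t_{i_0}$, one computes $(\Ad(n_j)Y)_{i_0,i_0+1}=(t_{i_0+1}-t_{i_0})\,u_{i_0,i_0+1}(n_j)$, so $\|\Ad(g_j)Y\|=\|\Ad(n_j)Y\|\to\infty$. Hence $\{g_j\}$ satisfies the hypothesis of Corollary \ref{c91}, which forces $R_{T_j}(k_j,n_j)\to(1,\psi)$, a contradiction.

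Finally I would assemble the estimate, recording first the crude global bound: if $\operatorname{supp}\psi\subset\mathcal K_{r_0}$, then $\{\mb{t}:kn\exp(\mb{t})x_e\in\operatorname{supp}\psi\}\subset\Omega_{n,c'}$ for some $c'=c'(r_0,n)>0$ (Minkowski), so $\big|\int_X\psi\,d((kn)_*\mu_{Ax_e})\big|\le\|\psi\|_\infty\Vol(\Omega_{n,c'})=O((\ln T)^{n-1})$ uniformly for $n\in N(T)$ and $k\in K$ by Proposition \ref{p107}. Splitting $N(T)=N(\epsilon,T)\cup(N(T)\setminus N(\epsilon,T))$, on the main part I combine the uniform claim with $\Vol(\Omega_{n,\delta})=(c_0+o(1))(\ln T)^{n-1}$ (Proposition \ref{p107}) and $\mu_N(N(\epsilon,T))=\big(\Vol(B_1)/\!\prod_{j>i}|\alpha_j-\alpha_i|+O(\epsilon)\big)T^{n(n-1)/2}$ (Lemma \ref{p106}); on the complement I use the crude bound together with $\mu_N(N(T)\setminus N(\epsilon,T))=O(\epsilon\,T^{n(n-1)/2})$ (Lemma \ref{p106}). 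Recalling $n_0=c_0\Vol(B_1)/\!\prod_{j>i}|\alpha_j-\alpha_i|$, this gives $\tfrac{|\Gamma_{M_\alpha}|c_X}{n_0T^{n(n-1)/2}(\ln T)^{n-1}}(F_T,\psi)=(1,\psi)+O_\psi(\epsilon)+o_{T\to\infty}(1)$; letting $T\to\infty$ and then $\epsilon\to0$ finishes the proof. I expect the main obstacle to be the uniform equidistribution claim: Corollary \ref{c91} is phrased only for sequences, so one must set up the contradiction argument and, crucially, verify that membership in $N(\epsilon,T)$ with $T\to\infty$ automatically forces the divergence $\Ad(g_j)Y\to\infty$, which is exactly where the explicit relation of Lemma \ref{p102} between the entries of $n$ and those of $\Ad(n)M_\alpha$ enters; the remaining steps are bookkeeping with the parameter $\epsilon$.
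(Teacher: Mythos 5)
Your proposal is correct and follows essentially the same route as the paper's proof: unfold $(F_T,\psi)$ via the Iwasawa decomposition $G=KNA$, split $N(T)$ into $N(\epsilon,T)$ and its complement, apply Corollary \ref{c91} together with the volume asymptotics of Proposition \ref{p107} and Lemma \ref{p106} on the main part, bound the complement crudely by the compact support of $\psi$, and let $T\to\infty$ and then $\epsilon\to0$. The only difference is one of detail: you make explicit the uniformity of Corollary \ref{c91} over $K\times N(\epsilon,T)$ (via the contradiction argument and the verification through Lemma \ref{p102} that $n\in N(\epsilon,T)$ with $T\to\infty$ forces $\Ad(n)Y\to\infty$ for all nonzero $Y\in\Lie(A)$), a point the paper's proof leaves implicit in the $o_\epsilon(1)$ term.
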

\begin{proof}
We have
\begin{eqnarray*}
(F_T,\psi)&=&\frac1{|\Gamma_{M_\alpha}|}\int_{G/\Gamma} \sum_{\gamma\in\Gamma}\chi_T(\Ad(g\gamma) M_\alpha)\psi(g) d m_X\\
&=&\frac1{|\Gamma_{M_\alpha}|c_X}\int_G\chi_T(\Ad(g)M_\alpha)\psi(g) d\mu_G(g)\\
&=&\frac1{|\Gamma_{M_\alpha}|c_X}\int_{KN}\int_A\chi_T(\Ad(kha)M_\alpha)\psi(kha) d\mu_Kd\mu_Nd\mu_A\\
&=&\frac1{|\Gamma_{M_\alpha}|c_X}\int_K\int_N\chi_T(\Ad(h)M_\alpha)d\mu_Nd\mu_K\int_A\psi(kha) d\mu_A.
\end{eqnarray*}
Now fix $\epsilon>0$. By Corollary \ref{c91}, we proceed
\begin{eqnarray*}
&=&\frac1{|\Gamma_{M_\alpha}|c_X}\int_K\int_{N(\epsilon,T)}\chi_T(\Ad(h)M_\alpha)\Vol(\Omega_{kh,\delta})d\mu_Nd\mu_K\frac1{\Vol(\Omega_{kh,\delta})}\int_A\psi(kha) d\mu_A\\
&{}&+\frac1{|\Gamma_{M_\alpha}|c_X}\int_K\int_{N\setminus N(\epsilon,T)}\chi_T(\Ad(h)M_\alpha)d\mu_Nd\mu_K\int_A\psi(kha) d\mu_A\\
&=&\frac1{|\Gamma_{M_\alpha}|c_X}\int_K\int_{N(\epsilon,T)}\chi_T(\Ad(h)M_\alpha)\Vol(\Omega_{kh,\delta})d\mu_Nd\mu_K\left(\int_{G/\Gamma}\psi d m_X+o_\epsilon(1)\right)\\
&{}&+\frac1{|\Gamma_{M_\alpha}|c_X}\int_K\int_{N\setminus N(\epsilon,T)}\chi_T(\Ad(h)M_\alpha)d\mu_Nd\mu_K\int_A\psi(kha) d\mu_A.
\end{eqnarray*}
Note that since $\psi\in C_c(G/\Gamma)$ we can find $\delta_\psi>0$ such that $$\int_A\psi(kha) d\mu_A=\int_{\Omega_{kh,\delta_\psi}}\psi(kha) d\mu_A.$$ So by Lemma \ref{p106} and Proposition \ref{p107}, we proceed
\begin{eqnarray*}
&=&\frac1{|\Gamma_{M_\alpha}|c_X}\int_K\int_{N(\epsilon,T)}\chi_T(\Ad(h)M_\alpha)\Vol(\Omega_{kh,\delta})d\mu_Nd\mu_K\int_{G/\Gamma}\psi d m_X\\
&{}&+o_\epsilon(T^{n(n-1)/2}(\ln T)^{n-1})+O_\psi(\epsilon T^{n(n-1)/2}(\ln T)^{n-1})\\
&=&\frac{n_0T^{n(n-1)/2}(\ln T)^{n-1}}{|\Gamma_{M_\alpha}|c_X}\int_{G/\Gamma}\psi d m_X\\
&{}&+o_{\epsilon,\delta}(T^{n(n-1)/2}(\ln T)^{n-1})+O_\psi(\epsilon T^{n(n-1)/2}(\ln T)^{n-1}).
\end{eqnarray*}
This implies that $$\limsup_{T\to\infty}\left|\left(\frac{|\Gamma_{M_\alpha}|c_X}{n_0T^{n(n-1)/2}(\ln T)^{n-1}}F_T,\psi\right)-(1,\psi)\right|\leq O_\psi(\epsilon).$$ We complete the proof by letting $\epsilon\to0$.
\end{proof} 

\begin{proof}[Proof of Theorem \ref{th14}]
Following the same proofs as in \cite{DRS93} and \cite{EMS96}, and combining Lemma \ref{p106} and Proposition \ref{p108}, we conclude that $$\frac{|\Gamma_{M_\alpha}|c_X}{n_0T^{n(n-1)/2}(\ln T)^{n-1}}F_T\to1.$$ Now Theorem \ref{th14} follows from this equation and Proposition \ref{p101} 
\end{proof}

\bibliographystyle{amsalpha}

\begin{bibdiv}
\begin{biblist}

\bib{AS}{article}{
      author={Aka, Menny},
      author={Shapira, Uri},
       title={On the evolution of continued fractions in a fixed quadratic
  field},
        date={2018},
        ISSN={0021-7670},
     journal={J. Anal. Math.},
      volume={134},
      number={1},
       pages={335\ndash 397},
         url={https://doi.org/10.1007/s11854-018-0012-4},
      review={\MR{3771486}},
}

\bib{BHC62}{article}{
      author={Borel, Armand},
      author={Harish-Chandra},
       title={Arithmetic subgroups of algebraic groups},
        date={1962},
        ISSN={0003-486X},
     journal={Ann. of Math. (2)},
      volume={75},
       pages={485\ndash 535},
         url={http://dx.doi.org/10.2307/1970210},
      review={\MR{0147566}},
}

\bib{Dani81}{article}{
      author={Dani, S.~G.},
       title={Invariant measures and minimal sets of horospherical flows},
        date={1981},
        ISSN={0020-9910},
     journal={Invent. Math.},
      volume={64},
      number={2},
       pages={357\ndash 385},
         url={http://dx.doi.org/10.1007/BF01389173},
      review={\MR{629475}},
}

\bib{DM93}{incollection}{
      author={Dani, S.~G.},
      author={Margulis, G.~A.},
       title={Limit distributions of orbits of unipotent flows and values of
  quadratic forms},
        date={1993},
   booktitle={I. {M}. {G}el$\prime$fand {S}eminar},
      series={Adv. Soviet Math.},
      volume={16},
   publisher={Amer. Math. Soc., Providence, RI},
       pages={91\ndash 137},
      review={\MR{1237827}},
}

\bib{DRS93}{article}{
      author={Duke, W.},
      author={Rudnick, Z.},
      author={Sarnak, P.},
       title={Density of integer points on affine homogeneous varieties},
        date={1993},
        ISSN={0012-7094},
     journal={Duke Math. J.},
      volume={71},
      number={1},
       pages={143\ndash 179},
         url={http://dx.doi.org/10.1215/S0012-7094-93-07107-4},
      review={\MR{1230289}},
}

\bib{ELMV}{article}{
      author={Einsiedler, Manfred},
      author={Lindenstrauss, Elon},
      author={Michel, Philippe},
      author={Venkatesh, Akshay},
       title={Distribution of periodic torus orbits and {D}uke's theorem for
  cubic fields},
        date={2011},
        ISSN={0003-486X},
     journal={Ann. of Math. (2)},
      volume={173},
      number={2},
       pages={815\ndash 885},
         url={http://dx.doi.org/10.4007/annals.2011.173.2.5},
      review={\MR{2776363}},
}

\bib{EM93}{article}{
      author={Eskin, Alex},
      author={McMullen, Curt},
       title={Mixing, counting, and equidistribution in {L}ie groups},
        date={1993},
        ISSN={0012-7094},
     journal={Duke Math. J.},
      volume={71},
      number={1},
       pages={181\ndash 209},
         url={http://dx.doi.org/10.1215/S0012-7094-93-07108-6},
      review={\MR{1230290}},
}

\bib{EMS96}{article}{
      author={Eskin, Alex},
      author={Mozes, Shahar},
      author={Shah, Nimish},
       title={Unipotent flows and counting lattice points on homogeneous
  varieties},
        date={1996},
        ISSN={0003-486X},
     journal={Ann. of Math. (2)},
      volume={143},
      number={2},
       pages={253\ndash 299},
         url={http://dx.doi.org/10.2307/2118644},
      review={\MR{1381987}},
}

\bib{EMS97}{article}{
      author={Eskin, A.},
      author={Mozes, S.},
      author={Shah, N.},
       title={Non-divergence of translates of certain algebraic measures},
        date={1997},
        ISSN={1016-443X},
     journal={Geom. Funct. Anal.},
      volume={7},
      number={1},
       pages={48\ndash 80},
         url={http://dx.doi.org/10.1007/PL00001616},
      review={\MR{1437473}},
}

\bib{EMV09}{article}{
      author={Einsiedler, M.},
      author={Margulis, G.},
      author={Venkatesh, A.},
       title={Effective equidistribution for closed orbits of semisimple groups
  on homogeneous spaces},
        date={2009},
        ISSN={0020-9910},
     journal={Invent. Math.},
      volume={177},
      number={1},
       pages={137\ndash 212},
         url={http://dx.doi.org/10.1007/s00222-009-0177-7},
      review={\MR{2507639}},
}

\bib{EV08}{article}{
      author={Ellenberg, Jordan~S.},
      author={Venkatesh, Akshay},
       title={Local-global principles for representations of quadratic forms},
        date={2008},
        ISSN={0020-9910},
     journal={Invent. Math.},
      volume={171},
      number={2},
       pages={257\ndash 279},
         url={http://dx.doi.org/10.1007/s00222-007-0077-7},
      review={\MR{2367020}},
}

\bib{GMO08}{article}{
      author={Gorodnik, Alex},
      author={Maucourant, Fran\c{c}ois},
      author={Oh, Hee},
       title={Manin's and {P}eyre's conjectures on rational points and adelic
  mixing},
        date={2008},
        ISSN={0012-9593},
     journal={Ann. Sci. \'Ec. Norm. Sup\'er. (4)},
      volume={41},
      number={3},
       pages={383\ndash 435},
      review={\MR{2482443}},
}

\bib{KK}{article}{
      author={Kelmer, Dubi},
      author={Kontorovich, Alex},
       title={Effective equidistribution of shears and applications},
        date={2018},
        ISSN={0025-5831},
     journal={Math. Ann.},
      volume={370},
      number={1-2},
       pages={381\ndash 421},
         url={https://doi.org/10.1007/s00208-017-1580-9},
      review={\MR{3747491}},
}

\bib{Kle10}{incollection}{
      author={Kleinbock, Dmitry},
       title={Quantitative nondivergence and its {D}iophantine applications},
        date={2010},
   booktitle={Homogeneous flows, moduli spaces and arithmetic},
      series={Clay Math. Proc.},
      volume={10},
   publisher={Amer. Math. Soc., Providence, RI},
       pages={131\ndash 153},
      review={\MR{2648694}},
}

\bib{KM98}{article}{
      author={Kleinbock, D.~Y.},
      author={Margulis, G.~A.},
       title={Flows on homogeneous spaces and {D}iophantine approximation on
  manifolds},
        date={1998},
        ISSN={0003-486X},
     journal={Ann. of Math. (2)},
      volume={148},
      number={1},
       pages={339\ndash 360},
         url={http://dx.doi.org/10.2307/120997},
      review={\MR{1652916}},
}

\bib{LM33}{article}{
      author={Latimer, Claiborne~G.},
      author={MacDuffee, C.~C.},
       title={A correspondence between classes of ideals and classes of
  matrices},
        date={1933},
        ISSN={0003-486X},
     journal={Ann. of Math. (2)},
      volume={34},
      number={2},
       pages={313\ndash 316},
         url={http://dx.doi.org/10.2307/1968204},
      review={\MR{1503108}},
}

\bib{M}{incollection}{
      author={Margulis, G.~A.},
       title={Discrete subgroups and ergodic theory},
        date={1989},
   booktitle={Number theory, trace formulas and discrete groups ({O}slo,
  1987)},
   publisher={Academic Press, Boston, MA},
       pages={377\ndash 398},
      review={\MR{993328}},
}

\bib{MS95}{article}{
      author={Mozes, Shahar},
      author={Shah, Nimish},
       title={On the space of ergodic invariant measures of unipotent flows},
        date={1995},
        ISSN={0143-3857},
     journal={Ergodic Theory Dynam. Systems},
      volume={15},
      number={1},
       pages={149\ndash 159},
         url={http://dx.doi.org/10.1017/S0143385700008282},
      review={\MR{1314973}},
}

\bib{MT}{article}{
      author={Margulis, G.~A.},
      author={Tomanov, G.~M.},
       title={Invariant measures for actions of unipotent groups over local
  fields on homogeneous spaces},
        date={1994},
        ISSN={0020-9910},
     journal={Invent. Math.},
      volume={116},
      number={1-3},
       pages={347\ndash 392},
         url={https://doi.org/10.1007/BF01231565},
      review={\MR{1253197}},
}

\bib{OS14}{article}{
      author={Oh, Hee},
      author={Shah, Nimish~A.},
       title={Limits of translates of divergent geodesics and integral points
  on one-sheeted hyperboloids},
        date={2014},
        ISSN={0021-2172},
     journal={Israel J. Math.},
      volume={199},
      number={2},
       pages={915\ndash 931},
         url={http://dx.doi.org/10.1007/s11856-013-0063-2},
      review={\MR{3219562}},
}

\bib{Rag}{book}{
      author={Raghunathan, M.~S.},
       title={Discrete subgroups of {L}ie groups},
   publisher={Springer-Verlag, New York-Heidelberg},
        date={1972},
        note={Ergebnisse der Mathematik und ihrer Grenzgebiete, Band 68},
      review={\MR{0507234}},
}

\bib{R90MR}{article}{
      author={Ra{}tner, Marina},
       title={On measure rigidity of unipotent subgroups of semisimple groups},
        date={1990},
        ISSN={0001-5962},
     journal={Acta Math.},
      volume={165},
      number={3-4},
       pages={229\ndash 309},
         url={http://dx.doi.org/10.1007/BF02391906},
      review={\MR{1075042}},
}

\bib{R90SMR}{article}{
      author={Ra{}tner, Marina},
       title={Strict measure rigidity for unipotent subgroups of solvable
  groups},
        date={1990},
        ISSN={0020-9910},
     journal={Invent. Math.},
      volume={101},
      number={2},
       pages={449\ndash 482},
         url={http://dx.doi.org/10.1007/BF01231511},
      review={\MR{1062971}},
}

\bib{R91RMC}{article}{
      author={Ra{}tner, Marina},
       title={On {R}aghunathan's measure conjecture},
        date={1991},
        ISSN={0003-486X},
     journal={Ann. of Math. (2)},
      volume={134},
      number={3},
       pages={545\ndash 607},
         url={http://dx.doi.org/10.2307/2944357},
      review={\MR{1135878}},
}

\bib{Sha00}{article}{
      author={Shah, Nimish~A.},
       title={Counting integral matrices with a given characteristic
  polynomial},
        date={2000},
        ISSN={0581-572X},
     journal={Sankhy\=a Ser. A},
      volume={62},
      number={3},
       pages={386\ndash 412},
        note={Ergodic theory and harmonic analysis (Mumbai, 1999)},
      review={\MR{1803465}},
}

\bib{Sha91}{article}{
      author={Shah, Nimish~A.},
       title={Uniformly distributed orbits of certain flows on homogeneous
  spaces},
        date={1991},
        ISSN={0025-5831},
     journal={Math. Ann.},
      volume={289},
      number={2},
       pages={315\ndash 334},
         url={http://dx.doi.org/10.1007/BF01446574},
      review={\MR{1092178}},
}

\bib{CasselsEscape}{article}{
      author={Shapira, Uri},
       title={Full escape of mass for the diagonal group},
        date={2017},
        ISSN={1073-7928},
     journal={Int. Math. Res. Not. IMRN},
      number={15},
       pages={4704\ndash 4731},
         url={https://doi.org/10.1093/imrn/rnw144},
      review={\MR{3685113}},
}

\bib{TW03}{article}{
      author={Tomanov, George},
      author={Weiss, Barak},
       title={Closed orbits for actions of maximal tori on homogeneous spaces},
        date={2003},
        ISSN={0012-7094},
     journal={Duke Math. J.},
      volume={119},
      number={2},
       pages={367\ndash 392},
         url={http://dx.doi.org/10.1215/S0012-7094-03-11926-2},
      review={\MR{1997950}},
}

\end{biblist}
\end{bibdiv}

\end{document}